\newtheorem{theorem}{Theorem}[]
\newtheorem{definition}{Definition}[section]
\newtheorem{remark}[definition]{Remark}
\newtheorem{proposition}[definition]{Proposition}
\newtheorem{lemma}[definition]{Lemma}
\newtheorem{corollary}[definition]{Corollary}
\newtheorem{theoremofothers}[definition]{Theorem}
\newcommand \Cm { \mathbb{C}}
\newcommand \Dm { \mathbb{D}}
\newcommand \Rm { \mathbb{R}}
\newcommand \Nm { \mathbb{N}}
\newcommand \Sm { \mathbb{S}}
\newcommand \Zm { \mathbb{Z}}
\newcommand \zbar { {\bar{z}}}
\renewcommand{\P}{\mathcal{P}}
\renewcommand{\d}{\mathrm{d}}
\let \Re \undefined
\DeclareMathOperator{\Re}{Re}
\DeclareMathOperator{\Diff}{Diff}
\DeclareMathOperator{\supp}{supp}
\renewcommand \L {{\cal L}}
\newcommand \D {{\cal D}}
\newcommand \A {{\cal A}}
\newcommand \F {{\cal F}}
\newcommand \G {{\cal G}}
\newcommand \Gh {{\cal G}_{\mathrm h}}
\newcommand \Ghbar {\overline{{\cal G}_{\mathrm h}}}
\renewcommand \H {{\cal H}}
\renewcommand \S {{\cal S}}
\newcommand \T {{\cal T}}
\newcommand \x { {\mathrm x}}
\newcommand \sfa {{\sf{a}}}
\newcommand\Stt {S_{\mathrm{tt}}}
\DeclareMathOperator{\tr}{tr}
\newcommand{\Aphg}{\mathcal{A}_{\mathrm{phg}}}
\newcommand{\OSM}{{}^0\overline{SM}}
\newcommand{\OSD}{{}^0\overline{S\mathbb{D}}}
\newcommand{\ev}{{\mathrm{ev}}}
\newcommand \Cev {C_{\mathrm{ev}}^\infty}
\newcommand \pih {\pi_\mathrm{h}}
\newcommand{\bSM}{{}^b\overline{ S^*M}}
\newcommand \muh {\mu_{\mathrm h}}
\newcommand \hide[1] {}
\renewcommand \tr {\mathrm{tr}_g}
\newcommand \itt {\mathrm{itt}}
\renewcommand \div {\delta_g}
\title{Tensor Tomography on Asymptotically Hyperbolic Surfaces}
\author{Nikolas Eptaminitakis\thanks{Institut f\"{u}r Differentialgeometrie, Leibniz Universit\"{a}t Hannover, Welfengarten 1, 30167 Hannover, Germany; email: nikolaos.eptaminitakis@math.uni-hannover.de} \and Fran\c{c}ois Monard\thanks{Department of Mathematics, University of California, Santa Cruz CA 95064; email:fmonard@ucsc.edu} \and Yuzhou Joey Zou\thanks{Department of Mathematics and Statistics, Oakland University, Rochester MI 48309; \newline email: yzou@oakland.edu}}
\date{}
\numberwithin{equation}{section}
\begin{document}
\maketitle

\begin{abstract}
    We initiate a study of the inversion of the geodesic X-ray transform $I_m$ over symmetric $m$-tensor fields on asymptotically hyperbolic surfaces. This operator has a non-trivial kernel whenever $m\ge 1$. To propose a gauge representative to be reconstructed from X-ray data, we first prove a ``tt-potential-conformal'' decomposition theorem for $m$-tensor fields (where ``tt'' stands for transverse traceless), previously used  in integral geometry on compact Riemannian manifolds with boundary in \cite{Sharafudtinov2007}, \cite{Dairbekov2011}. The proof is based on elliptic decompositions of the Guillemin-Kazhdan operators $\eta_\pm$ (\cite{Guillemin1980}) and leverages in the current setting the 0-calculus of Mazzeo-Melrose \cite{Mazzeo1987,Mazzeo1991}. Iterating this decomposition gives rise to an ``iterated-tt''  representative modulo $\ker I_m$ for a tensor field, which is distinct from the often-used solenoidal representative. 

    In the case of the Poincar\'e disk, we show that the X-ray transform of a tensor in iterated-tt form splits into components that are orthogonal relative to a specific $L^2$ structure in data space. We then provide a full picture of the data space decomposition, in particular a range characterization of $I_{m}$ for every $m$ in terms of moment conditions and spectral decay. Finally, we give explicit  approaches for the reconstruction of tensors in iterated-tt form from their X-ray transform or its normal operator, using specific knowledge of geodesically invariant distributions with one-sided Fourier content, whose properties are analyzed in detail. 
\end{abstract}

\tableofcontents

\section{Introduction} \label{sec:intro}

We consider the tensor tomography problem on Asymptotically Hyperbolic surfaces. A Riemannian metric $g$ defined in the interior $M^{\circ}$ of a compact manifold with boundary $M$ is called Asymptotically Hyperbolic (AH) if for any boundary defining function (``bdf'' throughout) $\rho$ for $\partial M$, $\overline{g}\coloneqq \rho^2g$ extends to a smooth Riemannian metric on $M$ and in addition one has $\| \d \rho\big|_{\partial M}\|_{\rho^2g}^2=1$. Such metrics are complete  and their sectional curvatures approach $-1$ as $\rho\to 0$ (\cite{Mazzeo1986}). The most important example is the Poincaré model of hyperbolic space. Given a non-trapping AH manifold $(M^{\circ},g)$ (that is, a manifold carrying an AH metric such that no unit speed geodesic spends infinite time in any compact subset of $M^{\circ}$), the space of oriented unit speed geodesics through $M^{\circ}$ can be given the structure of a manifold $\G \cong T^* (\partial M)$. Then for a symmetric $m$-tensor field on $M^\circ$, we can define the geodesic X-ray transform $I_m\colon C_c^\infty(M^\circ; S^m(T^* M^\circ))\to C_c^\infty(\G)$, given by  
\begin{align}
    I_m f (\gamma) = \int_{\Rm} f_{\gamma(t)} (\dot\gamma(t)^{\otimes m})\ dt, \qquad \gamma\in \G,
    \label{eq:Imf}
\end{align}
where $\gamma$ is in unit-speed parameterization. The tensor tomography problem consists in assessing which part of a given symmetric $m$-tensor $f$ can be reconstructed from $I_m f$, and how to reconstruct it. %
In the AH setting, such a problem is related to the non-linear inverse problem of reconstructing an AH metric from knowledge of its renormalized boundary distance data, see \cite{Graham2019,Lefeuvre2020,humbert2026}. 

Prior literature on this question is extensive in the case of convex and non-trapping manifolds with boundary, covering both its applications (to medical imaging and seismology, among others) and its theoretical developments. Known results include injectivity (up to natural kernel) and stability statements \cite{Sharafudtinov1994,Paternain2023,Paternain2019}, range characterizations, and reconstruction procedures \cite{Monard2015a,Derevtsov2015,Kazantsev2004,Krishnan2019}.

More recently, the X-ray transform over tensor fields has gained attention in the case of non-compact spaces including AH ones \cite{Graham2019}, Cartan-Hadamard manifolds \cite{Lehtonen2017} and asymptotically conic spaces \cite{Jia2022}. Such results usually follow more special cases where the background geometry is a symmetric space \cite{Michel1984}, or the integrand is a function or a one-form \cite{Helgason1999,Berenstein1994,Lehtonen2016,Eptaminitakis2024,Eptaminitakis2022,Eptaminitakis2021,Vasy2024,Hoop2024}.

The transform \eqref{eq:Imf} with $m=2$ can also be related to a linearization of entanglement entropy functionals in the Anti-de Sitter (AdS)/Conformal Field Theory (CFT) correspondence. 
The article \cite{Cao2020} proposes numerical approaches to recover a perturbation of a spacelike slice of the $\text{AdS}_3$ bulk metric from hyperbolic X-ray data, as well as to determine whether certain boundary measurements are in the range of the hyperbolic X-ray transform acting on symmetric tensors of order 2. The approach there is based on minimizing misfit functionals via constrained optimization and points to the absence of analytic inversions and range characterizations in the literature, which, as we discuss below, are addressed in detail in the present work.

For the problem we consider, the most recent result to date is \cite[Corollary 1.2]{Graham2019}, stating a unique characterization of $\ker I_m$ on a non-positively curved AH manifold: for $m\ge 0$ and $f\in \rho^{1-m} C^\infty(M, S^m(T^* M))$, %
we have that $I_m f =0$ if and only if $f=\d ^s q$ for $q\in \rho ^{2-m}C^\infty(M, S^{m-1}(T^* M))$, where $\d^{s} $ denotes the symmetrized {total} covariant derivative (in short, ``$f$ is a potential tensor").
While this kernel characterization result is an important starting point to envision what part of a tensor field is reconstructible in these geometries, it does not address reconstruction nor range characterization. Indeed, the method of energy identities used to obtain the above result is not suited for these purposes. The present article sheds some light on these questions in the following ways: 

\medskip
$\bullet$ We first address the choice of a tensor representative modulo potential tensors, coined ``iterated tt\footnote{short for {\bf t}ransverse-{\bf t}racefree} representative" below, to be reconstructed from X-ray data, see Corollary \ref{cor:gauge}. 
On convex, non-trapping compact surfaces with boundary, such a representative arises from iteratively applying  a {``tt-potential-conformal''} decomposition \cite[Theorem 3.3]{Sharafudtinov2007}, and was shown in \cite{Monard2015a,Kay2025} to lend itself to reconstruction approaches with desirable features. These include: the reconstruction is systematic at all tensor orders; iterated-tt tensors decompose into components whose X-ray transforms are in direct sum; the reconstruction of the tt components can be templated using special invariant distributions with one-sided fiber-harmonic content. These reasons make it preferable over the {\em solenoidal} representative (which arises from Sharafutdinov's potential-solenoidal decomposition \cite[Theorem 3.3.2]{Sharafudtinov1994}, often used in Fourier-based Euclidean X-ray inversions), or the {\em normal gauge} representative, used in \cite{Stefanov2017} to deal with the local X-ray problem near a convex boundary point. 

A first salient feature of this article is thus to generalize the tt-potential-conformal decomposition result \cite[Theorem 1.5]{Dairbekov2011} to oriented, simply connected AH surfaces, given in Theorem \ref{thm:gauge_maybe} below. Such a result requires deriving new elliptic decompositions for the Guillemin-Kazhdan operators $\eta_\pm$ (first defined in \cite{Guillemin1980}) in the AH context, and we show that this can be treated using the 0-calculus of Mazzeo-Melrose \cite{Mazzeo1987,Mazzeo1991}. The X-ray transform is then shown to be {\em injective} over such representatives in situations where solenoidal injectivity (that is, injectivity up to potential tensor) is already known, e.g., when the results in \cite{Graham2019} apply.

\medskip
$\bullet$ Second, we give a template for the reconstruction of an iterated-tt even tensor from its X-ray transform, emphasizing that some steps require results which exist in the literature of simple surfaces, but are still open in the AH case. We then show that all steps of this template can be successfully and explicitly carried out when $(M^\circ,g)$ is the Poincar\'e disk. Moreover, in this case the iterated-tt representative of a symmetric tensor field splits into components whose X-ray transforms live on {\em orthogonal} subspaces of an $L^2$-type space on $\G$. This provides insights into a full range decomposition and characterization of the X-ray transform over tensor fields of arbitrary order, see Corollary~\ref{thm:range} and Theorem~\ref{thm:moments} below. Finally, we provide two approaches to explicitly reconstruct this representative from either its X-ray transform (see Theorem \ref{thm:reconsXray} below), or the associated normal operator of the latter (see Theorem \ref{thm:recons_normal} below). The approach used to prove Theorem \ref{thm:reconsXray} mirrors the template laid out in \cite{Monard2015a,Kay2025} for the Euclidean disk. In this template, the reconstruction of the tt components requires the construction of appropriate geodesically invariant distributions with one-sided fiberwise Fourier content, to be paired with {X-ray} data. This geometry-specific construction is generally an open question, and is resolved here for the Poincar\'e disk. 

\medskip
We should mention that both features above are inherently two-dimensional. The first one may admit an analogue on AH manifolds of general dimension by considering the higher-dimensional analogues of $\eta_{\pm}$, though the details are more technical. The second step leverages crucially the notion of fiberwise holomorphicity relative to the circle action on the tangent fibers, along with ideas regarding the algebraic properties of fiberwise holomorphic invariant distributions, which have yet to be generalized to higher-dimensional settings.

Another important feature of this work lies in the fact that since the original manifold $M^\circ$ is non-compact, addressing ``boundary behavior'' involves a choice of extension of $SM^\circ$ and $S^*M^\circ$ (the unit sphere and cosphere bundle respectively), all the way to $\partial M$. As is observed below, for AH surfaces the compactification compatible with fiberwise Fourier decomposition ($\OSM$ below) differs from the extension resolving geodesic transport equations (${}^b \overline{S^* M}$ below). Then the reconstruction of tensor fields requires appropriate pairings between objects that are well-behaved (in the sense of being polyhomogeneous conormal on them) on each of the two: to address fiberwise Fourier content, it is best to consider tensor fields that extend naturally to $\OSM$, while the geodesic vector field and thus relevant geodesically invariant distributions are well-behaved on ${}^b \overline{S^* M}$. 

Several future directions are of interest. One would be to develop an explicit reconstruction method for the solenoidal part of a symmetric tensor on  hyperbolic space of arbitrary dimension. In another direction, it would be desirable to generalize the reconstruction results beyond the constant curvature setting. For the reconstruction from X-ray data (Theorem \ref{thm:reconsXray}), it is reasonable to expect that our method would apply in the AH surface case under suitable geometric assumptions, such as an analogue of simplicity (see Section \ref{sssec:template}). On the other hand, our reconstruction procedure from the normal operator (Theorem \ref{thm:recons_normal}) relies on the orthogonality of the X-ray transforms of the iterated tt components (Theorem \ref{thm:dataDecomp}), and it is unclear if one should expect an analogous result to hold in the non-constant curvature setting.

We now state the main results and provide an outline of the remainder of the article at the end.

\section{Statement of the main results}

\subsection{Results on AH Spaces}
\label{sec:results_on_AH_spaces}

\subsubsection{The unit sphere and cosphere bundles and their compactifications.}\label{sub:the_unit_sphere_and_cosphere_bundles_and_their_compactifications_}

Let $(M^{\circ},g)$ be an $n$-dimensional AH manifold and $\rho$ a bdf. The metric $g$ induces a conformal class of metrics on $\partial M$, given by $[\rho^{2}g\big|_{T\partial M}]$ and called the \emph{conformal infinity}. As shown in \cite{Graham1991}, for each choice $h_0$ of  representative  in the conformal infinity there exists a unique bdf ${\tilde x}$ called a \emph{geodesic bdf} such that $\| \d \tilde x\|_{\tilde x^{2}g}=1$ in a neighborhood of $\partial M$ and $\tilde x^{2}g\big|_{T\partial M}=h_0$.
Via the flow of its $\tilde{x}^{2}g$-gradient, it induces a product decomposition of a collar neighborhood of $\partial M$ in terms of which
\begin{equation}\label{eq:normal_form}
    g=\frac{\d {\tilde x}^{2}+h_{\tilde x}}{{\tilde x}^{2}},
\end{equation}
where $h_{\tilde x}$ is a smooth 1-parameter family of metrics on $\partial M$.
We can then choose coordinates $(y^{2},\dots,y^{n})$   for $\partial M$ near a point there and obtain a coordinate system $(u^{1},\dots, u^{n})=(\tilde x,y^{2},\dots,y^{n})$ for $M$.
Throughout the paper we use the Einstein summation convention, assuming that Latin (resp. Greek) indices run between 1 and $n$ (resp. 2 and $n$).

\paragraph{A model for the manifold of geodesics $\G$.} We will follow  \cite{Graham2019} to parametrize the geodesic flow of $(M^{\circ},g)$.
The statements in this section are elaborated there.
Recall that the $b$-tangent bundle $^{b}TM$ is  a smooth rank $n$-bundle over $M$ characterized by the property that its smooth sections are the vector fields on $M$ that are tangent to the boundary. 
In coordinates near a point in $\partial M$ as above, its smooth sections are spanned over $C^{\infty}(M)$ by ${\tilde x}\partial_{\tilde x}$, $\partial_{y^2},\dots ,\partial_{y^n}$.
The sections of the dual bundle ${}^bT^*M$ are locally spanned by $\frac{\d {\tilde x}}{{\tilde x}},\d y^{2},\dots, \d y^n$.
Over $M^{\circ},$  $TM^{\circ}$ and $T^{*}M^{\circ}$ are naturally identified with $ ^{b}TM\big|_{M^\circ}$ and $ ^{b}T^*M\big|_{M^\circ}$ respectively. \\
The unit cosphere bundle $S^{*}M^\circ$ is given by
\begin{equation}
    S^{*}M^\circ\coloneqq \{(u,\zeta)\in T^*M^{\circ}:g(\zeta,\zeta)=1\}.
\end{equation}
It can be viewed as a subset of $^{b}T^*M$, in which case its closure is a non-compact submanifold with boundary of $^{b}T^*M$ denoted by $^{b}\overline{S^*M}$, and expressed  near the boundary as
\begin{equation}\label{eq:bSM_coords}
    ^{b}\overline{S^*M}= \Big\{(u,\overline{\zeta})=\big(\tilde x,y,\bar{\xi}\frac{\d \tilde x}{\tilde x}+\eta_\alpha \d y^{\alpha}\big):\bar{\xi}^{2}+{\tilde x}^2h_{\tilde x}^{\alpha\beta}\eta_\alpha\eta_{\beta}=1\Big\},
\end{equation}
where the metric is given by \eqref{eq:normal_form}.\footnote{Throughout, lower (resp. upper) indices indicate components of a metric (resp. cometric).} 
See Fig.~\ref{fig:bsm}. Its boundary has two connected components, given by 
\begin{equation}
    \partial_\pm \bSM= \Big\{(u,\overline{\zeta})=\big(\tilde x,y,\bar{\xi}\frac{\d \tilde x}{\tilde x}+\eta_\alpha \d y^{\alpha}\big): {\tilde x =0},\  \bar\xi=\pm 1\Big\},
\end{equation}
and $\tilde x$ is a bdf for $\partial\, \bSM\coloneqq \partial_+\bSM\cup \partial_-\bSM$. The sets $\partial_\pm \bSM\subset {}^bT^*M$ are canonical, independent of $g$.

Write $\mathcal{X}$ for the generator of the geodesic flow on  $S^{*}M^{\circ}$. There exists a smooth vector field $\overline{\mathcal{X}}$ on $ \bSM$ which is transversal to $\partial\, \bSM$ and satisfies $\mathcal{X}=\tilde x\overline{\mathcal{X}}$; hence the integral curves of $\mathcal{X}$ and $\overline{\mathcal{X}}$ agree up to reparametrization (and project to unparameterized geodesics on $M^{\circ}$).
Moreover, one has $\pm\d \tilde x (\overline{\mathcal{X}})>0$ on $\partial_\pm \bSM$ and thus the latter can be viewed as the incoming/outgoing boundary\footnote{In this paper we will use the opposite from \cite{Graham2019} convention for the incoming/outgoing boundary, but we are in agreement with the convention used in \cite{Eptaminitakis2024}.}.
Assuming that $(M^{\circ},g)$ is non-trapping, 
one can parameterize the space of geodesics on $M^{\circ}$ as
\begin{equation}\label{eq:calG}
    \mathcal{G}\coloneqq \partial_+\bSM,
\end{equation}
which can be identified with $T^{*}\partial M$ via $ \frac{\d \tilde x}{\tilde x}+\eta_\alpha \d y^{\alpha}\leftrightarrow \eta_\alpha \d y^{\alpha}$, with the identification only depending on the representative in the conformal infinity.
In what follows it will be convenient to consider the radial compactification of $\mathcal{G}$, denoted by $\overline{\G}$, that is, compactify it as a compact manifold with boundary with bdf
\begin{equation}\label{eq:mu_defn}
    \mu_{\tilde x}(y,\eta) \coloneqq \langle \eta\rangle_{\tilde x^2g}^{-1}=(1+ \tilde x^{-2}g^{\alpha\beta}\eta_\alpha\eta_\beta)^{-1/2}.
\end{equation}
If $\rho$ is any bdf, $\rho^{2}g\big|_{T\partial M}$ determines a geodesic bdf and identification of $ \partial_+\bSM$ with $T^*\partial M$ as above. 
So $\rho$ determines a bdf for $\partial \G$, which we denote with $\mu_\rho$. 

\paragraph{The 0-sphere bundle.} Below we will consider the X-ray transform of symmetric tensor fields on $M^{\circ}$, and it is well known that these can be identified in a natural way with restrictions of polynomials in $TM^{\circ}$ to the unit sphere bundle, given by
\begin{equation}
    SM^\circ \coloneqq \{(u,v)\in TM^{\circ}:g(v,v)=1\}.
\end{equation}
It turns out that a natural way to extend this correspondence to the boundary is by viewing $SM^\circ$ as a subset of the 0-tangent bundle, as we now explain.
The 0-tangent bundle ${}^0TM$ (see \cite{Mazzeo1987}) is a rank $n$-vector bundle over $M$ whose smooth sections are spanned over $C^\infty(M)$ by vector fields vanishing at $\partial M$, that is, if $u=(u^1,\dots ,u^n)$ are local coordinates\footnote{Here we do not not assume that one of the coordinates is a bdf. This will be convenient later, as we would sometimes like our coordinates to be isothermal when considering AH surfaces. 
} near $p\in \partial M$ and $\rho$ a bdf, the smooth sections of ${}^0TM$ are spanned over $C^\infty$ by $\rho\partial_{u^j}$, $j=1,\dots,n$.
The dual bundle of ${}^0TM$ is denoted by ${}^0T^*M$, and its local sections are spanned over $C^{\infty}$ by $\frac{\d u^j}{\rho}$, $j=1,\dots, n$.
As before, the  bundles ${}^0TM$, ${}^0T^*M$ are canonically identified over $M^\circ$ with $TM^{\circ}$ and $T^*M^{\circ}$ respectively.
In this way, smooth tensor fields of any order on $M^\circ$ can be canonically identified with smooth (over $M^\circ$) sections  of corresponding  tensor products of ${}^0TM\big|_{M^\circ}$ and ${}^0T^*M\big|_{M^\circ}$.

Consider an {AH metric} $g$ on $M^\circ$. 
 By means of the canonical identification $\iota:{}^0TM\big|_{M^\circ}\to TM^\circ$,  $g$ induces in a natural way an inner product  on the fibers of $ {}^0TM$, still denoted by $g$, which extends to be smooth and non-degenerate all the way up to $\partial M$.
Indeed, using local coordinates near $\partial M$ to parametrize $\OSM$ as 
\begin{equation}\label{eq:coordinates0TM}
     (u,v)\mapsto \big((u^1,\dots,u^n),v^j \rho\partial_{u^j}\big)\in {}^0TM,
 \end{equation} 
  we have 
\begin{equation}
    {g}\big( v^j \rho\partial_{u^j}, v^k \rho\partial_{u^k}\big)= \rho^2 {g}( v^j \partial_{u^j},v^k \partial_{u^k})= \overline{g}_{jk} v^jv^k,
\end{equation}
and $\overline{g}$ extends to a smooth non-degenerate metric all the way to $\partial M$ by assumption.
Since $g$ is  a smooth metric on ${}^0TM$, the unit sphere bundle it defines is a smooth embedded submanifold with boundary of ${}^0TM$.
We denote it by $\OSM$, written locally in coordinates \eqref{eq:coordinates0TM} as
\begin{equation}
    \OSM=\{(u, v^j \rho\partial_{u^j})\in {}^0TM:\overline{g}_{jk} v^jv^k=1\}.
    \label{eq:OSM}
\end{equation}
An alternative way to describe it is as the closure of $SM^{\circ}$ when viewed as a subset of $^{0}TM$. We mention that $\OSM$ is in a natural way an edge space in the sense of \cite{Mazzeo1991}, see also Remark \ref{rem:edge}.
Note that $\OSM$ is \emph{not} the same space as the unit sphere bundle of $\bar{g}$ inside $TM$.

\subsubsection{Mapping properties of the X-ray transform on \texorpdfstring{$L^2$}{L 2} and \texorpdfstring{$\Aphg$}{phg} spaces}

On a non-trapping AH manifold $(M^\circ,g)$, the geodesic X-ray transform can be generally thought of as a map $I \colon C_c^\infty(SM^\circ)\to C_c^\infty(\G)$, given by 
\begin{align}
    I f(y,\eta) = \int_{\Rm} f(\gamma_{y,\eta}(t), \dot\gamma_{y,\eta}(t)) \ \d t, \qquad (y,\eta) \in \G.
    \label{eq:IH}
\end{align}
More generally, we want to consider Fr\'echet and Hilbert functional settings that incorporate boundary behavior. With the polyhomogeneous spaces $\Aphg^{E}$ and the various notations related to index sets defined in  Appendix \ref{sec:phg}, we first prove the following, which is analogous to \cite[Theorem 2.1]{Mazzeo2021} in the compact case: 
\begin{proposition}\label{prop:phg_mapping_I} Let $(M^\circ,g)$ be a non-trapping AH manifold and let $E$ be an index set with $\Re(E)>0$. Then the operator \eqref{eq:IH} extends as a map
    \begin{align}\label{eq:Iphg_mapping}
	I\colon \Aphg^E (\OSM)\to \Aphg^E(\overline{\G}).
    \end{align}  
Moreover, for every $\alpha>0$ and bdf $\rho$ on $M$, there exists a function $R_\alpha\in C^\infty(\overline{\G})$ such that 
\begin{equation}\label{eq:Ixalpha}
      I\rho^{\alpha}(\gamma)=B\Big(\frac{\alpha}{2},\frac12\Big)\mu_\rho^{\alpha}(\gamma)+\mu_\rho^{\alpha+1}R_{\alpha}(\gamma), \qquad \gamma \in \overline{\mathcal{G}},
  \end{equation}  
   where $B$ is the Beta function. 
   Moreover, there exist $\mu',\alpha'>0$ such that for all $\mu_\rho\leq \mu'$ and $0<\alpha\leq \alpha'$, $\big|\frac{\mu_{\rho} R_\alpha}{B(\alpha/2,1/2))}\big|\leq 1/2$. 
\end{proposition}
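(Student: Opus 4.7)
The proposition has two parts: a polyhomogeneity-preservation statement for $I$, and an explicit leading-order computation for $I\rho^\alpha$ together with a uniform error bound as $\mu_\rho,\alpha\to 0$. I would address them sequentially, working in normal form coordinates \eqref{eq:normal_form} near $\partial M$. For the \emph{mapping property}, the main structural input is that on $\bSM$ the geodesic vector field factors as $\mathcal{X}=\tilde x\,\overline{\mathcal{X}}$ with $\overline{\mathcal{X}}$ smooth up to and transverse to $\partial_\pm\bSM$. Since $(M,g)$ is non-trapping, the endpoint map $\Phi\colon\overline{\G}\to\partial_+\bSM$ along the flow of $\overline{\mathcal{X}}$ is a smooth diffeomorphism of manifolds with boundary, so solving $\overline{\mathcal{X}}u=\tilde x^{-1}f^\sharp$ with vanishing initial data on $\partial_-\bSM$ preserves polyhomogeneity on $\bSM$, with index set determined by that of $f^\sharp$ after accounting for the $\tilde x^{-1}$ factor against the index-shift from integration across $\partial_+\bSM$. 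I would first verify that the canonical identification of $\OSM$ and $\bSM$ over $M^\circ$ extends compatibly with polyhomogeneity at the boundary: in normal-form coordinates this is a fiber rescaling $v^j\leftrightarrow\bar\zeta_j$ together with a bdf comparison that sends $\Aphg^E(\OSM)$ into $\Aphg^E(\bSM)$ without shifting $E$. The statement then follows by tracing the lifted solution at $\partial_+\bSM$ and pulling back to $\overline{\G}$ via $\Phi^{-1}$.

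For the \emph{explicit asymptotic} with $\rho=\tilde x$, I would parameterize each geodesic reaching infinity by the angle $\theta\in(0,\pi)$ from one boundary endpoint to the other. In the exact warped-hyperbolic model $h_{\tilde x}\equiv h_0$, a semicircular geodesic with impact parameter $r$ satisfies $\tilde x(\gamma(\theta))=r\sin\theta$ with unit-speed measure $d\theta/\sin\theta$, giving the exact identity
\begin{equation*}
\int_{\Rm}\tilde x^\alpha(\gamma(t))\,dt=r^\alpha\int_0^\pi\sin^{\alpha-1}\theta\,d\theta=r^\alpha B(\alpha/2,1/2),
\end{equation*}
and under $\overline{\G}\cong T^*\partial M$ the impact parameter $r$ coincides with $\mu_{\tilde x}$ along $\gamma$. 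For the general AH case I would write $\tilde x(\gamma(t))=r\sin\theta(t)\bigl(1+r\,\psi(\theta;y)\bigr)$ with $\psi\in C^\infty$ encoding the $O(\tilde x)$ deviation of $h_{\tilde x}$ from $h_0$, together with a similar smooth correction to the arclength element. Expanding $(1+r\psi)^\alpha$ in powers of $r$ yields the model leading term plus a remainder of the form $r^{\alpha+1}\cdot(\text{smooth in }(y,\eta))$, which is precisely $\mu_{\tilde x}^{\alpha+1}R_\alpha$ with $R_\alpha\in C^\infty(\overline{\G})$. For an arbitrary bdf $\rho=F\tilde x$ with $F\in C^\infty(M)$ positive, Taylor-expanding $F^\alpha$ along $\gamma$ absorbs the extra terms into $R_\alpha$ without changing the leading constant.

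The main obstacle is the \emph{uniform bound} $\bigl|\mu_\rho R_\alpha/B(\alpha/2,1/2)\bigr|\leq 1/2$: since $B(\alpha/2,1/2)\sim 2/\alpha$ as $\alpha\to 0^+$, one must verify that $R_\alpha$ does not blow up faster. I would estimate the remainder by using the expansion $(1+r\psi)^\alpha-1=\alpha r\psi+O((\alpha r\psi)^2+\alpha^2)$, valid uniformly for $|r\psi|$ bounded and $0<\alpha\leq 1$, so that the leading correction to the model integral is of size
\begin{equation*}
\Bigl|\alpha r\int_0^\pi\sin^{\alpha-1}\theta\,\psi(\theta;y)\,d\theta\Bigr|\leq \|\psi\|_\infty\,\alpha r\,B(\alpha/2,1/2),
\end{equation*}
with strictly smaller corrections from the higher Taylor terms. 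Since $\alpha B(\alpha/2,1/2)$ remains bounded (indeed, converges to $2$) as $\alpha\to 0^+$, dividing through by $B(\alpha/2,1/2)$ and multiplying by $\mu_\rho=r$ gives a bound of order $O(\alpha r)$, which can be made less than $1/2$ by choosing $\alpha\leq\alpha'$ and $\mu_\rho\leq\mu'$ small. The bookkeeping of the $\alpha$-dependence at each Taylor step and across the $\tilde x\mapsto \rho$ change of bdf requires care, but is otherwise elementary.
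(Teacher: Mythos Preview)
Your argument for the second and third parts (the explicit asymptotic \eqref{eq:Ixalpha} and the uniform bound) follows essentially the paper's route: parametrize a near-infinity geodesic by the angle $\theta\in(0,\pi)$, use that in the model $\tilde x = r\sin\theta$ with $dt = d\theta/\sin\theta$, so the leading term is $r^\alpha\int_0^\pi\sin^{\alpha-1}\theta\,d\theta = r^\alpha B(\alpha/2,1/2)$, and then control the $O(r)$ perturbations uniformly in $\alpha$. Your Taylor expansion of $(1+r\psi)^\alpha$ tracks the $\alpha$-dependence slightly more explicitly than the paper, which simply observes that the full integrand is jointly smooth in $(\alpha,\mu_\rho)$ down to $\alpha=0$ with expansion $1+O(\mu_\rho)$; both reach the same conclusion.

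The first part, however, has a genuine gap. The statement is about polyhomogeneity on $\overline{\G}$ with respect to the bdf $\mu_\rho\sim\langle\eta\rangle^{-1}$, which is a completely different asymptotic regime from $\tilde x\to 0$ on $\OSM$ or $\bSM$. Your claim that the canonical identification ``sends $\Aphg^E(\OSM)$ into $\Aphg^E(\bSM)$'' is not correct as stated: over the interior the map is $(a,b)\leftrightarrow(\bar\xi,\eta)=(a,\tilde x^{-1}h_{\tilde x}b)$, so a smooth coefficient $f_{j\ell}(y,a,b)$ pulls back to $f_{j\ell}(y,\bar\xi,\tilde x\,h_{\tilde x}^{-1}\eta)$, whose Taylor coefficients in $\tilde x$ grow polynomially in $\eta$ and are therefore not smooth on $\partial\,\bSM$. (The paper explicitly remarks that $\OSM$ and $\bSM$ are genuinely different compactifications.) Even granting that step, solving $\overline{\mathcal X}u=\tilde x^{-1}f^\sharp$ and restricting to $\partial_-\bSM$ yields $If$ as a function of $(y,\eta)\in T^*\partial M$; nothing in a $\tilde x$-polyhomogeneous description of $u$ speaks to its behavior as $|\eta|\to\infty$, which is precisely what you must establish. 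The paper confronts this directly: it fixes $(y_*,\eta_*)\in\G$ with $\delta=|\eta_*|_{h_0}^{-1}$ small, uses the flow asymptotics $\tilde x(\theta)=\delta\sin\theta\,(1+O(\delta))$, $|\eta(\theta)|=\delta^{-1}(1+O(\delta))$ from \cite{Graham2019}, substitutes these into the expansion of $f$, and reads off a $\delta$-expansion of $If$ term by term, checking separately that $I(\mathcal A^N(\OSM))\subset\mathcal A^N(\overline{\G})$ for the remainder. Your Part~2 computation is in fact the special case $f=\rho^\alpha$ of exactly this procedure; proving Part~1 requires running it for a general polyhomogeneous expansion, not the abstract transport-equation argument you sketched.
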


\begin{remark}\label{rmk:phg_hyp}
    In the case of the Poincar\'e disk and relative to the bdf's $x$ and $\muh =\mu_x$ defined in Eq.\ \eqref{eq:bdfs} below, a direct calculation shows that $R_{\alpha}=0$ for all $\alpha>0$. 

\end{remark}

In what follows, we also need to address the X-ray transform in $L^2$-type settings and derive further mapping properties in that context. To this end, we will use as reference measures $\d V_g$, $\d\Sigma^{2n-1}$ (See Section \ref{ssub:measures_}) and $\d\Sigma_\partial = \d y\d \eta$ (in the notation of Sec.  \ref{sub:the_unit_sphere_and_cosphere_bundles_and_their_compactifications_}) on $M$, $\OSM$ and $\G$ respectively, and will use the following shorthand notation: 
\begin{align}
    L^2(M) \coloneqq  L^2(M, \d V_g), \quad L^2(\OSM) \coloneqq  L^2(\OSM, \d\Sigma^{2n-1}), \quad L^2(\G) \coloneqq  L^2(\G, \d\Sigma_\partial). 
    \label{eq:notation}
\end{align}
We will also use the measures above without explicitly denoting them to define $L^2$ sections of Hermitian vector bundles over $M$ and $\OSM$. Among them, the most important  will be sections of the (complexified) $m$-th symmetric tensor product of ${}^0T^*M$ over $M$; on this bundle, $g$ induces a canonical smooth (up to $\partial M$) Hermitian fiber metric, as explained in Section~\ref{ssub:symmetric_tensors}, and it will be always assumed that this Hermitian fiber metric is used to define $L^2$ inner products and Sobolev spaces.

Recall from \cite[Lemma 3.8]{Graham2019} that for every $\beta>\frac{1}{2}$, the operator \eqref{eq:IH} extends as a bounded operator $I\colon |\log \rho|^{-\beta} L^2(\OSM)\to L^2(\G)$ (where $\rho$ is a bdf as usual). %
We start with some further observations regarding forward mapping properties. %

\begin{lemma}\label{lem:surjective}\footnote{We would like to thank C Robin Graham for pointing out an inaccuracy in an earlier formulation of this lemma.}
    Let $(M^\circ,g)$ be a non-trapping AH manifold.
    \begin{enumerate}[(a)]
	\item\label{item_surj_1} For every $\delta>0$, the operator \eqref{eq:IH} viewed as a densely defined map on $C_c^\infty(S M^\circ)\subset L^2(\OSM)$ with values in $\mu_{\rho}^{\delta} L^2(\G)$ extends to a bounded and surjective map
	    \begin{align}
		I \colon \rho^{\delta} L^2(\OSM) \longrightarrow \mu_{\rho}^{\delta} L^2(\G). %
		\label{eq:IHgamma}
	    \end{align}
	\item\label{item_surj_2} The operator \eqref{eq:IH} viewed as a densely defined map $I \colon C_c^\infty (SM^\circ) \subset  L^2(\OSM) \longrightarrow L^2(\G)$ is unbounded.
     
     \item\label{item_surj_3} There exists a bounded, positive weight $w\in C^\infty(SM^\circ)$ such that \eqref{eq:IH} extends to a bounded and surjective operator $wL^2(\OSM)\subset L^2(\OSM)\to L^2 (\G)$.

    \end{enumerate}
\end{lemma}

\noindent The statement of Lemma \ref{lem:surjective}, together with the inclusion $\Aphg^E(\OSM)\subset \rho^{\delta} L^2(\OSM)$ for all $\delta$ such that $\Re(E)>\delta+ \frac{n-1}{2}$, imply that for polyhomogeneous spaces the condition $I(\Aphg^E(\OSM)) \subset L^2(\G)$ holds provided $\Re(E)>\frac{n-1}{2}$.
This is somewhat more restrictive than the results in Section \ref{sub:representatives_modulo_gauge_for_symmetric_tensor_fields} below.

\subsubsection{Representatives modulo gauge for symmetric tensor fields}\label{sub:representatives_modulo_gauge_for_symmetric_tensor_fields}

Proposition \ref{prop:phg_mapping_I} implies that  if $m\ge 0$ and $E$ is a smooth index set with $\Re(E)>0$, we can define the X-ray transform $I_m \colon \Aphg^E(M;{{S}}^m({}^0T^*M))\to \Aphg^E(\overline{\G})$ over tensor fields with boundary behavior as  $I_m = I \circ \ell_m$; here $\ell_m$, defined in \eqref{eq:lm}, is a map used  to realize smooth sections of ${{S}}^m({}^0T^*M)$ as smooth functions on $\OSM$. This operator has ``potential'' tensors as a natural kernel, as the next result indicates.
\noindent The $0$-Sobolev spaces $\rho^\delta H_0^{s}$ are defined in Appendix \ref{sec:the_0_calculus}.

\begin{lemma}\label{lm:kernel} Let $m\geq 1$. On a non-trapping AH manifold, provided $q\in \Aphg^{E}(M;S^{m-1}({}^{0}T^*M))$  with  $\Re(E)>0$ or $q\in \rho^\delta H_0^{s}(M;S^{m-1}({}^{0}T^*M))$ with $s\geq 1$ and $\delta>0$ we have $I_m \d^{s}q=0$ a.e., where $\d^{s}$ stands for the symmetrized covariant derivative, see Section \ref{sec:OSM}. %
\end{lemma}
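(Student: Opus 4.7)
The proof plan rests on the classical ``fundamental identity'' on $SM^\circ$,
\begin{equation*}
    \mathcal{X}\, \ell_{m-1}(q) = \ell_m(\d^s q),
\end{equation*}
obtained by differentiating $\ell_{m-1}(q)(x,v) = q_x(v^{\otimes(m-1)})$ along the geodesic flow and using $\nabla_{\dot\gamma}\dot\gamma = 0$ on unit-speed geodesics. Integrating this identity along any $\gamma \in \mathcal{G}$ and invoking the non-trapping assumption formally yields
\begin{equation*}
    I_m(\d^s q)(\gamma) = \lim_{t\to+\infty}\ell_{m-1}(q)(\gamma(t),\dot\gamma(t)) - \lim_{t\to-\infty}\ell_{m-1}(q)(\gamma(t),\dot\gamma(t)),
\end{equation*}
so the proof reduces to justifying this pointwise identity and then showing that both boundary terms vanish under each set of hypotheses.

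For the polyhomogeneous case, I would first observe that $\d^s$, being a first-order $0$-differential operator on sections of $S^*({}^0T^*M)$, maps $\Aphg^{E}(M;S^{m-1}({}^0T^*M))$ into $\Aphg^{E}(M;S^{m}({}^0T^*M))$ with the same index set. Lifting via $\ell_{m-1}$, the function $\ell_{m-1}(q)$ is polyhomogeneous conormal on $\OSM$ with index set $E$; since $\Re(E)>0$, it extends continuously to $\OSM$ and vanishes identically on $\partial\OSM$. By non-trapping, the endpoints $(\gamma(\pm\infty),\dot\gamma(\pm\infty))$ of every maximal geodesic lie in $\partial\OSM$, so both boundary terms vanish and $I_m\d^s q(\gamma) = 0$ pointwise in $\gamma$. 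Proposition \ref{prop:phg_mapping_I} also guarantees that the left-hand side is a well-defined element of $\Aphg^{E}(\overline{\mathcal{G}})$.

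For the Sobolev case, no pointwise argument is available, and I would instead use an approximation. Choose $q_n \in C_c^\infty(M^\circ; S^{m-1}({}^0T^*M))$ with $q_n \to q$ in $\rho^{\delta}H_0^{s}$, using density of smooth compactly supported tensor fields in the $0$-Sobolev spaces. For each $n$, the identity above holds pointwise with vanishing boundary terms, so $I_m\d^s q_n \equiv 0$. Since $\d^s$ is a first-order $0$-differential operator, it is bounded $\rho^{\delta}H_0^{s}\to \rho^{\delta}H_0^{s-1}\hookrightarrow \rho^{\delta}L^2$ for $s\geq 1$, so $\d^s q_n \to \d^s q$ in $\rho^{\delta}L^2(M;S^m({}^0T^*M))$. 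Composing with the bounded lift $\ell_m:\rho^{\delta}L^2(M;S^m)\to \rho^{\delta}L^2(\OSM)$ and applying Lemma \ref{lem:surjective}(a), $I_m\d^s q_n \to I_m\d^s q$ in $\mu_{\rho}^{\delta}L^2(\overline{\mathcal{G}})$, and a subsequence converges a.e., yielding $I_m\d^s q = 0$ a.e.

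The main obstacle I anticipate is the boundary-limit step in the polyhomogeneous case: since the geodesic vector field is smooth not on $\OSM$ but on $\bSM$ (with $\mathcal{X}=\tilde x\overline{\mathcal{X}}$), one must carefully match the two compactifications in order to identify the asymptotic values of $\ell_{m-1}(q)$ along a geodesic. A related bookkeeping item is to verify that $\d^s$ and $\ell_m$ behave as claimed in both the polyhomogeneous and $0$-Sobolev calculi; these mapping properties are natural in the Mazzeo--Melrose framework but warrant an explicit statement for a clean application here.
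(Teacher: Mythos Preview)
Your proposal is correct and follows essentially the same approach as the paper: both use the fundamental identity $\mathcal{X}\,\ell_{m-1}(q)=\ell_m(\d^s q)$ and the fundamental theorem of calculus along geodesics in the polyhomogeneous case, then density and the boundedness of $I$ from Lemma~\ref{lem:surjective}(a) in the Sobolev case. The only cosmetic difference is in how the boundary terms are killed: the paper uses the explicit decay $\rho\circ\gamma(t)=O(e^{-|t|})$ together with $|q_{i_1\dots i_{m-1}}|\le C\rho^{\epsilon}$ (from $\Re(E)>0$) to get $|q(\gamma(T),\dot\gamma(T)^{\otimes(m-1)})|\le Ce^{-\epsilon T}$, which sidesteps precisely the compactification-matching issue you flagged; your continuous-extension argument on $\OSM$ is equally valid but does require the (true) verification that unit-speed geodesics have limits in $\partial\,\OSM$.
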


We now particularize to AH surfaces ($n=2$), and address tensor field decompositions that lead to gauge representatives modulo $\ker I_m$ which, as explained below, have good reconstructibility properties. Here and below, $\div$ and $\tr$ denote the divergence and trace operators on tensor fields respectively, and $L$ is the symmetrized product with the metric defined in \eqref{eq:L}. For an integer $m\ge 2$ and a smoothness class ${\cal F} \in\{ \rho^\delta L^2, C^\infty, \dots\}$, we also denote
\begin{align}
\begin{split}
    {\cal F} (M; \Stt^m({}^0 T^* M)) &= \{u\in {\cal F} (M; S^m({}^0 T^* M)),\ \div u= 0,\ \tr(u) = 0\}, \quad m\ge 2,  \\
    {\cal F} (M; \Stt^1({}^0 T^* M)) &= \{u\in {\cal F} (M;{}^0 T^* M),\ \div u = \d u= 0\},    
    \end{split}
    \label{eq:ttspaces}
\end{align}
where the derivatives are to be understood in a distributional sense when they do not make sense classically. 
For one-forms, the decomposition considered is nothing but the usual Hodge decomposition. For one-forms in $L^2(M)$, the latter was proved in \cite{Mazzeo1986}; here we include a statement for weighted spaces.

\begin{lemma}\label{lem:oneforms} 
    Let $(M^\circ,g)$ be an oriented, {simply connected} AH surface, and fix $s\in \mathbb{R}$ and $\delta\in (-1/2,1/2)$. Then each $f\in \rho^\delta H^s_0(M;{}^0T^*M)$ decomposes uniquely in the form
    \begin{align}
        f = \d q_0 + \star\d f_s + \tilde{f}_1, 
        \label{eq:oneform_decomp}
    \end{align}
    where $q_0, f_s \in \rho^\delta H^{s+1}_0(M)$ and $\tilde{f}_1 \in \rho^\delta H^s_0(M;\Stt^1({}^0T^*M))$. 
    Here, $\star$ denotes the Hodge star on one-forms. Moreover, there exists $C>0$ independent of $f$ such that     \begin{equation}\label{eq:cont_dep_oneform}
	\| q_0\|_{\rho^\delta H^{s+1}_0(M)}+ \| f_s\|_{\rho^\delta H^{s+1}_0(M)} + \|\tilde{f}_1\|_{\rho^\delta H^s_0(M)} \leq C\| f\|_{\rho^\delta H^s_0(M)}.
    \end{equation}
    The conclusions above also hold if $f\in \Aphg^E(M;{}^0T^*M)$ with $\Re(E)>0$, in which case
    $q_0, f_s\in \Aphg^G(M)$ with $\Re {(G)}\geq \min\{1,\inf(E)\}^{-}$, and $\tilde{f}_1 \in \Aphg^{G\cup E}(M;{}^0T^*M)$.    
\end{lemma}

For  symmetric tensor fields of order $m\geq 2$, we now establish a decomposition  analogous to \cite[Theorem 1.5]{Dairbekov2011} in the case of oriented, non-trapping AH surfaces. For AH manifolds of general dimension, such a result was proved in \cite[Theorem 1.3]{Gicquaud2010} in the case $m=2$ and we generalize it here to general tensor order on surfaces.

\begin{theorem}\label{thm:gauge_maybe}
    Let $(M^\circ,g)$ be an oriented, {simply connected} AH surface, $s\in \Rm$, $m\ge 2$ and $\delta\in (-m+1/2,m-1/2)$. Then each $f\in \rho^\delta H^s_0(M;S^m({}^0T^*M))$ decomposes uniquely in the form 
    \begin{align}
	f = \d^s q + L \lambda + \tilde f,
    \label{eq:tensorDecomp}
    \end{align}
    where $q\in \rho^\delta H^{s+1}_0(M;S^{m-1}({}^0T^*M))$ satisfies $\tr(q)=0$, $\lambda\in \rho^\delta H^s_0(M;S^{m-2}({}^0T^*M))$ and $\tilde{f}\in \rho^\delta H^s_0(M;\Stt^m({}^0T^*M))$. Moreover, there exists $C>0$ independent of $f$ such that
    \begin{equation}\label{eq:cont_dep_maybe}
	\| q\|_{\rho^\delta H^{s+1}_0(M)}+ \| \lambda\|_{\rho^\delta H^{s}_0(M)} + \|\tilde{f}\|_{\rho^\delta H^s_0(M)} \leq C\| f\|_{\rho^\delta H^s_0(M)}.
    \end{equation}
    The conclusions above also hold if $f\in \Aphg^E(M;{{S}}^m({}^0T^*M))$ with $\Re(E)>-m+1$, in which case
    $q\in \Aphg^G(M;{{S}}^{m-1}({}^0T^*M))$ with $\Re {(G)}\geq \min\{m,\inf(E)\}^{-}$, and $L\lambda,$ $ \tilde{f} \in \Aphg^{G\cup E}(M;{{S}}^m({}^0T^*M))$.
\end{theorem}
\noindent

\begin{remark}[The case of higher boundary decay]
    For $\delta>0$ and $s\in \Rm$, denote $x^{\delta -} H_0^s := \cap_{0<\delta'<\delta} x^{\delta'} H_0^s$. 
    Although Lemma \ref{lem:oneforms} and Theorem \ref{thm:gauge_maybe} appear to have an upper threshold of boundary decay (of the form $\delta <\delta_0$), decompositions \eqref{eq:oneform_decomp} and \eqref{eq:tensorDecomp} are still possible if $f\in \rho^\delta H_0^s$ for $\delta\ge \delta_0$, however in this case, the summands in \eqref{eq:oneform_decomp} and \eqref{eq:tensorDecomp} can at best belong to $\rho^{\delta_0-}H_0^\bullet$ (with continuity estimates of the form \eqref{eq:cont_dep_oneform} or \eqref{eq:cont_dep_maybe} for every $\delta<\delta_0$) and cannot be guaranteed to decay faster.
\end{remark}

\paragraph{The iterated-tt representative.} For any $m$-tensor field, using Theorem \ref{thm:gauge_maybe} iteratively $\lfloor m/2\rfloor$ times, followed by Lemma \ref{lem:oneforms} if $m$ is odd, and using that $[L,\d ^s]=0$, we obtain the following decomposition, which we state without proof. 

\begin{corollary}[Iterated-tt decomposition] \label{cor:gauge} 
    Let $(M^\circ,g)$ be an oriented, simply connected AH surface, and fix $s\in \Rm,$ $m\ge 1$, and $\delta$ such that $|\delta|< 3/2- (m\mod 2)$. Then each tensor field $f\in \rho^\delta H^s_0(M;S^{m}({}^0T^*M))$ decomposes uniquely in the form 
    \begin{align}
        f = \d^s q + f^{\itt}, 
        \label{eq:gauge_decomp}
    \end{align}
    where $q\in \rho^\delta H^{s+1}_0(M;S^{m-1}({}^0T^*M))$ and $f^{\itt} \in \rho^\delta H^s_0(M;S^{m}({}^0T^*M))$ takes the following form: 
    \begin{itemize}
        \item If $m=2n$ for some $n>0$, 
        \begin{align}
	       f^{\itt} \coloneqq  \sum_{k=0}^n L^{n-k} \tilde f_{2k},
	       \label{eq:fitt_even}
        \end{align}
        where $\tilde f_0 \in \rho^\delta H_0^s(M)$, and for $1\le k\le n$ we have $\tilde f_{2k} \in \rho^\delta H^s_0(M;\Stt^{2k}({}^0T^*M))$. 
        \item If $m=2n+1$ for some $n\ge 0$,
        \begin{align}
	           f^{\itt} \coloneqq L^n(\star\d f_s) +  \sum_{k=0}^n L^{n-k} \tilde f_{2k+1}, 
	       \label{eq:fitt_odd}
        \end{align}
        where $f_s \in \rho^\delta H_0^{s+1}(M)$, and for $0\le k\le n$ we have $\tilde f_{2k+1} \in \rho^\delta H^s_0(M;\Stt^{2k+1}({}^0T^*M))$. 
    \end{itemize}    
    Moreover, there exists $C>0$ independent of $f$ such that
    \begin{equation}\label{eq:cont_dep_cor}
	\| q\|_{\rho^\delta H^{s+1}_0(M)}+ \|f^{\itt}\|_{\rho^\delta H^s_0(M)} \leq C\| f\|_{\rho^\delta H^s_0(M)}.
    \end{equation}
    Decomposition \eqref{eq:gauge_decomp} also holds if $f\in \Aphg^E(M;{{S}}^{m}({}^0T^*M))$ with $\Re(E)>-1+(m\mod 2)$, in which case $q$ has regularity $\Aphg^G$ with $\Re {(G)}\geq \min\{2-(m\mod 2),\inf(E)\}^{-}$, and $f^{\itt}$ has regularity $\Aphg^{G\cup E}$. 
\end{corollary}

\hide{
\todo[inline]{If $\delta\ge 2-\mod(m,2)$, maybe the same decomposition holds, where some of the lower tt components can be taken to be zero.}
}
Considering \eqref{eq:gauge_decomp}, \eqref{eq:fitt_even} and \eqref{eq:fitt_odd} when $\delta>0$,  Lemma \ref{lm:kernel} implies that $I_{m} \d^s q = 0$, i.e. $I_m f = I_m f^{\itt}$ and thus, using the fact that $I_{m} \circ L^q = I_{m-2q}$ for any $q\le \lfloor m/2\rfloor$, we have
\begin{align}
    \begin{split}
        I_{2n} f &= I_{2n} \Big( \sum_{k=0}^n L^{n-k} \tilde f_{2k} \Big) = \sum_{k=0}^n I_{2k} \tilde f_{2k} \qquad ({m}\ \text{even}), \\
        I_{2n+1} f &= I_{2n+1} \Big( L^n (\star\d f_s) + \sum_{k=0}^n L^{n-k} \tilde f_{2k+1} \Big) = I_1 (\star\d f_s) + \sum_{k=0}^n I_{2k+1} \tilde f_{2k+1} \qquad (m\ \text{odd}).
    \end{split}
    \label{eq:Xray_decomp}
\end{align}
We call the tensor $f^{\itt}$ appearing in \eqref{eq:gauge_decomp} the {\em iterated-tt} representative of $f$. While in the next section will propose a method for reconstructing $f^{\itt}$ from $I_{m}f$ on AH surfaces (and carry it out in the case of the Poincaré disk in Section~\ref{sec:intro_recons}), a first observation is to show that the X-ray transform is injective over it in some situations. 

\begin{proposition}[Injectivity]\label{prop:inj_on_repr_maybe2}
    Let $(M^\circ,g)$ be an oriented, non-trapping, non-positively curved AH surface and $m\ge 1$. Let $f\in \Aphg^E(M;{{S}}^{m}({}^0T^*M))$ with $\Re(E)>0$, and consider the decomposition \eqref{eq:gauge_decomp}. If $I_{m} f=0$, then $f^{\itt} = 0$. In particular, $f$ is a potential tensor. 
\end{proposition}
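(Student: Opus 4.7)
The plan is to combine \cite[Corollary 1.2]{Graham2019}, which characterizes $\ker I_{2n}$ as consisting of potential tensors under the non-positive curvature assumption, with the uniqueness of the iterated-tt decomposition from Corollary \ref{cor:gauge}. The guiding principle is that a tensor that is both purely potential and already in iterated-tt form must vanish, by uniqueness of the decomposition applied to it.

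I would first observe that $I_{2n} f^{\itt} = 0$. Indeed, by Corollary \ref{cor:gauge}, the potential part $q$ produced by the decomposition \eqref{eq:gauge_decomp} belongs to $\Aphg^G$ with $\Re(G) \geq \min\{2,\inf(E)\}^-$, which is strictly positive since $\Re(E) > 0$; hence Lemma \ref{lm:kernel} applies and gives $I_{2n}(\d^s q) = 0$. Combined with $I_{2n} f = 0$ and the linearity of the X-ray transform, this yields $I_{2n} f^{\itt} = 0$. The iterated-tt component $f^{\itt}$ lies in $\Aphg^{G \cup E}$ with $\Re(G \cup E) > 0$, so in particular it sits in $\rho^{1-2n} C^\infty(M;S^{2n}({}^0T^*M))$. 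Using the non-positive curvature hypothesis, I would then apply \cite[Corollary 1.2]{Graham2019} to $f^{\itt}$ to obtain some $r \in \rho^{2-2n} C^\infty(M;S^{2n-1}({}^0T^*M))$ with $f^{\itt} = \d^s r$.

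I would then conclude via uniqueness. The tensor $f^{\itt}$ admits two candidate iterated-tt decompositions: the tautological one $f^{\itt} = \d^s 0 + f^{\itt}$, valid because $f^{\itt}$ is by construction in iterated-tt form, and the one $f^{\itt} = \d^s r + 0$ coming from the previous step. The uniqueness asserted in Corollary \ref{cor:gauge}, applied to $f^{\itt}$, forces the two decompositions to agree component-by-component, yielding $f^{\itt} = 0$ (and $\d^s r = 0$). The ``in particular'' clause that $f$ is a potential tensor is then an immediate consequence of \eqref{eq:gauge_decomp}.

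The main technical obstacle is that Graham's theorem only guarantees $r \in \rho^{2-2n} C^\infty$, a space whose leading weight is nonnegative only for $n = 1$; for $n \geq 2$, $r$ is not obviously in the class $\rho^\delta H_0^{s+1}$ with $\delta \in (-3/2,3/2)$, nor in an $\Aphg^{G'}$ class with $\Re(G') > -1$, required by the uniqueness statement of Corollary \ref{cor:gauge}. Overcoming this would require upgrading the regularity of $r$, most plausibly via an elliptic regularity argument for $\d^s$ within the 0-calculus, leveraging the polyhomogeneity and strictly positive leading weight of $f^{\itt}$ and invoking the same tools developed in the proof of Theorem \ref{thm:gauge_maybe}.
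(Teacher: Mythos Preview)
Your overall strategy---reduce to $I_{2n} f^{\itt}=0$, then exhibit $f^{\itt}$ as a potential tensor via \cite{Graham2019}, then invoke uniqueness in Corollary~\ref{cor:gauge}---is exactly the paper's. You have also correctly located the only real difficulty: the potential $r$ coming out of \cite[Corollary~1.2]{Graham2019} has a priori only $\rho^{2-2n}$ decay, which for $n\geq 2$ is too weak to feed into the uniqueness statement of Corollary~\ref{cor:gauge}.

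Where your proposal diverges from the paper is in how this obstacle is resolved. You suggest upgrading the regularity of $r$ after the fact by an unspecified 0-calculus elliptic argument. The paper does something different and more concrete: it does \emph{not} invoke \cite[Corollary~1.2]{Graham2019} as a black box. Instead it splits the Graham--Guillarmou--Stefanov--Uhlmann argument into its two constituent pieces. First, a boundary determination step (Lemma~\ref{lm:boundary_det}, adapted from \cite[Lemma~3.14 and Prop.~3.15]{Graham2019}) produces $u\in\Aphg^G$ with the \emph{same} index set as $f^{\itt}$ such that $h:=f^{\itt}-\d^s u\in\dot{C}^\infty$. Second, the interior Pestov-identity argument of \cite[Theorem~1.1]{Graham2019} is applied to $h$; because $h$ is Schwartz at the boundary, the potential it yields is $u'\in\dot{C}^\infty$. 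Now $u+u'$ lies in $\rho^\delta H_0^1$ for some $\delta\in(0,3/2)$, and uniqueness in Corollary~\ref{cor:gauge} applies cleanly. The point is that the low-regularity potential in \cite[Corollary~1.2]{Graham2019} arises precisely from the boundary determination step when stated at the coarse $\rho^{1-m}C^\infty$ level; by redoing that step at the finer $\Aphg^G$ level one keeps the good decay throughout.
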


\subsubsection{A template to reconstruct iterated-tt tensors from X-ray data }\label{sssec:template}
Given the injectivity result Proposition \ref{prop:inj_on_repr_maybe2}, we now give a general step-wise scheme to reconstruct an $m$-tensor in iterated-tt form, along with the results needed, which are believed to be generally true yet not proved in the AH context. Writing $f^\itt$ as in \eqref{eq:fitt_even} or \eqref{eq:fitt_odd} depending on the parity of $m$, a template which has been successful in prior literature on simple surfaces (see, e.g., \cite{Monard2015a,Krishnan2019,Kay2025}) is as follows: 
\begin{enumerate}
    \item First reconstruct the top tt component $\tilde{f}_{m}$ by pairing the data $I_m f$ with special functions on $\G$ which, when extended to $SM^\circ$ as geodesically invariant distributions, are $L^2(SM^\circ)$-orthogonal to all tensor components of order less than $m$. On simple surfaces with boundary, this can be done using vertical Fourier analysis considerations (as defined Section \ref{sec:dim2} below), see, e.g. \cite[Lemma 7]{Krishnan2019}. 
    The construction of such distributions hinges on the landmark surjectivity theorem \cite[Theorem 1.4]{Pestov2005} for the adjoint of $I_0$, which is a cornerstone result in the study of 2D geometric inverse problems on simple Riemannian surfaces, see \cite{Paternain2023}. It is an open question to find analogues to either result in the AH context. In the case of the Poincar\'e disk,  we bypass the overall approach and show how to explicitly construct such invariant distributions, in Section \ref{sec:xipm} below. 

    Once $\tilde{f}_{m}$ is reconstructed, remove $I_{m} \tilde{f}_{m}$ from the data and repeat with the next tt component, until $\tilde{f}_2$ or $\tilde{f}_1$ included, depending on the parity of $m$.

    \item When all tt components are recovered, it remains to reconstruct $\tilde{f}_0$ from $I_0 \tilde{f}_0$, or $f_s$ from $I_1 (\star\d f_s)$. On simple Riemannian surfaces, pointwise reconstruction formulas exist (see, e.g., \cite[Theorem 5.1]{Pestov2004}), and in AH cases, only for $I_0$ on the Poincar\'e disk (see references in Remark \ref{rem:f0recons} below), leaving the case of simple AH surfaces open. In geometries where Singular Value Decompositions are available (e.g. rotationally-invariant metrics), SVD-based inversions can also be envisioned. 
\end{enumerate}

In the next section, we give avenues for explicitly solving both steps above in the case of the Poincar\'e disk, along with new additional results on reconstruction formulas for
\begin{align}
    I_\perp := I_1 \circ (-\star\d),
    \label{eq:Iperp}
\end{align}
data space decompositions, and range characterization of the hyperbolic X-ray transform over tensor fields of arbitrary order.

\subsection{Results on the Poincar\'e disk} \label{sec:intro_Poincare}

We now consider $M = \Dm \coloneqq  \{z\in \Cm,\ |z|\le 1\}$ and equip $\Dm^\circ$ with the Poincar\'e metric $g(z) = c^{-2}(z) |\d z|^2$, where $c(z) = \frac{1-|z|^2}{2}$. Recall from \cite{Eptaminitakis2024} that in this case, the set 
{$\G$ defined in \eqref{eq:calG} becomes }
 \begin{align}
     \Gh = (\Rm/2\pi\Zm)_\beta\times \Rm_a, 
     \label{eq:Gh}
 \end{align}
 where the geodesic corresponding to $(\beta,a)$ is given by \eqref{eq:hypgeo}.
Moreover, by \cite[Lemma 3.9]{Eptaminitakis2024}, one has the correspondence $(\beta,a)\leftrightarrow (y,-\eta dy)\in T^*\partial \Dm$ in the notation of Section \eqref{sub:the_unit_sphere_and_cosphere_bundles_and_their_compactifications_}, so $\d\Sigma_\partial = \d\beta\d a$.
Throughout,  $L^2(\Dm)$ is with respect to the hyperbolic volume form $\d V_H$.
Below, we will use the following bdfs for $\Dm$ and for the compactification $\overline{\Gh}$: 
\begin{align}
    x(z) \coloneqq  \frac{1-|z|^2}{1+|z|^2}, \quad z\in \Dm;\qquad \muh(\beta,a) \coloneqq  \frac{1}{\sqrt{1+a^2}}, \quad (\beta,a)\in \Gh.
    \label{eq:bdfs}
\end{align}
Using the results in \cite[Sec. 3.2.3]{Eptaminitakis2024} one can check that $\muh=\mu_x$.

In what follows, we will fix $m\ge 0$, $\delta>0$ (so that Proposition \ref{prop:inj_on_repr_maybe2} holds), and we will focus on the range characterization and inversion of $I_{m}$ on $x^\delta L^2(\Dm; S^{m} ({}^0 T^*\Dm))$, when a symmetric tensor  is decomposed as in \eqref{eq:gauge_decomp}. In light of \eqref{eq:Xray_decomp}, an important step is to explain how each summand of \eqref{eq:fitt_even} or \eqref{eq:fitt_odd} can be reconstructed separately. The case $m=0$ was addressed in \cite{Eptaminitakis2024}.  

In Section \ref{sec:intro_range}, we will explain how the summands in \eqref{eq:Xray_decomp} can be located in the data space $L^2(\Gh)$. Notably, they are $L^2(\Gh)$-orthogonal and, as $n$ runs through $\Nm_0$, they flag $L^2_\pm(\Gh)$ (defined in Eq. \eqref{eq:L2pm}). 

We then give explicit avenues for tensor field reconstructions in Section \ref{sec:intro_recons}, both from X-ray transform data and from the ``normal'' operator $N_{m}$ defined there. 

\subsubsection{Data space decomposition and range characterization} \label{sec:intro_range}

Recall that $\Gh$ is a 2-to-1 cover of the set of unoriented geodesics, with deck transformation the ``antipodal scattering relation'' 
\begin{align}\label{eq:antipodal_sc}
    \S_A\colon \Gh\to \Gh, \qquad (\beta,a) \mapsto (\beta+\pi+2\tan^{-1}a, -a).
\end{align}
The geodesic $\gamma_{\beta,a}$ corresponding to $(\beta,a)\in \Gh$, whose expression is given in Eq.\ \eqref{eq:hypgeo} below, is the same as $\gamma_{\S_A(\beta,a)}$ as a set, parametrized in the opposite direction. Then the space $L^2(\Gh)$ admits an orthogonal splitting 
\begin{align}
    L^2(\Gh) = L_+^2(\Gh) \stackrel{\perp}{\oplus} L_-^2(\Gh), \qquad L^2_{\pm}(\Gh) \coloneqq  L^2(\Gh) \cap \ker (Id \mp \S_A^*), 
\label{eq:L2pm}
\end{align} 
and the X-ray transform over $m$-tensor fields belongs to $L_{+/-}^2$ for $m$ even/odd. 

To understand how the data spaces decompose, a starting point is to recall that from results in \cite{Eptaminitakis2024} (see also Sec. \ref{sec:dataDecomp} below), the $L^2_+(\Gh)$-closure of $I_0 ( x^{\frac{1}{2}} L^2(\Dm))$ is understood in terms of the special basis of $L_+^2(\Gh)$ defined Eq.\ \eqref{eq:basis1} below, and this helps ``locate'' the term $I_0 f_0$ in \eqref{eq:Xray_decomp}. To address the other terms, we need a similar story for the transform $I_\perp$, and we need to study the X-ray transform over tt tensors. On the latter, since elements of $x^\delta L^2 (\Dm; \Stt^{k}({}^0 T^* \Dm))$ take the form $f(z) \d z^k + g(z)\d \zbar^k$ with $f$ holomorphic and $g$ antiholomorphic (see Lemma \ref{lm:conv} and Remark \ref{rm:holodiff} below), studying their image under the X-ray transform hinges on studying
\begin{align}
    I_{p,q} \coloneqq  I_q [z^p \d z^q] , \quad p\ge 0, \quad q\ge 1.
    \label{eq:Ipq}
\end{align}
We will see in Proposition \ref{prop:Ipq_ortho} below that $I_{p,q}\in L^2(\Gh)$, and hence we may define $\widehat{I_{p,q}} = I_{p,q}/\|I_{p,q}\|_{L^2(\Gh)}$. To formulate the decomposition for $L^2_-(\Gh)$, we introduce the space 
$H^{1,0}_w(\Dm) := \overline{\Cev(\Dm)}^{H^{1,0}_w}$, where the $H^{1,0}_w$ norm is defined by
\begin{align}
    \|u\|_{H^{1,0}_w}^2:=\int_{\Sm^1}\int_0^1 x(1-x^2)|\partial_xu|^2+ \frac{x}{1-x^2}|\partial_\omega u|^2+x|u|^2 \d x\d\omega,
    \label{eq:H10w}
\end{align}
$\omega = \arg(z)$, and $\Cev(\Dm)$ denotes the functions in $C^\infty(\Dm)$ whose Taylor expansion at $\partial \Dm$ with respect to the distinguished bdf $x$ in \eqref{eq:bdfs} only contains even powers. One can show that $\|u\|_{H^{1,0}_w}^2 = (\L_0^Hu,u)_{L^2(\Dm,\ x^3\d V_H)}$, where $\L_0^H$ is a  distinguished second order \emph{wedge} differential operator defined in \cite[Section 2.4]{Eptaminitakis2024}, hence the ``$w$" subscript. This definition of $H^{1,0}_w(\Dm)$ coincides with that introduced in \cite[Section 2.5]{Eptaminitakis2024} as the domain of $(\L_0^H)^{1/2}$. To compare this space to the 0-Sobolev scale, we state the following lemma, whose proof is relegated to Appendix \ref{app:0Sob}: 
\begin{lemma}\label{lem:H10w}
    For every $\epsilon>0$, the following inclusions hold:
    \begin{align}
        x^{1/2}H_0^1(\Dm) \subsetneq xH_w^{1,0}(\Dm) \subsetneq x^{1/2-\epsilon} H_0^1(\Dm). 
        \label{eq:H10comparison}
    \end{align}
\end{lemma}
The following result gives key data space decompositions, see also Figure \ref{fig:data} for a schematic.

\begin{theorem}[Data space decomposition]\label{thm:dataDecomp}
    We have the following decomposition
    \begin{align}
	L^2_+(\Gh) &= \overline{I_0 (x^{1/2} L^2(\Dm))}^{L^2} \stackrel{\perp}{\oplus} \mathrm{span}_{L^2} \left(\widehat{I_{p,2q}}, p\ge 0, q\ge 1\right) \stackrel{\perp}{\oplus} \mathrm{span}_{L^2} \left(\widehat{\overline{I_{p,2q}}}, p\ge 0, q\ge 1\right), 
	\label{eq:dataDecomp_even}  \\
    L^2_-(\Gh) &= \overline{I_\perp (xH^{1,0}_w(\Dm))}^{L^2} \stackrel{\perp}{\oplus} \mathrm{span}_{L^2} \left(\widehat{I_{p,2q+1}}, p\ge 0, q\ge 0\right) \stackrel{\perp}{\oplus} \mathrm{span}_{L^2} \left(\widehat{\overline{I_{p,2q+1}}}, p\ge 0, q\ge 0\right),
    \label{eq:dataDecomp_odd}
    \end{align}
    where in each equality above, the second and third families are $L^2(\Gh)$-orthonormal.   %
\end{theorem}

We now discuss the regularity and invertibility of the X-ray transform on tt tensors of arbitrary order. For an orthonormal family $\{\phi_p\}_{p\ge 0}$ in $L^2(\Gh)$ and $s\ge 0$, we denote 
\begin{align*}
    h^s(\phi_p,\ p\ge 0) = \Big\{u \in \text{span}_{L^2}(\phi_p,\ p\ge 0), \quad u = \sum_{p\ge 0} u_p \phi_p, \quad \sum_{p\ge 0} |u_p|^2 (p+1)^{2s} <\infty\Big\},
\end{align*}
which is a Hilbert space with norm $\|u\|_s = \big(\sum_{p\ge 0} |u_p|^2 (p+1)^{2s}\big)^{1/2}$. In the case $s=0$ we write $\ell^{2}(\phi_{p},\ p\geq 0)$. Then we have the following result. 

\begin{proposition}\label{prop:IStt_2k}
    For $k>0$ and $\delta\in (0,k-1/2)$,
     the operator 
    \begin{align*}
	I_{k} \colon x^\delta L^2 (\Dm, \Stt^{k} ({}^0 T^* \Dm)) \longrightarrow h^\delta ( \widehat{I_{p,k}},\ p\ge 0) \oplus h^\delta ( \widehat{\overline{I_{p,k}}},\ p\ge 0)
    \end{align*}
    is a homeomorphism. 
\end{proposition}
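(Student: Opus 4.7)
The strategy is to explicitly diagonalize $I_{2k}$ on TT tensors using the natural holomorphic/antiholomorphic basis adapted to the rotational symmetry of the disk, and then reduce the homeomorphism assertion to a uniform-in-$p$ norm comparison at the coefficient level.

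First, by Lemma \ref{lm:conv} and Remark \ref{rm:holodiff}, every $u\in x^\delta L^2(\Dm, \Stt^{2k}({}^0T^*\Dm))$ decomposes uniquely as $u = f\, \d z^{2k} + g\, \d \zbar^{2k}$ with $f$ holomorphic and $g$ antiholomorphic on $\Dm$. Expanding into power series $f(z) = \sum_{p\ge 0} c_p z^p$ and $g(z) = \sum_{p\ge 0} d_p \zbar^p$, angular integration in polar coordinates shows that the families $\{z^p\, \d z^{2k}\}_{p\ge 0}$ and $\{\zbar^p\, \d \zbar^{2k}\}_{p\ge 0}$ are mutually orthogonal in $x^\delta L^2$. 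Therefore
\[
\|u\|^2_{x^\delta L^2} = \sum_{p\ge 0} |c_p|^2\, \|z^p \d z^{2k}\|^2_{x^\delta L^2} + \sum_{p\ge 0} |d_p|^2\, \|\zbar^p \d \zbar^{2k}\|^2_{x^\delta L^2},
\]
and a direct radial integration using $\|\d z^{2k}\|^2_H \asymp c^{4k}$, $\d V_g = c^{-2}\, \d A$, and $c \asymp x \asymp 1-|z|^2$ near $\partial \Dm$ expresses each basis norm as a Beta integral, giving its precise large-$p$ asymptotic in the range $\delta \in (0, 2k-1/2)$ (the endpoints being exactly those for which the integrand is Lebesgue integrable).

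Second, applying $I_{2k}$ term-by-term yields $I_{2k}u = \sum_p c_p I_{p,2k} + \sum_p d_p \overline{I_{p,2k}}$. By Theorem \ref{thm:dataDecomp}, the normalized families $\{\widehat{I_{p,2k}}\}_p$ and $\{\widehat{\overline{I_{p,2k}}}\}_p$ form mutually orthogonal orthonormal sequences in $L^2(\Gh)$, and since $\|\overline{I_{p,2k}}\|_{L^2} = \|I_{p,2k}\|_{L^2}$, this gives
\[
\|I_{2k}u\|^2_{h^\delta \oplus h^\delta} = \sum_{p\ge 0} |c_p|^2\, \|I_{p,2k}\|^2_{L^2(\Gh)}(p+1)^{2\delta} + \sum_{p\ge 0} |d_p|^2\, \|I_{p,2k}\|^2_{L^2(\Gh)}(p+1)^{2\delta}.
\]
At this point both sides of the intended isomorphism are weighted $\ell^2$ sums of the coefficients $\{c_p\}, \{d_p\}$.

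The homeomorphism assertion thus reduces to the uniform-in-$p$ two-sided bound
\[
\|z^p \d z^{2k}\|^2_{x^\delta L^2} \asymp \|I_{p,2k}\|^2_{L^2(\Gh)}(p+1)^{2\delta},
\]
which is the main technical obstacle. Using the explicit geodesic parametrization \eqref{eq:hypgeo} (and, if convenient, reducing via the Möbius invariance of the hyperbolic disk to a model geodesic through the origin), $I_{p,2k}(\beta,a)$ can be evaluated as an integral expressible in closed form in terms of Beta or hypergeometric functions, and the $L^2(\Gh)$-norm then follows by another explicit integration. Matching the resulting large-$p$ asymptotic to the Beta-integral asymptotic from the source side, while tracking the constants uniformly in $p$, yields the stated equivalence; continuity of both $I_{2k}$ and its inverse between the prescribed spaces is then immediate, completing the proof of the homeomorphism.
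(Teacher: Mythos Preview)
Your proposal is correct and follows essentially the same approach as the paper: the paper packages the entire norm comparison into Lemma~\ref{lm:conv}\ref{conv_item_2} (which you cite for part~\ref{conv_item_1} but then effectively reprove for part~\ref{conv_item_2}), and the proof of Proposition~\ref{prop:IStt_2k} simply combines that lemma with the isomorphism~\eqref{eq:lmStt} and the orthogonality from Theorem~\ref{thm:dataDecomp}. The only notable difference is that for computing $\|I_{p,2k}\|^2_{L^2(\Gh)}$ the paper uses Santal\'o's formula and fiberwise Fourier content (Proposition~\ref{prop:Ipq_ortho}\ref{ortho_item_2}) rather than the direct integration you suggest, yielding the closed form $4\pi^2 B(\tfrac12,2k)\,4^{1-2k}B(p+1,4k-1)$ whose large-$p$ asymptotic $\sim C_k(p+1)^{-(4k-1)}$ matches your source-side Beta integral exactly when weighted by $(p+1)^{2\delta}$.
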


Combining Corollary \ref{cor:gauge}, Theorem \ref{thm:dataDecomp} and Proposition \ref{prop:IStt_2k}, we obtain the following, which in particular states that the X-ray transforms of the components of the decompositions \eqref{eq:fitt_even}-\eqref{eq:fitt_odd} are orthogonal.

\begin{corollary}[Range decomposition]\label{thm:range}
    For any $n\ge 0$, we have the following range decompositions, orthogonal in $L^2(\Gh)$: 
    \begin{align}
    \begin{split}
        	I_{2n}\left( x^\delta L^2(\Dm, S^{2n} ({}^0 T^* \Dm))\right) &= I_0 (x^\delta L^2(\Dm)) \stackrel{\perp}{\oplus} \bigoplus_{k=1}^n I_{2k} (x^\delta L^2(\Dm, \Stt^{2k} ({}^0 T^* \Dm))),\ \delta\in (0,3/2),\\
            I_{2n+1}\left( x^\delta L^2(\Dm, S^{2n+1} ({}^0 T^* \Dm))\right) &= I_\perp (x^\delta H^1_0(\Dm)) \stackrel{\perp}{\oplus} \bigoplus_{k=0}^n I_{2k+1} (x^\delta L^2(\Dm, \Stt^{2k+1} ({}^0 T^* \Dm))),\ \delta\in (0,1/2).
    \end{split}
	\label{eq:L2decomp}
    \end{align}    
\end{corollary}

\begin{remark}
    Combining Lemma~\ref{lem:oneforms}, Theorem \ref{thm:gauge_maybe} and \eqref{eq:L2decomp}, one may also write decompositions analogous to \eqref{eq:L2decomp} in $\Aphg^E$ spaces provided that ${\Re}(E)>1/2$. %
\end{remark}

\paragraph{Range characterization.} In light of Theorem \ref{thm:dataDecomp}, upon defining the orthogonal projections
\begin{align}
    \begin{split}
        \Pi_0&\colon L^2_+ (\Gh)\to \overline{\left( I_0 (x^{1/2}L^2(\Dm)) \right)}^{L^2(\Gh)}, \qquad \Pi_\perp \colon L^2_- (\Gh)\to \overline{\left( I_\perp (xH^{1,0}_w(\Dm)) \right)}^{L^2(\Gh)}, \\
        \Pi_{k} &\colon L^2 (\Gh) \to \ell^2 ( \widehat{I_{p,k}},\ p\ge 0) \oplus\ell^2 (\widehat{\overline{I_{p,k}}},\ p\ge 0), \qquad k\ge 1,        
    \end{split}
    \label{eq:orthoProj}    
\end{align}
we have the decompositions of the identity: 
\begin{align*}
    Id_{L^2_+(\Gh)} = \sum_{q\ge 0} \Pi_{2q}, \qquad Id_{L^2_-(\Gh)} = \Pi_\perp + \sum_{q\ge 0} \Pi_{2q+1}.
\end{align*}

Then we have the following range characterization.

\begin{theorem}[Range characterization] \label{thm:moments}
    Let $m$ be a natural integer, $\delta\in (0,3/2- (m\mod 2))$, and $\sigma = (-)^m$. Then a function $u\in L_\sigma^2(\Gh)$ belongs to the range of $I_{m}$ over $x^{\delta}L^2(\Dm, S^{m} ({}^0 T^* \Dm))$ if and only if
    \begin{enumerate}[$(i)$]
    	\item\label{range_item1} For every $q>0$, $\Pi_{m+2q} u = 0$.
        \item\label{range_item2} For every $0\le q < \lceil m/2 \rceil$, $\Pi_{m-2q} u \in h^{\delta} ( \widehat{I_{p,m-2q}},\ p\ge 0) \oplus h^{\delta} ( \widehat{\overline{I_{p,m-2q}}},\ p\ge 0)$.
        \item\label{range_item3} $\Pi_0 u \in I_0 (x^\delta L^2(\Dm))$ (case $m$ even), or $\Pi_\perp u \in I_\perp (x^\delta H^1_0(\Dm))$ (case $m$ odd).
    \end{enumerate}
\end{theorem}

\begin{remark}
    Condition \ref{range_item1} can be viewed as a generalization of the Helgason-Ludwig-Gelfand-Graev conditions and could be formulated equivalently as the vanishing moments condition
    \begin{enumerate}
        \item[(\textit{i}\,$'$)] $\qquad(h,I_{p,m+2q})_{L^2(\Gh)} = (h, \overline{I_{p,m+2q}})_{L^2(\Gh)} = 0\quad$\textit{ for every }$p\ge 0$ \textit{ and } $q>0$.
    \end{enumerate}
    Conditions \ref{range_item2} and \ref{range_item3} are concerned with spectral decay in certain distinguished bases.     
\end{remark}

\begin{remark}[On condition \ref{range_item3}] When $m$ is even, the condition $\Pi_0 h \in I_0 (x^\delta L^2(\Dm)))$ can be rewritten as 
\begin{center}
    $\Pi_0 h = \sum_{n=0}^\infty \sum_{k=0}^n a_{n,k} \sigma_{n,k}^{\delta-1/2} \psi_{n,k}^{\delta-1/2,H}$, where the $a_{n,k}$'s satisfy $\sum_{n=0}^\infty \sum_{k=0}^n |a_{n,k}|^2 <\infty$,
\end{center}
 the basis functions $\psi_{n,k}^{\gamma,H}$ for $\gamma>-1$ are defined in \eqref{eq:basis1}, and $\sigma_{n,k}^{\gamma}\in \Rm$ are defined in \cite[Eq. (21)]{Eptaminitakis2024}. %
Such an expansion is not $L^2_+(\Gh)$-orthogonal unless $\delta = 1/2$. For the latter value, this same condition \ref{range_item3} can also we rewritten somewhat more compactly and intrinsically, as the following equivalent condition
    \begin{enumerate}
	\item[(iii\,$'$)] $\Pi_0 h \in H_{T,D,+}^{1/2} (\Ghbar)$, 
    \end{enumerate}   
    where the space $H_{T,D,+}^{1/2} (\Ghbar)$ is defined in \cite[Sec. 2.7]{Eptaminitakis2024}. 

    When $m$ is odd, regarding characterizations of $I_\perp (x^\delta H^1_0(\Dm))$, Proposition \ref{prop:Ipq_ortho} helps understand its orthocomplement in $L^2_-(\Gh)$, though the spectral decay and the basis where to formulate sharp regularity remains unclear. On the other hand, Equation \eqref{eq:IperpRange} below gives a sharp range characterization of $I_\perp (x H^{1,0}_w(\Dm))$, where the space $x H^{1,0}_w(\Dm)$ satisfies the inclusions \eqref{eq:H10comparison} relative to the $0$-Sobolev scale.   
\end{remark}

\subsubsection{Reconstruction} \label{sec:intro_recons}
 
Because the range characterization is explicit, the ``only if'' part of Theorem \ref{thm:moments} provides an SVD-based reconstruction of each component of the iterated-tt representative of a symmetric tensor. In this section, we propose alternative routes towards reconstruction of the higher-order tensor components (written as $\{\tilde f_{k}\}_{1\le k\le m}$ and $f_s$ in the notation of Corollary \ref{cor:gauge}, where $k\equiv m\mod 2$), from $I_{m}f$ via integral kernels and from the normal operator $N_{m}$. Interestingly, both approaches involve inverting triangular systems, the former from higher to lower tensor modes, the latter from lower to higher. 

\paragraph{Reconstruction from X-Ray Data.} 
We have the following:

\begin{theorem}[Reconstruction of the tt components from X-ray data]\label{thm:reconsXray}
    Fix $m\ge 1$, $\delta\in (0,m-1/2)$, and suppose $f\in x^{\delta}L^2(\Dm, S^m ({}^0 T^* \Dm))$ takes the form
    \begin{description}
        \item[($m\ge 2$)] $f = Lf_{m-2} + \tilde{f}_{m}$, where $f_{m-2}\in x^{\delta}L^2(\Dm, S^{m-2} ({}^0 T^* \Dm))$ and $\tilde{f}_m \in x^{\delta}L^2(\Dm, \Stt^m ({}^0 T^* \Dm))$,
        \item[($m=1$)] $f = \star\d f_s + \tilde{f}_1$, where $f_s\in x^\delta H^1_0(\Dm)$ and $\tilde{f}_1 \in x^{\delta}L^2(\Dm, {}^0 T^* \Dm)$.
    \end{description}     
    Writing $\tilde{f}_m = f_{m,+}\d z^{m} + f_{m,-}\d\zbar^{m}$, $\tilde{f}_{m}$ is reconstructed from $\D=I_{m} f$ via the formulas
    \begin{align}
	    f_{m,+} = \int_{\Gh} {\cal D} (\beta,a)\ G_{m}(\beta,a; z)\ \d\beta\d a, \qquad f_{m,-} = \int_{\Gh} {\cal D} (\beta,a)\ \overline{G_{m}(\beta,a; z)}\ \d\beta\d a,
	\label{eq:reconsfk2}
    \end{align}
    where 
    \begin{align*}
	G_{m}(\beta,a; z) \coloneqq  \frac{2^m (-1)^m e^{-im(\beta+\tan^{-1}a)}}{8\pi^2(2m-2)! (1+a^2)^{m/2}} \sum_{p=0}^\infty \frac{(p+2m-1)!}{p!} \left(\frac{ia ze^{-i(\beta+\tan^{-1}a)}}{\sqrt{1+a^2}}\right)^{p}.
    \end{align*}
\end{theorem}

Theorem \ref{thm:reconsXray} is the basis for reconstructing all tt components of an arbitrary tensor in iterated-tt form, in decreasing tensor order.
As explained in Section \ref{sssec:template}, this reduces the problem to the reconstruction of $\tilde f_0$ from $I_0 \tilde f_0$ (even case) or $f_s$ from $I_1(\star \d f_s)$ (odd case).
For the former, there exists extensive prior work, see Section \ref{sec:reconsXray}.
To our knowledge, the latter has not been addressed in the literature and
 can be achieved by means of the following:

\begin{theorem}\label{thm:recons_m=1}
   For $f_s\in x^\delta H^1_0(\Dm)$, $\delta >0$ we have  a.e. \begin{equation}\label{eq:PU_inversion}
        f_s=\frac{1}{8\pi}I_0^\sharp(A_{+} ^{H})^{*}\mathcal{H}_+ A_-^{H}I_1\star \d f_s.
    \end{equation}
    The operators $A_+^H$, $\mathcal{H}_+$, $A_-^H$ are defined in Section \ref{sec:dataSpace} and $I_0^\sharp$ in \eqref{eq:backproj}.
\end{theorem}
Alternatively, one could reconstruct $f_s$ using an SVD-based approach, see Proposition~\ref{prop:IperpSVD}.

\begin{remark}
    Eq. \eqref{eq:PU_inversion} mirrors exactly the Pestov-Uhlmann inversion formula  on simple surfaces of constant curvature (see \cite{Pestov2004,Monard2015}). 
\end{remark}

\paragraph{Reconstruction from the normal operator.} 
Let $m\ge 0$ be an integer. The \emph{normal operator} associated with $I_m=I\ell_m$ is
\begin{equation}
    N_m\coloneqq \ell_m^*I^\sharp I \ell_m,
\end{equation}
where, briefly, $I^\sharp$ is extension by constancy along the geodesic flow and $\ell_m^*$ is the $L^2$ adjoint of $\ell_m$, see Section~\ref{ssub:reconstruction_from_the_normal_operator_proof_of_theorem_ref_thm_recons_normal} for details. There we also explain that for $\delta>0$, $N_m:x^{\delta}L^2(\Dm)\to x^{-\delta}L^2(\Dm)$ is bounded.

An inversion formula  for $N_0$ acting on compactly supported elements was derived by Berenstein and Casadio Tarabusi in \cite{Berenstein1991}. As a corollary of the SVD's proved in \cite{Eptaminitakis2024} and in Section \ref{sec:Iperp} below, we also obtain an inversion formula for $N_0$.
It has a different form from theirs, so we record it here. 
\begin{theorem}\label{thm:inversionN0}
    On $x^{1/2}L^2(\Dm)$, $\frac{1}{16\pi^2}(-\star \d N_1\star \d)  N_0= {Id}.$
\end{theorem}

Below we derive inversion formulas for $N_m$ for any $m\geq 1$ (modulo kernel).
For any smoothness class $\mathcal{F}\in \{C^r,\dot{C}^\infty,\Aphg^E,x^\delta L^2\}$ we will use  operators\footnote{On a manifold with boundary $X$, $\dot{C}^{\infty}(X)$ stands for functions that vanish at $\partial X$ to infinite order, with all derivatives.}
\begin{equation}
    Q_{m,k}: \mathcal{F}(\Dm;S^{m}(^{0}T^*\Dm))\to \mathcal{F}(\Dm;S^{m}(^{0}T^*\Dm)),  \quad -m\leq k\leq m\quad k\equiv m\mod 2 ,
\end{equation}
which project to the $\mathcal{F}$-span of   $c^{-m}\d z^{\frac{m+k}{2}}\d \bar z^{\frac{m-k}{2}}$. %
We use the notation $\mathcal A_m$ for the real section of  $\mathrm{Aut}(S^m({}^0T^{*}\Dm))$ 
 defined by
 \begin{equation}\label{eq:Akt2}
     \mathcal A_m \Big(\frac{\d z^{k}\d \bar{z}^{m-k} }{c^{m}}\Big) = 2^m \binom{m}{k}^{-1} \Big(\frac{\d z^{k}\d \bar{z}^{m-k} }{c^{m}}\Big), \qquad 0\le k\le m. 
\end{equation}

\begin{theorem}[Reconstruction from the normal operator]\label{thm:recons_normal}
Fix an integer $m\geq 0$ and $\delta\in (0,1/2)$.
and suppose that $f\in x^{\delta}L^2(\Dm;S^{m}(^0T^*\Dm))$.
Denote $\mathsf{D}\coloneqq N_m f\in x^{-\delta}L^2(\Dm;S^{m}(^0T^*\Dm))$. We have:
\begin{itemize}
    \item \textbf{$m$ even}:
Decompose $f$ as in \eqref{eq:gauge_decomp}.
We have the inversion formula
\begin{align}
    {\tilde{f}}_0 &= R_0\P_0\ell_m \mathcal{A}_{m}\mathsf{D},\label{eq:normal_inversion}\\
    \begin{split}
      L^{\frac{m-k'}{2}}{\tilde{f}}_{k'}
        &= \frac{Q_{m,k'}+Q_{m,-k'}}{4\pi  B\big(\frac{1}{2},k'\big)}\mathcal{A}_{m}\mathsf{D}\\ 
        &\qquad-  \sum _{\substack{0\leq k< k'\\ k \text{ even}}}  \frac{Q_{m,k'}\mathcal{A}_{m}N_m Q_{m,k}+Q_{m,-k'}{\mathcal{A}_{m}}N_m Q_{m,-k}}{4\pi  B\big(\frac{1}{2},k'\big)}L^{\frac{m-k}2}\tilde{f}_k,\qquad\label{eq:normal_inversion2}
        \end{split}
\end{align}    
where  $ 0<k'\leq m$ is even,  $R_0$ is the Berenstein-Casadio Tarabusi inverse of $N_0$ (\cite{Berenstein1991}), and $\mathcal{P}_0$ is defined in \eqref{eq:projector}.

\item 
\textbf{$m$ odd:} Decompose $f$ as in \eqref{eq:fitt_odd}.
We have the inversion formula 
\begin{align}
        f_s &= -\frac1{32\pi^2} N_0\star \d \ell_1^{-1} (\P_1+\P_{-1})\ell_m \A_m \mathsf{D}\label{eq:normal_inversion_odd}\\
\begin{split}
  L^{\frac{m-k'}2}\tilde{f}_{k'} &=
  \frac{Q_{m,k'}+Q_{m,- k'}}{4\pi B\big(\frac{1}2,k'\big)} \mathcal A_m \mathsf{D} -\frac{Q_{m, k'}+Q_{m,-k'}}{4\pi B\big(\frac{1}2,k'\big)}\mathcal A_mN_m L^{\frac{m-1}{2}} \star \d f_s\\*
  &\qquad -\sum _{\substack{1\leq  k< k'\\ k \text{ odd}}}\frac{Q_{m, k'}\mathcal A_mN_m Q_{m, k}+Q_{m,- k'}\mathcal A_mN_m Q_{m,- k}}{4\pi B\big(\frac{1}2,k'\big)}L^{\frac{m-k}2}\tilde{f}_k ,\end{split}\label{eq:normal_inversion2odd}
\end{align}
where $1\leq k'\leq m$ is odd, and $\mathcal{P}_{\pm 1}$ are defined in \eqref{eq:projector}.
\end{itemize}

\end{theorem}

\begin{remark}
    The equalities  \eqref{eq:normal_inversion}, \eqref{eq:normal_inversion2odd} a priori hold in $x^{-\delta}L^2(\Dm)$.
\end{remark}
\noindent Note that formulas (\ref{eq:normal_inversion}-\ref{eq:normal_inversion2odd}) are triangular, in the sense that one first reconstructs the lowest order components and proceeds iteratively to the higher order ones.
Further, note that in the case $m=1$, using \eqref{eq:normal_inversion_odd} and \eqref{eq:normal_inversion2odd} we can fully invert $N_1$ up to potential tensors.
We mention that inversion formulas for $N_1$  have also been derived by C Robin Graham (\cite{Graham}), in the spirit of \cite{Berenstein1991}.

\paragraph{Outline.} The remainder of the article is organized as follows. Section \ref{sec:AHspaces} covers the proofs of results that hold generally on {simply connected, oriented} AH surfaces, including: preliminaries in Section \ref{sec:OSM}; mapping properties of X-ray transforms (proofs of Proposition \ref{prop:phg_mapping_I}, Lemmas \ref{lem:surjective} and \ref{lm:kernel}) in Section \ref{sec:mappingppties}; the tensor field decompositions (proof of Theorem \ref{thm:gauge_maybe}) in Section \ref{sec:gauge_rep}. 

Section \ref{sec:Poincare} covers proofs of results specific to the Poincar\'e disk, namely:  preliminaries in Section \ref{sec:prelim_poinc}; a study of the transform $I_\perp$ in Section \ref{sec:Iperp}; the data space decomposition (Theorem \ref{thm:dataDecomp}) in Section \ref{sec:dataDecomp}; the range characterization (Theorem \ref{thm:moments}) in Section \ref{sec:rangeCharac}; finally,  reconstruction aspects in Section \ref{sec:recons}.

Appendix \ref{sub:the_0_calculus} covers background material on polyhomogeneous conormal distributions (Section \ref{sec:phg}) and the 0-calculus of Mazzeo-Melrose (Section \ref{sec:the_0_calculus}) and Appendix \ref{app:0Sob} includes the proof of Lemma \ref{lem:H10w}.
\section{Proofs on AH Spaces} \label{sec:AHspaces}

\subsection{Preliminaries}
 \label{sec:OSM}

In what follows $M$ is an $n$-dimensional compact manifold with boundary and $\rho$ is a fixed bdf.

\subsubsection{Symmetric Tensors}\label{ssub:symmetric_tensors}
We discuss certain symmetric tensors which are well adapted to the AH geometry and operators acting on them.
Our normalization conventions will generally follow \cite{Dairbekov2011}.
If  $V$ is a finite dimensional vector space and $m\geq 1$, the symmetrization operator is defined on decomposable elements of $V^{\otimes m}$ as
\begin{align}
        \sigma (v_1\otimes  \ldots \otimes v_m) \coloneqq  \frac{1}{m!}\sum_{\tau\in \mathrm{S}_m} v_{\tau(1)}\otimes \ldots \otimes v_{\tau(m)}, \quad v_j\in V,
    \label{eq:symmetrization}
\end{align}
and extended by linearity. Above, $\mathrm{S}_m$ denotes the $m$-th order permutation group.  The symmetric product  of $v\in S^m(V) $, $w\in S^{m'}(V)$ is defined via $vw=\sigma(v\otimes w)\in S^{m+m'}(V)$; here, $S^{m}(V)$ stands for the $m$-th symmetric power of $V$. Accordingly, for any smooth real or complex vector bundle  $E\to M$ we write   $C^\infty(M;{{S}}^m(E))$ for the space of smooth, symmetric $m$-tensors. The various vector bundles over $M$ will generally be assumed complexified, without explicitly denoting it.

We are especially interested in local sections of the  bundle ${{S}}^m({}^0T^*M)$, which in terms of coordinates $(u^1,\dots, u^n)$  take the form 
\begin{equation}\label{eq:0-tensor}
   f= \sum_{i_k\in \{1,\dots,n\}} f_{i_1\dots i_m}(u)\frac{\d u^{i_1}}{\rho}\otimes \dots \otimes \frac{\d u^{i_m}}{\rho},
\end{equation}
with $f_{i_1\dots i_m}=f_{\tau (i_1)\dots \tau(i_m)}$ for every permutation $\tau\in \mathrm{S}_m$. For each $u\in M$, the metric $g$ defines an inner product on the fiber ${}^0T_u^{*}M$ and we obtain an operator
\begin{equation}
    L:S^m({}^0T_{u}^*M)\to S^{m+2}({}^0T_u^*M),\quad f\mapsto \sigma(g\otimes f).\label{eq:fiberwise}
\end{equation}
Moreover, since $g\in C^{\infty}(M,S^2({}^0T^*M))$, $L$ induces in a natural way  an operator
\begin{align}
    L:C^{\infty}(M,S^m({}^0T^*M))\to C^{\infty}(M,S^{m+2}({}^0T^*M)).\label{eq:L}
\end{align}
It will be clear from context whether $L$ refers to \eqref{eq:fiberwise} or \eqref{eq:L}. Since  $\bar{g}=\rho^{2}g$ extends to a smooth metric on $M$ and $g^{ij}=\rho^{2}\bar{g}^{ij}$, it follows from the local expression \eqref{eq:0-tensor} that \eqref{eq:L} also preserves various  lower regularity spaces, such as  $\Aphg^{E}$, $C^r$, $\rho^{\delta}L^2$ etc.  

Since $g$  is  a smooth fiber metric on ${}^0TM\to M$, it induces  a smooth Hermitian inner product on (the complexification of) any tensor power of it. 
With respect to this Hermitian inner product, denoted   by $\langle \cdot,\cdot\rangle_g$, the inner product of $f,$ $f'\in S^m({}^0 T^*_{u}M)$  takes the form\footnote{Here $\bar{\cdot}$ over $g$ stands for the compactified metric and over $f$ for conjugation. We hope that this will not cause confusion.}
\begin{equation}\label{eq:metric_on_fibers}
    \langle f,f'\rangle _g=\sum _{i_k ,j_\ell\in  \{1,\dots,n\}}\overline{g}^{i_1j_1}\dots \overline{g}^{i_mj_m}f_{i_1\dots i_m}\overline{f'_{j_1\dots j_m}},\quad  {g}=\bar{g}_{ij}\frac{d u^{i}}{\rho} \frac{d u^{j}}{\rho}.
\end{equation}
The $\langle\cdot,\cdot \rangle_g$-adjoint of the map \eqref{eq:fiberwise} is the $g$-trace
\begin{equation}
    \tr:S^{m+2}({}^0T^*_uM))\to S^m({}^0T^*_uM)
\end{equation}
with respect to any two indices of its argument.
It induces a map $C^{\infty}(M,S^{m+2}({}^0T^*M))\to C^{\infty}(M,S^m({}^0T^*M))$, still denoted by $\tr$, which is the $L^2$-adjoint of \eqref{eq:L}.

It is easy to check that the Christoffel symbols of an AH metric are $O(\rho^{-1})$ in any coordinate system near $\partial M$.
Using this, 
one checks that the Levi-Civita connection induces by extension from the interior a map 
\begin{equation}
\begin{aligned}
         \nabla:C^{\infty}(M;{}^{0}T^{*}M)&\to C^\infty(M;{}^{0}T^{*}M\otimes {}^{0}T^{*}M),
\end{aligned}
\end{equation}  which  satisfies the Leibniz rule 
\begin{equation}
    \nabla (f\omega)=\d f \otimes  \omega +f\nabla \omega, \qquad \omega \in C^\infty(M;{}^0T^*M),\quad f\in  C^\infty(M),
\end{equation}
where $\d$ is the exterior derivative viewed as a map $C^\infty(M)\to C^\infty(M;{}^0T^*M) $.
$\nabla$ extends naturally to an operator $C^{\infty}(M;{}^{0}T^*M^{\otimes m})\to C^{\infty}(M;{}^{0}T^*M^{\otimes m+1})$ inducing an operator
\begin{equation}
    \d^{s}:C^{\infty}(M;S^{m}({}^{0}T^*M))\to C^\infty(M;S^{m+1}({}^{0}T^{*}M)),\qquad \d^{s}\coloneqq \sigma \circ \nabla.
\end{equation}
A local coordinate computation shows that 
\begin{equation}
    \d^{s}\in \Diff_0^{1}(M;S^{m}({}^{0}T^*M),S^{m+1}({}^{0}T^*M)),\label{eq:diff0}    
\end{equation}
(see Appendix \ref{sec:the_0_calculus}), so we obtain more generally that for any index set $E$,
\begin{equation}\label{eq:ds_phg}
    \d^{s}:\Aphg^{E}(M;S^{m}({}^{0}T^*M))\to \Aphg^{E}(M;S^{m+1}({}^{0}T^{*}M)).
\end{equation}
The $L^2$-formal adjoint of $-\d^{s}$ is the divergence operator 
\begin{equation}
    \div =\tr \nabla : C^{\infty}(M;S^{m+1}({}^{0}T^*M))\to C^\infty(M;S^{m}({}^{0}T^{*}M)).
\end{equation}

\subsubsection{Measures}\label{ssub:measures_}
We can make sense of weighted $L^2$ spaces of symmetric tensors using the inner product $\langle \cdot,\cdot \rangle_g$: for a measurable section $f$ we have 
\begin{equation}
    f\in \rho^\delta L^2(M;{{S}}^m({}^0T^*M))\overset{\text{def.}}\iff\int _M\rho^{-2\delta}\langle f,{f}\rangle_g\d V_g<\infty, \quad \delta \in \Rm.
\end{equation}
As we will see in the next section, symmetric tensors on ${}^0T^*M$ are in natural correspondence with certain functions in $\OSM$.
The natural Liouville measure on $T^*M$ pulls back via the metric to a measure on $TM$, whose local expression is 
\begin{equation}
    \d\lambda_{TM}=\det g\, \d u^1\dots\d u^{n} \d \tilde{v}^1\dots  \d \tilde{v}^n,
\end{equation}
where we write a vector as $\sum_{j=1}^{n}\tilde{v}^j\partial_{u^j}$.
In the coordinates  on ${}^0TM$, where a 0-vector is written as   $\sum_{j=1}^n v^j\rho \partial_{u^j}$,
 it takes the form 
\begin{equation}\label{eq:DSigma}
    \d\lambda_{{}^0TM}=\sqrt{\det g}\sqrt{\det \bar{g}}\, \d u^1\cdots \d u^{n} \d {v}^1\cdots  \d {v}^n,\quad \bar{g}=\rho^{2}g .
\end{equation}
Thus $\rho^{n}\d\lambda_{{}^0TM}$ is a smooth measure on ${}^0TM$.
Since $\OSM $ is a smooth submanifold of the latter, $\rho^{n}\d\lambda_{{}^0TM}$ induces a smooth measure on $\OSM$.
This implies that $\d\lambda_{{}^0TM}$ induces a smooth measure on $(\OSM)^\circ$ which we denote by $\d \Sigma^{2n-1}$, and which is singular at $\partial (\OSM)$.
Since in our coordinates $\OSM$ is given as the set where $\bar{g}(v,v)=1$, we have a local decomposition of the form $\d \Sigma^{2n-1}=\d V_g(u)\d\mu(v)$, where $\d \mu$ is a smooth measure on the fibers of $\OSM$ (which is the restriction to these fibers of the Lebesgue measure induced by $\bar g$). 

In some places we will use $\bSM$ instead of $\OSM$, equipped with the Liouville measure $|\mu|$.
 Since the metric $g$ induces a measure-preserving diffeomorphism between $(S^{*}M^{\circ},|\mu|)$ and $(SM^{\circ},\d \Sigma^{2n-1})$, we will also use the notation $\d \Sigma^{2n-1}$  for $\bSM$, without creating confusion.
By \cite[Lemma 2.2]{Graham2019}, $\rho \d \Sigma^{2n-1}$ extends to a smooth measure on $\bSM$.

\subsubsection{Symmetric Tensors as Functions on \texorpdfstring{$\OSM$}{0SM}}
There is a natural correspondence between smooth symmetric 0-tensor fields and smooth functions on $\OSM$.
We write 
\begin{equation}\label{eq:pi_m2}
	\pi_m:SM^\circ \to (TM^\circ)^{\otimes m} ,\quad  (u,v)\mapsto (u,v\otimes \dots\otimes v).
\end{equation}
Since the inclusion $\OSM\hookrightarrow {}^0TM$ is smooth, we have:
\begin{lemma}The map \eqref{eq:pi_m2} extends to a smooth map $\pi_m:\OSM \to {}^0TM^{\otimes m} $.\label{lm:pim}
\end{lemma}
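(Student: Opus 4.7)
The plan is to construct the extension by the obvious candidate: the map $(u,w)\mapsto (u,w\otimes\cdots\otimes w)$ defined on all of $^0TM$, and then restrict to the smooth submanifold with boundary $\OSM\subset {}^0TM$. The real work is just bookkeeping to check that this restriction agrees with $\pi_m$ on the interior under the canonical identifications $\iota$ and $\iota^{\otimes m}$.

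First, I would define $\widetilde{\pi}_m\colon {}^0TM\to ({}^0TM)^{\otimes m}$ by $(u,w)\mapsto (u,w\otimes\cdots\otimes w)$. Using local coordinates $(u^1,\dots,u^n)$ near a point of $\partial M$ and the trivialization \eqref{eq:coordinates0TM}, a point $(u,w)\in {}^0TM$ has coordinates $(u^1,\dots,u^n,v^1,\dots,v^n)$ with $w=v^j\rho\partial_{u^j}$, and the local frame on $({}^0TM)^{\otimes m}$ is $\{\rho\partial_{u^{i_1}}\otimes\cdots\otimes \rho\partial_{u^{i_m}}\}$. In these coordinates, $\widetilde{\pi}_m$ sends $(u,v)\mapsto (u,v^{i_1}\cdots v^{i_m})$ read off against that frame, which is polynomial in the fiber variables and hence smooth on ${}^0TM$.

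Next, since the inclusion $\OSM\hookrightarrow{}^0TM$ is smooth (by the description \eqref{eq:OSM} as a cosmooth level set of the smooth function $\bar g_{jk}v^jv^k$ on ${}^0TM$, with the bdf $\rho$ not entering the defining function), the restriction $\widetilde{\pi}_m\big|_{\OSM}\colon \OSM\to ({}^0TM)^{\otimes m}$ is smooth.

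Finally, I would check agreement with $\pi_m$ on $SM^\circ$. Over $M^\circ$, the identification $\iota\colon {}^0TM\big|_{M^\circ}\to TM^\circ$ sends $w=v^j\rho\partial_{u^j}\in{}^0T_uM$ to the honest vector $v^j\rho(u)\partial_{u^j}\in T_uM^\circ$, and the induced $\iota^{\otimes m}$ intertwines $w\otimes\cdots\otimes w$ with $\iota(w)\otimes\cdots\otimes\iota(w)$ termwise. Hence under the identifications $SM^\circ\cong (\OSM)^\circ$ and $(TM^\circ)^{\otimes m}\cong ({}^0TM)^{\otimes m}\big|_{M^\circ}$ used throughout Section 2, the map $\widetilde{\pi}_m\big|_{(\OSM)^\circ}$ is precisely $\pi_m$. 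The main (and mild) obstacle is simply to verify that these identifications commute consistently with the tensor power operation, so that extending the \emph{formula} of $\pi_m$ verbatim to $\OSM$ yields a well-defined smooth extension; no transverse regularity is needed since tensor products of 0-vectors are polynomial in the fiber coordinates of $^0TM$ with values in $({}^0TM)^{\otimes m}$.
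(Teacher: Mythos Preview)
Your proposal is correct and takes essentially the same approach as the paper, which gives no separate proof beyond the one-line observation preceding the lemma that the inclusion $\OSM\hookrightarrow{}^0TM$ is smooth. You have simply fleshed out that observation by exhibiting the global fiberwise-polynomial map $\widetilde{\pi}_m$ on ${}^0TM$ and checking compatibility with the interior identifications, which is exactly the intended argument.
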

\noindent Since ${}^0T^*M$ is the dual bundle of $ {}^0TM$, a smooth section of ${{S}}^m({}^0T^*M)$ is in particular a smooth map $ {}^0TM^{\otimes m} \to \Cm$.
Therefore, with Lemma \ref{lm:pim} we obtain a linear map 
\begin{equation}\label{eq:lm}
    \begin{aligned}
	\ell_m \colon {C}^\infty(M;{{S}}^m({}^0T^*M)) \to C^\infty(\OSM), \qquad f \mapsto \pi_m^* f.
    \end{aligned}	
\end{equation}
Another way to see the mapping property \eqref{eq:lm} is with local expressions.
Using local coordinates \eqref{eq:coordinates0TM} and if $f$ is given locally by \eqref{eq:0-tensor}, we obtain in coordinates \eqref{eq:coordinates0TM}
\begin{equation}\label{eq:lmf}
	\ell_m f(u,v)=  \sum_{i_k\in \{1,\dots,n\}} f_{i_1\dots i_m}(u){v^{i_1}} \cdots  v^{i_m}, \qquad \sum_{i,j=1}^n\overline{g}_{ij}v^iv^j=1.
\end{equation}

\begin{lemma}
The map $\ell_m$ in \eqref{eq:lm} satisfies
\begin{align}
		\ell_m:\dot {C}^\infty(M;{{S}}^m({}^0T^*M)) &\to \dot{C}^\infty(\OSM).
\end{align}
Moreover, for any smooth index set $E$ and $\delta\in \Rm$, it extends to a map
\begin{align}
		\ell_m:\Aphg^E(M;{{S}}^m({}^0T^*M)) &\to  \Aphg^E(\OSM),\\
 		\ell_m: \rho^\delta L^2(M;{{S}}^m({}^0T^*M)) &\to \rho^\delta L^2(\OSM)\label{eq:Hs}.
 \end{align} 
The map \eqref{eq:Hs} is a homeomorphism onto its image.
\end{lemma}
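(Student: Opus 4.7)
The plan is to work directly from the local expression \eqref{eq:lmf}, which writes $\ell_m f$ as a homogeneous polynomial of degree $m$ in the fiber coordinates $v^1, \ldots, v^n$ with coefficients $f_{i_1 \ldots i_m}(u)$. Since $\OSM$ is cut out locally by the constraint $\bar g_{ij} v^i v^j = 1$, the functions $v^i$ are smooth and bounded on $\OSM$. From this, the first mapping statement follows immediately: if $f$ vanishes to infinite order at $\partial M$ with all derivatives, so do all its components $f_{i_1\dots i_m}$ in the frame $\{\rho^{-1}\d u^i\}$, hence so does the finite sum \eqref{eq:lmf} as a function on $\OSM$. The $\Aphg^E$ statement follows in the same way: \eqref{eq:lmf} expresses $\ell_m f$ as a finite sum of pointwise products of $v^{i_k} \in C^\infty(\OSM)$ with pullbacks $\pi^* f_{i_1\dots i_m}$ under $\pi \colon \OSM \to M$, and $\Aphg^E$ is stable under multiplication by smooth functions and under pullback by a b-fibration such as $\pi$ (the key point being that $\pi^*\rho$ is, up to a smooth nonvanishing factor, a bdf for $\partial\OSM$).

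For the $L^2$ claim, I would use the measure decomposition $\d\Sigma^{2n-1} = \d V_g(u)\,\d\mu(v)$ from Section \ref{ssub:measures_} to write
\begin{equation*}
    \|\ell_m f\|_{\rho^\delta L^2(\OSM)}^2 = \int_M \rho(u)^{-2\delta}\left(\int_{\mathrm{fiber}(u)} |\ell_m f(u,v)|^2\, \d\mu(v)\right) \d V_g(u),
\end{equation*}
and show that the inner fiber integral is bilipschitz-comparable to $\langle f, f\rangle_g(u)$, uniformly in $u\in M$. At each fixed $u$, after diagonalizing $\bar g(u)$ the inner integral reduces to the $L^2$-norm on the standard Euclidean unit sphere $S^{n-1}$ of the polynomial $v \mapsto \ell_m f(u,v)$ of fixed degree $m$. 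On the finite-dimensional space $S^m((\Rm^n)^*)$ both this spherical $L^2$-norm and the pointwise coefficient norm $\langle f,f\rangle_g(u)$ from \eqref{eq:metric_on_fibers} are continuous norms, hence equivalent; the bilipschitz constants depend only on the metric $\bar g(u)$.

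The one step that requires care is promoting the pointwise equivalence to a uniform one up to $\partial M$. This follows because by definition of an AH metric, $\bar g = \rho^2 g$ extends to a smooth non-degenerate Riemannian metric on all of $M$, so the compact family $\{\bar g(u) : u \in M\}$ of inner products on $\Rm^n$ yields continuously varying, hence uniformly bounded, equivalence constants. Combining this with the previous display yields
\begin{equation*}
    \|\ell_m f\|_{\rho^\delta L^2(\OSM)}^2 \asymp \int_M \rho(u)^{-2\delta}\, \langle f, f\rangle_g(u)\, \d V_g(u) = \|f\|_{\rho^\delta L^2(M; S^m({}^0T^*M))}^2,
\end{equation*}
which simultaneously establishes boundedness of \eqref{eq:Hs} and that $\ell_m$ is a homeomorphism onto its image. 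Injectivity of $\ell_m$ needed here is automatic: the restriction of a fixed-degree $m$ homogeneous polynomial to the unit sphere (in any inner product) determines the polynomial, since nontriviality off the sphere is equivalent to nontriviality on it by homogeneity.
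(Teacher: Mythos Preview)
Your proof is correct and takes a more elementary route than the paper's own argument. For the first two mapping properties the paper says the same thing you do (they follow directly from the local formula \eqref{eq:lmf}). For the $L^2$ statement, however, the paper invokes the explicit orthogonal decomposition of a symmetric tensor into its trace-free components, $w = \sum_k L^k \tilde w_{m-2k}$, and then uses results from \cite{Dairbekov2011} (Lemmas 2.2--2.4) to compare the tensor norm and the spherical $L^2$ norm term by term via spherical harmonics. Your argument bypasses all of this: you simply observe that at each $u \in M$ both the fiber $L^2$ norm and the tensor norm $\langle\cdot,\cdot\rangle_g$ are genuine norms on the finite-dimensional space $S^m({}^0T_u^*M)$ (the first because $\ell_m$ is injective on homogeneous polynomials, the second because $\bar g$ is nondegenerate), hence equivalent, and then use smoothness of $\bar g$ up to $\partial M$ together with compactness of $M$ to make the equivalence uniform. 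This is shorter and requires no external lemmas. What the paper's approach buys is more explicit structural information (orthogonality of the components and explicit dimensional constants), which it uses elsewhere; for the bare statement of the lemma, your soft argument is entirely sufficient.
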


\begin{proof}
The first two statements are clear by \eqref{eq:lmf}. 
Eq.\ \eqref{eq:Hs} essentially follows from the arguments in \cite{Dairbekov2011}, so we only provide a brief outline.
Let $(V^{n}, g_0)$ be an inner product space with unit sphere $S = \{v\in V,\ g_0( v, v) = 1\}$. For any $m\ge  0$, consider the map $\tilde \ell_m \colon S^m (V) \to C^{\infty}(S)$ defined by $\tilde \ell_m f (v) = f(v,\dots,v)$. 
We claim that it is a homeomorphism with respect to the topologies induced by the inner product $\langle\cdot ,\cdot\rangle_{g_0}$ (defined analogously to \eqref{eq:metric_on_fibers}) and the $L^2$ inner product on $C^{\infty }(S)$.
To see this, by \cite[Lemma 2.3]{Dairbekov2011}  decompose $v$ into $g_0$-orthogonal summands as
$ v=\sum_{k=0}^{\lfloor m/2\rfloor} L^{k}\tilde{v}_{m-2k},$ where $ \mathrm{tr}_{g_0}  \tilde{v}_{m-2k}=0.$
Here $L$ and $\mathrm{tr}_{g_0}=L^*$ denote respectively symmetrized multiplication by $g_0$ and $g_0$-trace, as before.
The images $\tilde \ell_m L^{k}\tilde{v}_{m-2k}=\tilde\ell_{m-2k}\tilde v_{m-2k} $ are mutually $L^2$-orthogonal for different $k$'s as spherical harmonics of different degrees.
Then the claim follows upon noticing that for every $ k$, $ \tilde{\ell}_{m-2k} : S^{m-2k}(V)\cap \ker \mathrm{tr}_{g_0}\to L^2(S)$ is an isometry onto its image up to multiplication by a dimensional constant (by \cite[Lemma 2.4]{Dairbekov2011}), as well as the fact that  $|L^k\tilde v_{m-2k}|_{g_0}^{2}=C_{m,n,k} |\tilde v_{m-2k}|^{2}_{g_0}$ for some   dimensional constant $C_{m,n,k}>0$.
The latter fact is a consequence of \cite[Lemma 2.2]{Dairbekov2011}.

As we saw, $g$ defines a smooth metric (up to $\partial M$)   on the smooth vector bundle $^0T^*M\to M$ and its symmetric tensor products. At each $u\in M$, the unit sphere in $^0T^*_uM$ is $\OSM\big|_u$. Thus the arguments above apply to each $({}^0T_u^*M,g_u)$, $u\in M$, so for each such $u$ and $w\in S^m({}^0T_u^*M)$
\begin{equation}\label{eq:inner_products}
    \frac{1}{C}\int_{{}^0\overline{SM}\big|_{u}} |\ell_m w|^{2}\d \mu(v)\leq |w|_{g}^2\leq C\int_{{}^0\overline{SM}\big|_{u}} |\ell_m w|^{2}\d \mu(v),
\end{equation}
with $C>0$ independent of $u$, $w$.
Since locally $\d \Sigma^{2n-1}=\d V_g \d \mu$, for $f\in \dot{C}^{\infty}(M;{{S}}^m({}^0T^*M))$ replace $w$ with  $\rho^{-\delta }f$ in  \eqref{eq:inner_products} and integrate both sides with respect to $\d V_g$ to obtain by density \eqref{eq:Hs}, as well as the last statement.
\end{proof}

\subsubsection{Geodesic Vector Field and Santal\'o's formula}\label{ssec:Santalo}
An equation relating the natural kernel of $I_m$ with the natural kernel of $I$ is the following:
\begin{equation}
    \label{eq:intertw}
    \ell_{m+1} \circ \d^sv = X\circ \ell_m  v  \quad \text{for all} \quad v\in \Aphg^{E}(M;S^{m}(M,{}^0T^*M)),
\end{equation}
where $E$ is a smooth index set.
Both sides make sense as elements of $\Aphg^{E}(\OSM)$: indeed, for the left hand side this follows from  \eqref{eq:ds_phg} and  \eqref{eq:Hs}. For the right hand side, from \eqref{eq:Hs} and the fact that $X$ extends to a smooth vector field on $\OSM$ that is tangent to $\partial \OSM$ (this follows from a direct computation which we omit,  see \eqref{eq:Xconf} below for the 2-dimensional case).
In the interior the equality is classical (see, e.g. \cite[Sec. 10]{Dairbekov2011}), so it holds everywhere.

We close this section by mentioning Santal\'o's formula, which we need later. One has
\begin{align}
    \int_{\OSM} f \d \Sigma^{2n-1} = \int_{\G} If \ \d\Sigma_\partial, \quad f\in C_c^\infty(SM^\circ).
    \label{eq:Santalo}
\end{align}
This is the content of \cite[Lemma 3.6]{Graham2019}. There, the left hand side is expressed as an integral over the cosphere bundle $S^{*}M^{\circ}$ with the Liouville measure $|\mu|$, but, as explained at the end of Section \ref{ssub:measures_}, we can equivalently write \eqref{eq:Santalo} and often we will use the two points of view interchangeably.
Eq.  \eqref{eq:Santalo} implies that $I$ extends to a bounded operator $L^1(\OSM)\to L^1(\G)$, and \eqref{eq:Santalo} extends to functions in $L^1(\OSM)$ (or ($L^1(\bSM)$). In particular, it holds for functions in $\rho^\alpha  C^\infty_c(\bSM)$ if $\alpha>0$, by the remarks at the end of Section \ref{ssub:measures_}.

\subsection{Mapping properties: proof of Proposition \ref{prop:phg_mapping_I} and Lemmas \ref{lem:surjective} and \ref{lm:kernel}} \label{sec:mappingppties}

We now prove basic mapping properties of $I$ on some classes of polyhomogeneous and weighted $L^2$ spaces.
As it will be used later in the section, we denote 
\begin{align}
    \pi_\G\colon SM^{\circ}\to \G
    \label{eq:piG}
\end{align}
the  smooth map sending $(u,v)\in SM^{\circ}$ to the unique $\gamma\in \G$ such that there exists $t\in \Rm$ satisfying $(u,v) = (\gamma(t),\dot{\gamma}(t))$.
It is occasionally useful to view it as a smooth map $S^*M^\circ \to \partial_+\bSM=\G$ which extends smoothly on $\bSM$ and equals the identity on $\partial_+\bSM$, by \cite[Corollary 2.5]{Graham2019}.

\begin{proof}[Proof of Proposition \ref{prop:phg_mapping_I}]
Assume that $g$ is written in normal form as in \eqref{eq:normal_form}.
We use coordinates $(u^{1},\dots, u^{n})=({\tilde x},y^{2},\dots,y^{n})$ as in Section \ref{sub:the_unit_sphere_and_cosphere_bundles_and_their_compactifications_} ($\tilde x$ is a geodesic bdf) and write an element of $\OSM$ as $({\tilde x},y, a {\tilde x}\partial_{\tilde x}+b^\alpha {\tilde x}\partial_{y^\alpha})$, where $a^{2}+h_{\tilde x,\alpha\beta}b^\alpha b^\beta=1$.
Then ${\tilde x}$ is a bdf for $\OSM$, and in a neighborhood of any point $(u,v)\in \OSM$ near $\partial {}\OSM$, $n-1$ of the $(a,b)$ together with $({\tilde x},y)$ furnish smooth coordinates for $\OSM$.
Also let $({\tilde x},y,\bar{\xi}\frac{\d {\tilde x}}{{\tilde x}}+\eta_\alpha \d y^{\alpha})\in \bSM$.
The correspondence induced by the metric between the interiors of $\bSM$ and $\OSM$ yields
    $a=\bar{\xi}$, $ b^\alpha ={\tilde x} h^{\alpha\beta}_{\tilde x} \eta_\beta$.
Hence given $f\in \Aphg^E (\OSM)$, for every $N\geq 1$ there exist smooth $f_{j\ell}$ and $R_N\in \mathcal{A}^{N}(\OSM)$ (see Appendix \ref{sec:phg}) such that %
\begin{equation}\label{eq:phg_f}
    \begin{aligned}
        f&= \sum _{\substack{(s_j,p_j)\in E\\ \Re s_j\leq N}}\sum _{\ell=0}^{p_j} f_{j\ell}\left(y,
        {a,b}
        \right) \tilde {x}^{s_j}\log^{\ell}(\tilde {x})+R_N(\tilde {x},y,a,b)\\ 
        &= \sum_{\substack{(s_j,p_j)\in E\\ \Re s_j\leq N}}\sum _{\ell=0}^{p_j} f_{j\ell}\left(y,
    {\bar{\xi},{\tilde x}h_{{\tilde x}}^{-1}\eta}
    \right) {\tilde x}^{s_j}\log^{\ell}({\tilde x})+R_N({\tilde x},y, \bar{\xi},{\tilde x}h_{{\tilde x}}^{-1}\eta).
    \end{aligned}    
\end{equation}

We will consider the X-ray transform $If(y_*,\eta_*)$, where $(y_*,\eta_*)\in \partial_+\bSM\eqqcolon\G$, and with $f$ viewed as a function on $\bSM$. Viewed as coordinate functions, $(y_*,\eta_*)=(y,\eta)\big|_{\partial_+\bSM}$, Recall that we radially compactified $\G$ into a manifold with boundary $\overline{\G}$ with bdf $\mu_{\tilde{x}}$. 
It will be convenient to set $\delta \coloneqq |\eta_*|_{h_0}^{-1}$ for large $|\eta_*|_{h_0}$, so that $\delta=\mu_{{\tilde x}}(1+O(\mu_{{\tilde x}}))$ is a bdf for $\partial \overline{\G}$.
See Fig.~\ref{fig:bsm}. 

We first claim that if $\psi\in C_c^\infty(\bSM)$, then $I(\psi f)\in C_c^\infty (\G)$.
Recall from \cite{Graham2019} that the geodesic vector field satisfies $\mathcal X=\tilde x \overline{\mathcal X}$, where $\overline{\mathcal X}$ is smooth and transverse to $\partial \bSM$. Hence for each $(u,\zeta)\in S^*M^\circ$ the respective flows satisfy $\varphi_t(u,\zeta) = \overline \varphi_\tau (u,\zeta)$, where $\frac{\d t}{\d \tau}=\frac{1}{\tilde x \circ \overline\varphi_\tau(u,\zeta)}$. 
The flow of $\overline{\mathcal X}$ is incomplete and extends smoothly to $\bSM$.
For each  $(y_*,\eta_*)\in \partial_+\bSM $ it is defined for $0\leq \tau\leq \tau_+(y_*,\eta_*)<\infty$, where $\tau_+(y_*,\eta_*)$ is smooth and positive. Thus
\begin{equation}
    I(\psi f)(y_*,\eta_*) = \int _0^{\tau_+(y_*,\eta_*)}(\tilde x^{-1}\psi f)\circ \overline\varphi _\tau (y_*,\eta_*)\d \tau.
\end{equation}
Since $\overline\varphi _\tau (y_*,\eta_*)$ is smooth and in particular $\tilde x\circ \overline\varphi _\tau (y_*,\eta_*)= \tau\big (\tau_+(y_*,\eta_*)-\tau\big) h(y_*,\eta_*,\tau)$ with $h$ smooth and positive (by the transversality of $\overline{\mathcal X}$), it is straightforward to check using \eqref{eq:phg_f} that $I(\psi f)\in C^\infty (\bSM) $. Here the requirement $\Re (E)>0$ is needed for integrability.
The compact support property follows from  $\supp(I(\psi f))\subset \pi_\G(\supp \psi f)$.
In particular, it implies that $\delta$ is bounded below by a positive constant on $\supp(I(\psi f))$ and thus the behavior of $f$ when $|\eta|_{h_{\tilde x}}$ is bounded does not contribute to the  behavior of $If(y_*,\eta_*)$ as $\delta\to 0$, i.e., to the behavior of $If$ on ``short geodesics''.
Hence to examine this behavior is enough to assume that $f$ is supported in the set where $|\eta|_{h_{\tilde x}}\gg 1$.

\begin{figure}[ht]\centering
  \includegraphics[scale=.7]{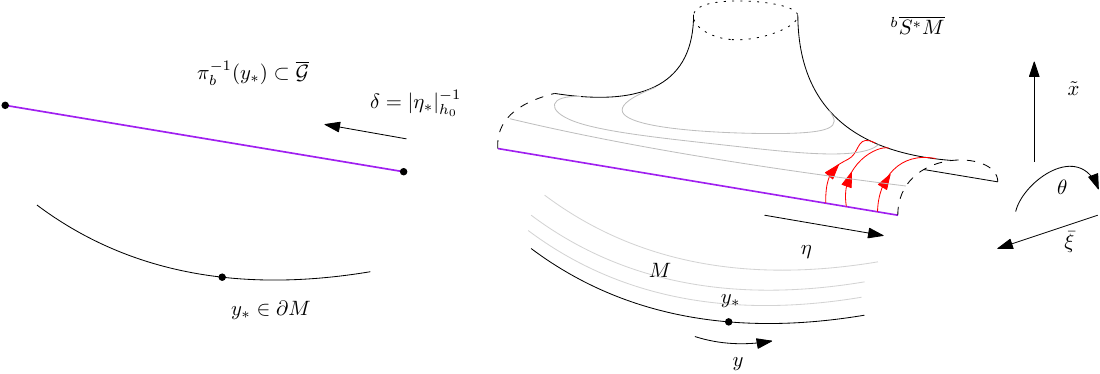}
  
  \captionsetup{width=0.8\textwidth}
  \caption{The case $n=2$. On the left, $\overline{\G}$, whose compactified fibers are parameterized  by a point $y_*\in \partial M$. We write $\pi_b$ for the natural projection $\bSM\to M$. On the right, $\bSM$ (with the $y$ variable suppressed) sitting over $M$. The gray curves are level sets of $\tilde{x}$. Some integral curves of $\mathcal{Y}$ (defined in \eqref{eq:Y}) corresponding to initial conditions with $\delta$ small are shown in red.}\label{fig:bsm} 
\end{figure}

Following \cite[§2.2]{Graham2019}, for $|\eta|_{h_{\tilde x}}\gg 1$ we use smooth coordinates $(\theta,y,\eta)$ for $\ \bSM$, where
\begin{equation}\label{eq:theta_defn}
    \bar{\xi}=\cos(\theta),\qquad {\tilde x}|\eta|_{h_{\tilde x}}=\sin(\theta).
\end{equation}
The vector field 
\begin{align}
	\mathcal{Y}\coloneqq \frac{1}{{\tilde x}|\eta|_{h_{\tilde x}}}\mathcal{X}=\frac{1}{\sin(\theta)}\mathcal{X}
	\label{eq:Y}
\end{align}
is smooth on $\bSM$ in the neighborhood of interest and transversal to its boundary, hence it has an incomplete flow.
Its  integral curves $\tilde{\varphi}_s$ agree with the integral curves $\varphi_t$ of $\mathcal{X}$ up to reparametrization, and for each $(u,\zeta)\in S^*M^{\circ}$ one has $\varphi_t(u,\zeta)=\tilde\varphi_s(u,\zeta)$ with $\d t/\d s=\frac{1}{({\tilde x}|\eta|_{h_{\tilde x}})\circ \tilde{\varphi}_s}=\frac{1}{\sin(\theta)\circ \tilde{\varphi}_s}$.

It follows from the proof of \cite[ Lemma 2.8]{Graham2019} that if $\delta$ is sufficiently small, the integral curve of $\mathcal{Y}$ starting at $(y_*,\eta_*)\in \partial_+\bSM$
is defined for $s\in [0,s_+(y_*,\eta_*)]$, where $s_+(y_*,\eta_*)$ is defined implicitly by the requirement $\tilde\varphi_{s_+(y_*,\eta_*)}=\pi$ and satisfies $s_+(y_*,\eta_*)=\pi +O(\delta)$.
Moreover, for small $\delta$, $\theta$ can be used as a parameter along the flow with $\frac{\d \theta}{\d s}=1+O(\delta)$ and 
\begin{align}\label{eq:coordinates_bSM}
\begin{split}
    {\tilde x}(\theta) &= \delta\sin(\theta)(1+O(\delta)), \quad y(\theta)=y_*+O(\delta) , \\
    \eta(\theta) &= \delta^{-1}(\hat{\eta}_{*}+O(\delta)) ,\quad |\eta(\theta)|_{h_{\tilde x}}=\delta^{-1}(1+O(\delta)),    
\end{split}    
\end{align}
where $\hat\eta_*=\delta \eta_*=\eta_*/|\eta_*|_{h_0}$ and the error terms are smooth in all variables (including $\delta)$.

We apply $I$ to \eqref{eq:phg_f} and analyze the terms of the sum; we want to show an analogous expansion for $If(y_*,\eta_*)=If(y_*,\hat\eta_*,\delta)$ in terms of $\delta$. For $\delta\ll 1$, recalling that $s_j>0$, and writing $\hat\eta_*^{\sharp}=h_0^{-1}\hat\eta_*$, 
\begin{align}
       &I\left(f_{j\ell}\left(y,
    {\bar{\xi},{\tilde x}h_{{\tilde x}}^{-1}\eta}
    \right) {\tilde x}^{s_j}\log^{\ell}({\tilde x})\right)(y_*,\eta_*)\\* 
    &\quad=\int _0^{s_+(y_*,\eta_*)}\!\!\!\!\!f_{j,\ell}\big(y(s),\cos(\theta(s)), {\tilde x}{(s)}h_{{\tilde x}(s)}^{-1}\eta(s)\big)\left(\frac{\sin(\theta(s))}{|\eta(s)|_{h_{\tilde x}}}\right)^{s_j}\log^{\ell}\left(\frac{\sin(\theta(s))}{|\eta(s)|_{h_{\tilde x}}}\right)\frac{\d s}{\sin(\theta(s))}\\
    &\quad=\int _0^{\pi }f_{j,\ell}\big(y(\theta),\cos(\theta), \sin(\theta)(\hat\eta_*^{\sharp}+O(\delta))\big)\left(\frac{\sin(\theta)}{|\eta(\theta)|_{h_{\tilde x}}}\right)^{s_j}\log^{\ell}\left(\frac{\sin(\theta)}{|\eta(\theta)|_{h_{\tilde x}}}\right)\frac{\d \theta}{\sin(\theta)(1+O(\delta))}\\
    &\quad=\delta^{s_j}\int _0^{\pi }f_{j,\ell}\big(y_*+O(\delta),\cos(\theta), \sin(\theta)(\hat\eta_*^{\sharp}+O(\delta))\big)\\* 
    &\hspace{1 in}\times {\sin^{s_j-1}(\theta)}\log^{\ell}\Big({\delta\sin(\theta)(1+O(\delta))}\Big)(1+O(\delta)){\d \theta} \\ 
&\quad=\sum_{m=0}^{\ell}A_{j,m}(y_*,\hat{\eta}_*,\delta)\delta^{s_j}\log^{m}(\delta),\label{eq:Ixlogx}
\end{align}
where
\begin{align}
    A_{j,m}(y_*,\hat{\eta}_*,\delta)=& \binom{\ell}{m}\int _0^{\pi }\left(f_{j,\ell}\big(y_*,\cos(\theta), \sin(\theta)\hat\eta_*^{\sharp}\big)+O(\delta)\right){\sin^{s_j-1}(\theta)}{(1+O(\delta))}\\*
&\hspace{2 in} \times\Big(\log(\sin(\theta))+\log(1+O(\delta))\Big)^{\ell-m}\d \theta,\\
        =&   \binom{\ell}{m}\int _0^{\pi }f_{j,\ell}\big(y_*,\cos   (\theta), \sin(\theta)\hat\eta_*^{\sharp}\big){\sin^{s_j-1}(\theta)}
\log^{\ell-m}(\sin(\theta))\d \theta+O(\delta).
\end{align}

Hence we have taken care of the terms of the expansion in \eqref{eq:phg_f}. We will have shown \eqref{eq:Iphg_mapping} if we can show that $I(\mathcal{A}^N(\OSM))\subset \mathcal{A}^N(\overline{\G})$.
To do so, let $N\geq 1$ and $R_N(\tilde x,y,a,b)\in \mathcal{A}^N(\OSM)$.
As before,
\begin{equation}\label{eq:IRN}
    IR_N(y_*,\hat{\eta}_*,\delta)=\int_0^{\pi} R_N(F(\delta,\theta, y_*,\hat{\eta}_*))\frac{\d \theta}{\sin(\theta)(1+{O}(\delta))},
\end{equation}
where
\begin{equation}
\begin{aligned}
         F(\delta,\theta, y_*,\hat{\eta}_*)= &\big(\delta\sin(\theta)(1+O(\delta)),y_*+O(\delta),\cos(\theta), \sin(\theta)(\hat\eta^{\sharp}_*+O(\delta)\big)   \big)\in \OSM.\\
\end{aligned}
\end{equation}
We claim that if ${r}\in \mathcal{A}^{N}(\OSM)$ for some $N\in \Rm$, then for any $j\geq 0$ and multiindex $J$ 
\begin{equation}\label{eq:conormality}
    (\delta \partial_\delta )^{j}\partial_{(y_*,\hat{\eta}_*)}^{J}\Big( {r}(F(\delta,\theta, y_*,\hat{\eta}_*))\Big)=\sum _{k=1}^{M}F_k (\delta,\theta, y_*,\hat{\eta}_*){r}_k(F(\delta,\theta, y_*,\hat{\eta}_*)),
\end{equation}
for some $M\geq 0$, where $F_k$ are smooth in all variables and ${r}_k\in \mathcal{A}^{N}(\OSM)$.
When we write $\partial_{(y_*,\hat{\eta}_*)}^{J}$ we abuse notation somewhat to indicate that we choose $n-2$ of $\hat{\eta}_{*j}=\frac{\eta_j}{|\eta|_{h_0}}$ such that, together with $y^\alpha$ and $\delta$, they form a coordinate system near a point in $\partial \overline \G$.
By induction it is easy to see that it suffices to show \eqref{eq:conormality} for $j+|J|=1$.
For $j=1,$ we have  by the chain rule
\begin{align}
     \delta \partial_\delta \left( {r}(F(\delta,\theta, y_*,\hat{\eta}_*))\right) 
        &=({\tilde x} \partial_{\tilde x} {r}) \big|_{F}(1+O(\delta))+\sum_{\alpha=2}^{n}\partial_{y^\alpha }{r}\big|_{F}  G_\alpha +\sum_{{\alpha}=2}^{n}\partial_{b^{\alpha}}  {r} \big|_{F}\tilde{G}_{\alpha},
\end{align}
where $G_\alpha$, $\tilde{G}_{ \alpha}$ are smooth functions on $\bSM$, down to $\delta=0$.
Now the claim follows for $J=0$ and $j=1$ by the observation that ${\tilde x}\partial_{\tilde x} {r}, \partial_{y}{r}, \partial_b {r}\in \mathcal{A}^{N}(\OSM)$.
The argument for $|J|=1$ is similar.
Using \eqref{eq:conormality}, we see that if $N\geq 1$
\begin{align}
    |(\delta \partial_\delta )^{j}\partial_{(y_*,\hat{\eta}_*)}^{J}IR_N|\overset{\eqref{eq:IRN}}{=}\left|\int _0^\pi (\delta \partial_\delta )^{j}\partial_{(y_*,\hat{\eta}_*)}^{J}
    \left(\frac{R_N(F(\delta,\theta, y_*,\hat{\eta}_*))}{1+O(\delta)}\right)\frac{\d\theta}{\sin(\theta)}\right|\\ 
    \overset{\eqref{eq:conormality}}{\leq} C\int_0^{\pi } {\tilde x}^{N}\circ F(\delta,\theta,y_*,\hat \eta_*)\frac{\d \theta}{\sin(\theta)}\overset{\eqref{eq:coordinates_bSM}}{\leq} C\int_0^\pi \delta^{N}\sin^{N-1}(\theta)\d \theta\leq C\delta^{N}.
\end{align}
This shows that $I(\mathcal{A}^N(\OSM))\subset \mathcal{A}^N(\overline{\G})$, finishing the proof of \eqref{eq:Iphg_mapping}.

To show \eqref{eq:Ixalpha}, let $\rho $ be a bdf;  the proof above applies with $\tilde x$  the geodesic bdf corresponding to $\rho^{2}g\big|_{T\partial M}$. Moreover, $\mu_\rho=\mu_{\tilde{x}}$ (by definition), so  $\delta=\mu_{\rho}+O(\mu_{\rho}^2) $.
 Using \eqref{eq:Ixlogx} with $s_j=\alpha>0$, $\ell=0$, and $f_{j\ell}=1$, we find
\begin{equation}
    I\rho^{\alpha}(y_*,\hat\eta_*,\mu_\rho)=\mu_\rho^{\alpha}\int _0^\pi \sin^{\alpha-1}(\theta)G(\alpha,\theta,y_*,\hat\eta_*,\mu_\rho)\d \theta,
\end{equation}
where $G$ is smooth in all variables, down to $\alpha=0$, with Taylor expansion in $\mu_{\rho}$ at $0$ of the form 
\begin{equation}
    G(\alpha,\theta,y_*,\hat\eta_*,\mu_\rho)=1+ O(\mu_\rho)\quad \text{as }\mu_\rho\to 0.
\end{equation}
For every fixed $\alpha>0$, \eqref{eq:Ixalpha} holds with $R_\alpha=\mu_\rho^{-1}
\int_0^\pi\sin^{\alpha-1}(\theta) (G-1)\d \theta$.
For the last claim,  if $\mu_\rho$ and $\alpha$ are sufficiently small we have
\begin{align}
        \mu_\rho^{-\alpha}\left|I\rho^{\alpha}-\mu_\rho^{\alpha}B(\alpha/2,1/2)\right|\leq \int _0^\pi \sin^{\alpha-1}(\theta)|
        G-1|\d\theta
        \leq \frac12\int _0^\pi \sin^{\alpha-1}(\theta)\d\theta\leq \frac12B(\alpha/2,1/2).
\end{align}
\end{proof}

We now turn to Lemma \ref{lem:surjective}.  Note the following pointwise inequality as a direct consequence of Cauchy-Schwarz' inequality: 
\begin{align}
    |I (fg)|^{2}(\gamma) \le I (|f|^2)(\gamma) \cdot I (|g|^2) (\gamma), \qquad \gamma\in \G.
    \label{eq:CS}
\end{align}
As an immediate consequence of \eqref{eq:Ixalpha}, we have that for any bdf $\rho$ on $M$ giving rise to a bdf $\mu_{\rho}$ on $\overline{\G}$, and for every $\alpha>0$, there exist constants $C_{\alpha,1},C_{\alpha,2}>0$ such that 
\begin{align}
    C_{1,\alpha} \mu_{\rho}^\alpha(\gamma) \le I \rho^\alpha (\gamma) \le C_{2,\alpha} \mu_{\rho}^\alpha(\gamma), \qquad \gamma\in \G. 
    \label{eq:IxAH}
\end{align}
\noindent For part \ref{item_surj_3} we will also need the following preparatory lemma.  As usual, $\tilde x$ is a geodesic bdf.

\begin{lemma}\label{lem:w_fct}Let $(M^\circ,g)$ be a non-trapping AH manifold and fix $\alpha>0$. There exists $w\in \tilde x^\alpha {C}^\infty(\bSM)$ which is bounded, everywhere positive in $S^*M^\circ$,  and satisfies $I(w^2)\in L^\infty (\G)$ and $1/(I(w^2))\in L^\infty(\G)$.
\end{lemma}
\begin{proof}
Consider a geodesic bdf  $\tilde x$, and fix coordinates $(\tilde x, y, \bar \xi, \eta )$ in a neighborhood of  $\partial {}^bT^{*}M$ in ${}^bT^{*}M$ where $\tilde x<2\epsilon$ for some $\epsilon >0$, so that $\bSM$ is represented as in \eqref{eq:bSM_coords}. 
Then, for $\tilde x\leq \epsilon$ we let $w_0\coloneqq  \tilde x \sqrt{1+|\eta|^2_{h_{\tilde x}}}$
and we extend it smoothly on $\bSM$, requiring that $w_0\geq \epsilon $ for $\tilde x\geq \epsilon $.
Then we let $w\coloneqq w_0^\alpha$.
We have $w\in\tilde x^\alpha C^\infty (\bSM)$, 
 $w>0 $ in $S^*M^{\circ}$ and 
 $w =0 $ on $\partial \bSM$.
 Moreover, $w$ is bounded: indeed, this is clear if $\tilde x\geq \epsilon$, and otherwise we observe that $\tilde x^{2}|\eta|^2_{h_{\tilde x}}\leq 1$ on $\bSM$.
 Since $w\in \tilde x ^\alpha{C}^\infty(\bSM)$,   $I(w^{2})\in C^{\infty}(\G)$ (this can be justified as in the proof of Proposition \ref{prop:phg_mapping_I}), in particular it is bounded over any compact set.
Since $w>0$ on $S^*M^\circ$, $I(w^2)>0$ everywhere and thus $I(w^2)$ is bounded below by a positive constant on any compact set.
Following the notation in the proof of Proposition~\ref{prop:phg_mapping_I}, for $(y_*,\eta_*)=(y_*,\hat \eta_*,\delta)\in \G$ with $\delta \ll 1$ we have, as in the computation \eqref{eq:Ixlogx} and using \eqref{eq:coordinates_bSM},
\begin{align}
    &I(w^2)(y_*,\hat \eta_*,\delta)=\int_0^{\pi }\big(\sin (\theta)(1+O(\delta))\big)^{2\alpha}\frac{\d \theta}{\sin(\theta)(1+O(\delta))}\quad =B(\alpha,1/2)+O(\delta).
\end{align}
This shows that $I(w^{2})$ is bounded above and below by a positive constant in a neighborhood of $\partial \overline {\G}$, hence everywhere.
\end{proof}

\begin{proof}[Proof of Lemma \ref{lem:surjective}]
    Proof of \ref{item_surj_1}.
    To prove boundedness, we compute, for fixed $\delta>0$ and $f\in C_c^\infty(SM^\circ)$:
    \begin{align*}
	\int_{\G} |I f|^2 \mu_{\rho}^{-2\delta} \d\Sigma_\partial &\stackrel{\eqref{eq:CS}}{\le} \int_{\G} I (\rho^{2\delta}) \cdot I (|f|^2 \rho^{-2\delta}) \mu_{\rho}^{-2\delta}\ \d\Sigma_\partial \\
	&\stackrel{\eqref{eq:IxAH}}{\le} C_{2,2\delta} \int_{\G} I (|f|^2 \rho^{-2\delta})\ \d\Sigma_\partial \stackrel{\eqref{eq:Santalo}}{=} C_{2,2\delta} \int_{SM^{\circ}} |f|^2 \rho^{-2\delta} \d\Sigma^{2n-1},
    \end{align*}
    hence \eqref{eq:IHgamma} is bounded with operator norm no greater than $\sqrt{C_{2,2\delta}}$ .
    
    To prove surjectivity, notice that we have  
    \begin{align}
	I \left( \rho^\alpha \frac{h}{I \rho^\alpha}\circ \pi_\G \right) = h, \qquad \alpha>0, \qquad h \in C_c^\infty (\G).
	\label{eq:rightinv}
    \end{align}
    Now fix $\delta>0$, and if $h\in L^2(\G, \mu^{-2\delta}\ \d\Sigma_\partial)$, we compute, fixing $\alpha>\delta$,
    \begin{equation}
	\int_{SM^{\circ}} \left| \rho^\alpha \frac{h}{I \rho^\alpha}\circ \pi_\G \right|^2 \rho^{-2\delta} \d\Sigma^{2n-1} \stackrel{\eqref{eq:Santalo}}{=} \int_{\G} \frac{|h|^2}{|I \rho^\alpha|^2} I (\rho^{2(\alpha-\delta)})\d\Sigma_\partial \stackrel{\eqref{eq:IxAH}}{\le} \frac{C_{2,2(\alpha-\delta)}}{C_{1,\alpha}^2} \int_{\G} |h|^2 \mu_{\rho}^{-2\delta}\ \d \Sigma_\partial.\label{eq:L2SMbd}
    \end{equation}
    Here the use of \eqref{eq:Santalo} is justified by the remarks at the end of Section \ref{ssec:Santalo}.
    Hence we have shown the existence of a bounded right-inverse for the operator \eqref{eq:IHgamma}.

    Proof of \ref{item_surj_2}. To show  unboundedness, we will show that for some well-chosen family $\{f_\alpha\}_{\alpha>0}$, the ratio $\|If_\alpha\|^2_{L^2(\G)}/\|f_\alpha\|^2_{L^2(SM^\circ)}$ can be made arbitrarily large. As a consequence of the statement below \eqref{eq:Ixalpha}, there exists $\mu'>0$ and $\alpha'>0$ such that for every $\alpha\in (0,\alpha')$, 
    \begin{align}
	\frac{1}{2} B\left( \alpha/2, 1/2 \right) \mu_{\rho}^\alpha(\gamma) \le I \rho^\alpha (\gamma) \le \frac{3}{2} B\left( \alpha/2, 1/2 \right) \mu_{\rho}^\alpha(\gamma) , \quad \gamma \in \G\cap \mu_{\rho}^{-1}( (0,\mu')).
	\label{eq:IxAH2}
    \end{align}
    Now fix $0\not\equiv\phi\in C_c^\infty(\G)$  with support contained in a  nonempty compact set  $K\subset \G\cap \mu_\rho^{-1} ( (0,\mu'))$, and set $f_\alpha \coloneqq  \rho^\alpha(\phi\circ\pi_\G)$.
For such an $f_\alpha$, 
    \begin{align*}
	\|If_\alpha\|^2_{L^2(\G)} = \int_{K} |\phi|^2 |I \rho^\alpha|^2 \d\Sigma_\partial \stackrel{\eqref{eq:IxAH2}}{\ge} \frac{1}{4} B(\alpha/2,1/2)^2 \int_{K} |\phi|^2 \mu_\rho^{2\alpha}\d\Sigma_\partial.
    \end{align*}
    On the other hand,
    \begin{align*}
	\|f_\alpha\|^2_{L^2(SM^\circ)} = \int_{SM^\circ} \rho^{2\alpha}|\phi\circ\pi_\G|^2\d\Sigma^3 &\stackrel{\eqref{eq:Santalo}}{=} \int_{K} I(\rho^{2\alpha}(|\phi|^2\circ\pi_\G))\d\Sigma_\partial \\
& \stackrel{\eqref{eq:IxAH2}}{\le} \frac{3}{2} B(\alpha,1/2)\int_{K}|\phi|^2 \mu_\rho^{2\alpha}\d\Sigma_\partial.
    \end{align*}
    Thus, 
    \begin{align*}
	\frac{\|I f_\alpha\|^2_{L^2(\G)}}{\|f_\alpha\|^2_{L^2(SM^\circ)}} \ge \frac{1}{6} \frac{B(\alpha/2,1/2)^2}{B(\alpha,1/2)} = \frac{2}{3} \frac{1}{\alpha}(1+o(1)) \quad \text{as } \alpha\to 0,
    \end{align*}
    hence the result.

Proof of  \ref{item_surj_3}.
Let $w$ be as in Lemma \ref{lem:w_fct} for a fixed $\alpha>0$: it can be viewed as a smooth function on either $S^*M^\circ$ or $SM^\circ$, by means of the musical isomorphisms.
Since $w$ is bounded and vanishes at $\partial \bSM$, $wL^2(SM^\circ) \subsetneq L^2(SM^\circ)$.
The boundedness statement follows from the estimate below, which is true for any $f\in C_c^\infty(SM^\circ)$ and does not require $1/(I[w^2])\in L^\infty(\G)$:
    \begin{align*}
        \int_\G |If|^2 \d\Sigma_\partial \stackrel{\text{C.-S.}}{\le} \int_\G I[(f/w)^2] I [w^2] \d\Sigma^{2n-1} &\le \|I[w^2]\|_\infty \int_\G I[(f/w)^2] \d\Sigma^{2n-1} \\
        &\!\!\!\stackrel{\eqref{eq:Santalo}}{\le} \|I[w^2]\|_\infty  \|f/w\|^2_{L^2(SM^\circ)}.
    \end{align*}

    For surjectivity we work with $\bSM$, slightly modifying the argument in the proof of part \ref{item_surj_1}.
    We now require that $1/(Iw^2)\in L^\infty(\G)$, and define the operator $B\colon C_c^\infty(\G) \to \tilde x^{2\alpha}{C}_c^\infty({}^b \overline{S^*M})$ by $Bh := w^2 \cdot\left(\frac{h}{I[w^2]}\circ \pi_G\right)$. We first note that $B$ extends by density to a bounded operator $B\colon L^2(\G)\to w L^2(S^*M^\circ)$, as follows from the estimate
    \begin{align*}
        \|w^{-1}Bh\|^2_{L^2(S^*M^\circ)} = \int_{S^*M^\circ} w^2 \left( \frac{h}{I[w^2]}\circ\pi_\G\right)^2 \d\Sigma^{2n-1} \stackrel{\eqref{eq:Santalo}}{=} \int_{\G} \frac{h^2}{I[w^2]} \d\Sigma_\partial \le \left\|\frac{1}{I [w^2]}\right\|_{\infty} \|h\|^2_{L^2(\G)},
    \end{align*}
    valid for every $h\in C_c^\infty(\G)$. Moreover, the identity $IBh = h$, originally true for all $h\in C_c^\infty(\G)$, extends to $h\in L^2(\G)$ by density. Hence we obtain surjectivity of $I\colon w L^2(S^*M^\circ)\to L^2(\G)$, which is equivalent to surjectivity of $I\colon w L^2(SM^\circ)\to L^2(\G)$.
\end{proof}

We end this section with the proof of Lemma \ref{lm:kernel} on the kernel of $I_m$. 

\begin{proof}[Proof of Lemma \ref{lm:kernel}]
First, for the polyhomogeneous case, let $q\in  \Aphg^{E}(M;S^{m-1}({}^{0}T^*M))$ with ${\Re E>0} $ and $f=\d^s q \in   \Aphg^{E}(M;S^{m}({}^{0}T^*M))$.
In coordinates $u^j$ near $\partial M$, $f$ takes the form \eqref{eq:0-tensor}, where $f_{i_1\dots i_m}\in \Aphg^{E}$.
Thus for any unit speed geodesic $\gamma(t)$ within the coordinate neighborhood
\begin{equation}
    I_m f (\gamma) =\sum_{i_j\in \{1,\dots,n\}}\int_{-\infty}^{\infty}f_{i_1\dots i_m}(\gamma(t))\frac{\dot\gamma^{i_1}(t)}{\rho(\gamma(t))}\dots \frac{\dot\gamma^{i_m}(t)}{\rho(\gamma(t))}\d t.
\end{equation}
Since $\gamma$ is unit speed, $\dot \gamma^{j}(t)/\rho(t)\in L^\infty(\Rm)$ for all $j$.
Then we know e.g. from \cite{Mazzeo1986} or \cite[Lemma 2.3]{Graham2019} that $\rho\circ \gamma(t)=O(e^{-|t|})$ as $t\to \pm \infty$. Moreover,
since $\Re(E)>0$, there exists $\epsilon>0 $ such that $\rho^{-\epsilon}|f_{i_1\dots i_m}|$ is bounded. Putting these together, the integrand is estimated by $Ce^{-\epsilon|t|}\in L^1(\Rm)$, so by dominated convergence  we have, for any unit speed $\gamma$,
\begin{equation}
\begin{aligned}
         I_m \d^{s}q=\lim _{T\to\infty}\int_{-T}^{T}\d ^{s} q(\gamma(t),\dot\gamma(t)^{\otimes m})\ \d t=\lim _{T\to\infty}\int_{-T}^{T}\frac{\d}{\d \tau}q(\gamma(\tau),\dot\gamma(\tau)^{\otimes (m-1)})\big|_{\tau = t}\ \d t\quad \\
        =\lim_{T\to \infty}(q(\gamma(T),\dot\gamma(T)^{\otimes (m-1)})-q(\gamma(-T),\dot\gamma(-T)^{\otimes (m-1)}))=0.
\end{aligned}
\end{equation}
The second equality follows from e.g., \cite[Eq.\ 3.3.17]{Sharafudtinov1994} and the last due to the fact that $q(\gamma(\pm T),\dot\gamma(\pm T)^{\otimes (m-1)})\leq C e^{-\epsilon T}$ as before. 
The general case follows upon covering $\gamma$ with finitely many charts.

If  $q\in \rho^{\delta }H_0^{s}(M;S^{m}({}^0T^*M))$ with $s\geq 1$ and $\delta>0$, 
the result follows by density of $ C^{\infty}_c(M^{\circ })$ in $\rho^{\delta }H_0^{s}(M)$, $\eqref{eq:diff0}$, and the continuity property \eqref{eq:IHgamma}.
\end{proof}

\subsection{Gauge representatives: proofs of Theorem \ref{thm:gauge_maybe} and Proposition \ref{prop:inj_on_repr_maybe2}} \label{sec:gauge_rep}

\subsubsection{Vertical Fourier Analysis in dimension 2} \label{sec:dim2}

Much of the material in Section \ref{sec:OSM} simplifies in  dimension 2  using isothermal coordinates. 
As mentioned in \cite[p. 2871]{Graham2019}, if $(M^{\circ},g)$ is a non-trapping, oriented  AH surface, it is simply connected.
By the existence of global isothermal coordinates for $(M,\rho ^{2}g)$, where $\rho $ is a bdf (see \cite[Theorem 3.4.16]{Paternain2023}) one sees that, upon renaming $\rho$ if necessary, the metric takes the form
 \begin{equation}\label{eq:isothermal}
    g=\frac{ (\d u^1)^2+(\d u^2)^2}{\rho^2}
\end{equation}
in terms of a coordinate chart identifying $M$ with $\Dm$.
In terms of \eqref{eq:isothermal}, a $0$-vector $v = v^1\rho \partial _{u^1}+v^2 \rho \partial_{u^2} \in {}^0 TM$ belongs to $\OSM$ (as defined in \eqref{eq:OSM}) if and only if $(v^1)^2+(v^2)^2=1$. 
This gives rise to the following parametrization of $\OSM$ 
\begin{equation}\label{eq:osm_param}
    \Dm\times (\Rm/2\pi\Zm) \ni (u^1,u^2,\theta)\mapsto \big((u^1,u^2),\cos(\theta) \rho\partial_{u^1}+\sin(\theta) \rho\partial_{u^2}\big)
    =\big(z,2\rho\Re (e^{i\theta}\partial_{z})\big), %
\end{equation} %
where $z\coloneqq u^1+iu^2$ is a complex coordinate with respect to the complex structure determined by $g$ and the orientation.
In terms of this parametrization, $\d \Sigma^3= \rho^{-2} \, \d u^1\d u^2 \d \theta$.

    Recall the canonical framing $\{X,X_\perp, V\}$ of $T(SM^{\circ})$, where $X$ is the generator of the geodesic flow, $V$ is the generator of the circle action on the tangent fibers uniquely determined by the metric and orientation, and $X_\perp\coloneqq [X,V]$. In the AH situation, we have the following.

\begin{proposition}
    If $(M^\circ ,g)$ is an oriented AH surface, then the canonical framing $\{X,X_\perp,V\}$ of $T(SM^\circ)$ extends to be smooth on $\OSM$ and tangent to the fibers of the natural projection $ \pi:\OSM\to M$ at $\partial\, \OSM$.
\end{proposition}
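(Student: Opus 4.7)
The plan is to work in global isothermal coordinates, in which $\OSM$ has a trivial product structure $\Dm \times (\Rm/2\pi\Zm)$, and then exhibit the three vector fields explicitly and check both claims by inspection.

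First I would use the isothermal coordinate system provided in \eqref{eq:isothermal}, writing $g=\rho^{-2}((\d u^1)^2 + (\d u^2)^2)$, and the resulting parametrization \eqref{eq:osm_param} of $\OSM$ by $(u^1,u^2,\theta)$. In these coordinates, the natural projection $\pi\colon\OSM\to M$ is simply $(u^1,u^2,\theta)\mapsto(u^1,u^2)$, so a vector at a point of $\partial\,\OSM$ is tangent to the fiber of $\pi$ if and only if its $\partial_{u^1}$ and $\partial_{u^2}$ components vanish there. The vertical vector field $V$, being the generator of the circle action $\theta\mapsto\theta+s$ determined by $g$ and the orientation, is just $V=\partial_\theta$ in this chart; it is trivially smooth on $\OSM$ and tangent to every fiber.

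The main computation is the formula for the geodesic vector field. Treating $g$ as conformally flat with conformal factor $e^{2\lambda}=\rho^{-2}$ and writing the Euler--Lagrange equations for a unit-speed geodesic whose velocity is $\rho(\cos\theta\,\partial_{u^1}+\sin\theta\,\partial_{u^2})$, a standard calculation yields
\begin{equation*}
    X = \rho\cos\theta\,\partial_{u^1} + \rho\sin\theta\,\partial_{u^2} + \bigl(\sin\theta\,\partial_{u^1}\rho - \cos\theta\,\partial_{u^2}\rho\bigr)\partial_\theta.
\end{equation*}
The first two terms are manifestly smooth sections of ${}^0TM$ pulled back to $\OSM$, and they vanish identically on $\{\rho=0\}$; the $\partial_\theta$ coefficient is smooth in $(u^1,u^2,\theta)$ up to the boundary. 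Thus $X$ extends smoothly to $\OSM$, and at any point of $\partial\,\OSM$ only the $\partial_\theta$-component survives, giving tangency to the fiber.

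For $X_\perp=[X,V]$, using $V=\partial_\theta$ and the formula above, the bracket reduces to differentiating the coefficients of $X$ in $\theta$:
\begin{equation*}
    X_\perp = \rho\sin\theta\,\partial_{u^1} - \rho\cos\theta\,\partial_{u^2} - \bigl(\cos\theta\,\partial_{u^1}\rho + \sin\theta\,\partial_{u^2}\rho\bigr)\partial_\theta,
\end{equation*}
which enjoys the same two properties by the same argument. The only point that requires some care is the derivation of the formula for $X$, where one must track factors of $\rho$ and signs arising from the conformal change; this is the one nontrivial step, but it is a routine calculation with the Christoffel symbols of a conformally flat metric. Everything else reduces to reading off coefficients in the chart $(u^1,u^2,\theta)$, so the argument concludes immediately after writing the three vector fields down.
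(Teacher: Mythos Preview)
Your proof is correct and follows essentially the same approach as the paper: write $V$, $X$, $X_\perp$ explicitly in the isothermal chart \eqref{eq:osm_param} and read off smoothness and fiber-tangency at $\rho=0$. The paper simply quotes the formulas for $X$ and $X_\perp$ from \cite{Paternain2023} rather than deriving them, but otherwise the argument is identical.
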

\begin{proof} In terms of \eqref{eq:osm_param}, the fiberwise counterclockwise rotation by angle $t$, $\rho_t\colon SM^\circ \to SM^\circ$ is nothing but the map $(u^1,u^2,\theta)\mapsto (u^1,u^2,\theta+t)$, with generator $V$ given by $V = \partial_\theta$.  
    This shows that $V$ extends to a smooth vector field on ${}^0\overline{SM}$ and tangential to its boundary. As in \cite[Lemma 3.5.6]{Paternain2023}, the vector fields $X,X_\perp$ take expressions
    \begin{align}
        X &= \cos(\theta)\rho\partial_{u^1}+\sin(\theta)\rho\partial_{u^2}+(\partial_{u^1}\rho\sin(\theta)-\partial_{u^2}\rho \cos(\theta))\partial_\theta, \label{eq:Xconf}\\*
        X_\perp &= \sin(\theta)\rho\partial_{u^1}-\cos(\theta)\rho\partial_{u^2}-(\partial_{u^1}\rho \cos(\theta)+\partial_{u^2}\rho \sin(\theta))\partial_\theta. \label{eq:Xperpconf}
    \end{align}
    When $\rho =0$, both vector fields are annihilated by $\d \pi:T\OSM\to TM$, which shows the claim.
\end{proof}

\begin{remark}\label{rem:edge}
    Denoting $\pi_\partial\coloneqq \pi\big|_{\partial\,\OSM}$, the
    space $\OSM$ is a manifold with boundary whose boundary has the structure of a fiber bundle $\partial\, \OSM\overset{\pi_\partial}{\longrightarrow} \partial M$, in other words an {\em edge space} in the sense of \cite{Mazzeo1991}. Recall that on an edge space, the space of edge vector fields consists of those smooth vector fields which are tangent to the fibers of the fiber bundle at the boundary. As an immediate corollary of the previous proposition we obtain that the vector fields $X,$ $X_\perp$, $V$ generate over $C^\infty(\OSM)$ the space of edge vector fields on $\OSM$. The Sasaki metric on $SM^\circ$ (for which $\{X,X_\perp,V\}$ is by definition orthonormal) therefore extends to an edge metric on $\OSM$.    
\end{remark}

Below we use the shorthand $\mathcal{F}$ to denote a fixed regularity space on either $M$ or $\OSM$, with $\mathcal{F}\in \{C^r,\dot{C}^\infty,\Aphg^E,\rho^\delta L^2\}$. We consider the Fourier decomposition
\begin{equation}
    \mathcal{F}(\OSM)=\bigoplus_{k\in \Zm}\Omega_k^{\mathcal{F}}, \qquad \text{where} \qquad \Omega_k^{\mathcal{F}}\coloneqq  \{ f\in \mathcal{F}(\OSM)\colon Vf = ik f \},
\end{equation}
with projectors 
\begin{equation}\label{eq:projector}
  \P_k:\mathcal{F}(\OSD)\to \Omega^{\mathcal{F}}_{k},\qquad  \P_kf(z,\theta)\coloneqq \frac{ e^{ik\theta}}{2\pi}\int_0^{2\pi}e^{-ik\phi}f(z,\phi)\d \phi, \quad k\in \Zm.
\end{equation}
Note that $\P_k^{2}=\P_k$ and $\P_{k'}\P_k=0$ for $k'\neq k$. A typical element $\hat{f}_k \in \Omega_k^{\mathcal{F}}$ in terms of a chart \eqref{eq:osm_param} will often be written as 
\begin{equation}\label{eq:hat_notation}
    \hat{f}_k \coloneqq e^{ik\theta} f_k, \quad f_k\in \F(M).
\end{equation}
Now fix $m\in \Nm_0$. Observing that a  frame of ${{S}}^m({}^0T^*M)$ is given by $\{\rho^{-m} {\sigma( \d z^{\otimes k} \otimes \d\bar{z}^{\otimes (m-k)} )}\}_{0\le k\le m}$ in the chart \eqref{eq:osm_param}, and that $\ell_m (\rho^{-m} \sigma( \d z^{\otimes k} \otimes \d\bar{z}^{\otimes (m-k)}) = e^{-i(m-2k)\theta}$, we deduce that the map \eqref{eq:lm} is an isomorphism
\begin{align}\label{eq:isomorphism_lm}
    \ell_m:\mathcal{F}(M;{{S}}^m({}^0T^*M))&\to \bigoplus_{j=0}^{m}\Omega_{-m+2j}^{\mathcal{F}}.
\end{align}
As in the compact case, from \eqref{eq:Xconf}-\eqref{eq:Xperpconf} it follows that if $\mathcal{F}\in \{\dot{C}^\infty,\Aphg^E\}$,
\begin{equation}
	X, X_\perp: \Omega_k^{\mathcal{F}}\to \Omega_{k-1}^{\mathcal{F}}\oplus \Omega_{k+1}^{\mathcal{F}}.
\end{equation}
Similar statements hold in finite regularity spaces. Moreover, defining the {\em Guillemin-Kazhdan operators} 
\begin{align}
\eta_\pm \coloneqq  \frac{1}{2}(X\pm iX_\perp),
\label{eq:GK}
\end{align}
one has $\eta_\pm:\Omega_k^{\mathcal{F}}\to \Omega_{k\pm 1}^{\mathcal{F}}$ for every $k\in \Zm$, $X=\eta_++\eta_-$,  and $X_\perp = -i \eta_++i\eta_-$.
Finally, we briefly discuss the following characterization of spaces $\F(M; \Stt^m ({}^0 T^* M))$ defined in \eqref{eq:ttspaces}. Following the discussion in, e.g., \cite{Paternain2015}, for $m\ge 2$ and $\F$ one of the regularity classes above, defining $\Theta_m^\F = \F(M; S^m ({}^0 T^* M)) \cap \ker \tr$ of trace free symmetric $m$-tensors and setting $\Theta_1^\F = \F(M; {}^0 T^* M)$, then the map 
\begin{align}
    \ell_m \colon \Theta_m^\F \longrightarrow \Omega_{-m}^\F \oplus \Omega_m^\F
    \label{eq:lmtraceless}
\end{align}
is an isomorphism. Via these isomorphisms, the divergence $\delta\colon \Theta^\F_m\to \Theta^\F_{m-1}$ acts by\footnote{This can be extracted from \cite{Paternain2015} as follows: in \cite[Appendix B]{Paternain2015}, the display $X_- (e_m + e_{-m}) = \eta_- e_m + \eta_+ e_{-m}$ appears, where $e_{\pm m}$ plays the role of $\hat{f}_{\pm m}$ above, and where $X_-$ is an operator satisfying in \cite[Section 3]{Paternain2015} the relation $X_- = \frac{1}{2} \ell_{m-1} \delta \ell_m^{-1}$ on $\Theta_m$ in the special case where $n=2$. } 
\begin{align*}
    \ell_{m-1} \delta \ell_m^{-1} (\hat{f}_{-m} + \hat{f}_m) = 2 (\eta_+ \hat{f}_{-m} + \eta_- \hat{f}_m), \qquad (\hat{f}_{-m}, \hat{f}_m)\in \Omega_{-m}^\F\times \Omega_m^\F.
\end{align*}
As a result, since by definition $\F(M; \Stt^m ({}^0 T^* M)) = \Theta_m^\F \cap \ker \delta$, we see that 
\begin{align}
    \ell_m \colon \F(M; \Stt^m ({}^0 T^* M)) \longrightarrow (\Omega_{-m}^\F \cap \ker \eta_+) \oplus (\Omega_{m}^\F \cap \ker \eta_-)
    \label{eq:ttdecom}
\end{align}
is an isomorphism for all $m\ge 1$. The same statement also holds with $\mathcal{F}=\rho^{\delta}L^2$, where the differential operators act a priori in a weak sense; as is known however, the elements on both sides are smooth over $M^{\circ}$. We also note that, on $\dot C^\infty(M)$, $\Aphg^E(M)$ or $\rho^\delta H^s_0(M)$,
\begin{align}
    \ell_1 \circ \d = X\circ \ell_0, \qquad \ell_1 \circ \star\d = - X_\perp \circ \ell_0.
    \label{eq:XXperp_intertwiners}
\end{align}

\subsubsection{Elliptic decomposition for the Guillemin-Kazhdan operators} \label{sec:GK}

The main result of this section is an elliptic decomposition for the $\eta_\pm$ operators, adapted to AH geometry. We will extensively use terminology and results from Appendix \ref{sec:the_0_calculus}. We assume throughout that $M$ is simply connected and oriented, so that there exists a global isothermal  chart. In such a chart, it follows from \eqref{eq:Xconf} that $\eta_\pm$ defined in \eqref{eq:GK} take the expression
\begin{equation}
    \eta_+=e^{i\theta}(\rho \partial_{{z}}-i\partial_z\rho\, \partial _\theta), \qquad 
    \eta_-=e^{-i\theta}(\rho \partial_{\bar{z}}+i\partial_{\bar{z}}\rho\, \partial _\theta).
\end{equation}
Therefore,
\begin{equation}\label{eq:eta_pm_isoth}
	\begin{aligned}
\eta_+(f(z)e^{ik\theta})=e^{i(k+1)\theta}(\rho \partial_{{z}}+k\partial_{{z}}\rho)f=e^{i(k+1)\theta}\rho^{-k+1}\partial_{{z}}(\rho^{k}f),\\%
    \eta_-(f(z)e^{ik\theta})=e^{i(k-1)\theta}(\rho \partial_{\bar{z}}-k\partial_{\bar{z}}\rho)f=e^{i(k-1)\theta}\rho^{k+1}\partial_{\bar{z}}(\rho^{-k}f).%
\end{aligned}	
\end{equation}  

\begin{proposition} \label{prop:decomp}
    Suppose that $(M^\circ,g)$ is an oriented, simply connected AH surface. Fix an integer $k$ with $\pm k\geq 1$, and consider  $f_k\in \rho^{\delta}H_0^s(M)$ with $s\in \Rm$ and  $\delta \in (-|k|+1/2,|k|-1/2)$.
    There exist unique  $g_{k}\in \rho^{\delta}H_0^s(M) $ and $v_{k\mp 1}\in \rho^{\delta}H_0^{s+1}(M)$ such that $\eta_\mp (g_{k}e^{ik\theta})=0$  and
    \begin{equation}\label{eq:decomposition}
        f_ke^{ik\theta}=g_ke^{ik\theta}+\eta_\pm (v_{k\mp 1}e^{i(k\mp 1)\theta}),
    \end{equation}
    with the estimate
    \begin{equation}\label{eq:estimate_v_k_f}
       \|v_{k\mp 1}\|_{\rho^\delta H_0^{s+1}(M)}+\|g_k\|_{\rho^\delta H_0^{s}(M)} \lesssim \|f_k\|_{\rho^\delta H_0^{s}(M)}.
    \end{equation}
    Moreover, $g_k\in \rho^{\delta}H_0^\infty(M) $.

    If  $f_k\in {\Aphg^E(M)} $, where $\Re(E)>-|k|+1$, the same statement holds with $v_{k\mp 1}\in \Aphg^G(M)$, where $\Re(G)\geq\min\{|k|,\inf(E)\}^{-}$,
     and $g_{k}\in \Aphg^{G\cup E}(M)$.
\end{proposition}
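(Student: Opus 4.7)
The plan is to reduce the decomposition to inverting the second-order 0-elliptic operator $L\coloneqq \eta_\mp\eta_\pm$ viewed as a scalar operator on $M$, and then to apply the Mazzeo--Melrose 0-calculus (Appendix~\ref{sec:the_0_calculus}). By complex conjugation (which swaps $\eta_+$ and $\eta_-$), it suffices to treat $k\ge 1$. Applying $\eta_-$ to the sought identity \eqref{eq:decomposition} and using $\eta_-(g_ke^{ik\theta})=0$ gives the elliptic PDE
\begin{equation*}
    \eta_-\eta_+\bigl(v_{k-1}e^{i(k-1)\theta}\bigr) = \eta_-\bigl(f_ke^{ik\theta}\bigr),
\end{equation*}
which by \eqref{eq:eta_pm_isoth} reads $e^{i(k-1)\theta} L v_{k-1}=\eta_-(f_ke^{ik\theta})$ with $L=\rho^{k+1}\partial_{\bar z}\rho^{-2k+2}\partial_z\rho^{k-1}$, a 0-differential operator of order $2$ on $M$. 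The first step is to compute its indicial family: substituting $v=\rho^s$ and using the AH normalization $4|\partial_z\rho|^2=1$ at $\partial M$ gives $L\rho^s=\tfrac{1}{4}(s-k)(s+k-1)\rho^s+O(\rho^{s+1})$, so the indicial roots are $s=k$ and $s=1-k$. The stated weight range $\delta\in(-|k|+1/2,|k|-1/2)$ is then precisely the open interval between these indicial roots after the shift built into the 0-Sobolev spaces $\rho^\delta H_0^s$.

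Next, I would invoke the Mazzeo--Melrose mapping theorem to conclude that $L\colon\rho^\delta H_0^{s+1}(M)\to\rho^\delta H_0^{s-1}(M)$ is Fredholm for $\delta$ in the above range. Injectivity should follow from pairing $Lv$ against $\bar v$ with respect to $\d\Sigma^3$: using the formal adjoint relation $\eta_\pm^*=-\eta_\mp$ on $L^2(\OSM,\d\Sigma^3)$, together with the weighted decay $\delta>1/2-k$ absorbing all boundary contributions in the integration by parts, one obtains $\|\eta_+(ve^{i(k-1)\theta})\|^2=0$; the local formula \eqref{eq:eta_pm_isoth} then forces $\rho^{k-1}v$ to be antiholomorphic on the simply connected domain $M^\circ\cong\Dm^\circ$, and the weighted decay excludes nonzero such solutions. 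Surjectivity follows by duality with the analogous argument for the formal adjoint. I would then set $v_{k-1}\coloneqq L^{-1}\eta_-(f_ke^{ik\theta})\in \rho^\delta H_0^{s+1}(M)$ and $g_k e^{ik\theta}\coloneqq f_ke^{ik\theta}-\eta_+(v_{k-1}e^{i(k-1)\theta})$, so that $\eta_-(g_ke^{ik\theta})=0$ by construction; the equation $\rho^{k+1}\partial_{\bar z}(\rho^{-k}g_k)=0$ from \eqref{eq:eta_pm_isoth} shows $\rho^{-k}g_k$ is holomorphic in $M^\circ$, hence smooth, promoting $g_k$ to $\rho^\delta H_0^\infty(M)$. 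The estimate \eqref{eq:estimate_v_k_f} is the boundedness of $L^{-1}$ composed with $\eta_\pm$ between 0-Sobolev spaces, and uniqueness is immediate from the established injectivity.

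For the polyhomogeneous statement, I would apply the 0-parametrix of $L$ to the data $\eta_-(f_ke^{ik\theta})\in\Aphg^{E}(M)$. Since the generalized inverse of a 0-elliptic operator preserves polyhomogeneity, with boundary expansions of the solution only permitted at powers coming from the indicial roots, $v_{k\mp1}$ would inherit an asymptotic expansion combining the index set $E$ with a possible term at the indicial root $s=|k|$ (the other root $s=1-|k|$ being excluded since $\Re(G)>1/2-|k|$ must hold for the chosen weight). This produces the index set $G$ satisfying $\Re(G)\ge\min\{|k|,\inf(E)\}^-$, and then $g_k=f_k-\eta_\pm v_{k\mp1}$ lies in $\Aphg^{G\cup E}(M)$. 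The main technical hurdle, I expect, is the polyhomogeneous bookkeeping: while the $L^2$ Fredholm theory is essentially a direct citation of the Mazzeo--Melrose apparatus, the exact form of the index set $G$ and the precise way in which the indicial root $|k|$ enters the expansion of $v_{k\mp 1}$ requires the most careful reading of the 0-calculus composition and parametrix results.
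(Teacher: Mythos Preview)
Your proposal is correct and follows essentially the same route as the paper: reduce to $k>0$ by conjugation, apply $\eta_-$ to obtain a second-order 0-elliptic equation for $v_{k-1}$, compute the indicial roots (you get $k$ and $1-k$, matching the paper's $P_{k-1}$), invoke the Mazzeo--Melrose Fredholm theorem, prove injectivity by an energy identity, define $g_k$ as the remainder, and handle the polyhomogeneous case via the parametrix mapping properties.

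Two places where the paper is more careful than your sketch. First, the paper does the Fredholm/invertibility analysis only at the self-adjoint weight $\delta=0$ (equivalently $\alpha=1/2$), where the $L^2$ pairing $\langle Lv,v\rangle$ and the density approximation by $C_c^\infty$ are unproblematic; it then extends the inverse to the full range $\delta\in(-|k|+1/2,|k|-1/2)$ using the explicit index-set structure of the parametrix (Proposition~\ref{prop:Sobolev_mapping}). Your claim that the integration by parts works directly at every $\delta$ in the range would need justification, since the pairing $\langle Lv,v\rangle_{L^2}$ is not a priori defined when $v\in\rho^\delta H_0^{s+1}$ with $\delta\ne0$. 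Second, your endgame for injectivity (``weighted decay excludes nonzero antiholomorphic solutions'') is vague: the paper finishes by observing that $\rho^{k-1}\Re(v)$ is harmonic for the hyperbolic Laplacian on $\Dm$ (via the global isothermal chart) and lies in $L^2(\Dm,\d V_H)$, then invokes the absence of $L^2$ harmonic functions there. Similarly, for $g_k\in\rho^\delta H_0^\infty$ the paper uses 0-ellipticity of $\overline{P_k}$ rather than interior holomorphicity, which by itself does not control boundary regularity in the $H_0^\infty$ scale.
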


\begin{remark}
	In the polyhomogeneous case, if $f_k\in \Aphg^E(M)$ satisfies $\eta_\mp (f_ke^{ik\theta})=0 $, $\pm k\geq 1$, then by uniqueness we have $g_k=f_k$ and $v_{k\mp1}=0$.
	Since then $G=\emptyset$,  the space $\Aphg^E(M)$ where $f_k$ lies stays unchanged.
\end{remark}

Since $\overline{\eta_-} = \eta_+$, we first observe that the case $k\le -1$ can be deduced by complex conjugation from the case $k\ge 1$: assuming the latter is known, and if $f_k e^{ik\theta} \in \Omega_k^{\cal F}$ for some $k\le -1$, then $\overline{f_k e^{ik\theta}} \in \Omega_{-k}^{\cal F}$ and as such can be written as 
\begin{align*}
    \overline{f_k e^{ik\theta}} = g_{-k} e^{-ik\theta} + \eta_+ \left( v_{-k-1} e^{-i(k+1)\theta} \right), \qquad \eta_- (g_{-k} e^{-ik\theta}) = 0.
\end{align*}
Setting $g_k=\overline{g_{-k}}$ and $v_{k+1}=\overline{v_{-k-1}}$ will yield decomposition \eqref{eq:decomposition}. Hence it suffices to focus on an elliptic decomposition for $\eta_+$ (case $k>0$) from now on. Applying $\eta_-$ to \eqref{eq:decomposition} gives the equation $\eta_- (f_k e^{ik\theta}) = \eta_- \eta_+ (v_{k-1} e^{i(k-1)\theta})$, hence determining $v_{k-1}$ calls for a solvability result for the operator $\eta_- \eta_+ \colon \Omega_{k-1}^{\cal F}\to \Omega_{k-1}^{\cal F}$ in appropriate spaces, akin to \cite[Lemma 6.5.4]{Paternain2023} in the compact case. To this end, using \eqref{eq:eta_pm_isoth}, we write for $k\ge 0$, 
\begin{align}
    \begin{split}
	\eta_-\eta_+(f(z)e^{ik\theta}) &= e^{ik\theta}P_k f, \\
	\text{where}\quad P_k &\coloneqq \rho^2\partial_{\bar{z}}\partial_z-k\rho\partial_{\bar{z}}\rho \,\partial_{z}+k\rho\partial_z\rho\,\partial_{\bar{z}}+(k\rho\partial_{\bar{z}z}^2\rho-k(k+1)\partial_{\bar{z}}\rho\partial_z\rho). 	
    \end{split}    
    \label{eq:Pmp}
\end{align}
Note that $P_k\in \Diff_0^2(M)$ (see Appendix \ref{sec:the_0_calculus}) and is formally self-adjoint with respect to the measure $\rho^{-2}\frac{1}{2}|\d z\wedge \d \bar{z}|$. Also observe that if $k=0$, $P_0=\frac{1}{4}\Delta_g$, where $\Delta_g$ is the negative Laplace-Beltrami operator of $g$, for which \eqref{eq:H2-L^2} is known, see e.g. \cite[Theorem C and Proposition F]{Lee2006}, as well as the references there.

\begin{proposition}\label{prop:Ppm}
    Suppose that $M$ is a simply connected, oriented AH surface.
    Let $k\ge 0$ be an integer. For every $s\in \Rm$ and $\delta \in (-k-1/2,k+1/2)$, the map
    \begin{equation}\label{eq:H2-L^2}
        P_k \colon \rho^\delta H_0^s(M)\to \rho^\delta H^{s-2}_0(M) 
    \end{equation}
    is an isomorphism.
    
    Further, let $h\in \Aphg^{E}(M)$ with $\Re({E})>-k$. For $\delta\in (-k-1/2,k+1/2)\cap (-\infty,\inf(E)-1/2)$, let $f\in \rho^\delta H_0^2(M)$ be the unique solution to $P_k f = h$. Then $f\in \Aphg^{G}(M)$, where ${G}\geq \min\{k+1,\inf(E)\}^{-}$.
\end{proposition}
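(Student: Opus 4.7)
The plan is to treat $P_k$ as a 0-elliptic operator in the Mazzeo--Melrose 0-calculus of \cite{Mazzeo1987,Mazzeo1991} and deduce the isomorphism statement from the standard Fredholm-plus-injectivity scheme. First I would verify directly from \eqref{eq:Pmp} that $P_k \in \Diff_0^2(M)$ (every coefficient is smooth up to $\partial M$, and every first-order term that carries a derivative is paired with a factor of $\rho$, so $P_k$ is generated by $\rho\partial_z$ and $\rho\partial_{\bar z}$). The 0-principal symbol is $\zeta\bar\zeta$ in the 0-dual coordinates, hence $P_k$ is 0-elliptic. Next I would compute the indicial family. A direct evaluation of $P_k(\rho^s\phi)$ gives
\begin{equation*}
    P_k(\rho^s\phi) = \bigl(s(s-1)-k(k+1)\bigr)|\partial_z\rho|^2\rho^s\phi + O(\rho^{s+1}),
\end{equation*}
and the AH condition together with the isothermal form of $g$ normalizes $|\partial_z\rho|^2|_{\partial M} = 1/4$. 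Thus the indicial polynomial factors as $(s-k-1)(s+k)$, yielding indicial roots $s=-k$ and $s=k+1$. The Mazzeo--Melrose Fredholm theorem, combined with the explicit invertibility of the normal operator (which reduces to the analogous problem on hyperbolic space, where the constant-coefficient-like structure in the upper half plane allows a direct Fourier treatment), then yields that $P_k\colon \rho^\delta H_0^s(M)\to \rho^\delta H_0^{s-2}(M)$ is Fredholm of index $0$ precisely when $\delta$ lies in the open interval between the two indicial roots shifted by the $L^2_0 = L^2(M,dV_g)$ baseline of $1/2$, namely $\delta\in(-k-1/2,k+1/2)$.

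For injectivity I would exploit the factorization $P_kf\cdot e^{ik\theta} = \eta_-\eta_+(fe^{ik\theta})$. Using $\eta_-^* = -\eta_+$ on $L^2(\OSM)$ and Santal\'o's formula, pairing $P_kf$ against $\bar f$ with the hyperbolic volume form gives
\begin{equation*}
    \langle P_kf,f\rangle_{L^2(M,dV_g)} = -\|\eta_+(fe^{ik\theta})\|_{L^2(\OSM)}^2 + \text{boundary terms},
\end{equation*}
and the boundary terms vanish in the natural $L^2_0$ range for $\delta$ because the weight is strictly above the relevant indicial root. Therefore $P_kf=0$ forces $\eta_+(fe^{ik\theta})=0$, which by \eqref{eq:eta_pm_isoth} means $\partial_z(\rho^kf)=0$, i.e.\ $\rho^kf$ is antiholomorphic on $M\cong\Dm$. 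The constraint $f\in\rho^\delta H_0^s$ with $\delta<k+1/2$ translates into a weighted Bergman condition with weight blowing up at $\partial\Dm$, which forces the antiholomorphic function $\rho^kf$ to vanish identically. For $\delta$ in the lower part of the interval one instead uses formal self-adjointness of $P_k$ with respect to $dV_g$: the $L^2(M,dV_g)$-dual of the map in \eqref{eq:H2-L^2} is $P_k\colon \rho^{-\delta}H_0^{2-s}\to\rho^{-\delta}H_0^{-s}$, and since $-\delta$ also lies in $(-k-1/2,k+1/2)$, triviality of the kernel for one sign of $\delta$ yields triviality of the cokernel for the opposite sign, upgrading Fredholm of index $0$ to an isomorphism throughout the entire strip.

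For the polyhomogeneity statement, I would apply the standard boundary regularity theorem of the 0-calculus: if $P_kf=h\in\Aphg^E$ with $\Re(E)>-k$ and $f\in\rho^\delta H_0^2$ with $\delta\in(-k-1/2,k+1/2)\cap(-\infty,\inf(E)-1/2)$, then $f$ is polyhomogeneous, and its index set is determined by propagating the index set of $h$ through the indicial operator. The only indicial root compatible with our upper weight constraint $\delta<k+1/2$ is $s=k+1$, so the allowable expansion terms come from either the forcing $h$ or from the indicial root $k+1$; this produces $f\in\Aphg^G$ with $\Re(G)\ge \min\{k+1,\inf(E)\}^-$.

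The main obstacle is the injectivity step: carefully justifying the integration-by-parts identity across the whole weighted range, and turning the resulting ``$\rho^kf$ antiholomorphic with weighted $L^2$ control'' statement into $f\equiv 0$, requires a careful weighted Bergman-type argument and a duality trick to cover the negative part of the strip where $f$ need not even lie in $L^2(dV_g)$. The rest of the argument is a direct application of the 0-calculus machinery.
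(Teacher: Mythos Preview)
Your proposal follows essentially the same route as the paper: 0-ellipticity of $P_k$, indicial roots $\{-k,k+1\}$, Fredholmness via the Mazzeo--Melrose theorem, injectivity from the identity $(P_kf,f)_{L^2(M)} = -\|\eta_+(fe^{ik\theta})\|_{L^2(\OSM)}^2$ yielding $\partial_z(\rho^kf)=0$, surjectivity by formal self-adjointness, and polyhomogeneity from the large-calculus structure of the inverse.

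One structural difference worth flagging: rather than verify full ellipticity and the injectivity integration-by-parts at every weight in the strip (and then patch the negative-$\delta$ range by duality, as you propose), the paper checks full ellipticity and proves invertibility \emph{only} at the self-adjoint weight $\delta=0$, where density of $C_c^\infty(M^\circ)$ in $H_0^2(M)$ makes the integration-by-parts clean and the vanishing of $\rho^kf$ follows from the absence of $L^2(dV_H)$ harmonic functions on the disk. It then uses the explicit index-set bounds on the inverse $G_k\in\Psi_0^{-2}(M)+\Psi_0^{-\infty,(E_\ell,E_f,E_r)}(M)$ (with $E_\ell,E_r\ge k+1$, $E_f\ge 0$) together with the weighted Sobolev mapping properties of the large 0-calculus to propagate the two-sided inverse to all $\delta\in(-k-1/2,k+1/2)$ at once. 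This bypasses both your weighted-Bergman argument and the duality trick, and is somewhat cleaner. The paper also checks invertibility of the reduced normal operator by an explicit $L^2([0,\infty),t^{-2}dt)$ pairing rather than by reduction to hyperbolic space.
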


\begin{proof}
    We will use Theorem \ref{thm:fredholm}. For definitions of the objects below, see Appendix \ref{sub:the_0_calculus}. The  operator $P_k\in \mathrm{Diff}_0^2(M)$ is elliptic, since its 0-principal symbol is
    \begin{equation}
	\sigma_0^2(\zeta)=-\frac{1}{4}|\xi|_{g(z)}^2\neq 0, \quad \text { for all } \quad \zeta =(z,\xi_j \rho^{-1}\d u^j)\in {}^0T^*M\setminus 0.
    \end{equation}

    To check full ellipticity, we compute the indicial operator and the normal operator at a fixed $p\in \partial M$.
    Recall that the metric is of the form \eqref{eq:isothermal} in isothermal coordinates near $p$. Because it is AH, $\|\d \rho\big|_{\partial M}\|_{\rho^2g}=1$, so upon fixing $p\in \partial M$ we can translate and rotate  the coordinates so that $(u^1,u^2)=0$ and $\d u^1=\d \rho$ at $p$, which also implies $\d \rho (\partial_{u^2})=0$ there.
    Thus $\rho=u^1+O(|u|^2)$ and we can use coordinates $(\rho,u^2)$, which allows us to  write \eqref{eq:Pmp} as
    \begin{equation}
        P_k = \frac{1}{4}\left(\rho^2(\partial_{\rho}^2+\partial_{u^2}^2)+2ik\rho \partial_{u^2}- k( k+1)\right)+ Q,
    \end{equation}
    where $Q\in \Diff_0^2(M)$ has smooth  coefficients which vanish at $(\rho,u^2)=0$.
       Freezing coefficients at $p$, taking Fourier transform in $u^2$ with dual variable $\eta$, and finally rescaling by setting $t=\rho|\eta|$, $\hat{\eta}=\eta/|\eta|$, we find that the reduced normal operator is given by 
    \begin{align}
       \hat{ N}(P_k,p,\hat{\eta})=&\frac{1}{4}\left(t^2\partial_t^2-t^2-2kt \hat{\eta}-k(k+1)\right)\\
       =&\frac{1}{4}\left(t^2\partial_t^2-(t+k \hat{\eta})^2-k\right)\text{ on }[0,\infty)_t, \quad  p\in \partial M,\quad \hat{\eta}\in \Sm^0.
    \end{align}   
    The {indicial operator} is 
    \begin{equation}
        I(P_k,p)=\frac{1}{4}\left(t^2\partial_t^2- k( k+1)\right),\quad  p\in \partial M,
    \end{equation}
     hence the {indicial family} is 
    \begin{equation}
        I(P_k,p,\zeta)=\frac{1}{4}\left(\zeta^2-\zeta- k(k+1)\right),\quad \zeta\in \Cm.
    \end{equation}    
    The {boundary spectrum} is therefore given by
    \begin{align}\label{eq:b-spectrum}
        \mathrm{Spec}_b(P_k,p )= \{1+k,-k\},
    \end{align}
    and we observe that it is independent of the basepoint $p$.

    To obtain Fredholm properties between the spaces in \eqref{eq:H2-L^2}, we could prove the requirements in Definition \ref{elliptic at weight} for $\alpha \in (-k,k+1)$.
    It is actually easier to restrict attention to the case $\alpha =1/2$  (this corresponds to the formally self-adjoint functional setting with $\delta = 0$ in \eqref{eq:H2-L^2}), and prove Fredholmness and invertibility.
    Since Theorem \ref{thm:fredholm} provides us with a detailed understanding of the inverse, we can  use Proposition~\ref{prop:Sobolev_mapping} to show that this inverse is well defined as an operator $\rho^\delta H_0^{s-2}(M)\to \rho^\delta H_0^s(M)$ for all $\delta\in (-k-1/2,k+1/2)$, and that it inverts $P_k$. 
   
    We check the requirements of Definition \ref{elliptic at weight} for $\alpha = 1/2$. Item \ref{item1}  holds by \eqref{eq:b-spectrum}. For \ref{item2}, suppose that 
    \begin{equation}\label{eq:Nu=0}
        \left(t^2\partial_t^2-(t+k \hat{\eta})^2-k\right)f=0,\quad f\in  L^2\big([0,\infty),t^{-2}{\d t}\big).
    \end{equation}
    As mentioned in the Appendix, $f\in C^\infty((0,\infty))$ and is Schwartz as $t\to \infty$.
    Moreover, by ellipticity of $ \hat{ N}(P_k,p,\hat{\eta})$ in the 0-calculus sense and \eqref{eq:0-ellipticity}, and the fact that $u$ is rapidly decaying at infinity, we also have $ {f}\in H^2_0([0,\infty),\frac{\d t}{t^2})$, where the Sobolev regularity near infinity is defined using powers of $\partial_t$.
    Upon multiplying \eqref{eq:Nu=0} by $\bar{f}$ and integrating with respect to $t^{-2}\d t$,
    \begin{equation}
    \label{eq:formally_sadj}
    \begin{aligned}
        0=&\int_0^\infty \left(t^2\partial_t^2-(t+k \hat{\eta})^2- k\right){f}\bar{{f}}t^{-2}\d t\\
        =&-\| t\partial_t {f}\|^2_{L^2([0,\infty),\frac{\d t}{t^2})}-\int_0^\infty \left((t+k \hat{\eta})^2+k\right)|{f}|^2t^{-2}\d t,
    \end{aligned}
    \end{equation}
    so $f=0$ since $k\geq 0$. 
    This computation also proves item \ref{item3} because $\hat{ N}(P_k,p,\hat{\eta})$ is formally self-adjoint with respect to the measure $\d t/t^2$.

    By Theorem \ref{thm:fredholm}, for any $s\in \Rm$ the operator
    \begin{equation}
      	P_k:H_0^s(M)\to H_0^{s-2}(M) \label{eq:d=0}
    \end{equation}   
    is Fredholm and in particular has closed range. We will check injectivity in the case $s=2$. This suffices, because if it does hold then it will hold for any $s$ by the ellipticity of $P_k$. So suppose that $f\in H_0^2(M)$ satisfies $P_k f=0$. Since ${C}_c^\infty(M^\circ)$ is dense in $H_0^2(M)$ , we can consider ${C}_c^\infty(M^\circ)\ni f_j\to f\in H_0^2(M)$.
    In terms of global isothermal coordinates identifying $M$ with $\Dm$ we compute, using everywhere the measures induced by $g$, 
    \begin{align}
	\big( P_k {f}_j,{f}_j\big)_{L^2(\Dm,\d V_g)} &= (2\pi)^{-1} \big( e^{-ik\theta}\eta_{-}\eta_{+}e^{ik\theta}{f}_j,{f}_j\big)_{L^2({}^0\overline{S\Dm})} \\
	&\quad= -(2\pi)^{-1} \big( \eta_{+}e^{ik\theta}{f}_j,\eta_{+}e^{ik\theta}{f}_j\big)_{L^2({}^0\overline{S\Dm})} \label{eq:IBP}
        \overset{\eqref{eq:eta_pm_isoth}}{=} -\|\rho^{-k+1}\partial_z(\rho ^{k }{f}_j)\|_{L^2(\Dm,\d V_g)}^2.
    \end{align}
    For the integration by parts we used \cite[Lemma 6.1.5]{Paternain2023}.
    Note that $\rho^{-k+1}\partial_z(\rho ^{k }\cdot )\in \Diff_0^1(\Dm)$, so $ \rho^{-k+1}\partial_z(\rho ^{k }{f}_j)\to  \rho^{-k+1}\partial_z(\rho ^{k }{f})$ in $L^2(\Dm)$. Thus taking a limit we find that $\partial_z(\rho ^{k }f)=0$.
    In particular, $\rho^{k}\Re(f)$ is harmonic with respect to the Laplacian of the constant curvature $-1$ hyperbolic metric on $\Dm$, and since $\d V_g$ is a smooth, non-vanishing multiple of $\d V_H$, $\rho^{k}\Re(f)$ is also in $L^2(\Dm,\d V_H)$. This implies that $\Re(f)$ must be 0, since the hyperbolic Laplacian does not have eigenvalues in $L^2(\Dm,\d V_H)$. Since $\rho ^{k }f$ is antiholomorphic and purely imaginary, $f=0$. Hence we obtain injectivity in \eqref{eq:d=0} for $s=2$ and for general $s\in \Rm$ by ellipticity.
    
    Since $P_k$ has closed range, surjectivity in \eqref{eq:d=0} is equivalent to injectivity of 
    \begin{equation}
    	P_k^*:H_0^{2-s}(M)\to H_0^{-s}(M),
    \end{equation}
    where $P_k^*$ is the adjoint in the functional setting \eqref{eq:d=0}; since $C_c^\infty(M^{\circ})$ is dense in $H^s_0(M)$ for $s\geq 0$, $P_k^*$  coincides with the $L^2(M)$-formal adjoint of $P_k$ on $L^2(M)$. %
     Since $P_k$ is formally self-adjoint and injective, it is surjective.

    By \eqref{eq:b-spectrum}, the index sets $\mathcal{E}_\pm $ in Definition \ref{def:index_sets} corresponding to $P_k$ are given by $\mathcal{E}_+=\{(1+l,0)\in \Nm\times\Nm_0 :\ l\geq k\}$ and $\mathcal{E}_-={\{(l,0)\in  \Nm_0\times\Nm_0:l\geq k\}}$. Therefore, Theorem \ref{thm:fredholm} implies that the inverse $G_k: L^2(M)\to H_0^2(M) $ of $P_k$ satisfies
    \begin{equation}\label{eq:Pinv}
        G_k\in \Psi^{-2}_0(M)+\Psi^{-\infty,({E}_{{\ell}},{E}_{{f}},{E}_{{r}})} _0(M),\qquad  {E}_{{\ell}},{E}_{{r}}\geq 1+k,\quad {E}_{{f}}\geq 0.
    \end{equation}    
    This implies that one has 
    \begin{equation}\label{eq:Pinv2}
        P_k G_k f=f,\qquad G_k P_k h = h
    \end{equation} 
    for all $f\in  L^2(M)$ and $h\in H_0^{2}(M)$, in particular if $f$, $h\in C_c^\infty(M^\circ)$.
    By \eqref{eq:Pinv} and Proposition \ref{prop:Sobolev_mapping}, for any $s\in \Rm$, \eqref{eq:Pinv2} can be extended by continuity to $f$ and $h$ in $ \rho^\delta H_0^{s}(M)$ and $\rho^\delta H_0^{s-2}(M)$ respectively, provided $\delta\in (-k-1/2,k+1/2)$. This finishes the proof of the first statement of the proposition.

    Now if $h\in \Aphg^E(M)$ with $\Re(E)>-k$, and $f\in \rho^\delta H_0^s(M)$ (with $\delta $ as in the statement) solves $P_k f = h$, we have  $f=G_k h$. Therefore by Proposition \ref{prop:phg_mapping}, since $\Re({E}_{r}+{E})>1$,
    \begin{equation}
        f\in \Aphg^{G}(M), \qquad {G}=({E}+{E}_{{f}})\overline{\cup}{E}_{{l}},\quad \text{so } \Re(G)\geq \min\{k+1,\inf(E)\}^{-}.     
    \end{equation} 
    The ``$-$'' superscript is explained by the fact that  the leading order term of the expansion of $h$ could contain logarithmic factors (or it could be a smooth  multiple of $\rho^{k+1}$, causing the extended union to have a logarithmic factor at leading order).
\end{proof}

We are now ready to prove the main result of the section.

\begin{proof}[Proof of Proposition \ref{prop:decomp}, case $k>0$]
    Applying $\eta_-$ to  \eqref{eq:decomposition} (case $k>0$) we find
    \begin{equation}
	\eta_-( f_ke^{ik\theta})=\eta_-\eta_+(v_{k-1}e^{i(k-1)\theta})\quad\implies\quad (\rho \partial_{\bar{z}}-k\partial_{\bar{z}}\rho)f_k=P_{k-1}v_{k-1}.
	\label{eq:Ppmeq}
    \end{equation}
    Since $(\rho \partial_{\bar{z}}-k\partial_{\bar{z}}\rho)\in \Diff_0^1(M),$ the left hand side is in $\rho^{\delta}H_0^{s-1}(M)$. Therefore, by Proposition \ref{prop:Ppm} there exists a unique solution $v_{k-1}$ to \eqref{eq:Ppmeq} in $\rho^\delta H_0^{s+1}(M)$ and one has the estimate
    \begin{equation}
	\|v_{k-1}\|_{\rho^\delta H_0^{s+1}(M)}\lesssim \|f_k\|_{\rho^\delta H_0^{s}(M)}.\label{eq:estimate_v_f}
    \end{equation}
    Upon setting
    \begin{equation}
	g_k=f_k-e^{-ik\theta}\eta_+(v_{k-1}e^{i(k-1)\theta})=f_k-(\rho\partial_{z}+(k-1)\partial_z\rho)v_{k-1}\in \rho^\delta H_0^{s}(M),\label{eq:g_k}
    \end{equation}
    we obtain \eqref{eq:decomposition} with the estimate 
    \begin{equation}
	\|g_k\|_{\rho^\delta H_0^{s}(M)}\lesssim (\|f_k\|_{\rho^\delta H_0^{s}(M)}+\|v_{k-1}\|_{\rho^\delta H_0^{s+1}(M)})\overset{\eqref{eq:estimate_v_f}}{\lesssim } \|f_k\|_{\rho^\delta H_0^{s}(M)}.
    \end{equation}
    This, together with \eqref{eq:estimate_v_f}, yields \eqref{eq:estimate_v_k_f}. For uniqueness, if $g_k'$, $v_{k-1}'$ as in the statement satisfy \eqref{eq:decomposition}, it is not hard to see that
    \begin{equation}
	\eta_-\eta_+\left((v_{k-1}-v_{k-1}')e^{i(k-1)\theta}\right)=0\implies P_{k-1} (v_{k-1}-v_{k-1}')=0\implies v_{k-1}-v_{k-1}'=0,
    \end{equation}
    by Proposition \ref{prop:Ppm}. This in turn implies $g_k=g_k'$.
    Regarding the regularity of $g_k$, note that $\eta_+\eta_-(g_ke^{ik\theta})=0\implies \overline{P_k} g_k=0$, so the ellipticity of $\overline{P_k}$ yields $g_k\in \rho^\delta H^\infty_0(M)$.
    
    The proof of the polyhomogeneous case follows the same steps: if $f_k\in \Aphg^E(M)$ with $\Re(E)>-k+1$, $(\rho \partial_{\bar{z}}-k\partial_{\bar{z}}\rho)f_k\in \Aphg^E(M)$ in \eqref{eq:Ppmeq}.
    By Proposition \ref{prop:Ppm} we obtain $v_{k-1}\in \Aphg^G(M)$ with $\Re(G)\geq \min\{k,\inf(E)\}^{-}$ and by \eqref{eq:g_k} we obtain 
    $g_k\in \Aphg^{G\cup E}(M).$
\end{proof}

\subsubsection{Proof of Lemma \ref{lem:oneforms}, Theorem \ref{thm:gauge_maybe} and Proposition \ref{prop:inj_on_repr_maybe2}}

\begin{proof}[Proof of Lemma \ref{lem:oneforms}] Let $f$ be as in the statement, and write, according to \eqref{eq:hat_notation}, \eqref{eq:isomorphism_lm}, $\ell_1 f = \hat{f}_1 + \hat{f}_{-1}$, where ${f}_{\pm 1}\in {\rho^\delta H_0^s(M)}$. Applying Proposition \ref{prop:decomp} to $\hat{f}_1$ and $\hat{f}_{-1}$ separately, we obtain
    \begin{align*}
	   \hat{f}_1 = \eta_+ \hat{v}_{0,+} + \hat{g}_1, \qquad	\hat{f}_{-1} = \eta_- \hat{v}_{0,-} + \hat{g}_{-1},
    \end{align*}
    where $\hat{v}_{0,\pm}\in \rho^\delta H_0^{s+1}(M)$ and $\hat{g}_{\pm 1}\in \rho^\delta H_0^{s}(M) \cap \ker \eta_\mp$. Using that $\eta_\pm = \frac{1}{2} (X \pm i X_\perp)$, %
    \begin{align*}
        \ell_1 f = \hat{f}_1 + \hat{f}_{-1} &= \hat{g}_1 + \hat{g}_{-1} + X \left( \frac{\hat{v}_{0,+}+\hat{v}_{0,-}}{2}\right) - X_\perp \left( \frac{\hat{v}_{0,+}-\hat{v}_{0,-}}{2i}\right) \\
        &= \ell_1 \left( \tilde{f}_1  + \d \left( \frac{\hat{v}_{0,+}+\hat{v}_{0,-}}{2}\right) + \star\d  \left( \frac{\hat{v}_{0,+}-\hat{v}_{0,-}}{2i}\right) \right) 
    \end{align*}
    where we have used \eqref{eq:XXperp_intertwiners}, and set $\tilde{f}_1 := \ell_1^{-1}(\hat{g}_1 + \hat{g}_{-1})$ by virtue of isomorphism \eqref{eq:ttdecom}. Decomposition \eqref{eq:oneform_decomp} follows upon defining $f_s := \frac{\hat{v}_{0,+}-\hat{v}_{0,-}}{2i}$ and $q_0 := \frac{\hat{v}_{0,+}+\hat{v}_{0,-}}{2}$.
\end{proof}

\begin{proof}[Proof of Theorem \ref{thm:gauge_maybe}] Let $f$ be as in the statement, and write, according to \eqref{eq:isomorphism_lm}, $\ell_m f = \hat{f}_m + \hat{f}_{-m} + \sum_{k=1}^{m-1} \hat{f}_{-m+2k}$, where ${f}_k\in {\rho^\delta H_0^s(M)}$ (recall \eqref{eq:hat_notation}). We now apply Proposition \ref{prop:decomp} to $\hat{f}_m$ and $\hat{f}_{-m}$ separately: decompose uniquely 
    \begin{align*}
	\hat{f}_m &= \eta_+ \hat{v}_{m-1} + \hat{g}_m = X \hat{v}_{m-1} + \hat{g}_m - \eta_- \hat{v}_{m-1}, \\
	\hat{f}_{-m} &= \eta_- \hat{v}_{-m+1} + \hat{g}_{-m} = X \hat{v}_{-m+1} + \hat{g}_{-m} - \eta_+ \hat{v}_{-m+1}.
    \end{align*}
    Then set $q \coloneqq  \ell_{m-1}^{-1}(\hat{v}_{m-1} + \hat{v}_{-m+1})$, $\tilde{f} \coloneqq  \ell_m^{-1}(\hat{g}_m + \hat{g}_{-m})$ using isomorphisms \eqref{eq:lmtraceless} and \eqref{eq:ttdecom}, and $\lambda \coloneqq  \ell_{m-2}^{-1}(\sum_{k=1}^{m-1} \hat{f}_{-m+2k} - \eta_- \hat{v}_{m-1} - \eta_{+} \hat{v}_{-m+1})$ using the isomorphism \eqref{eq:isomorphism_lm}.
    From the coordinate expression \eqref{eq:lmf} and from \eqref{eq:Hs} it is not hard to see that $q$ and $\tilde{f}$ have the same regularity as $v_{\pm (m-1)}\in \rho^\delta H_0^{s+1}(M)$ and $g_{\pm m}\in \rho^\delta H_0^{s}(M)$ respectively. 
    The polyhomogeneous case follows in the same way, and  the construction of $ q$, $\tilde{f}$, $\lambda$ together with \eqref{eq:estimate_v_k_f} imply that they satisfy the estimate \eqref{eq:cont_dep_maybe}.

    To see that they are unique, suppose that $\d^s q + L \lambda + \tilde f=0$. Applying $\ell_m$, we find $X\ell_{m-1}q+\ell_{m-2}\lambda +\ell_m \tilde f=0$. Using the isomorphisms \eqref{eq:lmtraceless} and \eqref{eq:ttdecom}, as above we have $\ell_{m-1} q= \hat{v}_{m-1}+\hat{v}_{-m+1},$ ${v}_j\in{\rho^{\delta}H_0^1}(M)$, and $\ell_m \tilde f=\hat{g}_m+\hat{g}_{-m} $ with $\eta_\pm \hat{g}_{\mp m}=0$.
    Thus isolating the $\pm m$ modes we obtain $q=0 $ and $\tilde f=0$ from the uniqueness statement in Proposition \ref{prop:decomp}.
    Then $\lambda=0$ by injectivity of $L$ (this follows, e.g., by \cite[Lemma 2.2]{Dairbekov2011}).
\end{proof}

Before proving Proposition \ref{prop:inj_on_repr_maybe2}, we state a lemma whose proof is a straightforward adaptation of the proofs of Lemma 3.14 and Proposition 3.15 in \cite{Graham2019}. 

\begin{lemma}\label{lm:boundary_det}Let $(M^\circ,g)$ be a non-trapping AH manifold.
Let $f\in \Aphg^E(M;S^m({}^0T^*M))$, $m\geq 0$, with $\Re(E)>0$ and $I_mf=0$. Then there exists $q\in \Aphg^E(M;S^{m-1}({}^0T^*M))$ such that $f-\d ^{s}q\in \dot{C}^{\infty}(M;S^m({}^0T^*M))$. (In the case $m=0$ the statement is that $f\in \dot C^\infty(M)$.)
\end{lemma}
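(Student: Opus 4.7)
The plan is an iterative boundary-determination argument, producing $q$ as the Borel sum of a sequence of leading-order corrections.

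First, I would exploit the detailed asymptotic expansion of $I_m f = I(\ell_m f)$ at $\partial\overline{\G}$ derived in the proof of Proposition \ref{prop:phg_mapping_I}, in particular formula \eqref{eq:Ixlogx}. This identifies each coefficient in the polyhomogeneous expansion of $I_m f$ at $\partial\overline{\G}$ as an explicit angular integral, over the boundary fibers of $\OSM$, of the corresponding coefficient of $\ell_m f$ at $\partial \OSM$. Since $I_m f \equiv 0$, each of these angular integrals must vanish identically in $(y_*,\hat{\eta}_*)$, yielding a family of moment-vanishing conditions on the coefficients of the expansion of $\ell_m f$.

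Next, I would construct $q$ inductively. Let $s_0 = \inf\Re(E)$ and write the leading term of $f$ as $f^{(0)} = \rho^{s_0}\log^{p_0}(\rho)\, F_0$. The intertwining relation $\ell_m (\d^s q) = X \ell_{m-1} q$ from \eqref{eq:intertw}, combined with the fact that $X$ extends smoothly to $\OSM$ (see \eqref{eq:Xconf}) and is tangent to $\partial\, \OSM$, shows that to leading order $\ell_m(\d^s q)$ is obtained by applying a linear boundary operator $\widehat{X}_0$ (the boundary principal part of $X$) to the leading coefficient of $\ell_{m-1}q$. The moment-vanishing conditions from the previous step are exactly the solvability conditions for the resulting boundary equation $\widehat{X}_0 Q_0 = \ell_m F_0$, which can be inverted explicitly by the same method as in \cite[Lemma 3.14 and Proposition 3.15]{Graham2019}. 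This produces a leading coefficient $Q_0$, and hence a tensor $q_1 \in \Aphg^E(M; S^{m-1}({}^0 T^* M))$ such that $f - \d^s q_1$ is polyhomogeneous with strictly greater leading order; by Lemma \ref{lm:kernel}, $I_m(f - \d^s q_1) = 0$, so the argument may be iterated.

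Iterating produces a sequence $\{q_k\}_{k \ge 1}$ with strictly increasing leading orders and index sets contained in $E$. A standard Borel-summation procedure in polyhomogeneous spaces then yields $q \in \Aphg^E(M; S^{m-1}({}^0 T^* M))$ with $q \sim \sum_k q_k$, and by construction $f - \d^s q \in \dot{C}^\infty(M; S^m({}^0T^*M))$.

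I expect the main obstacle to be the solvability step, namely verifying that the moment conditions extracted from $I_m f = 0$ are exactly the ones needed to solve $\widehat{X}_0 Q_0 = \ell_m F_0$, and providing the explicit inversion. Tracking logarithmic terms and the precise index set through the iteration also requires care, but is essentially bookkeeping once the boundary equation is understood; both are handled analogously to the corresponding steps in \cite{Graham2019}.
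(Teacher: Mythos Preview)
Your proposal is correct and takes essentially the same approach as the paper, which does not give a proof but simply states that it is ``a straightforward adaptation of the proofs of Lemma 3.14 and Proposition 3.15 in \cite{Graham2019}.'' Your sketch is precisely such an adaptation: the iterative boundary-determination argument you describe, with the solvability step handled as in \cite{Graham2019}, is what the paper has in mind.
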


We now prove Proposition \ref{prop:inj_on_repr_maybe2}.

\begin{proof}[Proof of Proposition \ref{prop:inj_on_repr_maybe2}]
    By Corollary \ref{cor:gauge},  $f = \d^s q + f^{\textrm{itt}}$, where $f^{\itt} \in \Aphg^G(M;{{S}}^{m}({}^0T^*M))$ with $\Re(G)>0$. By assumption, since Lemma \ref{lm:kernel} implies $0 = I_{m} f = I_{m} f^{\itt}$, by Lemma \ref{lm:boundary_det} there exists a tensor $u\in \Aphg^{G}(M;S^{m-1}(^0T^*M))$ such that $h \coloneqq  f^{\itt} -\d ^s u\in \dot{C}^{\infty}(M;S^{m}(^0T^*M))$ and with $I_{m} h = I_{m} f^{\itt} =0$. Now the proof of \cite[Theorem 1.1]{Graham2019}, which relies on Pestov identities on a compact exhaustion of $M^{\circ}$, goes through and yields the existence of $u'\in \dot{C}^{\infty}(M;S^{m-1}(^0T^*M))$ such that $h = \d^s u'$. Hence we have $\d^s u' = f^{\itt} - \d^s u$, i.e. $0 = \d^s (u+u') - f^{\itt}$. Since there exists $\delta\in (-1/2,1/2)$ such that $h\in \rho^{\delta }L^2(M;S^{m}(^0T^*M))$ and $u+u'\in \rho^{\delta}H_0^{1}(M;S^{m-1}(^0T^*M))$, the uniqueness statement in Corollary \ref{cor:gauge} yields $u+u'=0$ and $f^{\itt}=0$.
\end{proof}

\section{Proofs on the Poincar\'e disk} \label{sec:Poincare}

\subsection{Preliminaries}\label{sec:prelim_poinc}

Following notation in \cite{Eptaminitakis2024}, the interior $\Dm^\circ$ is equipped with the metric $g(z) = c^{-2}(z) |dz|^2$, $c(z) \coloneqq  \frac{1-|z|^2}{2}$. %
We will use for bdf the function $x$ defined in \eqref{eq:bdfs}, and in the coordinates $(x, \omega)$, $\omega =\arg(z)$, the Riemannian volume form is given by $\d V_H = x^{-2} \d x\ \d\omega$. In what follows, we write $L^2(\Dm)\coloneqq L^2(\Dm,\d V_H)$. Unit-speed hyperbolic geodesics in $S\Dm^\circ$ take the form $(\gamma_{\beta,a}(t), \dot\gamma_{\beta,a}(t)) = (z_{\beta,a}(\x(t)), c(z_{\beta,a}(\x(t)) e^{i\theta_{\beta,a}(\x(t))})$, where $\x(t) = \tanh(t/2)$, $(\beta,a)\in \Gh$, and 
\begin{align}
    z_{\beta,a}(\x) = e^{i\beta} \frac{(2+ia)\x + ia}{ia\x-2+ia}, \qquad \theta_{\beta,a}(\x) = \beta +\pi + 2\tan^{-1} \left( a \frac{\x+1}{2} \right).
    \label{eq:hypgeo}
\end{align}
We will use the shorthand $\pih$ for the map $\pi_{\Gh}$ defined in \eqref{eq:piG}. Its expression is given in \cite[Lemma 3.7]{Eptaminitakis2024}, and its pullback map induces a map from distributions on $\Gh$ to geodesically invariant distributions on $S\Dm^\circ$. The backprojection operator of $h\in C^\infty(\Gh)$ is given by
\begin{equation}
	I_0^\sharp h (z) \coloneqq \int_{S_z\Dm^{\circ}} h(\pih(z,w))\ \d S_z(w), \qquad z\in \Dm^\circ,
	\label{eq:backproj}
\end{equation}
and it is the formal adjoint of $I_0$ in the functional setting $L^2(\Dm)\to L^2(\Gh)$.

\subsubsection{The transport boundary \texorpdfstring{$\Gamma$}{Gamma}} \label{sec:transport_boundary}

Recall that a natural ``transport" boundary for $S\Dm^\circ$ as defined in \cite{Eptaminitakis2024} is $\Gamma := \Gamma_+ \cup \Gamma_-$, where $\Gamma_{+/-} = \Gh \times \{\pm 1\}$ is thought of as the inward/outward-pointing boundary. In coordinates, we have, for all $(\beta,a)\in \Gh$,
\begin{align*}
    \lim_{t\to -\infty} (\gamma_{\beta,a}(t), \dot \gamma_{\beta,a}(t)) = (\beta,a) \in \Gamma_+, \qquad \lim_{t\to +\infty} (\gamma_{\beta,a}(t), \dot \gamma_{\beta,a}(t)) = (\beta + \pi + 2\tan^{-1}a,a)\in \Gamma_-.     
\end{align*}
Both $\Gamma_\pm$ carry the measure $\d\beta\d a$, and for $f,g\in L^2(\Gamma)$, we denote 
\begin{align*}
    (f,g)_{L^2(\Gamma)} = \sum_{\sigma = \pm 1} \int_{\Gh} f(\beta,a,\sigma) \overline{g(\beta,a,\sigma)}\d \beta\d a.
\end{align*}
The space $\Gamma$ has three natural involutions, the scattering relation 
\begin{align*}
    S^H\colon \Gamma_\pm \to \Gamma_\mp, \qquad (\beta,a,\pm1)\mapsto (\beta+\pi \pm 2\tan^{-1}a, a ,\mp 1), \qquad (\beta,a)\in (\Rm/2\pi\Zm)\times \Rm, 
\end{align*}
the antipodal map 
\begin{align*}
    \sfa_H \colon \Gamma_\pm \to \Gamma_\mp, \qquad (\beta,a,\pm1)\mapsto (\beta, -a ,\mp 1), \qquad (\beta,a)\in (\Rm/2\pi\Zm)\times \Rm, 
\end{align*}
and the antipodal scattering relation $S_A^H := \sfa_H\circ S^H$, also defined in \eqref{eq:antipodal_sc}. 

\subsubsection{Bases for the data space \texorpdfstring{$L^2(\Gh)$}{L2(G)}} 

Recalling the splitting \eqref{eq:L2pm} of $L^2(\Gh)$, we now define bases of $L^2_\pm(\Gh)$ which will turn out useful in what follows. Specifically, following \cite[Eq. (50)]{Eptaminitakis2024} we define\footnote{A typo in \cite[Eq. (50)]{Eptaminitakis2024} states that $\phi_{n,k}$ is only defined for $n\ge 0$.}, on $(\Rm/2\pi\Zm)_\beta \times [-\pi/2,\pi/2]_\alpha$
\begin{align}
    \begin{split}
        \psi_{n,k} &= \frac{(-1)^n}{2\pi} e^{i(n-2k)(\beta+\alpha)} (e^{i(n+1)\alpha}+(-1)^n e^{-i(n+1)\alpha}), \qquad n\ge 0,\  k\in \Zm, \\
        \phi_{n,k} &= \frac{(-1)^n}{2\pi} e^{i(n-2k)(\beta+\alpha)} (e^{i(n+1)\alpha}-(-1)^n e^{-i(n+1)\alpha}), \qquad n\ge -1, \ k\in \Zm.     
    \end{split}
    \label{eq:phipsiE}
\end{align}
Then,  recalling \eqref{eq:bdfs}, we have 
\begin{align}
    L^2_+(\Gh) &= \text{span}_{L^2}\left\{\psi_{n,k}^{H}(\beta,a) := \muh \psi_{n,k}(\beta,\tan^{-1}a), \ n\in \Nm_0,\ k\in \Zm\right\},
    \label{eq:basis1} \\
    L^2_-(\Gh) &= \text{span}_{L^2}\left\{\phi_{n,k}^{H}(\beta,a) := \muh \phi_{n,k}(\beta,\tan^{-1}a), \ n\ge -1,\ k\in \Zm\right\}.
    \label{eq:basis2}    
\end{align}
Both bases above are orthonormal,
and can be seen to be eigenfunctions of the two operators 
\begin{align}
    D_\beta = \frac{1}{i}\partial_\beta, \qquad {\cal T}_0 = - (\partial_\beta-(1+a^2)\partial_a)^2 + 2a(\partial_\beta-(1+a^2)\partial_a) - (2a^2+1)Id,
    \label{eq:T0}
\end{align} 
specifically,
\begin{align}
    ({\cal T}_0, D_\beta) \psi_{n,k}^{H} &= ( (n+1)^2, n-2k) \psi_{n,k}^{H}, \qquad n\ge 0,\quad k\in \Zm,  \label{eq:T0spectrum}\\
    ({\cal T}_0, D_\beta) \phi_{n,k}^{H} &= ( (n+1)^2, n-2k) \phi_{n,k}^{H}, \qquad n\ge -1,\quad k\in \Zm.
    \label{eq:T0spectrum2}    
\end{align}

\subsubsection{The boundary operators \texorpdfstring{$P_\pm^H$}{Ppm}}\label{sec:dataSpace}

Recall the operators of even and odd extension relative to the scattering relation, given by 
\begin{align}
    A^H_\pm \colon L^2(\Gamma_+) \to L^2(\Gamma), \qquad A^H_\pm u|_{\Gamma_+} = u, \qquad A^H_\pm u|_{\Gamma_-} = \pm u \circ S^H,
    \label{eq:AHpm}
\end{align}
with $L^2-L^2$ adjoints 
\begin{align}\label{eq:L2adjoints}
    (A^H_\pm)^* \colon L^2(\Gamma) \to L^2(\Gamma_+), \qquad (A^H_\pm)^* u := (u|_{\Gamma_+}) \pm (u|_{\Gamma_-})\circ S^H.
\end{align}
Recall the $\Rm$-Hilbert transform 
\begin{align}
    H_\Rm f(a) := \frac{1}{\pi} \text{p.v.}\int_\Rm \frac{f(a')}{a-a'}\ \d a', \qquad a\in \Rm,
    \label{eq:HR}
\end{align}
skew-adjoint relative to $L^2(\Rm, \d a)$. We extend it to $L^2(\Gamma_\pm)$ as a fiberwise operator. Out of $H_\Rm$, we then define fiberwise even and odd Hilbert transforms ${\cal H}_\pm \colon L^2(\Gamma)\to L^2(\Gamma)$, given by 
\begin{align}
    {\cal H}_\pm u (\beta,a, 1) = H_\Rm \left( \frac{u \pm u\circ \sfa_H}{2} \right) (\beta,a,1), \qquad (\beta,a,1)\in \Gamma_+,
    \label{eq:Hpm}
\end{align}
and extended to $\Gamma_-$ in such a way as to satisfy the symmetry property $\sfa_H^* {\cal H}_\pm = \pm {\cal H}_\pm$.

Finally, we define the analogues of the Pestov-Uhlmann operators
\begin{align}
    P_\pm^H \colon L^2(\Gh) \to L^2(\Gh), \qquad P_\pm^H := (A_-^H)^* {\cal H}_\pm A_+^H.
    \label{eq:PU}
\end{align}
By symmetry considerations, it is easy to find that
\begin{align}
    P_+^H (L^2_- (\Gh)) = P_-^H (L^2_+ (\Gh)) = 0, \quad P_+^H (L^2_+ (\Gh)) \subset L^2_-(\Gh), \quad P_-^H (L^2_- (\Gh)) \subset L^2_+(\Gh).\qquad 
    \label{eq:Psymmetries}
\end{align}
The operator $P_-^H$ was defined in \cite{Eptaminitakis2024} and was seen to be intertwined to the classical Pestov-Uhlmann operator (as defined in \cite{Pestov2004}) for the Euclidean unit disk, via a diffeomorphism relating fan-beam coordinates in the Euclidean disk and $\Gh$ coordinates on the Poincar\'e disk. While it can be seen that a similar relation does not occur as simply for the $P_+^H$ operator, an important relation specific to the Poincar\'e disk occurs here: 
\begin{lemma}\label{lem:Ppmadjoints}\phantom{a}
\begin{enumerate}[(a)]
    \item\label{item_pmadj1} We have the relation $(P_-^H)^* = - P_+^H$ on $L^2_+(\Gh)$. 
    \item\label{item_pmadj2} We have the alternative expressions
    \begin{align}
        P^H_- = (A_+^H)^* \H_+ A_-^H, \qquad P^H_+ = (A_+^H)^* \H_- A_-^H. 
        \label{eq:PU2}
    \end{align}
\end{enumerate}
\end{lemma}

It should be noted that the relation $(P_-^H)^* = - P_+^H$ cannot hold for the original Pestov-Uhlmann operators, as can be observed from their Singular Value Decompositions in the Euclidean disk \cite{Monard2015a}.

\begin{proof} To prove \ref{item_pmadj1}, we aim to show $(P_-^H f,g)_{L^2(\Gh)} = -(f,P_+^H g)_{L^2(\Gh)}$ for all $f,g\in L^2(\Gh)$, and by virtue of \eqref{eq:Psymmetries}, it is enough to assume $f\in L^2_- (\Gh)$ and $g\in L^2_+ (\Gh)$. In particular, this means that $A^H_+ f$ and $A^H_- g$ are fiberwise odd, and $A^H_- f$ and $A^H_+ g$ are fiberwise even on $\Gamma$ (i.e., relative to $\sfa_H^*$). In particular, from \eqref{eq:Hpm}, we have 
    \begin{align*}
        ({\cal H}_- A_+^H f)|_{\Gamma_+} = H_\Rm f, \qquad ({\cal H}_+ A_+^H g)|_{\Gamma_+} = H_\Rm g,
    \end{align*}
    the former extended by oddness to $\Gamma$, the latter by evenness. We then write
    \begin{align*}
        (P_-^H f,g)_{L^2(\Gh)} = ({\cal H}_- A_+^H f, A_-^H g)_{L^2(\Gamma)} &\stackrel{(\star)}{=} 2 (({\cal H}_- A_+^H f)|_{\Gamma_+}, g)_{L^2(\Gamma_+)} = 2 (H_\Rm f, g)_{L^2(\Gh)}, \\
        (f,P_+^H g)_{L^2(\Gh)} = (A_-^H f, {\cal H}_+ A_+^H g)_{L^2(\Gamma)} &\stackrel{(\star)}{=} 2(f, ({\cal H}_+ A_+^H g)|_{\Gamma_+})_{L^2(\Gamma_+)} = 2 (f, H_\Rm g)_{L^2(\Gh)},
    \end{align*}
    where step $(\star)$ follows by fiberwise evenness of the integrand under consideration. The result follows by the skew-adjointness of $H_\Rm$.  

    We now prove \ref{item_pmadj2}, focusing on \eqref{eq:PU2} for $P_-^H$, as the proof for $P_+^H$ is similar. To this end, notice that for $\phi\in L^2_-(\Gh)$, $A_-^H \phi$ is fiberwise even on $\Gamma$ and $A_+^H \phi$ is fiberwise odd on $\Gamma$. By the definition of $\H_\pm$, $\H_+ A_-^H \phi$ is fiberwise even, with $(\H_+ A_-^H \phi)|_{\Gamma_-} = (\H_+ A_-^H \phi)|_{\Gamma_+}\circ \sfa_H = H_\Rm \phi \circ \sfa_H$. Similarly, $\H_- A_+^H \phi$ is fiberwise odd, with $(\H_- A_+^H \phi)|_{\Gamma_-} = -(\H_- A_+^H \phi)|_{\Gamma_+}\circ \sfa_H = - H_\Rm \phi \circ \sfa_H$. Then,
    \begin{align*}
        (A_+^H)^* \H_+ A_-^H \phi = (\H_+ A_-^H \phi)|_{\Gamma_+} + (\H_+ A_-^H \phi)|_{\Gamma_-} \circ S^H &= H_\Rm \phi + H_\Rm \phi \circ \sfa_H \circ S^H \\
        &= (A_-^H)^* \H_- A_+^H \phi = P_- \phi,
    \end{align*}
    hence the result. 
\end{proof}

Combining Lemma \ref{lem:Ppmadjoints} with \cite[Corollary 5.3]{Eptaminitakis2024} (augmented with the additional observation that $P_-^H \phi_{-1,k}^H=0$ for all $k\in \Zm$), we deduce the Singular Value Decomposition of $P_+^H$:

\begin{corollary}
    Relative to bases \eqref{eq:basis1}-\eqref{eq:basis2}, the operator $P_+^H \colon L^2_+ (\Gh)\to L^2_- (\Gh)$ satisfies: 
    \begin{align}
        P_+^H \psi_{n,k}^{H} = (-2i) 1_{0\le k\le n}\ \phi_{n,k}^H, \quad n\ge 0,\ 0\le k\le n.
        \label{eq:P+HSVD}
    \end{align}   
\end{corollary}

\subsection{The transform \texorpdfstring{$I_\perp$}{Iperp}}\label{sec:Iperp}

We study the map $I_\perp \colon C_c^\infty(\Dm^\circ) \to C_c^\infty (\Gh)$, given by $I_\perp f = -I_1 (\star \d f)$. We first observe that by \eqref{eq:XXperp_intertwiners}, \eqref{eq:Xperpconf} and Lemma~\ref{lem:surjective}, $I_\perp$ extends to a bounded operator for any $\delta >0$
\begin{align}
    I_\perp \colon x^\delta H^1_0 (\Dm) \to L^2_-(\Gh). 
    \label{eq:IperpExtended}
\end{align}

We first derive a relation akin to \cite[Eq. (4.3)]{Pestov2004} adapted to the AH setting. 

\begin{proposition}\label{prop:intertwiner}
    We have the relation
    \begin{align*}
        \frac{1}{2\pi}I_\perp I_0^\sharp = P_+^H \qquad \text{on }\qquad  C_c^\infty(\Gh).
    \end{align*}
\end{proposition}

\hide{
\begin{remark}[Temporary: using this for every $\gamma>-1$] Using \cite{Eptaminitakis2024}, since $(I_0 x^{2+2\gamma})^* = x^{-1} I_0^\sharp \muh^{-2\gamma}$, this gives (denoting $N_\gamma = (I_0 x^{2+2\gamma})^* I_0 x^{2+2\gamma}$)
\begin{align*}
    I_\perp x N_\gamma = P_+^H \muh^{-2\gamma} I_0 x^{2+2\gamma},
\end{align*}
and with $N_\gamma\colon x^{-\gamma-3/2} L^2 \to H^{1,\gamma}_w$ being an isomorphism, this gives us $I_\perp$ defined on $xH_w^{1,\gamma}(\Dm)$, as the composition
\begin{align*}
    P_+^H \muh^{-2\gamma} I_0 x^{2+2\gamma} N_\gamma^{-1} \colon H_w^{1,\gamma}(\Dm) \stackrel{(\diamond)}{\longrightarrow} L^2(\Gh).
\end{align*}
The question is for which $\gamma$ is the map ``$(\diamond)$" continuous. Observing that $\muh^{-2\gamma} I_0 x^{2+2\gamma}$ lands into $\muh^{-\gamma} L^2_+ (\Gh) = L^2_+ (\Gh, \muh^{2\gamma})$, the question is then: for which values of $\alpha$ is the Hilbert transform bounded on $L^2(\Rm, (1+x^2)^{\alpha/2}\ dx)$, and the answer seems to be (using $A_2$ Muckenhoupt weight theory) for $|\alpha|<1$, i.e. for $|\gamma|<1/2$, which is the right range to consider for us, further restricting to $\gamma \in (-1/2,0)$ so that $\muh^{-\gamma}L^2(\Gh) \subset L^2(\Gh)$. In principle this gives us mapping properties for $I_\perp (xH^{1,\gamma}_w(\Dm))$ as well, although might not directly give a solid SVD (need to check), nor does it serve any purpose for the paper, such as understanding the spaces $I_\perp (x^\delta H^1_0(\Dm))$.     
\end{remark}

}

The proof of Proposition \ref{prop:intertwiner} relies on the following preliminary lemma relating the fiberwise Hilbert transform $H$ on $S\Dm^\circ$ (see, e.g. \cite{Paternain2023}) and traces 
at the boundary $\Gamma$. 

\begin{lemma}
    \label{lem:hilbert-limit}
    For any $w\in C_c^\infty(\Gh)$, we have 
    \begin{align}
        \lim_{t\to -\infty} H(w\circ \pih)(\gamma_{\beta,a}(t), \dot{\gamma}_{\beta,a}(t)) = H_\Rm w (\beta,a), \qquad (\beta,a)\in \Gh,
        \label{eq:Hlimit}
    \end{align}
    where $H_\Rm$ is the $\Rm$-Hilbert transform defined in Eq. \eqref{eq:HR}.
\end{lemma}

\begin{proof}[Proof of Lemma \ref{lem:hilbert-limit}]
First note that the limit as $t\to -\infty$ can be thought of as a limit as $\x = \tanh (t/2) \to -1$, and below, we will identify $(\gamma_{\beta,a}(t), \dot{\gamma}_{\beta,a}(t))$ with  $(z_{\beta,a}(\x), \theta_{\beta,a}(\x))$ as defined in \eqref{eq:hypgeo}. By definition of the fiberwise Hilbert transform, we have 
\begin{align}
    H (w\circ\pih)(z_{\beta,a}(\x),\theta_{\beta,a}(\x)) &= \text{p.v.}\int_{-\pi}^{\pi} h(\theta)\ w \circ \pih (z_{\beta,a}(\x),\theta_{\beta,a}(\x)-\theta)\ \d\theta \nonumber \\*
    &= -\int_0^\pi h(\theta)\sum_{\pm}\pm w\circ\pih(z_{\beta,a}(\x),\theta_{\beta,a}(\x)\pm\theta) \ \d\theta, \label{eq:pv-sym}
\end{align}
where $h(\theta) := \frac{1+\cos\theta}{2\pi \sin\theta}$. The singularity of $h$ at $\theta=0$ warrants a principal value integral which, to study the parameter-dependent limits  below, is rewritten as the regular integral \eqref{eq:pv-sym}.
We now need to understand the behavior of $\pih(z_{\beta,a}(\x),\theta_{\beta,a}(\x)\pm\theta)$. We show
\begin{lemma}
\label{lem:tildeba-asymp}
If $(\tilde\beta,\tilde{a}) = \pih(z_{\beta,a}(\x),\theta_{\beta,a}(\x)+\theta)$ where $|\theta|<\pi$, then
\begin{align}
    \tilde\beta = \beta + O((1+\x)|\sin\theta|), \qquad   \tilde{a} = a +\frac{1}{1+\x}\sin\theta+O(|\sin\theta|),
    \label{eq:abasym}
\end{align}
where the big-O estimates are uniform as $\x\to-1$ on any compact subset of $(-\pi,\pi)$.
\end{lemma}
Assuming Lemma \ref{lem:tildeba-asymp}, we proceed as follows. First, we note, for a fixed $\theta\in(-\pi,\pi)\backslash\{0\}$, that if $(\tilde\beta,\tilde{a}) = {\pih(z_{\beta,a}(\x),\theta_{\beta,a}(\x)+\theta)}$, then $|\tilde{a}|\to\infty$  as $\x\to-1$. In particular, the integrand of the right-hand side of \eqref{eq:pv-sym} eventually equals $0$ as $\x\to-1$ for any $\theta\in(0,\pi)$ since $w$ has compact support. Thus, for any $\epsilon>0$, we have
\[\lim_{\x\to-1}-\int_{\epsilon}^\pi h(\theta)\sum_{\pm}\pm w\circ\pih(z_{\beta,a}(\x),\theta_{\beta,a}(\x)\pm\theta) \ \d\theta = 0\]
by the Dominated Convergence Theorem (note that $h(\theta)$ is bounded on $(\epsilon,\pi)$, so the integrand can be bounded by $\|w\|_{L^\infty}h(\theta)$, which is integrable on $(\epsilon,\pi)$).

It suffices to study the integral on the right-hand side of \eqref{eq:pv-sym}, with the limits of integration changed to $(0,\epsilon)$ for some small $\epsilon>0$.
In that integral, we make the change of variables
\[s=\frac{1}{1+\x}\sin\theta,\quad \d s=\frac{1}{1+\x}\cos\theta\ \d\theta.\]
Note that $h(\theta) = \tilde h(\theta)\frac{\cos(\theta)}{\sin(\theta)}$, where $\tilde h(\theta) = \frac{1+\cos\theta}{2\pi\cos\theta}$; note that $\tilde h(\theta) = \frac{1}{\pi}+O(\sin^2\theta)$. We then have
\[h(\theta)\ d\theta = \tilde h(\theta)\frac{\frac{1}{1+\x}\cos\theta}{\frac{1}{1+\x}\sin\theta}\d \theta = \hat{h}(s;\x)\frac{\d s}{s}\]
where $\hat{h}(s;\x):=\tilde{h}(\arcsin((1+\x)s)) = \frac{1}{\pi}+O((1+\x)^2s^2)$.
In addition, if we let
\begin{align*}
    (\tilde\beta_{\pm}(s;\x),\tilde a_{\pm}(s;\x)) := \pih \left(z_{\beta,a}(\x),\theta_{\beta,a}(\x)\pm\arcsin((1+\x)s) \right),  
\end{align*}
then Lemma \ref{lem:tildeba-asymp} gives $\tilde\beta_{\pm} = \beta + O((1+\x)^2|s|)$, and $\tilde a_{\pm} = a \pm s + O((1+\x)|s|)$. It follows that
\begin{align*}
    -\int_0^\epsilon h(\theta)\sum_{\pm}\pm w\circ\pih(z_{\beta,a}(\x),\theta_{\beta,a}(\x)\pm\theta) \ \d\theta = -\int_0^{\frac{\sin(\epsilon)}{1+\x}}\hat{h}(s;\x)\sum_{\pm}\pm w(\tilde\beta_{\pm}(s;\x),\tilde a_{\pm}(s;\x)) \frac{\d s}{s}.    
\end{align*}
We now apply Dominated Convergence Theorem to the right-hand side. First we note that the integrand is uniformly bounded: indeed, $\tilde h(\theta)$ is uniformly bounded on $(0,\epsilon)$, and hence so is $\hat h(s;\x)$ on $(0,\sin(\epsilon)/(1+\x))$. Moreover, we have
\[|\tilde\beta_+(s;\x)-\tilde\beta_-(s;\x)|\le C_\beta(1+\x)^2|s|,\quad |\tilde a_+(s;\x)-\tilde a_-(s;\x)|\le 2|s|+C_a(1+\x)|s|,\]
so by the Mean Value Theorem we have
\[\left|\frac{w(\tilde\beta_+(s;\x),\tilde a_+(s;\x))-w(\tilde\beta_-(s;\x),\tilde a_-(s;\x))}{s}\right|\le \|w\|_{C^1}(C_\beta(1+\x)^2+2+C_a(1+\x)).\]
Finally, since $\tilde a_\pm(s;\x) = a\pm s+O((1+\x)|s|)$, and $w$ is compactly supported, we see that $w(\tilde\beta_{\pm}(s;\x),\tilde a_{\pm}(s;\x))$ is supported in a uniform compact set as $\x\to -1$. Thus, the integrand is uniformly bounded and supported in a uniform compact set, and hence dominated by an integrable function. For each $s>0$, we have
\[\lim_{\x\to -1}\hat{h}(s;\x)\sum_{\pm}\pm w(\tilde\beta_{\pm}(s;\x),\tilde a_{\pm}(s;x)) \frac{1}{s} = \frac{1}{\pi}\sum_{\pm} \pm w(\beta,a\pm s)\frac{1}{s}.\]
Hence, by the Dominated Convergence Theorem, 
\begin{align*}
\lim_{\x\to-1} -\int_0^{\frac{\sin(\epsilon)}{1+\x}}\hat{h}(s;\x)\sum_{\pm}\pm w(\tilde\beta_{\pm}(s;\x),\tilde a_{\pm}(s;x)) \frac{\d s}{s} = -\int_0^\infty\frac{1}{\pi s}\sum_{\pm}\pm w(\beta, a\pm s)\ \d s.
\end{align*}
Changing variable $a' = a\pm s$ in the $\pm$ term recovers $H_\Rm w (\beta,a)$. Lemma \ref{lem:hilbert-limit} is proved.

\begin{proof}[Proof of Lemma \ref{lem:tildeba-asymp}]
    Recalling \eqref{eq:hypgeo}, a preliminary computation shows that 
    \begin{align}
        e^{-i\theta_{\beta,a}}z_{\beta,a} %
        &= -1 - 2w, \qquad w := - (\x+1) \frac{1+ia}{2+ia (\x+1)}, \label{eq:prelim1} \\
        \frac{2e^{-i\theta_{\beta,a}} z_{\beta,a} }{1-|z_{\beta,a}|^2} &= \frac{4\x + a^2 (\x+1)^2}{2(1-\x^2)} + ia.
        \label{eq:prelim2}
    \end{align}
    In particular, $e^{-i\theta_{\beta,a}}z_{\beta,a} \to -1$ as $\x\to-1$. Recall from \cite[Lemma 3.7]{Eptaminitakis2024} that $\pih(z,\tilde\theta) = (\tilde\beta,\tilde a)$ where 
    \begin{equation}
    \label{eq:tildebetaa}
    \tilde\beta(z,\tilde\theta) = \tilde\theta + \pi + 2 \arg (1-z e^{-i\tilde\theta}), \qquad \tilde a(z,\tilde\theta) = \text{Im} \left(\frac{2ze^{-i\tilde\theta}}{1-|z|^2} \right).
    \end{equation}
    Thus, writing $(\beta,a) = \pih (z_{\beta,a},\theta_{\beta,a})$ and $(\tilde\beta,\tilde a) = \pih(z_{\beta,a},\theta_{\beta,a}+\theta)$, we find, with $\tilde \theta = \theta_{\beta,a}+\theta$,
    \begin{align*}
        \tilde\beta - \beta &=  \left(\tilde\theta + \pi + 2 \arg (1-z_{\beta,a} e^{-i\tilde\theta})\right) - \left(\theta_{\beta,a}+\pi+2\arg(1-z_{\beta,a}e^{-i\theta_{\beta,a}})\right)\\
        &= 2\arg \left( (e^{i\theta/2}-z_{\beta,a}e^{-i\theta_{\beta,a}}e^{-i\theta/2})/(1-z_{\beta,a}e^{-i\theta_{\beta,a}}) \right) \\
        &\!\!\! \stackrel{\eqref{eq:prelim1}}{=} 2\arg\left(\cos (\theta/2) -i\frac{w}{1+w}\sin (\theta/2)\right) \\
        &= O(|w||\sin\theta|).
    \end{align*}
    The estimate for $\tilde\beta$ in \eqref{eq:abasym} follows since $|w| = O(1+\x)$. For $\tilde a$, we write
    \begin{equation}
    \label{eq:tildea}
    \begin{aligned}
        \tilde a \stackrel{\eqref{eq:tildebetaa}}{=} \text{Im}\left(\frac{2z_{\beta,a}e^{-i\tilde\theta}}{1-|z_{\beta,a}|^2}\right) &\!\!\stackrel{\eqref{eq:prelim2}}{=} \text{Im} \left(  e^{-i\theta} \left( \frac{4\x + a^2 (\x+1)^2}{2(1-\x^2)}+ia      \right) \right) \\
        &= a \cos\theta - \frac{4\x + a^2 (\x+1)^2}{2(1-\x^2)} \sin\theta \\
        &= a + \frac{1}{1+\x}\sin\theta + \left(a(\cos\theta-1)-\frac{2+a^2(\x+1)}{2(1-\x)}\sin\theta\right).
    \end{aligned}
    \end{equation}
    Then \eqref{eq:abasym} follows after noting the last term is $O(|\sin\theta|)$ as $\x\to-1$. Lemma \ref{lem:tildeba-asymp} is proved.
\end{proof}

\end{proof}

\begin{proof}[Proof of Proposition \ref{prop:intertwiner}] Take $w\in C_c^\infty(\Gh) \cap \ker (Id - S_A^*)$ and extend it as a fiberwise even invariant distribution $w\circ\pih$ on $S\Dm^\circ$. We now write 
\begin{align*}
    HX(w\circ \pih) - X H (w\circ \pih) = [H,X] (w\circ \pih) \stackrel{(\diamond)}{=} X_\perp \P_0 (w\circ \pih) + \P_0 (X_\perp (w\circ\pih)),    
\end{align*}
where $\P_0$ is defined in \eqref{eq:projector} and $(\diamond)$ can be found in, e.g., \cite[Proposition 6.2.2]{Paternain2023}. In the above display, the leftmost term vanishes because $w\circ\pih$ is geodesically invariant, and the rightmost term vanishes because $X_\perp (w\circ\pih)$ is fiberwise odd. Integrating the remaining terms along the geodesic flow and using the fundamental theorem of calculus, we arrive at the relation
\begin{align*}
    \frac{1}{2\pi} I_\perp I_0^\sharp w \overset{\eqref{eq:backproj}}= I X_\perp \P_0 (w\circ \pih) = - IX H (w\circ\pih) = (A_-^H)^* (H (w\circ\pih))|_{\Gamma}. 
\end{align*}
Now using Lemma \ref{lem:hilbert-limit}, we have $(H (w\circ\pih))|_{\Gamma_+} = H_\Rm w$. Moreover, since $w\circ\pi_h$ is fiberwise even, at the transport boundary we have $(H (w\circ\pih))|_{\Gamma} \in \ker (Id - \sfa_H^*)$, and hence $(H (w\circ\pih))|_{\Gamma_-} = (H (w\circ\pih))|_{\Gamma_+} \circ \sfa_H$, so we have $(H (w\circ\pih))|_{\Gamma} = \H_+ A_+ ^Hw$. Proposition \ref{prop:intertwiner} is proved. 
\end{proof}

As a consequence of Proposition \ref{prop:intertwiner} and known SVD results about $I_0$, we deduce in Proposition \ref{prop:IperpSVD} a form of generalized SVD  for the unbounded operator
\begin{equation}\label{eq:Iperp_unb}
    iI_\perp:xC_\ev^\infty(\Dm)\subset x^{-1/2}L^2(\Dm)\to L^2_-(\Gh)
\end{equation} and a range characterization (Corollary \ref{cor:Iperp_range}) for $I_\perp\colon x H^{1,0}_w(\Dm) \to L^2_-(\Gh)$. 
In the statements below, $\Phi(z) = \frac{2z}{1+|z|^2}$, and $\{Z_{n,k}^0\}_{n\ge 0, 0\le k\le n}$ refer to the Zernike polynomials as defined in \cite{Eptaminitakis2024}. The functions $\{\widehat{\Phi^{*}Z_{n,k}^0}\}_{n\ge 0, 0\le k\le n}$ are in $C_\ev^\infty(\Dm)$ and form an orthonormal basis of $x^{-3/2}L^2(\Dm)$ (the notation $\widehat \cdot$ stands for scaling to achieve norm 1). 
Moreover, 
for $m\geq 0,$ we write $\ell^{*}_{m}$ for the $L^2(\Dm;S^{m}({}^0T^*\Dm))\to L^{2}(\OSD)$-adjoint of $\ell _{m}$.

\begin{proposition}\label{prop:IperpSVD}
The   Hilbert space adjoint of the unbounded, densely defined operator \eqref{eq:Iperp_unb} is given by $(iI_\perp)^*=-ix^{-1}\star \d\ell_1^* I^\sharp$, with domain 
\begin{equation}\label{eq:dom_adj}
    \mathcal{D}((iI_\perp)^*)=\Big\{u=\sum_{n=-1}^\infty\sum_{ k\in \mathbb{Z} } a_{n,k} \, \phi_{n,k}^H:\sum_{n=-1}^\infty\sum_{ k\in \mathbb{Z} }|a_{n,k}|^2<\infty \ \ \text{and}\ \ \sum _{n\geq 0}\sum_{0\leq k\leq n}|a_{n,k}|^2(n+1)<\infty\Big\}
\end{equation}
and nullspace 
\begin{equation}
    \ker ((iI_\perp)^*) = \mathrm{span} _{L^2(\Gh)}\{\phi_{n,k}^H:n\geq -1 ,k<0\text{ or } k>n\}.
\end{equation}
Further, we have the spectral decompositions
    \begin{align}
       i I_\perp (x  \widehat{\Phi^* Z_{n,k}^0} ) = {2\sqrt{\pi}}{\sqrt{n+1}} \,\phi_{n,k}^H,\qquad n\geq 0, \quad 0\leq k\leq n,
        \label{eq:IperpSVD}\\
        (iI_\perp)^*\phi_{n,k}^H = -ix^{-1}\star \d \ell_1^* I^\sharp  \big( \phi_{n,k}^H\big) = 2\sqrt \pi\sqrt{n+1}  \, x\widehat{\Phi^* Z_{n,k}^0} ,\qquad n\geq 0, \quad 0\leq k\leq n.\label{eq:spectral_Iperp_star}
    \end{align}
\end{proposition}

\begin{proof} We first derive \eqref{eq:IperpSVD}. Recall from \cite[Theorem 2.6]{Eptaminitakis2024} that $I_0 x^2 \colon x^{-3/2}L^2(\Dm, \d V_H) \to L^2_+(\Gh)$ is bounded with adjoint $(I_0 x^2)^* = x^{-1} I_0^\sharp$. Combining this with Proposition \ref{prop:intertwiner}, if $f\in C_c^\infty(\Dm^\circ)$, then $I_0 x^2 f \in C_c^\infty(\Gh)$, and applying $I_\perp I_0^\sharp = I_\perp x (I_0 x^2)^*$, we obtain the relation
    \begin{align}
        I_\perp x (I_0 x^2)^* (I_0 x^2) f = 2\pi P_+ I_0 x^2 f, \qquad f\in C_c^\infty(\Dm^\circ). 
        \label{eq:intertwiner2}
    \end{align}
    Since the right hand side extends by boundedness to $f\in x^{-3/2}L^2(\Dm,\d V_H)$, so does the left hand side. Moreover, from \cite[Theorem 2.17]{Eptaminitakis2024}, the operator 
    \begin{align*}
        (I_0 x^2)^* (I_0 x^2) \colon x^{-3/2}L^2(\Dm,  \d V_H)\to H^{1,0}_w (\Dm),
    \end{align*}
    is a homeomorphism, hence $I_\perp x \colon H^{1,0}_w (\Dm) \to L^2_-(\Gh)$ is bounded. From \cite[Theorem 2.17]{Eptaminitakis2024} again, we have, for every $n\ge 0$ and $0\le k\le n$, the relations 
    \begin{align}\label{eq:N0_sp_dec}
        (I_0 x^2)^* (I_0 x^2) \widehat{\Phi^* Z_{n,k}^0} = \frac{4\pi}{n+1} \widehat{\Phi^* Z_{n,k}^0}, \qquad (I_0 x^2) \widehat{\Phi^* Z_{n,k}^0} = \frac{2\sqrt{\pi}}{\sqrt{n+1}} \psi_{n,k}^{H}.
    \end{align} 
    Using those in \eqref{eq:intertwiner2} along with \eqref{eq:P+HSVD}, we obtain \eqref{eq:IperpSVD}.

    To find the domain of the Hilbert space adjoint, recall that
    \begin{equation}
        \D((iI_\perp)^*) = \{u\in L^2_-(\Gh): (iI_\perp \varphi,u)_{L^2(\Gh)}\leq C \|\varphi\|_{x^{-1/2}L^2(\Dm)}\quad \forall \varphi\in xC_\ev^\infty(\Dm)\}.
    \end{equation}
    Expanding $u=\sum_{n=-1}^\infty\sum_{k\in \Zm}a_{n,k} \phi_{n,k}^H$ and $\varphi=\sum_{n\geq 0}\sum_{0\leq k\leq n}b_{n,k}x\widehat {\Phi^* Z_{n,k}^0}$, where $\sum_{n,k}|a_{n,k}|^2<\infty$ and $\sum_{n,k} (n+1)^s|b_{n,k}|^2<\infty$ for any $s$ (see \cite[Theorem 2.17]{Eptaminitakis2024}), we have by \eqref{eq:IperpSVD}
    \begin{equation}
        (iI_\perp \varphi,u)_{L^2(\Gh)}= 2\sqrt\pi \sum_{n\geq 0}\sum_{0\leq k\leq n}\sqrt{n+1}b_{n,k}\overline{a_{n,k}}.
    \end{equation}
    Thus the Cauchy-Schwarz inequality yields the ``$\supset$'' inclusion in \eqref{eq:dom_adj}. For the ``$\subset$'' inclusion, if $u\in \D((iI_\perp)^*)$, by the Riesz representation theorem there exists $z\coloneqq (iI_\perp)^*u\in x^{-1/2}L^2(\Dm)$ such that $(iI_\perp \varphi,u)_{L^2(\Gh)}=( \varphi,z)_{x^{-1/2}L^2(\Dm)}$ for all $\varphi \in xC^\infty_\ev (\Dm)$. Thus if we expand $u,$ $\varphi$ as above, and also write $z = \sum_{n\geq 0}\sum_{0\leq k\leq n}c_{n,k} x\widehat{\Phi^*Z_{n,k}^0}$, since 
        $(\varphi ,z)_{x^{-1/2}L^2(\Dm)}=\sum_{n\geq 0}\sum _{0\leq k\leq n}b_{n,k}\overline{c_{n,k}},$
    we see that $a_{n,k}= \frac{1}{2\sqrt{\pi}\sqrt{n+1}}c_{n,k}$, which yields the estimate $\sum_{n\geq 0}\sum_{0\leq k\leq n}|a_{n,k}|^2(n+1)<\infty$.
    Similarly, $z=0$ exactly when all coefficients of the $\phi_{n,k}^H$-expansion of $u$ vanish for $0\leq n$ and $0\leq k \leq n$.

    To compute the adjoint, for $n,n'\geq 0,$ and $0\leq k\leq n$, $0\leq k'\leq n'$ we have
    \begin{equation}
    \begin{aligned}
       {2\sqrt{\pi}}{\sqrt{n+1}}\delta_{nn'}\delta_{kk'}\overset{\eqref{eq:IperpSVD}}=-\big ( iI \ell_1\star \d   \big(  x\widehat{\Phi^*  Z_{n,k}^0} \big),\phi_{n',k'}^H\big)_{L^2(\Gh)}\hspace{1.7 in}\\*
       =-i\big (  \ell_1\star \d   \big(  x\widehat{\Phi^*  Z_{n,k}^0} \big),I^\sharp \phi_{n',k'}^H\big)_{L^2(\OSD)}
       =\big (      x\widehat{\Phi^*  Z_{n,k}^0} , -i\star \d\ell_1^*I^\sharp \phi_{n',k'}^H\big)_{L^2(\Dm)},%
        \label{eq:Iperpstarpair}
    \end{aligned}
    \end{equation}
    hence for any $u\in \D((iI_\perp)^*)$ and any $\varphi \in x C_\ev^\infty(\Dm)$
    \begin{equation}
        \big (     \varphi,-  ix^{-1}\star \d\ell_1^*I^\sharp u\big)_{x^{-1/2}L^2(\Dm)}= (iI_\perp \varphi,u)_{L^2(\Gh)}\label{eq:last_svd},
    \end{equation}
    and this yields the claimed formula for the adjoint.
    The second equality in \eqref{eq:Iperpstarpair} holds by \eqref{eq:Santalo}, Lemma~\ref{lem:surjective}, and density of $C_c^\infty(S\Dm^\circ)$ in $ x^\delta L^2$, because $\ell_1
    \star \d (x\widehat{\Phi^* Z^0_{n,k}})\in xC^\infty(\OSD)\subset x^\delta L^2(\OSD)$ for any fixed  $\delta\in (0,1/2)$.
    Finally, \eqref{eq:spectral_Iperp_star} follows from \eqref{eq:IperpSVD} and \eqref{eq:Iperpstarpair}.
\end{proof}

\begin{corollary}\label{cor:Iperp_range}
    The operator $I_\perp \colon xH^{1,0}_w(\Dm)\to L^2(\Gh) $ is bounded, and in fact
        \begin{align}
        I_\perp (xH^{1,0}_w(\Dm)) = \bigg\{ \sum_{n\ge 0}\sum_{k=0}^n \frac{a_{n,k}}{\sqrt{n+1}} \phi_{n,k}^H, \qquad \sum_{n,k}|a_{n,k}|^2<\infty \bigg\}. 
        \label{eq:IperpRange}
    \end{align}
\end{corollary}

\subsection{Data space decomposition - proof of Theorem \ref{thm:dataDecomp}}\label{sec:dataDecomp}

We now sketch a proof of Theorem \ref{thm:dataDecomp}, based on intermediate results whose proofs are covered in the next subsections. We first locate the closures of the ranges of $I_0$ and $I_\perp$ in bases \eqref{eq:basis1} and \eqref{eq:basis2} respectively, namely we observe that 
\begin{align}
    \overline{I_0 (x^{1/2}L^2(\Dm))}^{L^2(\Gh)} &= \text{span}_{L^2} \left\{\psi_{n,k}^{H},\ n\ge 0,\ 0\le k\le n\right\}, 
    \label{eq:rangeI0_closure} \\
    \overline{I_\perp (x H^{1,0}_w(\Dm))}^{L^2(\Gh)} &= \text{span}_{L^2} \left\{\phi_{n,k}^{H},\ n\ge 0,\ 0\le k\le n\right\}.
    \label{eq:rangeIperp_closure}
\end{align}
Eq. \eqref{eq:rangeI0_closure}  can be easily derived from the SVD of $I_0$ as presented in, e.g., \cite[Theorem 2.8]{Eptaminitakis2024}, from which it  follows that 
\begin{align*}
    I_0 (x^{1/2}L^2(\Dm)) = I_0 x^2 (L^2(\Dm, x^3 \d V_H)) = \left\{ \sum_{n\ge 0} \frac{2\sqrt{\pi}}{\sqrt{n+1}} \sum_{k=0}^n a_{n,k} \psi_{n,k}^{H}, \quad \sum_{n,k} |a_{n,k}|^2<\infty\right\},
\end{align*}
That \eqref{eq:rangeIperp_closure} is true follows directly from \eqref{eq:IperpRange}. From \eqref{eq:rangeI0_closure} and \eqref{eq:rangeIperp_closure}, we thus deduce
\begin{align}
\begin{split}
    \left(\overline{I_0 (x^{1/2}L^2(\Dm))}^{L^2(\Gh)}\right)^\perp &= E_+ \stackrel{\perp}{\oplus} E_-, \qquad (\text{within } L^2_+ (\Gh))\\
\text{where}\quad E_+ \coloneqq  \text{span}_{L^2} &\left\{\psi_{n,k}^{H},\ n\ge 0,\ k<0 \right\}, \quad E_- \coloneqq  \text{span}_{L^2} \left\{\psi_{n,k}^{H},\ n\ge 0,\ k>n \right\}, 	    
\end{split}
\label{eq:Epm}	\\
\begin{split}
    \left(\overline{I_\perp (xH^{1,0}_w(\Dm))}^{L^2(\Gh)}\right)^\perp &= F_+ \stackrel{\perp}{\oplus} F_-, \qquad (\text{within } L^2_- (\Gh))\\
\text{where}\quad F_+ \coloneqq  \text{span}_{L^2} &\left\{\phi_{n,k}^{H},\ n\ge -1,\ k<0 \right\}, \quad F_- \coloneqq  \text{span}_{L^2} \left\{\phi_{n,k}^{H},\ n\ge -1,\ k>n \right\}, 	    
\end{split}
\label{eq:Fpm}
\end{align}
see Figure \ref{fig:data} for a schematic. Using that $\overline{\psi_{n,k}^{H}} = (-1)^n \psi_{n,n-k}^{H}$ and $\overline{\phi_{n,k}^{H}} = (-1)^{n+1}\phi_{n,n-k}^{H}$ for all $n,k$, we observe that $E_- = \{\overline{\psi},\ \psi \in E_+\}$ and $F_- = \{\overline{\psi},\ \psi \in F_+\}$. Then Theorem \ref{thm:dataDecomp} boils down to proving the following two lemmas: 

\begin{lemma}\label{lem:spans}
The following equalities hold: 
\begin{align}
    E_+ = \mathrm{span}_{L^2} \left\{\widehat{I_{p,2q}},\ p\ge 0,\ q\ge 1 \right\}, \qquad F_+ = \mathrm{span}_{L^2} \left\{\widehat{I_{p,2q+1}},\ p\ge 0,\ q\ge 0 \right\}.
    \label{eq:EpFp}
    \end{align}
\end{lemma}

\begin{lemma}\label{lem:orthonormal}
The two families appearing on the right sides of \eqref{eq:EpFp} are orthonormal families.      
\end{lemma} 

The remainder of the section is organized as follows: Lemmas \ref{lem:spans} and \ref{lem:orthonormal} both rely on a crucial algebraic relation between the $I_{p,q}$ integrals and certain distinguished invariant functions on $S\Dm^\circ$. We first explore these relations in Section \ref{sec:xipm}. We then use them to prove Lemma \ref{lem:spans} in Section \ref{sec:spans}. Finally, in Section \ref{sec:orthonormal}   we cover the proof of Lemma \ref{lem:orthonormal}, using considerations of fiberwise Fourier content on $S\Dm^\circ$ to prove the orthogonality of the ${I_{p,q}}$.

\subsubsection{Distinguished first integrals and an expression for \texorpdfstring{$I_{p,q}$}{Ipq}}\label{sec:xipm}

Consider the geodesically invariant functions $\xi_{\pm} \colon S\Dm^\circ\to \Cm$ defined as the forward $(+)$ and backward $(-)$ endpoints of the geodesic emanating from $(z,\theta)\in S\Dm^\circ$ respectively (in the identification \eqref{eq:osm_param}).
Hence in terms of the identification $(z_{\beta,a}(t), \dot{z}_{\beta,a}(t)) \stackrel{\eqref{eq:osm_param}}{\leftrightarrow} (z_{\beta,a}(t), \theta_{\beta,a}(t))$, and with the help of \eqref{eq:antipodal_sc}, they satisfy
\begin{align*}
    \xi_- (z_{\beta,a}(t), \theta_{\beta,a}(t)) = e^{i\beta}, \qquad \xi_+ (z_{\beta,a}(t), \theta_{\beta,a}(t)) = e^{i(\beta+\pi+2\tan^{-1} a)}, \qquad ( (\beta,a),t)\in \Gh\times \Rm. 
\end{align*}
Using, e.g., \cite[Lemma 3.7]{Eptaminitakis2024} it is easy to check that
\begin{align}
    \xi_\pm (z,\theta) = \frac{z \pm e^{i\theta}}{1\pm \zbar e^{i\theta}}, \qquad (z,\theta)\in \Dm^\circ\times (\Rm/2\pi\Zm) \stackrel{\eqref{eq:osm_param}} \approx S\Dm^\circ,
    \label{eq:xipm}
\end{align}
which shows in particular that $\xi_{\pm}$ are fiberwise holomorphic. Out of $\xi_\pm$, let us define the functions $w,\xi\colon S\Dm^\circ\to \Cm$ via  
\begin{align}
    w \coloneqq  \frac{\xi_+ + \xi_-}{2} = \frac{z-e^{2i\theta} \zbar}{1-\zbar^2 e^{2i\theta}}, \qquad \xi \coloneqq  \frac{\xi_+ - \xi_-}{4} = \frac{c e^{i\theta}}{1-\zbar^2 e^{2i\theta}},
    \label{eq:wxi}
\end{align} 
satisfying for all $t\in \Rm$
\begin{align}
    \begin{split}
	w (z_{\beta,a}(t), \theta_{\beta,a}(t)) &= e^{i (\beta+ \tan^{-1}a + \pi/2)} \cos(\pi/2+\tan^{-1}a) = e^{i (\beta+ \tan^{-1}a+\pi/2)} \frac{-a}{\sqrt{1+a^2}}, \\
	\xi (z_{\beta,a}(t), \theta_{\beta,a}(t)) &= \frac{i}{2} e^{i (\beta+ \tan^{-1}a+\pi/2)} \sin (\pi/2 + \tan^{-1}a) =  e^{i (\beta+ \tan^{-1}a+\pi/2)} \frac{i}{2\sqrt{1+a^2}}.
    \end{split}
    \label{eq:trace_w_xi}
\end{align}

We now show that for any $p\ge 0$ and $q\ge 1$, we can write $I_{p,q}$ (defined in \eqref{eq:Ipq}) as a polynomial of $w,\xi$. 

\begin{proposition}\label{prop:Ipq}
    For any $q\ge 1$ and $p\ge 0$, 
    \begin{align*}
	I_{p,q} = 2\xi^q \sum_{\ell = 0}^{\lfloor p/2 \rfloor} \binom{p}{2\ell} B(\ell+\frac{1}{2},q) (2\xi)^{2\ell} w^{p-2\ell}.
    \end{align*}    
\end{proposition}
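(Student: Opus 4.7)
The plan is to reduce the computation of $I_{p,q}$ to a single beta integral by exploiting the geodesic invariance of $\xi_\pm$ (and hence $w,\xi$) together with a natural affine change of variables that linearizes the trajectory.

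First, I would establish that along any unit-speed geodesic one has the ODE
\begin{equation*}
\dot z(t) = -\frac{(z(t)-\xi_+)(z(t)-\xi_-)}{4\xi}.
\end{equation*}
Since the isothermal parametrization \eqref{eq:osm_param} together with $g = c^{-2}|dz|^2$ gives $\dot z = c e^{i\theta}$ on unit-speed geodesics, this boils down to an algebraic identity between $z,\bar z, e^{i\theta}$ and the invariants $\xi_\pm$. Solving the two equations $\xi_\pm(1\pm \bar z e^{i\theta}) = z\pm e^{i\theta}$ for $e^{i\theta}$ and $z$ along the geodesic yields, after addition/subtraction, the relations $e^{i\theta}(1-w\bar z) = 2\xi$ and $z(1-w\bar z) = w - \xi_+\xi_-\bar z$. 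Substituting these into $c e^{i\theta}$ with $c = (1-z\bar z)/2$ and recognizing $z^2 - 2wz + \xi_+\xi_- = (z-\xi_+)(z-\xi_-)$ produces the displayed ODE.

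Next, I would introduce the substitution $u(t) \coloneqq (z(t)-w)/(2\xi)$. Using $\xi_\pm = w\pm 2\xi$, I have $z - \xi_\pm = 2\xi(u\mp 1)$, and the ODE above reduces to $\dot u = (1-u^2)/2$, whose solution with endpoints $u(\mp\infty)=\mp 1$ is $u(t) = \tanh(t/2)$. Under this change of variables $dt = 2\,du/(1-u^2)$ and $\dot z^q = \xi^q(1-u^2)^q$, so
\begin{equation*}
I_{p,q} = \int_{\Rm} z^p \dot z^q\, dt = 2\xi^q \int_{-1}^1 (w+2\xi u)^p(1-u^2)^{q-1}\, du.
\end{equation*}
Expanding $(w+2\xi u)^p$ by the binomial theorem, the odd powers of $u$ vanish by parity, and for each even $k = 2\ell$ one has $\int_{-1}^1 u^{2\ell}(1-u^2)^{q-1}\,du = B(\ell+\tfrac12,q)$ via the substitution $v = u^2$. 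Collecting these contributions gives the claimed formula.

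The main obstacle is the algebraic reduction in the first step: it is important to keep $\xi_\pm$ (rather than $w,\xi$) as the intermediary so that the factorization $(z-\xi_+)(z-\xi_-)$ appears directly and the $c$-factor collapses cleanly. An alternative, more pedestrian route would be to verify the ODE by direct differentiation of the explicit parametrization \eqref{eq:hypgeo}, using that $\xi_\pm$ in terms of $(\beta,a)$ are $e^{i\beta}$ and $e^{i(\beta+\pi+2\tan^{-1}a)}$; this serves as a useful consistency check but is computationally less transparent than the invariant-theoretic derivation above.
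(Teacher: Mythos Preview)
Your proof is correct and follows essentially the same route as the paper: derive the Riccati-type ODE $\dot z = -(z-\xi_+)(z-\xi_-)/(4\xi)$ along geodesics, then linearize via $z = w + 2\xi u$ to reduce $I_{p,q}$ to the beta integral $2\xi^q\int_{-1}^1(w+2\xi u)^p(1-u^2)^{q-1}\,du$. The only differences are tactical. The paper obtains the ODE via a M\"obius transformation sending $\xi_-\mapsto 0$, $\xi_+\mapsto\infty$ (so the geodesic becomes a radial half-line in the upper half-plane satisfying $\dot w = w$), whereas you extract it algebraically from the explicit formulas for $\xi_\pm(z,\theta)$; both are valid, and your derivation is self-contained though more computational. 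For the change of variables, the paper interprets $\int z^p\dot z^{q-1}\,\dot z\,dt$ as a holomorphic contour integral from $\xi_-$ to $\xi_+$ and then re-parametrizes by the straight chord (implicitly using path-independence), while you work directly with the real parameter $u=\tanh(t/2)$; your version avoids the contour-deformation step and is arguably cleaner on that point.
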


To prove Proposition \ref{prop:Ipq}, we will use the following 
\begin{lemma}\label{lem:geodesic-diffeq}
    If $z(t)$ is a unit-speed hyperbolic geodesic in $\Dm^\circ$, with $z_{\pm} = z(\pm\infty)\in \Sm^1$, then
    \[\dot z = -\frac{(z-z_-)(z-z_+)}{z_+-z_-}.\]
\end{lemma}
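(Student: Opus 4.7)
My strategy is to exhibit the identity as covariant under the action of the isometry group of $(\Dm,g)$, and then to reduce the check to a single explicit representative geodesic. For any Möbius transformation $\phi(z)=(az+b)/(cz+d)$ with $ad-bc=1$, a direct cross-ratio computation gives
\[
\phi(z)-\phi(w)=\frac{z-w}{(cz+d)(cw+d)},\qquad \phi'(z)=\frac{1}{(cz+d)^2}.
\]
Combining these, the transformed unit-speed geodesic $\tilde z(t)\coloneqq \phi(z(t))$ with endpoints $\tilde z_\pm\coloneqq \phi(z_\pm)$ satisfies $\dot{\tilde z}=\phi'(z)\dot z$ and
\[
-\frac{(\tilde z-\tilde z_-)(\tilde z-\tilde z_+)}{\tilde z_+-\tilde z_-}=\phi'(z)\left(-\frac{(z-z_-)(z-z_+)}{z_+-z_-}\right),
\]
so both sides of the claimed identity pick up the common scalar factor $\phi'(z)=(cz+d)^{-2}$. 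Hence, if the identity holds on one unit-speed geodesic, it holds on every image of it under such a $\phi$.

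Since $\mathrm{PSU}(1,1)$ acts transitively on pointed oriented unit-speed geodesics of $\Dm$, and since the identity is invariant under time translation $t\mapsto t+t_0$, it suffices to verify it on a single convenient representative. I take the real-axis geodesic $z(t)=\tanh(t/2)$, for which $z_-=-1$, $z_+=1$, and compute
\[
\dot z(t)=\frac{1}{2\cosh^2(t/2)}=\frac{1-\tanh^2(t/2)}{2}=\frac{1-z^2}{2}=-\frac{(z-z_-)(z-z_+)}{z_+-z_-},
\]
which closes the argument.

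A fully direct verification is also available: differentiating \eqref{eq:hypgeo} and letting $N=(2+ia)\x+ia$, $D=ia\x+(ia-2)$, the cancellation $N'D-ND'=-4$ yields $\dot z=-2e^{i\beta}(1-\x^2)/D^2$, while sending $\x\to\pm 1$ identifies $z_-=e^{i\beta}$ and $z_+=-e^{i\beta}(1+ia)/(1-ia)$, from which short computations give $z-z_-=2e^{i\beta}(\x+1)/D$, $z-z_+=2e^{i\beta}(\x-1)/(D(1-ia))$, and $z_+-z_-=-2e^{i\beta}/(1-ia)$; the claim then follows by direct substitution. I expect no real obstacle in either approach: the equivariance route reduces everything to one algebraic line plus a trivial canonical case, while the direct route is mechanical algebra organized around the identity $N'D-ND'=-4$.
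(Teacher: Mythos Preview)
Your proof is correct, and both routes you describe are valid. The paper takes a slightly different tack: rather than establishing equivariance under all of $\mathrm{PSU}(1,1)$ and checking one representative, it fixes the geodesic and applies the single M\"obius transformation $F(z)=(z-z_-)/(z-z_+)$ to send it to a radial geodesic $w(t)=w_0 e^t$ in (a rotation of) the upper half-plane; the identity $\dot w=w$ then unwinds via $F'(z)\dot z=F(z)$ to give the claimed ODE. Your equivariance-plus-canonical-case argument is arguably more transparent as a template (it isolates the group-theoretic content from the single algebraic verification), while the paper's version avoids invoking transitivity of the isometry group on the unit tangent bundle and is slightly more self-contained. Your second, purely computational verification from the explicit parametrization~\eqref{eq:hypgeo} is the most direct of the three and has the merit of being entirely internal to the paper's own formulas; the organizing identity $N'D-ND'=-4$ makes it quite clean.
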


\begin{proof}[Proof of Lemma \ref{lem:geodesic-diffeq}]
    Let $F(z) = \frac{z-z_-}{z-z_+}$. Then $F$ is a M\"obius transformation sending $z_-$ to $0$ and $z_+$ to $\infty$, so it sends any generalized circle in the complex plane passing through $z_-$ and $z_+$ to a generalized circle passing through $0$ and $\infty$, i.e. a straight line through $0$. Thus, it sends $\mathbb{D}$ to the upper half-plane, up to rotation.
    
    This implies $F$ must send $z(t)$ to the geodesic in the (rotated) upper half-plane starting at $0$ and ending at $\infty$. Thus, if $w(t) = F(z(t))$, then $w(t)$ must have the form $w(t) = w_0e^t$ for some $w_0\in\mathbb{C}$, and hence we must have $\dot w(t) = w(t)$. Written in terms of $z(t)$, this yields $F'(z(t))\dot{z}(t) = F(z(t))$, which gives the desired conclusion. 
\end{proof}

\begin{proof}[Proof of Proposition \ref{prop:Ipq}] By definition \eqref{eq:Ipq}, we have
    \begin{align}
	I_{p,q}(\beta,a) \coloneqq  \int_{\Rm} z_{\beta,a}^p(t) \dot z_{\beta,a}^q(t)\ dt, \quad (\beta,a)\in \Gh.
	\label{eq:Ipq2}
    \end{align}
    Note that for $z(t) = z_{\beta,a}(t)$ as in \eqref{eq:hypgeo}, we have $z_- = e^{i\beta}$ and $z_+ = e^{i(\beta + \pi + 2\tan^{-1} a)}$ so that, with $(w,\xi)$ defined in \eqref{eq:wxi}, we have $w = \frac{z_++z_-}{2}$ and $\xi = \frac{z_+-z_-}{4}$. Using Lemma \ref{lem:geodesic-diffeq}, in \eqref{eq:Ipq2} we replace $\dot{z}^{q-1}$ while keeping the last $\dot{z}$ factor to change variable $t\to z$. This gives 
    \begin{align*}
	I_{p,q} &= \frac{(-1)^{q-1}}{(z_+-z_-)^{q-1}}\int_{z_-}^{z_+} z^p((z-z_-)(z-z_+))^{q-1}\, \d z \\
	&= \frac{1}{(z_+-z_-)^{q-1}}\int_{-1}^1 \left(\frac{z_++z_{-}}{2} + u \frac{z_+-z_-}{2}\right)^p \left( \left(\frac{z_+-z_-}{2}\right)^2 (1-u^2)\right)^{q-1} \frac{z_+-z_-}{2}\ \d u,
    \end{align*}
    where the complex integral is computed using the parameterization $z(u) = \frac{z_++z_{-}}{2} + u \frac{z_+-z_-}{2}$. In terms of the functions $(w,\xi)$, this reads
    \begin{equation}
		\label{eq:ipqwxi}
	I_{p,q} = 2 \xi^q \int_{-1}^1 (w+ 2u \xi)^p (1-u^2)^{q-1}\ \d u = 2\xi^q \sum_{k = 0}^p \binom{p}{k} (2\xi)^k w^{p-k} \int_{-1}^1 u^k (1-u^2)^{q-1}\ \d u.
    \end{equation}
    All coefficients for $k$ odd vanish and hence upon setting $k=2\ell$, 
    \begin{align*}
	I_{p,q} = 2\xi^q \sum_{\ell = 0}^{\lfloor p/2 \rfloor} \binom{p}{2\ell} (2\xi)^{2\ell} w^{p-2\ell} \int_{-1}^1 u^{2\ell} (1-u^2)^{q-1}\ \d u.
    \end{align*}
    The result follows upon observing that $\int_{-1}^1 u^{2\ell} (1-u^2)^{q-1}\ \d u = B(\ell+\frac{1}{2},q)$. 
\end{proof}

\subsubsection{Proof of Lemma \ref{lem:spans}}\label{sec:spans}

Lemma \ref{lem:spans} is proved by exploring each eigensubspace of the operator $D_\beta$. Indeed, since $D_\beta \psi_{n,k}^{H} = (n-2k) \psi_{n,k}^{H}$ and $D_\beta \phi_{n,k}^{H} = (n-2k) \phi_{n,k}^{H}$, we have the decompositions
\begin{align}
    E_+ &= \bigoplus_{r\ge 2} E_r,\ \text{where}\  E_r \coloneqq  E_+ \cap \ker_{L^2} (D_\beta-r) = \text{span} \{\psi_{r-2p,-p}^{H},\ 1\le p\le \lfloor r/2 \rfloor\},
    \label{eq:Er} \\
    F_+ &= \bigoplus_{r\ge 1} F_r,\  \text{where}\  F_r \coloneqq  F_+ \cap \ker_{L^2} (D_\beta-r) = \text{span} \{\phi_{r-2p,-p}^{H},\ 1\le p\le \lfloor (r+1)/2 \rfloor\}, \qquad
    \label{eq:Fr}    
\end{align}
in particular, $\dim E_r = \lfloor r/2\rfloor$ and $\dim F_r = \lfloor (r+1)/2\rfloor$. See Figure \ref{fig:data} for a visualization of these decompositions.

\begin{figure}[htpb]
    \begin{center}
	\includegraphics[width=0.48\textwidth]{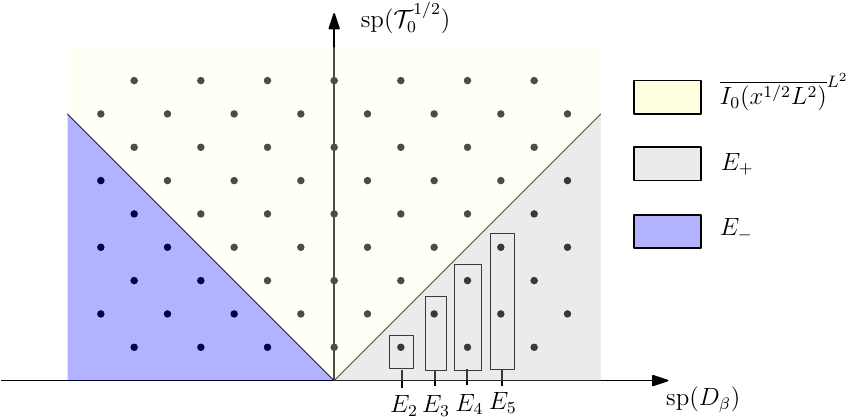}
    \includegraphics[width=0.48\textwidth]{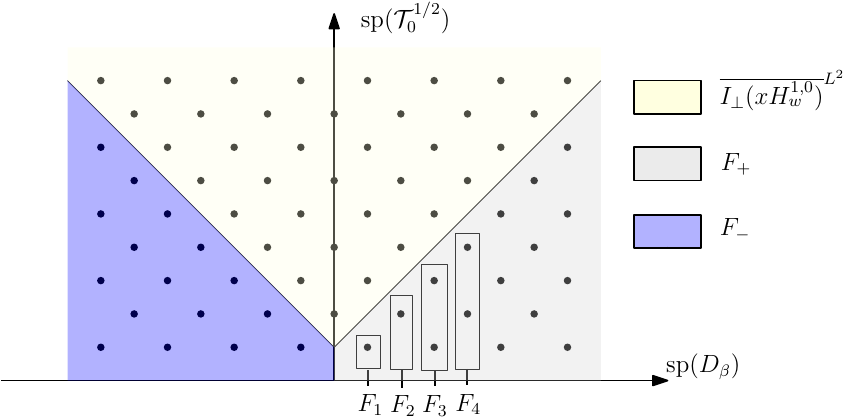}
    \end{center}
        \captionsetup{width=0.8\textwidth}
    \caption{A spectral representation of the data spaces $L^2_+(\Gh)$ (left) and $L^2_-(\Gh)$ (right). Each ``$\bullet$'' represents an element $\psi_{n,k}^{H}$ (left) or $\phi_{n,k}^H$ (right). The spaces $E_{\pm}$, $E_r$, $F_\pm$ and $F_r$ are defined in \eqref{eq:Epm}, \eqref{eq:Er}, \eqref{eq:Fpm} and \eqref{eq:Fr}, respectively. 
    }

    \label{fig:data}
\end{figure}

\begin{lemma}\label{lem:psink}\phantom{a}
\begin{enumerate}[(a)]
    \item \label{item_psink1}For any $n\in \Nm_0$ and $k>0$, $\psi_{n,-k}^{H}$ is a homogeneous polynomial of degree $n+2k$ in $(\xi,w)$, even in $\xi$, and factoring out a $\xi^2$. 
    
    \item \label{item_psink2}For any $n\ge -1$ and $k>0$, $\phi_{n,-k}^{H}$ is a homogeneous polynomial of degree $n+2k$ in $(\xi,w)$, odd in $\xi$.     
\end{enumerate}
\end{lemma}

\begin{proof} Below, we use the shorthands $\xi = \frac{i}{2} e^{i\omega} \muh$, $\frac{a}{\sqrt{1+a^2}} = -w e^{-i\omega}$ and $e^{2i\omega} = w^2-4\xi^2$, where $\omega = \beta + \tan^{-1}a + \pi/2$. Then an important observation is that, up to scalar multiples, we have 
\begin{align*}
    \psi_{n,-k}^H \propto \muh^2 e^{i(n+2k)\omega} U_n \left(\frac{a}{\sqrt{1+a^2}}\right), \qquad \phi_{n,-k}^H \propto \muh e^{i(n+2k)\omega} T_{n+1} \left(\frac{a}{\sqrt{1+a^2}}\right),
\end{align*}
where $T_n,U_n$ denote the Chebyshev polynomials of the first and second kind\footnote{Uniquely defined by the relations $T_n(\cos\theta) = \cos(n\theta)$ and $U_{n}(\cos\theta)\sin\theta = \sin ((n+1)\theta)$ for $n\in \Nm_0$.}, respectively. To prove \ref{item_psink1}, recall that $U_n$ is a polynomial of degree $n$ with the same parity as $n$, i.e. of the form $U_n (t) = \sum_{q=0}^{\lfloor n/2 \rfloor} a_{n,q} t^{n-2q}$ for some coefficients $a_{n,q}$. Then the above expression of $\psi_{n,-k}^{H}$ becomes 
    \begin{align*}
	\psi_{n,-k}^{H} \propto \muh^2 e^{i(n+2k)\omega} \sum_{q=0}^{\lfloor n/2 \rfloor} a_{n,q} \left( -w e^{-i\omega} \right)^{n-2q} = -4(-1)^n \xi^2 \sum_{q=0}^{\lfloor n/2 \rfloor} a_{n,q} w^{n-2q} (w^2-4\xi^2)^{q+k-1},
    \end{align*}
    hence the result. The proof of \ref{item_psink2} is similar. 
\end{proof}

\begin{lemma}
    Fix $r = n+2k>0$. Then 
    \begin{enumerate}[(a)]
        \item\label{item_l_4131} the four linear spans below are equal and describe the subspace $E_r$ defined in \eqref{eq:Er}: 
        
    (1) $\psi^{H}_{r-2p,-p}$ for $1\le p\le \lfloor r/2\rfloor$. 

    (2) $I_{r-2p,2p}$ for $1\le p\le \lfloor r/2\rfloor$. 

    (3) $\xi^{2p} w^{r-2p}$ for $1\le p\le \lfloor r/2\rfloor$. 

    (4) $\xi^{2} w^{r-2p} (w^2-4\xi^2)^{p-1}$ for $1\le p\le \lfloor r/2\rfloor$. 

    \item\label{item_l_4132} The four linear spans below are equal and describe the subspace $F_r$ defined in \eqref{eq:Fr}: 

    (1$\,'$) $\phi^{H}_{r-2p,-p}$ for $1\le p\le \lfloor (r+1)/2\rfloor$. 

    (2$\,'$) $I_{r+1-2p,2p-1}$ for $1\le p\le \lfloor (r+1)/2\rfloor$. 

    (3$\,'$) $\xi^{2p-1} w^{r+1-2p}$ for $1\le p\le \lfloor (r+1)/2\rfloor$. 

    (4$\,'$) $\xi w^{r-2p+1} (w^2-4\xi^2)^{p-1}$ for $1\le p\le \lfloor (r+1)/2\rfloor$. 
    \end{enumerate}
    \label{lem:linspans}
\end{lemma}

\begin{proof} We prove \ref{item_l_4131}, then the proof of \ref{item_l_4132} is similar.
    That (3)=(4) follows from the fact that (4) admits a triangular representation in the span (3). 

    (2)=(3) follows from Proposition \ref{prop:Ipq}, notably that $\{I_{r-2p,2p},\ 1\le p\le \lfloor r/2\rfloor\}$ admits a triangular representation in the span (3). 

    (1)=(4): follows from Lemma \ref{lem:psink}, notably that $\{\psi^{H}_{r-2p,-p},\ 1\le p\le \lfloor r/2\rfloor\}$ admits a triangular representation in the span (4).     
\end{proof}

We can then conclude with the proof of Lemma \ref{lem:spans}.

\begin{proof}[Proof of Lemma \ref{lem:spans}]
From Lemma \ref{lem:linspans}, we see that for every $p\ge 0$ and $q\ge 1$, $I_{p,2q}$ is a finite linear combination over those $\psi^{H}_{n,k}$'s that satisfy $p+2q = n-2k$ with $k<0$, and vice-versa. Similarly, for every $p\ge 0$ and $q\ge 0$, $I_{p,2q+1}$ is a finite linear combination over those $\phi^{H}_{n,k}$'s that satisfy $p+2q+1 = n-2k$ with $k<0$, and vice-versa. Hence the result.
\end{proof}

\subsubsection{Orthogonality properties - proof of Lemma \ref{lem:orthonormal}} \label{sec:orthonormal}

 While the explicit expression computed in Proposition \ref{prop:Ipq} could give a direct (yet tedious) computation of the inner products $\int_{\Gh} I_{p,q} \overline{I_{p',q'}}\ \d \beta \d a$, we show below that such pairings can be computed by moving them from $\Gh$ to $S\Dm^\circ$ via Santal\'o's formula and exploiting fiberwise Fourier content. To do this in the next result below, Proposition \ref{prop:Ipq} provides the key insight that $I_{p,q}$ is a homogeneous polynomial of degree $p+q$ in $(w,\xi)$ with the same parity in $\xi$ as that of $q$. Since $\xi = c e^{i\theta} + O(e^{3i\theta})$ and $w = z + O(e^{2i\theta})$, where for $k\in \Zm$, $O(e^{ik\theta})$ denotes a smooth function on $S\Dm^\circ$ all of whose Fourier modes of order $<k$ vanish, Proposition \ref{prop:Ipq} gives us that
\begin{align}
    I_{p,q}\circ\pih = 2B\Big(\frac{1}{2},q\Big) z^p c^q e^{iq\theta} + O(e^{i(q+2)\theta}).  
    \label{eq:Ipq_leading}
\end{align}
The following result in particular justifies that the $I_{p,q}$'s belong to $L^2(\Gh)$, and that the conclusion of Lemma \ref{lem:orthonormal} holds.

\begin{proposition}\label{prop:Ipq_ortho}
    Fix $p,p'\ge 0$, $q,q'\ge 1$. Then
\begin{enumerate}[(a)]
    \item\label{ortho_item_1} If $(p,q)\ne (p',q')$, then $\int_{\Gh} I_{p,q} \overline{I_{p',q'}}\ \d \beta \d a = 0$.

    \item\label{ortho_item_2} If $(p,q) = (p',q')$, we have $\int_{\Gh} |I_{p,q}|^2 \d \beta\d a  = 4\pi^2 B\left( \frac{1}{2},q \right) \frac{1}{4^{q-1}} B(p+1, 2q-1)$.

    \item \label{ortho_item_3}For any $\delta>0$ and $f\in x^\delta L^2(\Dm)$, $\left(I_{p,q}, I_0 f \right)_{L^2(\Gh)}= 0$. 

    \item \label{ortho_item_4} For any $\delta>0$ and $f\in x^\delta H^1_0(\Dm)$, $\left(I_{p,q}, I_1 (\star \d f) \right)_{L^2(\Gh)}= 0$. 
    
\end{enumerate}
\end{proposition}

\begin{proof} \ref{ortho_item_1} 
    If $q=q'$, then $p\ne p'$. In that case, noticing that $D_\beta I_{p,q} = (p+q) I_{p,q}$ (this follows for instance from Prop. \ref{prop:Ipq} and the fact that $D_\beta w = w$ and $D_\beta \xi = \xi$) and similarly for $I_{p',q'}$, since $p+q\ne p'+q'$, orthogonality follows directly from the $\beta$ integral.

    Now suppose $q<q'$ without loss of generality. Applying Santal\'o's formula to the expression $z^p c^q e^{iq\theta} \overline{I_{p',q'}\circ \pih}$, we obtain %
    \begin{align*}
    	\int_{\Gh} I_{p,q} \overline{I_{p',q'}}\ \d \beta\d a \stackrel{\eqref{eq:Santalo}}{=} \int_{S\Dm^\circ} z^p c^q e^{iq\theta} \overline{I_{p',q'}\circ \pih}\ d\Sigma^3 = \int_{\Dm^\circ} z^p c^q \int_{0}^{2\pi} e^{iq\theta} \overline{I_{p',q'}\circ \pih}\ d\theta\ dV_H, 
        \end{align*} 
    and by virtue of \eqref{eq:Ipq_leading}, the last $\theta$ integral is zero since $I_{p',q'}\circ \pih$ only has Fourier content in the modes greater than or equal to $q'$. 

    \ref{ortho_item_2} Following a similar argument as in \ref{ortho_item_1}, if $(p,q)=(p',q')$, we compute
    \begin{align*}
	\int_{\Gh} |I_{p,q}|^2 \d \beta\d a \stackrel{\eqref{eq:Santalo}}{=} \int_{\Dm^\circ} z^p c^q \int_{0}^{2\pi } e^{iq\theta} \overline{I_{p,q}\circ \pih}\ d\theta\ \d V_H \stackrel{\eqref{eq:Ipq_leading}}{=} 4\pi B\left( \frac{1}{2},q \right) \int_{\Dm^\circ} |z|^{2p} c^{2q} \d V_H,
    \end{align*}
    and the result follows upon observing that 
    \begin{align}
	\int_{\Dm^\circ} |z|^{2p} c^{2q} \d V_H = \frac{2\pi}{4^{q-1}} \int_0^1 r^{2p} (1-r^2)^{2(q-1)}\ r \d r = \frac{\pi}{4^{q-1}} B(p+1, 2q-1).
	\label{eq:temp}
    \end{align}

    \ref{ortho_item_3} With $\delta, f$ as in the statement, Lemma \ref{lem:surjective} guarantees that $I_0 f\in \muh^\delta L^2(\Gh)\subset L^2(\Gh)$ and hence the pairing $\left(I_{p,q}, I_0 f \right)_{L^2(\Gh)}$ makes sense. Similarly to \ref{ortho_item_1}, apply Santal\'o's formula to $f\ \overline{I_{p,q}\circ \pih}$ to make appear
    \begin{align*}
	\int_{\Gh} I_0 f\ \overline{I_{p,q}}\ \d\beta\d a \stackrel{\eqref{eq:Santalo}}{=} \int_{S\Dm^\circ} f\ \overline{I_{p,q}\circ \pih}\ \d\Sigma^3 = \int_{\Dm^\circ} f \int_{0}^{2\pi } \overline{I_{p,q}\circ \pih}\ d\theta\ \d V_H = 0,
    \end{align*}
    again, by considerations of fiberwise Fourier content. 

    \ref{ortho_item_4} With $\delta, f$ as in the statement, Lemma \ref{lem:surjective} guarantees that $I_1(\star\d f)\in \muh^\delta L^2(\Gh)\subset L^2(\Gh)$ and hence the pairing $\left(I_{p,q}, I_1(\star\d f) \right)_{L^2(\Gh)}$ makes sense. Applying Santal\'o's formula to $-X_\perp f\ \overline{I_{p,q}\circ \pih}$, we arrive at 
    \begin{align*}
        \int_{\Gh} I_1(\star \d f) \overline{I_{p,q}}\ \d\beta\d a \stackrel{\eqref{eq:Santalo}}{=} -\int_{S\Dm^\circ} (X_\perp f) \overline{I_{p,q}\circ \pih}\ \d\Sigma^3 &= i (\eta_+ f, \mathcal{P}_1(I_{p,q}\circ\pih))_{L^2(\OSD)} \\
        &\stackrel{(\diamond)}{=} -i(f, \eta_- \mathcal{P}_{1}(I_{p,q}\circ\pih))_{L^2(\OSD)} = 0,
    \end{align*}
    where step $(\diamond)$ is an integration by parts with zero boundary term, which can be justified beginning with $f\in C_c^\infty(\Dm^\circ)$, then extending to $f\in x^\delta H^1_0(\Dm)$ by density.
\end{proof}

\subsection{Range characterization: proof of Theorem \ref{thm:moments}}\label{sec:rangeCharac}

\subsubsection{The X-ray transform of \texorpdfstring{$x^\delta L^2(\Dm; \Stt^{k} ({}^0 T^* \Dm))$}{tt tensors}: proof of Proposition \ref{prop:IStt_2k}} 

Recall from \eqref{eq:ttdecom} that for $k\ge 1$ and $\delta>0$,
\begin{align}
    \ell_k \colon x^\delta L^2(\Dm; \Stt^{k} ({}^0 T^* \Dm)) \longrightarrow (\Omega_{-k}^{x^\delta L^2}\cap \ker \eta_+) \oplus (\Omega_{k}^{x^\delta L^2}\cap \ker \eta_-)
    \label{eq:lmStt}
\end{align}
is an isomorphism, and moreover that the first summand on the right side is obtained by complex-conjugating elements of the second summand. With this reduction in mind, in the result below we first characterize the spaces $\Omega_{k}^{x^\delta L^2}\cap \ker \eta_-$ for $k\ge 1$, then study their images under the X-ray transform. 

\begin{lemma}\label{lm:conv}
    Fix $k\ge 1$ and $\delta\in (0,k-1/2)$. 
    \begin{enumerate}[(a)]
        \item\label{conv_item_1} Then $\hat g_k\in \Omega_k ^{x^\delta L^2} \cap \ker \eta_-$ if and only if 
    \begin{equation}\label{eq:a_p}
    \hat g_k=c^k e^{i k\theta} \sum_{p=0}^{\infty}a_p z^{p}\qquad \text{with} \qquad \sum _{p=0}^\infty |a_p|^2 B(2k-1-2\delta,p+1)<\infty.
    \end{equation}
    \item\label{conv_item_2}  The restriction of the X-ray transform below is a homeomorphism: 
    \begin{align*}
    I\colon \Omega_k ^{x^\delta L^2} \cap \ker \eta_- \longrightarrow h^\delta ( \widehat{I_{p,k}},\ p\ge 0).
    \end{align*}

    \end{enumerate}
\end{lemma}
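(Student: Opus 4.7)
My plan proceeds in two main steps, corresponding to the two parts of the lemma, with a norm-comparison argument tying them together.

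For part \ref{conv_item_1}, I start from the explicit formula \eqref{eq:eta_pm_isoth}: with $\rho = c$ on $\Dm$, the equation $\eta_-(f e^{ik\theta}) = 0$ reduces to $\partial_{\bar z}(c^{-k} f) = 0$, so $c^{-k} f$ is holomorphic. Since $\Dm$ is simply connected, Taylor expansion gives $f = c^k h(z)$ with $h(z) = \sum_{p\ge 0} a_p z^p$. To compute $\|\hat g_k\|^2_{x^\delta L^2(\OSD)}$, I use that $c \simeq x$ as bdfs, so the two weighted norms are equivalent; integrating out $\theta$ produces a factor $2\pi$, and combining the weight $c^{-2\delta}$ with $|f|^2 = c^{2k}|h|^2$ and the volume form $d\Sigma^3 = c^{-2} du^1 du^2\, d\theta$ yields
\begin{equation*}
\|\hat g_k\|^2_{x^\delta L^2} \simeq \int_\Dm c^{2k-2\delta-2}|h(z)|^2\, du^1\, du^2.
\end{equation*}
Writing $z = re^{i\phi}$ and using orthogonality of $\{e^{ip\phi}\}_{p\ge 0}$ on circles kills cross terms, after which the substitution $u = r^2$ turns the radial integral into $2^{-(2k-2\delta-1)} B(p+1, 2k-2\delta-1)$, giving the claimed characterization. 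The hypothesis $\delta \in (0, k-1/2)$ ensures $2k-2\delta-1 > 0$, which is exactly what is needed for the Beta function to be finite.

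For part \ref{conv_item_2}, I first note that $\ell_k^{-1}(c^k h(z) e^{ik\theta}) = h(z)\, dz^k$, so by linearity of $I_k$ on monomials, $I \hat g_k = \sum_{p\ge 0} a_p I_{p,k}$, with the sum converging a priori in $\muh^\delta L^2(\Gh) \subset L^2(\Gh)$ via the boundedness of $I$ established in Lemma \ref{lem:surjective}(a) (noting $\delta > 0$). By Proposition \ref{prop:Ipq_ortho}\ref{ortho_item_1}--\ref{ortho_item_2}, the family $\{I_{p,k}\}_{p \ge 0}$ is $L^2(\Gh)$-orthogonal with $\|I_{p,k}\|^2_{L^2(\Gh)} = C_k B(p+1, 2k-1)$ for some constant $C_k > 0$. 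Consequently the target norm in $h^\delta$ is
\begin{equation*}
\Big\|\sum_p a_p I_{p,k}\Big\|_{h^\delta}^2 = \sum_p |a_p|^2 \|I_{p,k}\|^2_{L^2}(p+1)^{2\delta} = C_k \sum_p |a_p|^2 B(p+1, 2k-1)(p+1)^{2\delta}.
\end{equation*}

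The key step is then to compare this with the source-norm coefficients $B(p+1, 2k-2\delta-1)$ from part \ref{conv_item_1}. Using the asymptotic $B(p+1, s) \sim \Gamma(s)(p+1)^{-s}$ as $p\to\infty$, both sequences behave like $(p+1)^{-(2k-2\delta-1)}$ up to a positive multiplicative constant, so the two weighted $\ell^2$-norms are equivalent. This equivalence immediately gives that the map $\hat g_k \mapsto I\hat g_k$ is a bounded bijection with bounded inverse between $\Omega_k^{x^\delta L^2} \cap \ker \eta_-$ and $h^\delta(\widehat{I_{p,k}}, p \ge 0)$, with surjectivity following by reading the correspondence backwards: to any $\ell^2$-sequence $\{c_p\}$ with $\sum |c_p|^2(p+1)^{2\delta}<\infty$ we associate $a_p \coloneqq c_p/\|I_{p,k}\|_{L^2}$, which satisfies the source summability condition thanks to the same asymptotic matching.

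The main obstacle is really just verifying the asymptotic comparability of the two Beta-function sequences when modulated by $(p+1)^{2\delta}$; this is a routine Stirling-type computation but it crucially relies on the hypothesis $0 < \delta < k - 1/2$ to keep both Beta functions finite with positive exponents. Everything else is bookkeeping: the kernel identification in part \ref{conv_item_1} is a one-line consequence of \eqref{eq:eta_pm_isoth}, and the decomposition of $I\hat g_k$ follows from linearity and the orthogonality supplied by Proposition \ref{prop:Ipq_ortho}.
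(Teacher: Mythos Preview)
Your proposal is correct and follows essentially the same approach as the paper: the holomorphicity characterization via \eqref{eq:eta_pm_isoth}, the norm computation by angular orthogonality and a Beta integral, the continuity of $I$ from Lemma~\ref{lem:surjective}(a) to pass the sum through, and the Stirling-type comparison $B(p+1,s)\sim \Gamma(s)(p+1)^{-s}$ to match the two weighted $\ell^2$ norms are exactly the ingredients the paper uses. The only place the paper is marginally more explicit is in invoking monotone convergence to justify the interchange of sum and integral in part~\ref{conv_item_1}, and in noting that the Beta coefficients are nonzero for every $p$ (not merely asymptotically comparable), but your argument implicitly covers both points.
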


\begin{proof}
    \ref{conv_item_1} Since $\partial_{\zbar} (c^{-k}{g}_k) = 0$ by \eqref{eq:eta_pm_isoth},  $c^{-k}{g}_k$ is holomorphic and we have ${g}_k=c^k\sum_{p=0}^{\infty}a_p z^{p}$, where the series converges to ${g}_k$ uniformly on compact subsets of $\Dm^\circ$. 
    We thus compute
    \begin{align*}
	\infty>\int_{\Dm}c^{-2\delta} |{g}_k|^2 \d V_H = \lim_{R\to 1^-} \int_{|z|\le R} c^{-2\delta} |{g}_k|^2 \d V_H &= \lim_{R\to 1^-} \sum_{p=0}^\infty |a_p|^2 \int_{|z|\le R} c^{2k-2\delta} |z|^{2p} \d V_H \\
	&= \sum_{p=0}^\infty |a_p|^2 \int_{\Dm} c^{2k-2\delta} |z|^{2p} \d V_H,
    \end{align*}
    where the first and last equalities use monotone convergence, and the middle one uses uniform convergence. Finally, using \eqref{eq:temp}, we arrive at 
    \begin{align}
	\int_{\Dm}c^{-2\delta} |{g}_k|^2 \d V_H = 2^{-2k+2\delta+2} \pi \sum_{p=0}^\infty |a_{p}|^2 B(2k-2\delta-1,p+1). 
	\label{eq:sum_convergence}
    \end{align}

    \ref{conv_item_2} Writing $\hat{g}_k$ as the $x^\delta L^2$-convergent series \eqref{eq:a_p}, by continuity of $I\colon x^\delta L^2 (\OSM) \to L^2(\Gh)$ we have $I \hat{g}_k = \sum_{p=0}^\infty a_p\ I_{p,k}$ at least defined as an $L^2(\Gh)$-convergent series. Using Proposition \ref{prop:Ipq_ortho}, we compute the squared $h^s( \widehat{I_{p,k}},\ p\ge 0)$ norm
    \begin{align*}
	\| I\hat{g}_k \|_{s}^2 = \sum_{p=0}^\infty |a_{p}|^2 \|I_{p,k}\|_{L^2}^2 (p+1)^{2s} = \frac{4\pi^2 B(1/2,k)}{4^{k-1}} \sum_{p=0}^\infty |a_{p}|^2 B(p+1,2k-1) (p+1)^{2s}.
    \end{align*}
    We now use the asymptotics $\Gamma(x+\alpha) \sim \Gamma(x) x^{\alpha}$ as $x\to +\infty$ (valid for any fixed $\alpha\in \Cm$) to deduce that $B(x,y) = \frac{\Gamma(x) \Gamma(y)}{\Gamma(x+y)} \sim \Gamma(y) x^{-y}$ as $x\to \infty$. Then as $p\to \infty$,  
    \begin{align*}
	B(2k-2\delta-1, p+1) &\sim \Gamma(2k-2\delta-1) (p+1)^{1-2k+2\delta}, \quad (2k-2\delta-1>0), \\
	B(p+1,2k-1) (p+1)^{2s} &\sim \Gamma(2k-1) (p+1)^{1-2k+2s}, \quad (2k-1>0),
    \end{align*}
    while none of the coefficients vanish at any $p$. In particular, the two norms $\|I \hat{g}_k\|_s$ and $\|\hat{g}_k\|_{x^\delta L^2}$ (computed in \ref{conv_item_1}) are equivalent if and only if $s = \delta$. 
\end{proof}

\begin{remark}\label{rm:holodiff}
    Note that the characterization \eqref{eq:a_p} also means that $\hat{g}_k = \ell_k(g_k \d z^k)$, where $g_k \d z^k$ is a holomorphic $k$-differential. Hence \eqref{eq:lmStt} also implies that $x^\delta L^2(\Dm; \Stt^{k} ({}^0 T^* \Dm))$ is in 1:1 correspondence with sums of $x^\delta L^2$-integrable holomorphic $k$-differentials and their complex conjugates.     
\end{remark}

\begin{proof}[Proof of Proposition \ref{prop:IStt_2k}] This is now a direct consequence of \eqref{eq:lmStt}, Lemma \ref{lm:conv}.\ref{conv_item_2}, and the fact that the images of each summand in the right-hand side of \eqref{eq:lmStt} are orthogonal by virtue of Theorem \ref{thm:dataDecomp}.
\end{proof}

\subsubsection{Proof of Theorem \ref{thm:moments}}

We cover the case $m$ even, as the proof for odd $m$ is completely similar. Suppose $m=2n$ for $n$ a natural integer, and let $\delta\in (0,3/2)$.

($\implies$) Suppose $u = I_{2n} f$ for some $f\in x^\delta L^2(\Dm, S^{2n}({}^0 T^* \Dm))$, which we may assume in iterated-tt form \eqref{eq:gauge_decomp}. Then by Proposition \ref{prop:Ipq_ortho}, condition \ref{range_item1} follows. Condition \ref{range_item2} is a direct consequence of Proposition \ref{prop:IStt_2k}. Finally, since $\tilde{f}_0\in x^\delta L^2 (\Dm)$,  by Proposition \ref{prop:Ipq_ortho}.\ref{ortho_item_3}, $I_0 \tilde{f}_0 \perp \sum_{k=1}^n I_{2k} \tilde{f}_{2k}$ and hence $\Pi_0 h = I_0 \tilde{f}_0$.

($\impliedby$) Suppose $h\in L^2_+(\Gh)$ satisfies \ref{range_item1}, \ref{range_item2} and \ref{range_item3}. Then, 
\begin{align*}
    h = \Pi_0 h + \sum_{k=1}^\infty \Pi_{2k}h \stackrel{\text{\ref{range_item1}}}{=} \Pi_0 h + \sum_{k=1}^n \Pi_{2k}h.
\end{align*}
By \ref{range_item2}, for each $1\le k\le n$, $\Pi_{2k} h$ belongs to $h^\delta (\widehat{I_{p,2k}}, p\ge 0)\oplus h^\delta (\widehat{I_{p,2k}}, p\ge 0)$ and by Proposition \ref{prop:IStt_2k} there exists a unique $\tilde{f}_{2k}\in x^\delta L^2(\Dm, \Stt^{2k}({}^0 T^* \Dm))$ such that $I_{2k} \tilde{f}_{2k} = \Pi_{2k} h$. By \ref{range_item3}, $\Pi_0 h = I_0 \tilde{f}_0$ for some $\tilde{f}_0 \in x^\delta L^2(\Dm)$. Finally, by linearity, $h = I_0 \tilde{f}_0 + \sum_{k=1}^n I_{2k} \tilde{f}_{2k} = I_{2n} f$ where $f \coloneqq  \sum_{k=0}^n L^{n-k} \tilde{f}_{2k}$, an element of $x^\delta L^2(\Dm, S^{2n}({}^0 T^* \Dm))$. Theorem \ref{thm:moments} is proved.

\subsection{Reconstructions} \label{sec:recons}

\subsubsection{Proof of Theorems \ref{thm:reconsXray} and \ref{thm:recons_m=1} and discussion} \label{sec:reconsXray}

\begin{proof}[Proof of Theorem \ref{thm:reconsXray}] With notations as in the statement, we first reconstruct $f_{m,+} = \sum_{p=0}^\infty b_{p} z^p$, then the reconstruction formula for $f_{m,-}$ can be deduced by complex conjugation. 

The recovery of $b_{p}$ will be done by computing the integral ${\cal J} := \int_{\OSD} \ell_m f\cdot \overline{w^p \xi^m} \d\Sigma^3$ in two ways, where $w,\xi\in C^\infty(S\Dm^\circ)$ are defined in \eqref{eq:wxi}, and we abuse notation slightly by not distinguishing between $w,\xi$ and their restrictions to $\Gh$. To this end, we first contend that $\ell_m f \cdot \overline{w^p\xi^m} \in L^1(\OSD,\d\Sigma^3)$. Indeed, using that $\ell_m f \in x^\delta L^2(\OSD)$ and using Cauchy-Schwarz inequality, we write 
\begin{align*}
    \int_{\OSD} |\ell_m f\cdot \overline{w^p\xi^m}| \d\Sigma^3 &\le \| x^{-\delta}\ell_m f\|_{L^2(\OSD)}
    \left(\int_{\OSM} x^{2\delta} |w^p\xi^m|^2 \d\Sigma^3\right)^{1/2} \\
    &\le \| x^{-\delta}\ell_m f\|_{L^2(\OSD)} \left(\int_{\Gh} (Ix^{2\delta}) |w|^{2p} |\xi|^{2m} \d\beta\d a\right)^{1/2}  \\
    &\le \| x^{-\delta}\ell_m f\|_{L^2(\OSD)} B(\delta,1/2)^{1/2} \left(\int_{\Gh} |w|^{2p} |\xi|^{2m} \muh^{2\delta}\d\beta\d a\right)^{1/2},
\end{align*}
where the last inequality follows using Remark~\ref{rmk:phg_hyp}, and the last integral is finite since $m\ge 1$ and $|w|^{2p} |\xi|^{2m} = O(\muh^{2m})$ by \eqref{eq:trace_w_xi}. In addition, observe that, by consideration of fiberwise Fourier contents on the product $w^p \xi^m$, we have that ${\cal P}_m (w^p \xi^m) = e^{im\theta} c^m z^p$ and ${\cal P}_k (w^p \xi^m) = 0$ for all $k<m$. Thus, since ${\cal P}_k (\ell_m f) = 0$ for $k>m$, 
\begin{align*}
    {\cal P}_0 (\ell_m f\cdot \overline{w^p \xi^m}) = \sum_{k\in \Zm} \mathcal{P}_k(\ell_m f)\ \overline{\mathcal{P}_k(w^p \xi^m)} = \mathcal{P}_m(\ell_m f)\ \overline{\mathcal{P}_m(w^p \xi^m)} = c^m f_{m,+} \cdot c^m \bar{z}^p.
\end{align*}
On to the first computation of ${\cal J}$, using the last display, we also have, if $m\geq 2$,
\begin{equation}
    \begin{aligned}
        {\cal J} &= 2\pi \int_{\Dm} {\cal P}_0 (\ell_m f\cdot \overline{w^p \xi^m})\ \d V_H \stackrel{(\star)}{=} 2\pi \int_{\Dm} (\sum_{q=0}^\infty b_q z^q)c^{2m} \zbar^p\ \d V_H \\
        &= 2\pi b_p \int_{\Dm} c^{2m} |z|^{2p}\ \d V_H \stackrel{\eqref{eq:temp}}{=} \frac{\pi^2}{2^{2m-3}} B(p+1,2m-1) b_{p} = \frac{\pi^2}{2^{2m-3}} \frac{(2m-2)! p!}{(2m + p-1)!} b_{p}.
    \end{aligned}\label{eq:ibp}
    \end{equation}
    On the other hand, by Santal\'o's formula, we have
    \begin{align*}
        {\cal J} = \!\! \int_{\Gh} \! (I f)\ \overline{w^p \xi^m}\ \d\beta\d a \stackrel{\eqref{eq:trace_w_xi}}{=}\!\! \int_{\Gh}\! (I f)   \frac{e^{-i(p+m)(\beta + \tan^{-1}a+\pi/2)} (-a)^p (-i)^m}{2^m (1+a^2)^{(p+m)/2}} \d\beta\d a.
    \end{align*}
    Combining the last two displays, we obtain 
    \begin{align}
        b_{p} = \frac{2^{m-3}}{(2m-2)!\pi^2} \frac{(2m+p-1)!}{p!} \int_{\Gh} (I f)  \frac{ e^{-i(p+m)(\beta + \tan^{-1}a+\pi/2)}(-a)^p (-i)^m}{(1+a^2)^{(p+m)/2}} \d\beta\d a.
	\label{eq:bprecons}
    \end{align}    
    Then \eqref{eq:reconsfk2} follows by plugging \eqref{eq:bprecons} back into the definition of $f_{m,+}$ and defining $G_{m}(\beta,a;z)$ along the way.
    The case $m=1$ follows similarly, observing in addition that  
    $(\star)$ in \eqref{eq:ibp} holds because $(\ell_1 \star \d f_s, w^p \xi)_{L^2(\OSD)}= ( I_1(\star \d f),I_{p,1})_{L^2(\Gh)} =0,$ by Proposition \ref{prop:Ipq_ortho}\ref{ortho_item_4}.
    \end{proof}

\begin{remark}
    The reconstruction formula \eqref{eq:reconsfk2} is not unique. Indeed, in the argument above, one could perturb the first integral $w^p \xi^{m}$ with a first integral whose Fourier support lies in the modes $\{k\colon k>n\}$. This may change the integral kernel without changing the outcome. 
\end{remark}

\begin{proof}[Proof of Theorem~\ref{thm:recons_m=1}]

By virtue of Lemma \ref{lem:Ppmadjoints}.\ref{item_pmadj2} and \cite[Corollary 5.3]{Eptaminitakis2024}, we have
\begin{align*}
    (A_{+} ^{H})^{*}\mathcal{H}_+ A_-^{H} \phi_{n,k}^H = P_-^H \phi_{n,k}^H = -2i \psi_{n,k}^H,\qquad n\geq 0,\quad 0\leq k\leq n.
\end{align*}
 Combining this with \eqref{eq:IperpSVD} and \cite[Theorem 2.8]{Eptaminitakis2024} yields
 \begin{equation}
 \begin{aligned}
    -\frac{1}{8\pi}I_0^{\sharp
    } (A_+^H)^{*}\mathcal{H}_+A_-^{H}I_\perp x  \widehat{\Phi^* Z_{n,k}^0}
    =
    \frac{\sqrt{n+1}}{2\sqrt{\pi}} I_0^{\sharp
    } \psi_{n,k}^{H}
    =\frac{\sqrt{n+1}}{2\sqrt{\pi}} \frac{2\sqrt \pi}{\sqrt{n+1}}x   \widehat{\Phi^* Z_{n,k}^0} 
    =x   \widehat{\Phi^* Z_{n,k}^0} .
 \end{aligned}\label{eq:on_basis_elements}
 \end{equation}
Since $\big\{x\widehat{\Phi^* Z_{n,k}^0}\big\}_{n\geq 0,\ 0\leq k\leq n}$ are an orthonormal basis of $x^{-1/2}L^2(\Dm)$, \eqref{eq:PU_inversion} holds there.
In particular, it holds a.e. and for elements of  $x^\delta  H_0^{s+1}(\Dm)$ for $\delta\geq -1/2$ and $s \geq -1$.
\end{proof}

\paragraph{Reconstruction of $\tilde f_0$.}

For any $\delta>0$, the reconstruction of $\tilde{f}_0\in x^\delta L^2(\Dm)$ from $I_0 \tilde{f}_0$ (or equivalently, from $\Pi_0 I_{2n} f$ where $f\in x^\delta L^2(\Dm, S^m ({}^0 T^* \Dm))$ with $\delta\in (0,3/2)$ and $\tilde{f}_0$ is the $0$-component of $f^\itt$) can generally leverage the Singular Value Decompositions for $I_0$ derived in \cite{Eptaminitakis2024}. 

\begin{proposition}
    For any $\delta >0$ and $\tilde{f}_0\in x^\delta L^2(\Dm)$, we have
    \begin{align}
        \tilde f_0 = x^{2+2\gamma} \sum_{n\ge 0} \sum_{k=0}^n (\sigma_{n,k}^\gamma)^{-1} (I_0 \tilde{f}_0, \muh^{-2\gamma} \psi_{n,k}^{\gamma,H})_{L^2(\Gh)} \widehat{\Phi^* Z_{n,k}^\gamma}, \qquad \gamma = \delta-\frac{1}{2},
        \label{eq:f0recons}
    \end{align}    
    where $(\Phi^* Z_{n,k}^\gamma, \psi_{n,k}^{\gamma,H}, \sigma_{n,k}^\gamma)$ are defined in \cite[Theorem 2.8]{Eptaminitakis2024}.    
\end{proposition}

\begin{proof}
    To bridge notation, we set $\gamma = \delta-1/2$, and notice that $\tilde{f}_0 \in x^\delta L^2(\Dm)$ if and only if $x^{-2-2\gamma} \tilde{f}_0\in L^2 (\Dm, x^{2\gamma+3}\d V_H)$, while $I_0 \tilde{f}_0 = I_0 x^{2+2\gamma} (x^{-2-2\gamma} \tilde{f}_0)$. This justifies that we should leverage the SVD $\left(\widehat{\Phi^* Z_{n,k}^\gamma}, \psi_{n,k}^{\gamma,H}, \sigma_{n,k}^\gamma\right)_{n\ge 0,\ 0\le k\le n}$ of 
    \[ I_0 x^{2+2\gamma} \colon L^2 (\Dm, x^{2\gamma+3}\d V_H) \to L^2(\Gh, \muh^{-2\gamma} \d\Sigma_\partial).  \]
    To wit, $x^{-2-2\gamma} \tilde{f}_0$ is reconstructed by inverting this SVD, namely writing
    \begin{align*}
        x^{-2-2\gamma} \tilde{f}_0 = \sum_{n\ge 0} \sum_{k=0}^n (\sigma_{n,k}^\gamma)^{-1} (I_0 x^{2+2\gamma}(x^{-2-2\gamma} \tilde{f}_0), \psi_{n,k}^{\gamma,H})_{L^2(\Gh,\muh^{-2\gamma}\d\Sigma_\partial)} \widehat{\Phi^* Z_{n,k}^\gamma},
    \end{align*}
    then \eqref{eq:f0recons} follows upon multiplying both sides by $x^{2+2\gamma}$, and using that 
    \begin{align*}
        (I_0 x^{2+2\gamma}(x^{-2-2\gamma} \tilde{f}_0), \psi_{n,k}^\gamma)_{L^2(\Gh,\muh^{-2\gamma}\d\Sigma_\partial)} = (I_0 \tilde{f}_0, \muh^{-2\gamma} \psi_{n,k}^{\gamma,H} )_{L^2(\Gh)}.
    \end{align*}
\end{proof}

The case $\delta=1/2$ presents some functional advantages, notably the existence of intertwining differential operators, which allow for the derivation of a new reconstruction  given below. This in particular gives a pointwise reconstruction approach for $\tilde f_0\in x^\delta L^2(\Dm)$ for any $\delta\ge 1/2$. In the statement below, $\T_0$ refers to the self-adjoint realization of the operator defined in \eqref{eq:T0}, with spectral decomposition \eqref{eq:T0spectrum}.
\begin{proposition}
    For every $f\in x^{1/2}L^2(\Dm)$, 
    \begin{align}
	f = \frac{1}{4\pi} x I_0^\sharp \circ \T_0^{1/2} \circ I_0 f.
	\label{eq:f0recons2}
    \end{align}
\end{proposition}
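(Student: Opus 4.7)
The plan is to diagonalize each factor on the right-hand side of \eqref{eq:f0recons2} using the SVD of $I_0 x^2$ from \cite[Theorem 2.8]{Eptaminitakis2024} together with the spectral decomposition \eqref{eq:T0spectrum} of $\T_0$, verify the identity on basis elements, and then extend by linearity.

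Concretely, specializing the SVD to $\gamma = 0$ gives, for each $(n,k)$ with $n \ge 0$ and $0 \le k \le n$,
\[ I_0 x^2 \widehat{\Phi^* Z_{n,k}^0} = \sigma_{n,k}^0\, \psi_{n,k}^{0,H}, \qquad \sigma_{n,k}^0 = \frac{2\sqrt{\pi}}{\sqrt{n+1}}, \]
and the $L^2(\Dm, x^3 \d V_H) \to L^2(\Gh)$ adjoint of $I_0 x^2$, computed via Santal\'o's formula \eqref{eq:Santalo}, is $(I_0 x^2)^* = x^{-1} I_0^*$. Hence $I_0^* \psi_{n,k}^{0,H} = \sigma_{n,k}^0\, x \widehat{\Phi^* Z_{n,k}^0}$. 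Combining this with $\T_0^{1/2} \psi_{n,k}^{0,H} = (n+1) \psi_{n,k}^{0,H}$ from \eqref{eq:T0spectrum}, a direct calculation on the basis element $f = x^2 \widehat{\Phi^* Z_{n,k}^0}$ gives
\begin{equation*}
	x\, I_0^* \T_0^{1/2} I_0 f \;=\; x\, I_0^*\bigl[(n+1)\sigma_{n,k}^0\, \psi_{n,k}^{0,H}\bigr] \;=\; (n+1)(\sigma_{n,k}^0)^2\, x^2\, \widehat{\Phi^* Z_{n,k}^0} \;=\; 4\pi f.
\end{equation*}
Any $f \in x^{1/2} L^2(\Dm)$ admits the expansion $f = x^2 \sum_{n,k} a_{n,k}\, \widehat{\Phi^* Z_{n,k}^0}$ with $\sum_{n,k}|a_{n,k}|^2 <\infty$, so linearity yields \eqref{eq:f0recons2}.

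The main obstacle is the rigorous justification of the intermediate steps: the formal series $\T_0^{1/2} I_0 f = 2\sqrt{\pi} \sum_{n,k} \sqrt{n+1}\, a_{n,k}\, \psi_{n,k}^{0,H}$ need not converge in $L^2(\Gh)$ when only $\sum|a_{n,k}|^2<\infty$ is available, and must be interpreted via the spectral calculus of $\T_0$ as an unbounded self-adjoint operator. A clean way around this is to restrict attention to the closed subspace $\overline{I_0(x^{1/2} L^2(\Dm))}^{L^2} = \overline{\mathrm{span}\{\psi_{n,k}^{0,H}: 0 \le k \le n\}}$, in which $I_0 f$ lies by Proposition \ref{prop:Ipq_ortho}\ref{ortho_item_3} together with Theorem \ref{thm:dataDecomp}, and to define the composition $x I_0^* \T_0^{1/2} I_0$ directly through its spectral expansion: the potentially divergent factors $\sqrt{n+1}$ produced by $\T_0^{1/2}$ are exactly cancelled by the decaying singular values $\sigma_{n,k}^0 \sim (n+1)^{-1/2}$ of $I_0^*$, so the composite operator is bounded on $x^{1/2}L^2(\Dm)$ and equals $4\pi\,\mathrm{Id}$. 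Alternatively, one proves \eqref{eq:f0recons2} first on finite linear combinations of the basis (where all objects are classical), and then extends by continuity of both sides on $x^{1/2} L^2(\Dm)$.
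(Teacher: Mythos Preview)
Your proof is correct. Both your approach and the paper's rest on the same underlying data---the SVD of $I_0 x^2$ from \cite[Theorem~2.8]{Eptaminitakis2024} and the spectral decomposition \eqref{eq:T0spectrum} of $\T_0$---but you package them differently. The paper invokes two structural results from \cite{Eptaminitakis2024}: Proposition~2.14, an intertwining relation of the form $(I_0 x^2)^\dagger \T_0^{1/2} = (\L_0^H)^{1/2}(I_0 x^2)^\dagger$, and Theorem~2.15, which identifies $\frac{1}{4\pi}(\L_0^H)^{1/2}(I_0 x^2)^\dagger I_0 x^2$ with the identity. You instead verify the identity directly on the singular vectors and extend by density, which is more elementary and self-contained at the cost of not exhibiting the intertwining structure. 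Your handling of the unboundedness of $\T_0^{1/2}$---noting that the growth $(n+1)$ is exactly compensated by the decay of $\sigma_{n,k}^0$, so that the composite is bounded---is the right way to close the argument.
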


\begin{proof} We will use Proposition 2.14 and Theorem 2.15 in \cite{Eptaminitakis2024}, which is best done by setting $g = x^{-2}f\in x^{-3/2}L^2(\Dm) = L^2(\Dm, x^3\d V_H)$ so that $I_0 f = I_0 x^2 g$. Then we have 
    \begin{align*}
	\frac{1}{4\pi} x^{-1}  {I_0^\sharp}  \T_0^{1/2} I_0 x^2 g &= \frac{1}{4\pi} {(I_0 x^2)^{*}}  \T_0^{1/2} I_0 x^2 g \stackrel{(*)}{=} \frac{1}{4\pi} (\L_0^H)^{1/2} {(I_0 x^2)^{*}} I_0 x^2 g \stackrel{(\bullet)}{=} g,
    \end{align*}
    where step $(*)$ follows from \cite[Proposition 2.14]{Eptaminitakis2024}  (more specifically, that the intertwining of the operators $\T_0$ and $\L_0^H$ implies an intertwining of their functional calculi) and step $(\bullet)$ follows from \cite[Theorem 2.15]{Eptaminitakis2024}. Above, {$(I_0 x^2)^{*}$} denotes the  $ L^2(\Dm_H, x^{3}\, \d V_H) \to L_+^2(\Gh)$-adjoint of $I_0 x^{2}$ and the differential operator $\L_0^{H}$ is defined in \cite[Section 2.4]{Eptaminitakis2024}. %
    Multiplying both sides of this equality by $x^2$ completes the proof. 
\end{proof}

\begin{remark}\label{rem:f0recons}
    To reconstruct $\tilde f_0$, other approaches have been derived in the literature in the case where $\tilde f_0$ is either compactly supported or of Schwartz class: 
    \begin{itemize}
	\item Equation \eqref{eq:f0recons2} resembles \cite[Theorem 1]{Lissianoi1997} and \cite[Eq.\ (29)]{Bal2005}, formulated there for $C_c^\infty$ functions and derived by other means. A more thorough comparison will be the object of future work. 
	\item If $\tilde f_0$ has compact support, one may also use  the reconstruction formula of Kurusa (\cite[Theorem 4.1]{Kurusa1991}) or of  Berenstein-Casadio Tarabusi (\cite{Berenstein1991}), which is of backprojection-then-filter type, cf.  Theorem \ref{thm:inversionN0} below. One may also enclose $\text{supp } \tilde f_0$ inside a simple subset of the Poincar\'e disk, on which the Pestov-Uhlmann reconstruction formula \cite{Pestov2004} can be implemented with no error term (as those only appear in non-constant curvature). 
	\item Some formulas are written for $\tilde f_0$ of Schwartz class, e.g. Helgason \cite[Eq.\ (30) or (31)]{Helgason1999}. It is often stated in the literature that it is expected that formulas extend, although that the borderline cases are unknown to the authors.  
    \end{itemize}
\end{remark}

\subsubsection{Reconstruction from the Normal Operator - Proof of Theorems \ref{thm:inversionN0} and \ref{thm:recons_normal}}\label{ssub:reconstruction_from_the_normal_operator_proof_of_theorem_ref_thm_recons_normal}

We begin with some preliminaries to explain the notation in Theorem \ref{thm:recons_normal}. 
We write \begin{equation}
    I^\sharp \coloneqq\pih^*:C^\infty(\Gh)\to C^\infty(S\Dm^{\circ})
\end{equation} for the extension as a first integral. 
By Santal\'o's formula \eqref{eq:Santalo}, $I^\sharp $ is the formal adjoint of $I$ in the functional setting $ L^2(\OSD)\to L^2(\Gh)$. 
Thus by Lemma \ref{lem:surjective}, we have boundedness
\begin{equation}\label{eq:Istar_bd}
    I^\sharp :\muh^{-\delta}L^2(\Gh)\to {x}^{-\delta}L^2 (\OSD),\quad \delta>0.
\end{equation}
Recall that for $m\geq 0,$ we write $\ell^{*}_{m}$ for the $L^2(\Dm;S^{m}({}^0T^*\Dm))\to L^{2}(\OSD)$-adjoint of $\ell _{m}$.
By \eqref{eq:Hs} and Lemma \ref{lem:surjective}, $ I_m=I\ell_m:{x}^{\delta}L^2(\Dm;S^m({}^0T^*\Dm))\to \muh^{\delta}L^2(\Gh)$ is bounded if $\delta>0$, so 
\begin{equation}
    N_m=\ell^{*}_mI^\sharp I\ell_m:{x}^{\delta}L^2(\Dm;S^m({}^0T^*\Dm))\to{x}^{-\delta}L^2(\Dm;S^m({}^0T^*\Dm)),\quad  \delta>0,\label{eq:Nm_bd}
\end{equation}
is bounded. If $m=0$, $N_0=I_0^\sharp I_0=x(I_0x^2)^*(I_0x^2)x^{-2}$ in the notation of the proof of Proposition~\ref{prop:IperpSVD}.

Recalling Proposition \ref{prop:IperpSVD}, we write 
\begin{equation}
    N_\perp\coloneq x(iI_\perp)^*iI_\perp= -\star \d \ell_1^* I^\sharp I\ell_1 \star \d= -\star \d N_1 \star \d.
\end{equation}
Note that for $k=0,1$, $\star \d $ can be identified with an operator in  $\Diff_0^1(\Dm;\Lambda ^k({}^0T^*M),\Lambda ^{1-k}({}^0T^*M))$ by extension from the interior.
From Proposition~\ref{prop:IperpSVD} it follows that
$N_\perp:xH^{1,0}_w(\Dm)\to x^{1/2}L^2(\Dm)$ is a homeomorphism with  spectral decomposition
\begin{equation}\label{eq:Nperp}
    N_\perp \big(\frac{1}{n+1}x\widehat{\Phi^* Z_{n,k}^0}\big)={4\pi} x^2\widehat{\Phi^* Z_{n,k}^0}.
\end{equation}

\begin{proof}[Proof of Theorem \ref{thm:inversionN0}.]
    Comparing \eqref{eq:Nperp} with the spectral decomposition of $N_0:x^{1/2}L^2(\Dm)\to xH^{1,0}_w(\Dm)\subset x^{-1/2}L^2(\Dm)$ in \eqref{eq:N0_sp_dec} we obtain $ N_\perp N_0=16\pi^2 {Id}$ on $x^{1/2}L^2(\Dm)$.
\end{proof}
\noindent As in the proof of Theorem \ref{thm:inversionN0}, we also have
\begin{equation}\label{eq:Nperp_inv}
N_0N_\perp=16\pi^2{Id}\quad \text { on}\quad xH^{1,0}_w(\Dm).
\end{equation}

We will need an expression for $\ell_m^{*}$.
We first establish that for every $m\ge 0$ 
\begin{align}
A^{kt}\coloneqq	\langle c^{-m}\d z^{k} \d\bar z^{m-k}, c^{-m}\d z^{t} \d\bar z^{m-t}\rangle_g = 2^m \binom{m}{k}^{-1} \delta^{kt}, \qquad 0\le k,t\le m.
	\label{eq:inner}
\end{align}
To see this, recall that on decomposables the inner product defined in \eqref{eq:metric_on_fibers} gives
\begin{align*}
	\langle a_1\otimes \ldots \otimes a_m, b_1\otimes \ldots \otimes b_m\rangle_g  = \prod_{j=1}^m \langle a_j, b_j \rangle_g,
\end{align*}
and hence, using that $\sigma$ is symmetric and satisfies $\sigma \circ \sigma = \sigma$, 
\begin{align}
	\langle \sigma(a_1\otimes \ldots \otimes a_m), \sigma(b_1\otimes \ldots \otimes b_m)\rangle_g  = \frac{1}{m!}\sum_{\tau \in \mathrm{S}_m} \prod_{j=1}^m \langle a_j, b_{\tau (j)} \rangle_g.
	\label{eq:symdprod}
\end{align}
Now fix $m\ge 0$ and $0\le t,k\le m$, and consider $a_j = c^{-1}\d z$ for $1\le j\le k$, $a_j = c^{-1}\d\zbar$ for $k+1\le j\le m$, $b_j = c^{-1}\d z$ for $1\le j\le t$, and $b_j = c^{-1}\d\zbar$ for $t+1\le j\le m$. Since $\langle c^{-1} \d z, c^{-1} \d z \rangle_g = \langle c^{-1} \d\zbar, c^{-1} \d\zbar \rangle_g = 2$ and $\langle c^{-1} \d z, c^{-1} \d\zbar \rangle_g = 0$, we find that in the right hand side of \eqref{eq:symdprod}, the term $\prod_{j=1}^m \langle a_j, b_{\tau(j)} \rangle_g$ is nonzero and equal to $2^m$ if and only if $k=t$ and $\tau (\{1, 2, \ldots, k\}) = \{1, 2, \ldots, k\}$. The latter condition is satisfied for $k! (m-k)!$ elements of $\mathrm{S}_m$, hence \eqref{eq:inner} follows.

\begin{lemma}\label{lm:lm_adj}Let $m\geq 0$. In complex notation, upon writing $h=\sum_{t=-\infty}^{\infty} h_{t} \, e^{it \theta}\in x^{\delta}L^2(\OSD)$,
\begin{equation}
    \ell_m^{*}h=2^{1-m}\pi\sum _{k=0}^{m}\binom{m}{k} h_{-m+2k} \ \frac{\d z^k \d \bar{z}^{m-k}}{c^{m}}.
\end{equation}
\end{lemma}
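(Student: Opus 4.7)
The plan is to test the defining identity $(\ell_m^* h, f)_{L^2(\Dm; S^m({}^0T^*\Dm))} = (h, \ell_m f)_{L^2(\OSD)}$ against sections $f$ expanded in the natural frame $\beta_k \coloneqq  c^{-m}\,\d z^k\,\d\zbar^{m-k}$, $0 \le k \le m$. Writing $f = \sum_{k=0}^{m} f_k(z)\,\beta_k$ reduces the identity to an equality of scalar integrals over $\Dm$ indexed by $k$, from which the coefficients of $\ell_m^* h$ in this frame will drop out by the arbitrariness of the $f_k$.

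For the $\OSD$ side, I first compute $\ell_m \beta_k$ pointwise: in the parametrization \eqref{eq:osm_param} the 1-forms $\d z$ and $\d\zbar$ applied to the unit 0-vector $2c\,\Re(e^{i\theta}\partial_z)$ give $c\,e^{i\theta}$ and $c\,e^{-i\theta}$ respectively, so $\ell_m \beta_k = e^{i(2k-m)\theta}$. Using the measure factorization $\d\Sigma^3 = \d V_H\,\d\theta$ on $\OSD$ and the Fourier orthogonality $\int_0^{2\pi} e^{it\theta}e^{-i(2k-m)\theta}\,\d\theta = 2\pi\,\delta_{t,\,2k-m}$, the pairing with $h = \sum_t h_t e^{it\theta}$ collapses to $(h, \ell_m f)_{L^2(\OSD)} = 2\pi \sum_{k=0}^m \int_\Dm h_{-m+2k}\,\overline{f_k}\,\d V_H$.

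For the $\Dm$ side, I would posit $\ell_m^* h = \sum_{k=0}^m c_k(z)\,\beta_k$ and invoke the already-established diagonality $\langle \beta_k, \beta_t\rangle_g = 2^m\binom{m}{k}^{-1}\delta_{kt}$ from \eqref{eq:inner}. This immediately gives $(\ell_m^* h, f)_{L^2(\Dm; S^m)} = 2^m\sum_{k=0}^m \binom{m}{k}^{-1}\int_\Dm c_k\,\overline{f_k}\,\d V_H$. Equating the two expressions and comparing coefficients against each arbitrary $\overline{f_k}$ yields $c_k = 2^{1-m}\pi\binom{m}{k}\,h_{-m+2k}$, which is the claimed formula.

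No step here is delicate; the calculation is essentially bookkeeping. The three things to keep straight are (i) the conjugation convention in the Hermitian pairing on $S^m({}^0T^*\Dm)$, so that only $\overline{f_k}$ (and not $f_k$) appears in the reduced integrals, (ii) the correct sign of the Fourier mode produced by $\ell_m\beta_k$, so that the index $-m+2k$ in the final formula lines up with the frame index $k$, and (iii) tracking the $2\pi$ from the angular integration against the $2^m$ coming from the frame's lack of orthonormality, which together produce the prefactor $2^{1-m}\pi$.
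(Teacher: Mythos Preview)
Your proof is correct and follows essentially the same approach as the paper: both expand a test tensor $f$ in the frame $c^{-m}\d z^k\d\zbar^{m-k}$, compute $\ell_m f$ explicitly, use Fourier orthogonality in $\theta$ on one side and the diagonal inner product \eqref{eq:inner} on the other, and then compare coefficients. The only cosmetic difference is that the paper pairs in the order $(\ell_m f, h) = (f, \ell_m^* h)$ whereas you take the conjugate pairing $(\ell_m^* h, f) = (h, \ell_m f)$, which of course gives the same result.
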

\begin{proof}
Write $f=\sum_{k=0}^{m}f_{k}\frac{\d z^{k}\d \bar{z}^{m-k}}{c^{m}}\in x^{-\delta}L^2(\Dm;S^{m}({}^0T^*\Dm))$, so that $\ell_{m}f=\sum_{k=0}^{m} f_{k} e^{i(-m+2k)\theta}$.
Also write $\ell_m^* h=\sum_{t=0}^{m} H_{t}\frac{\d z^{t}\d \bar{z}^{m-t}}{c^{m}}$.
We compute 
\begin{align}
        2\pi \sum _{k=0}^{m}\sum_{t=-\infty}^{\infty}\int_\Dm  f_{k}\delta^{-m+2k,t}\overline{h_{t}}\d V_H=( \ell_m f ,h)_{L^{2}(\OSD)}=
        ( f, \ell_m^{*}h)_{L^2(\Dm)}\overset{\eqref{eq:inner}}=\!\sum _{k,t=0}^{m}\int _\Dm  f_{k} A^{kt}\overline{H_{t}}\d V_H.
\end{align}
From this, $\sum_{t=0}^{m}\overline{A^{kt}}{H_{t}}=2\pi {h_{-m+2k}}$ for each $k=0,\dots, m$,  thus \eqref{eq:inner} yields the claim.
    \end{proof}

By Lemma \ref{lm:lm_adj} and \eqref{eq:Akt2}, if $h=\sum_{t=-\infty}^{\infty} h_{t} \, e^{it \theta}$,
\begin{equation}\label{eq:lmstar}
    \ell_m\mathcal A_m\ell_m^{*}h=2\pi \sum _{k=0}^{m}h_{-m
    +2k }e^{i(-m
    +2k)\theta} =2\pi \sum _{k=0}^{m}\P_{-m
    +2k } h ,
\end{equation} recall \eqref{eq:projector}.
The main ingredient in the proof of Theorem~\ref{thm:recons_normal} is the following lemma:
\begin{lemma}\label{lm:diag_term}
Let $m\geq 1$. Suppose that $0<k\leq m$ (resp. $-m\leq k<0$) is an integer with the same parity as $m$ and let $\delta\in (0,|k|-1/2)$. Consider $\hat g_k\in \Omega_k ^{x^
\delta L^2}$ satisfying $ \eta_-  \hat g_k=0$ (resp. $\eta_+ \hat g_k=0$).
Then for any  $-m\leq k'\leq k$ (resp. $k\leq k'\leq m$) 
\begin{equation}\label{eq:triangularization}
  \frac{1}{2\pi}  \P_{k'}\ell _m\mathcal A_m \ell_m^* I^\sharp I \hat g_k= \begin{cases}
        2B\big(\frac{1}{2},|k|\big)\hat g_k, &k'=k, \\*
        0,& k'< k\ (\text{resp. }k'>k).
    \end{cases}
\end{equation}
\end{lemma}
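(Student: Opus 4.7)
My plan is to reduce \eqref{eq:triangularization} to a fiberwise Fourier computation on $S\Dm^\circ$, exploiting that the polynomial expression for $I_{p,k}$ from Proposition \ref{prop:Ipq} is built out of the first integrals $w,\xi$ and therefore pulls back trivially via $I^*$.

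\textbf{Reduction and expansion.} First, I would write $I_m^* = \ell_m^* I^*$ and apply \eqref{eq:lmstar} to $h = I^* I \hat g_k$, obtaining $\ell_m\mathcal A_m I_m^* I \hat g_k = 2\pi \sum_{j=0}^m \P_{-m+2j}(I^* I \hat g_k)$. Since $k$ shares parity with $m$ by hypothesis, projecting onto a mode $k'$ of the appropriate parity isolates one summand (opposite-parity $k'$ kill both sides trivially), reducing the lemma to
$$\P_{k'} I^* I \hat g_k = 2B(\tfrac12,k)\hat g_k \text{ if } k'=k, \qquad 0 \text{ if } k' < k,$$
in the case $k>0$. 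Next, I would use Lemma \ref{lm:conv}.\ref{conv_item_1} to identify $\hat g_k$ with the holomorphic $k$-differential $\sum_p a_p z^p\,\d z^k$ of suitably weighted square-summable coefficients, so that $I\hat g_k = \sum_p a_p I_{p,k}$ in $L^2(\Gh)$. Proposition \ref{prop:Ipq} then writes $I_{p,k}$ as a polynomial in $w,\xi$ on $\Gh$, and since $w,\xi$ are first integrals (as smooth functions of $\xi_\pm$), the pullback $I^* I_{p,k}$ equals the same polynomial now viewed on $S\Dm^\circ$ via \eqref{eq:wxi}.

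\textbf{Fourier content on $S\Dm^\circ$.} The heart of the argument is then the direct geometric series expansion
$$\xi = \sum_{n\ge 0} c\bar z^{2n}\, e^{i(2n+1)\theta}, \qquad w = z - 2c\sum_{n\ge 1}\bar z^{2n-1}\, e^{2in\theta},$$
which shows that $\xi$ is supported in odd positive fiberwise Fourier modes (with leading term $c e^{i\theta}$) and $w$ in non-negative even modes (with $\theta$-independent piece $z$). Hence each monomial $w^{p-2\ell}\xi^{k+2\ell}$ is supported in modes $\ge k+2\ell$, all of parity $k$. In particular $\P_{k'}(I^* I_{p,k}) = 0$ for $k'<k$; for $k'=k$, only $\ell=0$ contributes, and picking the leading Fourier mode of each factor gives $\P_k(w^p \xi^k) = z^p c^k e^{ik\theta} = \ell_k(z^p\,\d z^k)$. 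Thus $\P_k I^* I_{p,k} = 2B(\tfrac12,k)\ell_k(z^p\,\d z^k)$, and summing with weights $a_p$ recovers $2B(\tfrac12,k)\hat g_k$. The case $k<0$ follows by complex conjugation, since $\overline w$ and $\overline\xi$ play symmetric roles with opposite-sign Fourier modes.

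\textbf{Main obstacle.} I expect the argument to be largely algebraic, with the main bookkeeping being to justify commuting the projections $\P_{k'}$ past the $L^2(\Gh)$-convergent series expansion of $I\hat g_k$. This should follow from the boundedness \eqref{eq:Istar_bd} of $I^*$ together with continuity of each $\P_{k'}$ on weighted $L^2(\OSD)$ spaces, so no real analytic difficulty is expected.
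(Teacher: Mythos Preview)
Your proposal is correct and follows essentially the same approach as the paper's proof: both reduce via \eqref{eq:lmstar} to computing $\P_{k'} I^* I\hat g_k$, expand $\hat g_k$ into the series $\sum_p a_p z^p c^k e^{ik\theta}$ via Lemma \ref{lm:conv}, and then analyze the fiberwise Fourier content of $I^* I_{p,k} = \pih^* I_{p,k}$ on $S\Dm^\circ$. The only cosmetic difference is that you compute the geometric-series expansions of $w$ and $\xi$ explicitly, whereas the paper invokes the already-recorded consequence \eqref{eq:Ipq_leading}; both amount to the same observation that $\xi = c e^{i\theta} + O(e^{3i\theta})$ and $w = z + O(e^{2i\theta})$, and the justification for commuting $\P_{k'}$ past the series via \eqref{eq:Istar_bd} is handled identically.
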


\begin{proof}
Using that $ \overline{\mathcal{A}_m \big(c^{-m}{\d z^{k}\d \bar {z}^{m-k} }\big)}=
\mathcal{A}_m \big(c^{-m}{ \d \bar{z}^{k}\d z^{m-k} }\big)$ for $k=0,\dots,m$, we can check  that $\mathcal{A}_m$  commutes with conjugation in $S^{m}(^{0}T^*\Dm)$ (which as we recall is the complexification of a real vector bundle). 
Hence it suffices to prove \eqref{eq:triangularization} for $k>0$, as the case of $k<0$ will then follow by conjugation.
Let $k>0$ and write  $\hat g_k=c^k e^{i k\theta} \sum_{p=0}^{\infty}a_p z^{p}$  as in  \eqref{eq:a_p}.
As in the proof of Lemma \ref{lm:conv},  the infinite sum converges in $x^{\delta}L^{2}(\OSD)$. Hence by Lemma \ref{lem:surjective}, 
\begin{equation}\label{eq:interm}
    I\hat g_k=I\Big(e^{ik\phi}c^{k}\sum_{p=0}^{\infty} a_p z^p\Big)=\sum _{p=0}^{\infty }a_p I (z^p e^{ik\phi}c^{k})=\sum _{p=0}^{\infty }a_p I_{p,k} \in \muh^{\delta}L^2(\Gh) \subset L^2(\Gh).
\end{equation}
We now use \eqref{eq:lmstar}  to write, for $-m\leq k'\leq k$ with the same parity as $m$
\begin{align}
    \frac{1}{2\pi}\P_{k'}\ell _ m& \mathcal{A}_m\ell_m^{*} I^\sharp  I \hat g_k
        \overset{\eqref{eq:lmstar}}{=}\P_{k'}I^\sharp I\hat g_k
        \overset{\eqref{eq:interm}}{=} \P_{k'}I^\sharp \sum _{p=0}^{\infty }a_p I_{p,k}\\
        &\overset{(*)}{=} \sum _{p=0}^{\infty }a_p \P_{k'}I^\sharp  I_{p,k}
        {=}\sum _{p=0}^{\infty }a_p  \P_{k'}\pih^* I_{p,k}
         \overset{\eqref{eq:Ipq_leading}}{=}
        \sum _{p=0}^{\infty }a_p \P_{k'}\Big(2B\Big(\frac{1}{2},k\Big) z^p c^k e^{ik\theta} + O(e^{i(k+2)\theta})\Big)\\
        &\quad  =\begin{cases}
            \sum _{p=0}^{\infty }2B\left(\frac{1}{2},k\right)a_p e^{ik\theta}z^pc^k, &k'=k,\\*
            0, &k'<k
        \end{cases}    \quad =\begin{cases}
            2B\left(\frac{1}{2},k\right)\hat g_k, &k'=k,\\*
            0, &k'<k.
        \end{cases}
        \end{align} 
To justify $(*)$ we use the convergence of the infinite sum in $L^{2}(\Gh)$ by \eqref{eq:interm}, the  boundedness \eqref{eq:Istar_bd} as well as the fact that $\P_{k'}:x^{\delta'}L^2(\OSD)\to x^{\delta'}L^2(\OSD)$ is bounded for any $\delta'$.
In other words, the infinite sum following $(*)$  a priori makes sense in $x^{-\delta}L^2(\OSD)$. If $m$ and $k'$ have opposite parities, $\frac{1}{2\pi}\P_{k'}\ell _ m \mathcal{A}_m \ell_m^{*}I^\sharp I \hat g_k=0$ by {\eqref{eq:lmstar}}.
\end{proof}

\begin{remark}[A Euclidean-Hyperbolic comparison]\label{rmk:comparison}
    In \eqref{eq:triangularization},  we obtain in the case $k=k'$ a multiplication operator due to the fact that the coefficient in front of the bottom Fourier mode of $I_{p,q}$ does not depend on $p$, see \eqref{eq:Ipq_leading}. This is markedly different from the analogous situation in the Euclidean disk, which we now discuss for comparison. If we define $I_{p,q}^E$ similarly to \eqref{eq:Ipq} with $I_q$ replaced by the Euclidean X-ray transform (such a quantity is equally relevant to reconstruct higher-order components of tensor fields in the Euclidean disk), we now explain that the analogue of \eqref{eq:Ipq_leading} is 
\begin{align}
	\frac{I_{p,q}^E}{\cos\alpha} \circ \pi_F = \frac{2(-1)^p}{p+1} z^p e^{iq\theta} + O(e^{i(q+2)\theta}).
	\label{eq:Ipq_leadingE}
\end{align}
where $(\beta,\alpha)$ denote fan-beam coordinates for the Euclidean disk and $\pi_F$ is the analogue there of the map $\pih$. Division by $\cos\alpha$ in this case is a necessary step to ensure one-sided Fourier content after applying $\pi_F^*$. Thus in the Euclidean case, the coefficient of the bottom mode in \eqref{eq:Ipq_leadingE} does not depend on $q$ but depends on $p$. To justify \eqref{eq:Ipq_leadingE}, we first compute
\begin{align*}
	\frac{I_{p,q}^E}{\cos\alpha} \circ \pi_F = \frac{e^{iq(\beta+\alpha+\pi)}  I_0^E z^p}{\cos\alpha} \circ \pi_F = e^{iq\theta} \left(  \frac{ I_0^E z^p}{\cos\alpha} \circ \pi_F   \right).
\end{align*}
A direct calculation shows that 
\begin{align*}
	\frac{I_0^E z^p (\beta,\alpha)}{\cos\alpha} = \frac{2}{p+1} e^{ip (\beta+\alpha+\pi/2)} U_p(-\sin\alpha),
\end{align*}
where $U_p$ is the $p$-th Chebyshev polynomial of the second kind. Moreover, if $(z=x+iy,\theta)$ are coordinates on the Euclidean unit tangent bundle $S\Dm_E$ (describing a tangent vector $v = \cos\theta \partial_x + \sin\theta\partial_y$), we have that $(\beta+\alpha+\pi)\circ \pi_F = \theta$ and $-\sin\alpha \circ\pi_F = \frac{z e^{-i\theta}-\bar{z} e^{i\theta}}{2i}$, so that 
\begin{align*}
	\frac{I_0^E z^p}{\cos\alpha} \circ\pi_F = \frac{2 (-i)^{p}}{p+1} e^{ip \theta} U_p ( (z e^{-i\theta}-\bar{z} e^{i\theta})/(2i) ),
\end{align*}
The zeroth Fourier mode of the right side only comes from the top-degree term of $U_p$, whose coefficient is $2^p$, in particular $U_p ( (z e^{-i\theta}-\bar{z} e^{i\theta})/(2i) ) = \frac{1}{i^p} z^p e^{-ip\theta} + O(e^{-i(p-1)\theta})$, and hence \eqref{eq:Ipq_leadingE} follows. 
\end{remark}

We will now prove Theorem \ref{thm:recons_normal}. 
The operators $Q_{m,k}$  in its statement are given by
\begin{equation}
    Q_{m,k}\coloneqq \ell_m^{-1}\P_k\ell _m,\qquad -m\leq k\leq m \quad  \qquad k \equiv m \mod 2,
\end{equation}
 using the isomorphism property \eqref{eq:isomorphism_lm}.
 On $\mathcal{F}(\Dm;S^m({}^0T^*\Dm))$ one has $\sum_{j=0}^{m}Q_{m,-m+2j}={Id}$ and $Q_{m,k}^{2}=Q_{m,k}$.

\begin{proof}[Proof of Theorem \ref{thm:recons_normal}]
Let $f$ be as in the statement, and decompose it as in  \eqref{eq:gauge_decomp}.
By Lemma \ref{lm:kernel}, $N_m \d^{s} q=0$. 
By \eqref{eq:ttdecom}, for $1\leq k\leq m$ with $k\equiv m \mod 2$,
\begin{equation}\label{eq:tensor-to-pol}
    \ell_m L^{(m-k)/2}\tilde{f}_{k}=\ell_{k}\tilde{f}_{k}=(\P_{k} +\P_{-k})\ell_{k}\tilde{f}_{k}\eqqcolon\hat g_{k}+\hat g_{-k}, \quad\text{where}\quad \hat g_{\pm k}\in \Omega_{\pm k}^{x^{\delta}L^2}\cap \ker \eta_{\mp}.
\end{equation}
From this, it is immediate upon applying $\ell_m^{-1}\P_{\pm k}$ that
\begin{equation}\label{eq:k_kf_k}
    \ell_{m}^{-1}\hat g_{\pm k}=\ell_{m}^{-1}\P_{\pm k}\ell_{m}L^{(m-k)/2}\tilde{f}_{ k}=Q_{m,\pm k}L^{(m-k)/2}\tilde{f}_{k}.
\end{equation}
We now separate cases.

\noindent $\bullet$ \textbf{Case $m$ even.}
To prove \eqref{eq:normal_inversion}-\eqref{eq:normal_inversion2}, we compute $Q_{m,k'}\mathcal A_m \mathsf{D}$ for even $k'\in [-m,m]$.
If  $0 < k'\leq m$ even, and writing $\hat{g}_0\coloneqq \tilde{f}_0/2$, we have
\begin{align}
        Q_{m,k'}\mathcal A_m \mathsf{D}&=Q_{m,k'}\mathcal A_mN_m f =\sum _{\substack{0\leq k\leq m\\ k \text{ even}}}Q_{m,k'}\mathcal A_m\ell_m^*I^\sharp I\ell_{m} L^{(m-k)/2}\tilde f_{k}\\
        &\overset{\eqref{eq:tensor-to-pol}}{=}\sum _{\substack{0\leq  k\leq m\\ k \text{ even}}}(\ell_m^{-1}\P_{k'}\ell_m)\mathcal A_m\ell_m^*I^\sharp I( \hat{g}_{k}+\hat{g}_{-k}) \\ 
        &\!\!\!\!\overset{\text{Lem. \ref{lm:diag_term}}}{=}\sum _{\substack{0\leq k<  k'\\ k \text{ even}}}(\ell_m^{-1}\P_{k'}\ell_m)\mathcal A_m\ell_m^*I^\sharp I( \hat{g}_{k}) +4\pi B\big(\frac{1}2,k'\big)\ell_m^{-1} \hat{g}_{k'} \\ 
        &\overset{\eqref{eq:k_kf_k}}{ =}\sum _{\substack{0\leq  k< k'\\ k \text{ even}}}Q_{m,k'}\mathcal A_mN_m Q_{m,k}L^{\frac{m-k}2}\tilde{f}_k+ 4\pi B\big(\frac{1}2,k'\big)Q_{m,k'}L^{\frac{m-k'}2}\tilde{f}_{k'}. \label{eq:matrix_proof}
\end{align}

If $k'=0$, the first three equalities above are exactly the same, and then Lemma \ref{lm:diag_term} yields
\begin{equation}\label{eq:case_k_0}
    \begin{aligned}
    Q_{m,0}\mathcal{A}_m\mathsf{D}=Q_{m,0}\mathcal{A}_mN_m& L^{m/2} \tilde{f}_0
    =Q_{m,0}\mathcal{A}_mN_m \ell_m^{-1}\tilde{f}_0
    =\ell_m^{-1}\P_0(\ell_m\mathcal{A}_m\ell_m^{*})I^\sharp I\ell_0\tilde{f}_0\\ 
     &\quad \overset{\eqref{eq:lmstar}}{=}
     2\pi \ell_m^{-1}\P_0I^\sharp I\ell_0\tilde{f}_0=\ell_m^{-1}\ell_0^{*}I^\sharp I\ell_0\tilde{f}_0=\ell_m^{-1}N_0\tilde{f}_0=L^{m/2}N_0\tilde{f}_0.
\end{aligned}    
\end{equation}
From this we obtain \eqref{eq:normal_inversion}: 
\begin{equation}
    Q_{m,0}\mathcal{A}_m \mathsf{D}=L^{m/2}N_0\tilde{f}_0\implies N_0\tilde{f}_0=\P_0\ell_m \mathcal{A}_m\mathsf{D}\implies \tilde{f}_0=R_0 \P_0\ell_m \mathcal{A}_m\mathsf{D}.
\end{equation}
The inversion as stated in \cite{Berenstein1991} is for $\tilde{f}_0\in C_c^{\infty}(\Dm^\circ)$, but it also holds for $\tilde{f}_0\in x^{\delta}L^{2}(\Dm)$, by \cite[Proposition 5.2]{Eptaminitakis2022}.

Finally, note for even  $k'>0$  we have $Q_{m,-k'}=\overline {Q_{m,k'}}$.
Using that $\mathcal{A}_m$ commutes with conjugation as explained before and that $\overline{\mathsf{D}}=N_m \overline{f}$, for such $k'$ we have 
\begin{equation}\begin{aligned}
    Q_{m,-k'}{\mathcal{A}_m}{\mathsf{D}}\overset{ \phantom{\eqref{eq:matrix_proof}}}{=}&\overline {Q_{m,k'}{\mathcal{A}_m}\overline{\mathsf{D}}}\\ \overset{ \eqref{eq:matrix_proof}}{=}  &\sum _{\substack{0\leq k< k'\\ k \text{ even}}}Q_{m,-k'}{\mathcal{A}_m}N_m Q_{m,-k}L^{\frac{m-k}2}{\tilde{f}}_k+ 4\pi B\big(\frac{1}2,k'\big)Q_{m,-k'}L^{\frac{m-k'}2}{\tilde{f}}_{k'}.
    \end{aligned}\label{eq:neg_k_pr}
\end{equation}

Now to obtain \eqref{eq:normal_inversion2}, 
combine \eqref{eq:matrix_proof} with \eqref{eq:neg_k_pr} to see that for even ${k'}>0$
\begin{align}
        L^{\frac{m-k'}{2}}\tilde f_{k'}&
        =Q_{m,{k'}}L^{\frac{m-k'}{2}}\tilde f_{k'}+ Q_{m,-{k'}}L^{\frac{m-k'}{2}}\tilde f_{k'}\\
        &=\frac{Q_{m,k'}\mathcal{A}_m\mathsf{D}+Q_{m,-k'}{\mathcal{A}_m}{\mathsf{D}}}{4\pi  B\big(\frac{1}{2},k'\big)}-\!\!\!\! \sum _{\substack{0\leq k< k'\\ k \text{ even}}}\!\!\!\!\frac{Q_{m,k'}\mathcal{A}_mN_m Q_{m,k}+Q_{m,-k'}{\mathcal{A}_m}N_m Q_{m,-k}}{4\pi  B\big(\frac{1}{2},k'\big)}L^{\frac{m-k}2}\tilde{f}_k.
\end{align}
\noindent $\bullet$ \textbf{Case $m$ odd.} Following the same steps as in \eqref{eq:matrix_proof} and \eqref{eq:neg_k_pr}, we see that for $1\leq k'\leq m$ odd we have 
\begin{align}
  Q_{m,\pm k'}\mathcal A_m \mathsf{D} &=Q_{m,\pm k'}\mathcal A_mN_m L^{\frac{m-1}{2}} \star \d f_s\\*
  &+\sum _{\substack{0\leq  k< k'\\ k \text{ odd}}}Q_{m,\pm k'}\mathcal A_mN_m Q_{m,\pm k}L^{\frac{m-k}2}\tilde{f}_k+ 4\pi B\big(\frac{1}2,k'\big)Q_{m,\pm k'}L^{\frac{m-k'}2}\tilde{f}_{k'},
\end{align}
which yields \eqref{eq:normal_inversion2odd} upon adding the two equalities corresponding to each odd $1\leq k'\leq m$ and solving for $L^{\frac{m-k'}{2}}\tilde f_{k'}=(Q_{m, k'}+Q_{m,- k'})L^{\frac{m-k'}2}\tilde{f}_{k'}$.
For \eqref{eq:normal_inversion_odd}, note that in the special case $k'=1$,
\begin{align}
    (Q_{m,1}+Q_{m,-1})\mathcal A_m \mathsf{D} &=(Q_{m,1}+Q_{m,-1})\mathcal A_m N_mL^{\frac{m-1}{2}} \star \d f_s+4\pi B\big(\frac{1}2,1\big)L^{\frac{m-1}{2}}\tilde f_1\\ 
    &=\ell_{m}^{-1}(\mathcal{P}_{1}+\mathcal{P}_{-1})(\ell_m\mathcal A_m\ell _m^{*})I^\sharp I_1 \star \d f_s+4\pi B\big(\frac{1}2,1\big)L^{\frac{m-1}{2}}\tilde f_1\\
    &\overset{\eqref{eq:lmstar} }=2\pi\ell_{m}^{-1}(\mathcal{P}_{1}+\mathcal{P}_{-1})I^\sharp I_1 \star \d f_s+4\pi B\big(\frac{1}2,1\big)L^{\frac{m-1}{2}}\tilde f_1\\
    &\overunderset{\eqref{eq:lmstar}, \text{ case  }m=1}{\eqref{eq:Akt2}}=2\ell_{m}^{-1}\ell_1 N_1 \star \d f_s+4\pi B\big(\frac{1}2,1\big)L^{\frac{m-1}{2}}\tilde f_1\\
    \implies&  N_1 \star \d f_s=\frac12 \ell_1^{-1}(\P_1+\P_{-1})\ell_m \A_m \mathsf{D}-2\pi B(\frac12,1)\tilde f_1.
\end{align}
Applying $-\star \d$ to both sides, the last term vanishes and we find 
\begin{equation}\label{eq:f_s_inversion}
    N_\perp f_s=-\frac12 \star \d \ell_1^{-1} (\P_1+\P_{-1})\ell_m \A_m \mathsf{D}.
\end{equation}
Note that \eqref{eq:Nperp_inv} in particular holds on $C_c^\infty(\Dm^\circ)$.
Now \eqref{eq:Nperp_inv} can be decomposed as the following chain of  maps which are bounded if $\delta\in (0,1/2)$:
\begin{equation}
    x^\delta H_0^1(\Dm)\overset {\star \d }\longrightarrow x^\delta L^2(\Dm;{}^0T^*\Dm)\overset {N_1}{\underset {\eqref{eq:Nm_bd}}\longrightarrow}
    x^{-\delta} L^2(\Dm;{}^0T^*\Dm)
    \overset {-\star \d }{\longrightarrow}x^{-\delta} H_0^{-1}(\Dm)\overset{N_0}\longrightarrow x^{-\delta}L^2(\Dm);
\end{equation}
 the last mapping property follows by \cite[Proposition 4.4]{Eptaminitakis2022}
 and Proposition \ref{prop:Sobolev_mapping}.
Hence \eqref{eq:Nperp_inv} holds on $x^\delta H_0^1(\Dm) $ and upon applying $N_0$ to \eqref{eq:f_s_inversion} we obtain \eqref{eq:normal_inversion_odd}.
\end{proof}

\appendix

\section{Background Material}\label{sub:the_0_calculus}
\subsection{Polyhomogeneous Conormal Distributions}\label{sec:phg}

In what follows,  a \textit{smooth index set} is a discrete set $E\subset \Cm\times \mathbb{N}_0 $ with the properties $|(s_j,p_j)|\to\infty\implies\Re(s_j)\to\infty$ and 
$(s_j,p_j)\in E\implies (s_j+m,p_j-\ell)\in E$ for any $  m\in \mathbb{N}_0=\{0,1,\dots\} $ and $\ell=0,1,\dots,p_j.$

Let $M^n$ be a compact manifold with boundary with $\rho $ a bdf and let ${E}\subset \Cm\times \Nm_0$ be a smooth index set as above.
A function $f\in \rho^{s} L^\infty(M)$ is in the space $\mathcal{A}^s(M) $ of \emph{conormal functions of order $s$} if for any number of vector fields $V_1,\dots V_k$ that are tangent to $\partial M$ we have
\begin{equation}
    V_1\dots V_kf\in \rho^{s}L^\infty(M).
\end{equation}
The space $\Aphg^{E}(M)$ of  \emph{polyhomogeneous functions with index set $E$} is defined by
\begin{equation}
    f\in \Aphg^{E}(M) \quad \text{if}\quad f\sim \sum _{\Re s_j\to \infty}\sum _{\ell=0}^{p_j} a_{j\ell} \rho^{s_j}\log^{\ell}(\rho),\text{ where }a_{j\ell}\in C^\infty(\partial M), \quad (s_j,p_j)\in {E}\label{eq:phg}
\end{equation}
with respect to a product decomposition $[0,\epsilon)_\rho\times \partial M$ of a  collar neighborhood of $\partial M$.
The meaning of the asymptotic expansion is that  for every $N\in  \Rm$, if one calls $f_N$ the sum on the right hand side corresponding to $\Re s_j\leq N$, then $|f-f_N|\in \mathcal{A}^{N}(M)$ (see \cite{Melrose1992} and \cite{Grieser2001}).

If $a\in \Rm$, we write $\Re E>a$ (or ${E}> a$ if $E\subset \Rm\times \Nm_0)$ if $\Re(s)>a$ for all $(s,p)\in E$.
We also write  $\Re {E}\geq a^-$  to mean that $\Re s\geq a$ for all $(s,p)\in {E}$.
So, if  $f\in \Aphg^{E}(M)$ and $\Re E\geq a^-$, $\rho^{-a}f$ is not necessarily bounded but it grows at most logarithmically fast as $\rho\to 0$.
We write $\Re {E}\geq a$  if $\Re {E}\geq a^-$ and there is no element $(s,p)$ in ${E}$ with $\Re(s)=a$ and $p\neq 0$.
Finally, we let $\inf(E)\coloneqq \inf\{\Re(s):(s,p)\in E\}$. If $E\neq \emptyset$, the infimum is attained.
Given index sets $E$, $E'$, their sum and extended union respectively are defined as
    \begin{align}
        E+E'\coloneqq&\{(s,p)+(s',p'):(s,p)\in E, \; (s',p')\in E'\},\\
        E\overline{\cup}E'\coloneqq&E\cup E'\cup\{(s,p+p'+1):\text{there exist }(s,p)\in E, (s,p')\in E'\}.
     \end{align}

\subsection{The 0-calculus}\label{sec:the_0_calculus}

In this section we give a quick overview of the 0-calculus, in particular the results we need to prove Proposition~\ref{prop:Ppm}.
For more details see \cite{Mazzeo1987,Mazzeo1991,Hintz2021}.
Let $M^{n}$ be a compact manifold with boundary, and let $\rho$ be a bdf. The Lie algebra of 0-vector fields consists of those vector fields that vanish at $\partial M$, or, equivalently, of the smooth sections of ${}^0TM$.
We denote by $\mathrm{Diff}_0^k(M)$  the space of those differential operators of order $k$ that consist of locally finite sums of at most $k$-fold compositions of 0-vector fields.
If $P\in \mathrm{Diff}_0^k(M)$, then we have in coordinates $u=(u^1,\dots ,u^n)$  (and using multi-index notation)
\begin{equation}
    P=\sum _{|\alpha|\leq k} a_{\alpha}(u)(\rho\partial_{u})^\alpha, \quad a_\alpha\in C^\infty.
\end{equation}
Its 0-principal symbol is given by 
\begin{equation}
    \sigma^k_0(\zeta)=\sum _{|\alpha|=k} a_{\alpha}(u)(i\xi)^\alpha, \qquad  \zeta =\Big(u,\sum _{j=1}^{n}\xi_j \frac{\d u^j}{\rho}\Big)\in {}^0T^*M\setminus 0.
\end{equation}
Then $P\in \mathrm{Diff}_0^k(M)$ is elliptic if $\sigma^k_0(\zeta)\neq 0$ for all $\zeta \in {}^0T^*M \setminus 0$.
To obtain Fredholm properties, one needs the notions of the reduced normal operator and the indicial operator.

The normal operator is a differential operator in the inward tangent space at a point $p\in \partial M$, given by freezing coefficients at $p$ in terms of special coordinates. 
Fix $p\in \partial M$ and let the coordinates $(u)=(u^1,u')$ be chosen such that $u^1\eqqcolon x$ is a bdf and $u'=(u^2,\dots, u^n)$ restrict to coordinates on $\partial M$ with $u'(p)=0$.
Then if in these coordinates 
\begin{equation}
    P=\sum _{|\alpha|+j\leq k} a_{\alpha,j}(u)(x\partial_x)^j(x\partial_{u'})^\alpha,
\end{equation}the normal operator is given by 
\begin{equation}
    N(P,p)=\sum _{|\alpha|+j\leq k} a_{\alpha,j}(0)(x\partial_x)^j(x\partial_{u'})^\alpha.
\end{equation}
Taking Fourier transform in the translation invariant directions $(u^2,\dots,u^n)$ with dual variable $\eta$ and then setting $t=x|\eta|$, $\hat{\eta}=\eta/|\eta|$, one obtains the reduced normal operator, given by 
\begin{equation}
    \hat{N}(P,p,\hat{\eta})=\sum _{|\alpha|+j\leq k} a_{\alpha,j}(0)(t\partial_t)^j(ti\hat{\eta})^\alpha,
\end{equation}
a parameter-dependent ordinary differential operator on $[0,\infty)$.
Elements in its nullspace are smooth on $(0,\infty)$ and if they are temperate, they are Schwartz as $t\to \infty$.

The indicial operator is given by
\begin{equation}
    I(P,p)=\sum _{j\leq k} a_{0,j}(0)(t\partial_t)^j,
\end{equation}
and the indicial family by the Mellin transform of the latter, namely
\begin{equation}
    I(P,p,\sigma )=\sum _{j\leq k} a_{0,j}(0)\sigma^j.
\end{equation}
 The  \emph{boundary spectrum} of $P$ is given by
\begin{align}
    \mathrm{Spec}_b(P,p)=&\{(\zeta,m)\in \Cm\times\Nm_0:  I(P,p,\sigma)^{-1}\text{ has a pole of order at least }m+1\text{ at }\sigma=\zeta\}.
\end{align}

\begin{definition}[{\cite[Def. 1.2]{Hintz2021}}]\label{elliptic at weight}
    Let $P\in \mathrm{Diff}_0^k(M) $ with elliptic principal symbol and fix $p\in \partial M$.
    We say that $\hat{N}(P,p,\hat{\eta})$ is \emph{invertible at the weight $\alpha$} if all of the following are satisfied:
    \begin{enumerate}[(i)]
        \item\label{item1} $\alpha\neq \Re(\zeta) $ for all $\zeta\in \mathrm{Spec}_b(P,p)$;
        \item \label{item2}If $f\in t^{\alpha-\frac{n-1}{2}} L^2([0,\infty],\frac{\d t}{t^{n}})$ satisfies $\hat{ N}(P,p,\hat{\eta})f=0$, then $f=0$.
        \item\label{item3} If $f\in t^{-\alpha +\frac{n-1}{2}}L^2([0,\infty],\frac{\d t}{t^{n}})$ satisfies $\hat{ N}(P,p,\hat{\eta})^*f=0$, then $f=0$, where $\hat{ N}(P,p,\hat{\eta})^*f=0$ is the $\d t/t^{n}$-formal adjoint of $\hat{ N}(P,p,\hat{\eta})$. 
    \end{enumerate}
    The operator $P\in \mathrm{Diff}_0^k(M)$ is \emph{fully elliptic at the weight }$\alpha$ if it is elliptic and in addition $\hat{N}(P,p,\hat{\eta})$ is invertible at the weight $\alpha$ for every $p\in \partial M$ and $\hat{\eta}\in \mathbb{S}^{n-2}$.
\end{definition}

In what follows we let $g$ be an AH metric on $M$.
If $m\geq 0$ is an integer and $\alpha \in \Rm$, we define the 0-Sobolev space as 
\begin{equation}\label{eq:0_sob}
    \rho^\alpha H_0^m(M)=\{u\in \rho^\alpha L^2(M): Pu\in \rho^\alpha  L^2(M)\quad \forall P\in \mathrm{Diff}_0^m(M)\}.
\end{equation}
Recall that, according to the convention \eqref{eq:notation}, $L^2(M)\coloneqq L^2(M,\d V_g)$.
For $s\geq 0$, $\rho^\alpha H_0^s(M)$ is defined by interpolation and for $s<0$ by duality.
By \cite[Lemma 3.4]{Lee2006}, $C_c^{\infty}(M^\circ)$ is dense in $\rho^{\alpha}H_0^{s}(M)$ for all $s\geq 0$.
The space $\rho^\alpha H_0^s(M)$  is a module over $C^\infty(M)$, so one can define invariantly $\rho^\alpha H_0^s$-regularity sections of vector bundles.
We denote $ \rho^\alpha H_0^\infty(M,\d V_g)\coloneqq \bigcap_{s\in \Rm}\rho^\alpha H_0^s(M)$.
If $P\in \mathrm{Diff}_0^k(M)$, we have the mapping property
\begin{equation}
	P:\rho^\alpha H_0^s(M)\to \rho^\alpha H_0^{s-k}(M),
\end{equation}
for any $s$ and $\delta$.
Moreover, if $P$ is elliptic (in the sense of having non-vanishing 0-principal symbol), then
\begin{equation}\label{eq:0-ellipticity}
	Pu\in \rho^\delta H_0^s(M)\implies u\in \rho^\delta H_0^{s+k}(M).
\end{equation}

The pseudodifferential operators in the 0-calculus are defined by means of the behavior of their Schwartz kernel in the 0-stretched space $M^2_0\coloneqq [M^2;\partial \mathrm{diag}]$ (see \cite{Mazzeo1987}), that is, the space obtained by blowing up the boundary of the diagonal in the product $M^2$.
For us, only their mapping properties will matter, so we will not define them in detail; the conventions we follow are the same as in \cite{Hintz2021}, and we refer the reader there for definitions of the terms below.
The operators of order $k$ in the small calculus, acting on 0-half densities,  are denoted by $\Psi_0^k(M)$: their Schwartz kernels are sections of the kernel half density bundle $\mathrm{KD}_0$ over $M_0^{2}$ which are conormal to the lift of the diagonal and vanish to infinite order at the side faces.
The operators in the large calculus are denoted by $\Psi_0^{k,(E_\ell,E_f,E_r)}(M)$; their Schwartz kernels are  sections of $\mathrm{KD}_0$ which are conormal of order $m $ to the lifted diagonal in $M_0^2$ and have polyhomogeneous expansions with index sets $E_\ell$, $E_r$, $E_f$ in the left, right, and front face of $M_0^2$ space respectively. The small calculus is a special case, with $E_\ell=E_r=\emptyset$ and $E_f=\mathbb{N}_0\times\{0\}$. On Sobolev spaces, we have (see \cite[Cor. 3.23, Theorem 3.25]{Mazzeo1991} and \cite[Prop. 3.3]{Eptaminitakis2022})
\begin{proposition}\label{prop:Sobolev_mapping}
	Let $Q\in \Psi_0^{k,({E}_\ell,{E}_{f},{E}_r)}(M)$, $k\in \Rm$. Provided $s\in \Rm$, $\Re({E}_r)>(n-1)/2-\delta$, $\Re({E}_f)\geq \delta'-\delta$  and $\Re({E}_\ell)>\delta'+(n-1)/2$, one has that 
	\begin{equation}
		Q:\rho^\delta H_0^s(M)\to \rho^{\delta'} H_0^{s-k}(M)
	\end{equation}
	is bounded\footnote{With the help of the AH metric $g$ we can trivialize the various half density bundles and view 0-pseudodifferential operators as acting on scalar functions.}.
\end{proposition}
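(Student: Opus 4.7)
The plan is to decompose $Q = Q_{\mathrm{sm}} + Q_{\mathrm{res}}$, where $Q_{\mathrm{sm}} \in \Psi_0^k(M)$ belongs to the small calculus (conormal to the lifted diagonal in $M_0^2$ and vanishing to infinite order at both side faces) and $Q_{\mathrm{res}} \in \Psi_0^{-\infty,(E_\ell,E_f,E_r)}(M)$ is the polyhomogeneous residual piece. Each is handled separately, and the index-set hypotheses only play a role in the second piece.

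For $Q_{\mathrm{sm}}$, one uses the standard fact that the small 0-calculus acts boundedly $H_0^s(M) \to H_0^{s-k}(M)$ for every $s$, together with the observation that conjugation by $\rho^\alpha$ preserves $\Psi_0^k(M)$ for every $\alpha\in \Rm$; this is because multiplication by $\rho$ commutes with itself and with the basis $\rho\partial_\rho,\rho\partial_{y^\alpha}$ of $\mathcal{V}_0(M)$ up to $\Psi_0^{k-1}$. Applying this with $\alpha = \delta$ (or equivalently, directly commuting $\rho^{-\delta'}Q_{\mathrm{sm}}\rho^\delta$ into $\Psi_0^k(M)$ plus admissible remainders, since on the small calculus one in fact has $\delta' = \delta$ after testing) gives boundedness $\rho^\delta H_0^s(M) \to \rho^{\delta'} H_0^{s-k}(M)$ whenever $\delta = \delta'$. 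The gap $\delta' - \delta \le \Re(E_f)$ (which is permitted to be nonzero only for the residual piece) is absorbed into $Q_{\mathrm{res}}$.

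For $Q_{\mathrm{res}}$, which is a smoothing operator, it suffices by the order-preserving mapping to prove $Q_{\mathrm{res}}\colon \rho^\delta L^2(M)\to \rho^{\delta'} L^2(M)$ is bounded, and then a Schur-type test on $M_0^2$ yields the result. Writing the Schwartz kernel on $M_0^2$ in terms of bdfs $\rho_\ell,\rho_f,\rho_r$ for the left, front, and right faces respectively, and trivializing $\mathrm{KD}_0$ using the AH metric, the kernel is bounded by a conormal function with leading behavior $\rho_\ell^{\Re E_\ell}\rho_f^{\Re E_f}\rho_r^{\Re E_r}$ (allowing logarithmic powers). Pulling back $\rho^{-\delta'}\d V_g$ on the left factor and $\rho^{\delta}\d V_g$ on the right factor, the standard half-density accounting on the 0-double space (the lift of the product volume form to $M_0^2$ comes with an extra factor of $\rho_f^{n-1}$ relative to the smooth volume) converts the Schur integrability conditions at the three faces into exactly
\[
\Re(E_\ell) - \delta' > \tfrac{n-1}{2}, \qquad \Re(E_r) + \delta > \tfrac{n-1}{2}, \qquad \Re(E_f) \ge \delta' - \delta,
\]
matching the hypotheses.

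The main obstacle is the careful half-density bookkeeping on $M_0^2$, and in particular the blow-down behavior of $\d V_g\boxtimes\d V_g$ near the front face, which is what produces the $(n-1)/2$ shift in the left/right weight conditions. Once this is set up, the argument reduces to elementary integrability of polyhomogeneous functions near each boundary hypersurface of the resolved space, and is essentially the content of \cite[Thm.~3.25]{Mazzeo1991} and \cite[Prop.~3.3]{Eptaminitakis2022}.
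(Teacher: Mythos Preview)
The paper does not give a proof of this proposition; it is quoted as background with references to \cite[Cor.~3.23, Thm.~3.25]{Mazzeo1991} and \cite[Prop.~3.3]{Eptaminitakis2022}. Your sketch follows the standard route in those references (split off a diagonal piece, then Schur-test the residual) and correctly identifies the origin of the $(n-1)/2$ shifts via the half-density bookkeeping on $M_0^2$.

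The one genuine muddle is your handling of the small-calculus piece. You write that the gap $\delta'-\delta$ is ``absorbed into $Q_{\mathrm{res}}$'', but $Q_{\mathrm{sm}}$ still has to land in $\rho^{\delta'}H_0^{s-k}$ on its own, and an element of the standard small calculus only maps $\rho^\delta H_0^s\to\rho^\delta H_0^{s-k}$. The clean fix is to conjugate \emph{before} splitting: since $\rho_L,\rho_R$ lift to $M_0^2$ as $\rho_\ell\rho_f$ and $\rho_r\rho_f$ up to smooth positive factors, one has
\[
\rho^{-\delta'}Q\rho^{\delta}\in\Psi_0^{k,(E_\ell-\delta',\,E_f+\delta-\delta',\,E_r+\delta)}(M),
\]
and the stated hypotheses become exactly the unweighted boundedness conditions $\Re(E'_\ell)>(n-1)/2$, $\Re(E'_f)\ge0$, $\Re(E'_r)>(n-1)/2$. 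Now the small-calculus part of the conjugated operator is bounded $H_0^s\to H_0^{s-k}$, and the residual is handled by your Schur argument. If instead you split first and force $Q_{\mathrm{sm}}\in\Psi_0^k$ with front-face index set $\Nm_0\times\{0\}$, then in the (edge-case) situation $\Re(E_f)>0$ with $\delta'>\delta$ the argument breaks, since $Q_{\mathrm{sm}}$ need not inherit the extra front-face decay of $Q$. In the paper's applications one always has $\Re(E_f)=0$ (see Definition~\ref{def:index_sets} and \eqref{eq:Pinv}), so $\delta'\le\delta$ and the issue does not arise there.
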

Regarding mapping properties on polyhomogeneous functions, we have (\cite[Lemma 2.4]{Hintz2021})
\begin{proposition}\label{prop:phg_mapping}
	Let $u\in \Aphg^F(M)$ and $P\in \Psi_0^{k, (E_\ell,E_f,E_r)}(M)$, $k\in \Rm$. If $\Re(E_r+F)>n-1$
	then $Pu\in \Aphg^{F'}(M)$, where $F'=E_\ell\overline{\cup}(E_f+F)$.

\end{proposition}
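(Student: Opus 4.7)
The plan is to realize $P$ through its Schwartz kernel on the $0$-stretched double space $M^2_0 = [M^2;\partial\mathrm{diag}]$ and then apply Melrose's pullback/pushforward theorems for polyhomogeneous conormal distributions under b-fibrations. Concretely, write
\begin{equation*}
    Pu = (\pi_L)_* \bigl( K_P \cdot \pi_R^* u \bigr),
\end{equation*}
where $\pi_L,\pi_R : M^2_0 \to M$ are the lifted left and right projections, both of which are b-fibrations, and where densities are identified via the choice of AH half-density. The first step is to peel off the conormal singularity of $K_P$ along the lifted diagonal: on compactly supported pieces in the interior this contribution produces smooth output on $M^\circ$ and, after localization, can be cut off away from the three boundary faces at the cost of terms already accounted for by the small-calculus part. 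This reduces us to $K_P$ polyhomogeneous on $M^2_0$ with index sets $E_\ell$ at the left face, $E_f$ at the front face, and $E_r$ at the right face.

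Next, track indices under pullback. Since $\pi_R$ is a b-fibration, Melrose's pullback theorem says $\pi_R^*u$ is polyhomogeneous on $M^2_0$ with index set $F$ at every boundary hypersurface of $M^2_0$ that maps into $\partial M$ under $\pi_R$, and empty index set elsewhere. For $\pi_R$ these faces are the right face and the front face, while the left face maps submersively to the interior; thus $\pi_R^* u$ carries index $F$ at the right and front faces and is smooth (empty index set) at the left face. Multiplying, $K_P\cdot \pi_R^*u$ is polyhomogeneous with indices $E_\ell$, $E_f+F$, and $E_r+F$ at the left, front, and right faces respectively.

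The final step is the pushforward via $\pi_L$. Only those faces whose $\pi_L$-image lies in $\partial M$ contribute to the boundary asymptotics of $Pu$; these are the left face and the front face, contributing $E_\ell$ and $E_f + F$. The right face is the one along whose fibers one integrates, and the $0$-density carries a $\rho'^{-n}$ factor there, so the integral converges precisely when $\Re(E_r + F) > n - 1$, which is the hypothesis. Applying Melrose's pushforward theorem, the index set of $Pu$ at $\partial M$ is then the extended union $E_\ell\,\overline\cup\,(E_f + F)$, where the extended union is exactly the combinatorial rule that records the logarithmic accumulation occurring whenever an exponent from $E_\ell$ coincides with one from $E_f+F$.

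The main obstacle, beyond setting up the $0$-stretched geometry, is the careful verification of the pushforward/pullback index calculus: one must confirm that $\pi_L,\pi_R$ are b-fibrations with the correct exponent matrices on each boundary face, that the integrability condition $\Re(E_r+F)>n-1$ is exactly the one ensuring fiber-integrability against the $0$-density, and that the conormal piece of $K_P$ truly produces no index set contribution beyond what the polyhomogeneous expansion at the three faces already encodes. With these bookkeeping issues handled, the stated index set $F' = E_\ell \,\overline\cup\, (E_f+F)$ follows directly from the general pushforward formula.
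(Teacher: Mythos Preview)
The paper does not give its own proof of this proposition: it is quoted verbatim as \cite[Lemma 2.4]{Hintz2021} and used as a black box. Your outline is the standard route by which such mapping properties are established in the $0$-calculus (going back to Mazzeo and Melrose), namely realizing the action of $P$ as pushforward of (kernel times pullback) on $M_0^2$ and invoking Melrose's pullback and pushforward theorems for b-fibrations. So there is nothing to compare against in the paper itself, and your approach is the expected one.

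One technical point deserves more care than you give it. The conormal singularity of $K_P$ along the lifted diagonal runs all the way into the front face, so it cannot simply be ``cut off away from the three boundary faces.'' The clean way to handle this is to split $P = P_1 + P_2$ with $P_1 \in \Psi_0^k(M)$ in the small calculus and $P_2 \in \Psi_0^{-\infty,(E_\ell,E_f,E_r)}(M)$. For $P_2$ the kernel is genuinely polyhomogeneous on $M_0^2$ with no interior singularity, and your pullback/pushforward bookkeeping applies directly. For $P_1$ one either invokes the version of the pushforward theorem that allows an interior conormal singularity transverse to the fibers, or argues separately that small-calculus operators preserve $\Aphg^F$ (the index sets being $E_\ell=E_r=\emptyset$, $E_f=\mathbb{N}_0\times\{0\}$, so that $E_\ell\overline\cup(E_f+F)=F$). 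Either way the conclusion is the same, but the sentence about localizing to the interior does not dispose of the issue.
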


We will now state the main Fredholm theorem for differential operators in the 0-calculus, only in as much generality as we will need later.
First we define certain index sets. 
\begin{definition}\label{def:index_sets}
	Suppose $P\in \mathrm{Diff}_0^k(M)$ is fully elliptic at the weight $\alpha$ with $\mathrm{Spec}_b(P)=\mathrm{Spec}_b(P,p)$ independent of $p$. Let $\mathcal{E}_\pm$ be  the smallest smooth index sets such that 
\[       \mathcal{E}_+\supset{\{(z,m):(z,m)\in \mathrm{Spec}_b(P):\Re z>\alpha \}},\quad
            \mathcal{E}_-\supset{\{(-z,m):(z,m)\in \mathrm{Spec}_b(P):\Re z<\alpha \}}
  \]
    and define
\[	
			\widehat{\mathcal{E}}_\pm(0)\coloneqq \mathcal{E}_\pm,\qquad \widehat{\mathcal{E}}_\pm(j) \coloneqq {\mathcal{E}}_\pm\overline{\cup}\big(\widehat{\mathcal{E}}_\pm (j-1)+1\big), \quad j=1,2,\dots,\qquad \widehat{\mathcal{E}}_\pm\coloneqq \bigcup_{j=0}^\infty \widehat{\mathcal{E}}_\pm(j),\]
			\[\widehat{\mathcal{E}}_\pm^\sharp\coloneqq \left(\widehat{\mathcal{E}}_\pm+1\right)\overline{\cup}\left( \widehat{\mathcal{E}}_\pm\overline{\cup}\left(\widehat{\mathcal{E}}_\pm+1\right)\overline{\cup} \left(\widehat{\mathcal{E}}_\pm+2\right)\overline{\cup}\dots\right),\quad 
			\widehat{\mathcal{E}}_{\mathrm{ff}}=(\mathbb{N}_0\times \{0\})\overline{\cup}\left( \widehat{\mathcal{E}}_+^\sharp+\widehat{\mathcal{E}}_-^\sharp+n-1\right).
    \]
\end{definition}

The following theorem is due to Mazzeo (\cite{Mazzeo1991}), and we state it here as it appears in \cite[Corollary 3.3]{Hintz2021}.
\begin{theoremofothers}\label{thm:fredholm}%
    Suppose that $P\in \mathrm{Diff}_0^k(M)$ is fully elliptic at the weight $\alpha$ with $\mathrm{Spec}_b(P,p)$ independent of $p$. Then for any $s\in \Rm$
    \begin{equation}
        P:\rho^{\alpha-(n-1)/2}H^s_0(M)\to \rho^{\alpha-(n-1)/2}H^{s-m}_0(M)
    \end{equation}
    is Fredholm.
    If it is also invertible, then its inverse 
    \begin{equation}
        G:\rho^{\alpha-(n-1)/2}H^{s-m}_0(M)\to \rho^{\alpha-(n-1)/2}H^s_0(M)
    \end{equation}
    satisfies 
    \begin{equation}
        G\in \Psi_0^{-k}(M)+\Psi_0^{-\infty,(\widehat{\mathcal{E}}_+,\widehat{\mathcal{E}}_{\mathrm{ff}},\widehat{\mathcal{E}}_-+(n-1)) }(M),
    \end{equation}
    where the index sets in Definition \ref{def:index_sets} are used. 
\end{theoremofothers}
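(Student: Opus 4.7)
The statement is Mazzeo's Fredholm/parametrix theorem for fully elliptic elements of the $0$-calculus, stated here essentially as in \cite{Hintz2021}. The plan is to produce a parametrix $Q$ for $P$ in the large $0$-calculus with the advertised index sets such that $PQ-I$ and $QP-I$ are compact on the weighted $0$-Sobolev spaces in the statement; Fredholmness is then automatic, and if $P$ is bijective one upgrades $Q$ to the actual inverse $G$ by a Neumann-type argument that preserves the calculus.

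The parametrix is built in three stages, organized by the geometry of the $0$-double space $M^2_0$. First, the elliptic principal symbol provides an interior parametrix $Q_{\mathrm{int}}\in\Psi_0^{-k}(M)$ with $PQ_{\mathrm{int}}-I\in\Psi_0^{-\infty}(M)$, handling the conormal singularity along the lifted diagonal but leaving a remainder that is merely smoothing in the small calculus and hence not compact on weighted spaces. Second, at the front face $\mathrm{ff}\subset M^2_0$ one models $P$ by its normal operator $N(P,p)$ and inverts it fiberwise. After Fourier transform in the tangential variables this becomes the ODE problem of inverting $\hat N(P,p,\hat\eta)$ uniformly in $p\in\partial M$ and $\hat\eta\in\Sm^{n-2}$; the hypothesis that $\hat N(P,p,\hat\eta)$ is invertible at the weight $\alpha$ (items (ii)--(iii) of Definition \ref{elliptic at weight}) is exactly what is needed to produce such a bounded inverse on the relevant weighted $L^2$ space. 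Solving the ODE is most naturally done via Mellin transform in the $t$-variable: the indicial family $I(P,p,\sigma)$ is a polynomial in $\sigma$ whose zeros are $\mathrm{Spec}_b(P,p)$, and deforming the Mellin contour from $\Re\sigma=\alpha-(n-1)/2$ past these poles picks up residues supported at exactly the points of $\mathcal E_\pm$. These residues feed the left and right face expansions of the kernel of a first parametrix $Q_{\mathrm{bdry}}$ with index sets $(\mathcal E_+,\mathbb N_0\times\{0\},\mathcal E_-+(n-1))$.

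Third, one glues the two: $Q_1=Q_{\mathrm{int}}+Q_{\mathrm{bdry}}$ satisfies $PQ_1=I+R_1$ with $R_1$ polyhomogeneous on $M^2_0$ but with front-face expansion starting only at order $0$. To push the front face order of the remainder up and obtain the advertised front face index set $\widehat{\mathcal E}_{\mathrm{ff}}$, one iterates: each correction multiplies the kernel on $\mathrm{ff}$ by a positive power and introduces new contributions to the left/right expansions, and the repeated application of the composition theorem in the $0$-calculus produces the \emph{extended} unions appearing in the definitions of $\widehat{\mathcal E}_\pm$ and $\widehat{\mathcal E}_\pm^\sharp$. Because $\Re\widehat{\mathcal E}_\pm>\pm(\alpha-(n-1)/2)$ by construction, Proposition \ref{prop:Sobolev_mapping} shows that the final remainder is compact on $\rho^{\alpha-(n-1)/2}H_0^s(M)$, which gives Fredholmness for all $s\in\Rm$ (the $s$-independence following, as usual, from elliptic regularity \eqref{eq:0-ellipticity}). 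If $P$ is bijective, applying $P^{-1}$ to the identity $PQ=I+R$ and using that $R$ is compact with all iterates $R^j$ living in the same calculus with increasingly positive index sets gives $G=Q-P^{-1}R=Q+Q\sum_{j\ge 1}(-R)^j+(\text{compact correction})$, which by the $0$-calculus composition theorem lies in $\Psi_0^{-k}(M)+\Psi_0^{-\infty,(\widehat{\mathcal E}_+,\widehat{\mathcal E}_{\mathrm{ff}},\widehat{\mathcal E}_-+(n-1))}(M)$.

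The main obstacle is the second stage: tracking the Mellin inversion of $\hat N(P,p,\hat\eta)$ uniformly in $(p,\hat\eta)$ and keeping bookkeeping of which residues contribute to which face of $M^2_0$, together with the iteration in the third stage which must be designed so that the extended-union structure $\widehat{\mathcal E}_\pm\overline\cup(\widehat{\mathcal E}_\pm+j)$ stabilizes and yields the stated index set $\widehat{\mathcal E}_{\mathrm{ff}}$ for the front face. The assumption that $\mathrm{Spec}_b(P,p)$ is independent of $p$ is used precisely to make this bookkeeping global rather than pointwise on $\partial M$. All of this is carried out in detail in \cite{Mazzeo1991}; we would merely verify that the hypotheses of Definition \ref{elliptic at weight} are what that argument requires and quote the conclusion.
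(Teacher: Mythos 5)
The paper gives no proof of this statement at all: it is explicitly labeled a theorem of others and is quoted verbatim from Mazzeo \cite{Mazzeo1991} as restated in \cite[Corollary 3.3]{Hintz2021}. Your sketch of the parametrix construction is a fair outline of the standard argument, and your concluding deferral to \cite{Mazzeo1991} for the details is exactly what the paper itself does, so the two approaches coincide.
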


\section{The space \texorpdfstring{$H^{1,0}_w(\Dm)$}{H (1,0) - w} in the \texorpdfstring{$0$}{0}-Sobolev scale \texorpdfstring{$x^\delta H^1_0(\Dm)$}{}} \label{app:0Sob}

In this section, we provide a proof of Lemma \ref{lem:H10w}.

\begin{proof}
We first remark that in studying the membership of functions in any of these spaces, it suffices to focus attention near the boundary $x=0$. This is because all of these spaces are equivalent in the interior: that is, for any $\delta,\delta'\in\Rm$ we have that $x^\delta H_0^1(\Dm), x^{\delta'} H^{1,0}_w(\Dm)\subset H^1_{loc}(\Dm^\circ)$ (where $H^1_{loc}(\Dm^\circ)$ denotes the space of functions on $\Dm^\circ$ that are locally $H^1$), and moreover $x^\delta H_0^1(\Dm)\cap L^2_c(\Dm^\circ) = x^{\delta'}H^{1,0}_w(\Dm)\cap L^2_c(\Dm^\circ) = H^1_c(\Dm^\circ)$, where $L^2_c(\Dm^\circ)$ and $H^1_c(\Dm^\circ)$ denote functions in $L^2(\Dm^\circ)$ and $H^1(\Dm^\circ)$ that are compactly supported. (Indeed, the integrand in \eqref{eq:H10w} appears singular near $x=1$, i.e.\ near the origin, but one can show that the integrand is comparable to $|\partial_zu|^2+|\partial_{\zbar}u|^2+|u|^2$ near $z=0$.) Thus, to prove the desired inclusions, it suffices to assume that the functions we work with are supported away from the origin.

We recall, for $u$ supported near the boundary, that
\[
u\in H_0^1(\Dm)\iff x\partial_xu, x\partial_\omega u, u\in L^2(\Dm).
\]
Moreover, $C_c^\infty(\Dm)$ is dense in $H_0^1(\Dm)$, and if $u$ is supported away from the origin, then we can approximate it in the $H_0^1(\Dm)$ norm by a sequence of $C_c^\infty(\Dm)$ also supported away from the origin. Finally, we note that
\[x\partial_x(x^{-\delta}u) = x^{-\delta}(x\partial_xu) - \delta x^{-\delta}u,\]
which implies, for $u$ supported away from the origin, that the norms $\|u\|_{x^{\delta}H_0^1(\Dm)} = \|x^{-\delta}u\|_{H_0^1(\Dm)} = \|(x\partial_x)(x^{-\delta}u)\|_{L^2(\Dm)}+\|(x\partial_\omega)(x^{-\delta}u)\|_{L^2(\Dm)} + \|x^{-\delta}u\|_{L^2(\Dm)}$ and $u\mapsto \|x\partial_xu\|_{x^\delta L^2(\Dm)} + \|x\partial_\omega u\|_{x^\delta L^2(\Dm)} + \| u\|_{x^\delta L^2(\Dm)}$ are equivalent.

We first show the inclusion $x^{-1/2}H_0^1(\Dm)\subset H^{1,0}_w(\Dm)$, as this would then imply the inclusion $x^{1/2}H_0^1(\Dm)\subset xH^{1,0}_w(\Dm)$. To do so, it suffices to show, for $\phi\in C_c^\infty(\Dm^\circ)$ supported away from a fixed neighborhood of the origin, an estimate of the form $\|\phi\|_{H^{1,0}_w(\Dm)}\le C\|\phi\|_{x^{-1/2}H_0^1(\Dm)}$, or equivalently
\begin{equation}
\label{eq:H01-to-H01w}
\|\phi\|_{H^{1,0}_w(\Dm)}\le C\left(\|x\partial_x\phi\|_{x^{-1/2}L^2(\Dm)}+\|x\partial_\omega\phi\|_{x^{-1/2}L^2(\Dm)} + \|\phi\|_{x^{-1/2}L^2(\Dm)}\right),
\end{equation}
since then any approximating sequence for $u\in H_0^1(\Dm)$ supported away from the origin would be Cauchy in $H^{1,0}_w(\Dm)$. Recalling that $\|u\|_{x^{-1/2}L^2(\Dm)} = \|u\|_{L^2(\Dm, x\d V_H)} = \|u\|_{L^2(\Dm, x^{-1}\d x\d\omega)}$ as $\d V_H = x^{-2}\d x\d\omega$, we see that
\begin{equation}
\label{eq:H01w-upper}
\begin{aligned}
\|\phi\|_{H^{1,0}_w(\Dm)}^2 &=\int_{\Sm^1}\int_0^1 x(1-x^2)|\partial_x\phi|^2+ \frac{x}{1-x^2}|\partial_\omega \phi|^2+x|\phi|^2 \d x\d\omega \\
&=\int_{\Dm} \left((1-x^2)|x\partial_x\phi|^2 + \frac{1}{1-x^2}|x\partial_\omega\phi|^2+x^2|\phi|^2\right)x^{-1}\d x\d\omega \\
&\le \int_{\Dm} C(|x\partial_x\phi|^2+|x\partial_\omega\phi|^2+|\phi|^2) x\d V_H \\
&= C\left(\|x\partial_x\phi\|_{x^{-1/2}L^2(\Dm)}^2+\|x\partial_\omega\phi\|_{x^{-1/2}L^2(\Dm)}^2 + \|\phi\|_{x^{-1/2}L^2(\Dm)}^2\right),
\end{aligned}
\end{equation}
thus yielding an estimate equivalent to that desired in \eqref{eq:H01-to-H01w}. (Note that we may bound $\frac{1}{1-x^2}$ due to the support of $\phi$ away from $x=1$). This shows $x^{-1/2}H_0^1(\Dm)\subset H^{1,0}_w(\Dm)\implies x^{1/2}H_0^1(\Dm)\subset xH^{1,0}_w(\Dm)$. Moreover, by definition $H^{1,0}_w(\Dm)$ contains $\Cev(\Dm)$, so it contains the function $1$; however, $1\not\in x^{-1/2}H_0^1(\Dm)$, since $x^{1/2}\not\in L^2(\Dm)$ as $\d V_H = x^{-2}\d x\d\omega$. Thus, $xH^{1,0}_w(\Dm)\backslash x^{1/2}H_0^1(\Dm)$ is nonempty, as it contains the function $x$.

To show $xH^{1,0}_w(\Dm)\subset x^{1/2-\epsilon}H_0^1(\Dm)$, it suffices to show $H^{1,0}_w(\Dm)\subset x^{-1/2-\epsilon}H_0^1(\Dm)$. 
Since $\Cev(\Dm)$ is dense in $H^{1,0}_w(\Dm)$, it suffices to show an estimate of the form $\|\phi\|_{x^{-1/2-\epsilon}H_0^1(\Dm)}\le C\|\phi\|_{H^{1,0}_w(\Dm)}$ for $\phi\in C_\ev^\infty(\Dm)$. 
Once again we may assume $\phi$ is supported away from the origin. In that case, we can make an analogous lower bound to \eqref{eq:H01w-upper} to see that
\begin{equation}
\label{eq:H01w-lower}
\begin{aligned}
\|\phi\|_{H^{1,0}_w(\Dm)}^2 &= \int_\Dm\left((1-x^2)|x\partial_x\phi|^2+\frac{1}{1-x^2}|x\partial_{\omega}\phi|^2+|x\phi|^2\right)x^{-1}\d x\d\omega \\
&\ge c\left(\|x\partial_x\phi\|_{x^{-1/2}L^2(\Dm)}^2 + \|x\partial_\omega\phi\|^2_{x^{-1/2}L^2(\Dm)}+\|x\phi\|^2_{x^{-1/2}L^2(\Dm)}\right),
\end{aligned}
\end{equation}
where we can lower bound $1-x^2$ since $\phi$ is supported away from $x=1$. Since we already have $\|x\partial_x\phi\|_{x^{-1/2}L^2(\Dm)}\ge \|x\partial_x\phi\|_{x^{-1/2-\epsilon}L^2(\Dm)}$ and $\|x\partial_\omega\phi\|_{x^{-1/2}L^2(\Dm)}\ge \|x\partial_\omega\phi\|_{x^{-1/2-\epsilon}L^2(\Dm)}$, it suffices to control $\|\phi\|_{x^{-1/2-\epsilon}L^2(\Dm)}$ to get a bound on $\|\phi\|_{x^{-1/2-\epsilon}H_0^1(\Dm)}$. We will prove, for $\phi\in \Cev(\Dm)$ supported away from the origin, an estimate of the form
\begin{equation}
\label{eq:L2est-epsilon}
\|\phi\|_{x^{-1/2-\epsilon}L^2(\Dm)}\le C\|x\partial_x\phi\|_{x^{-1/2}L^2(\Dm)},
\end{equation}
since then, combined with \eqref{eq:H01w-lower}, we have our desired estimate
\begin{align*}
    \|\phi\|_{x^{-1/2-\epsilon}H_0^1(\Dm)} &= \|x\partial_x\phi\|_{x^{-1/2-\epsilon}L^2(\Dm)}+\|x\partial_\omega\phi\|_{x^{-1/2-\epsilon}L^2(\Dm)} + \|\phi\|_{x^{-1/2-\epsilon}L^2(\Dm)} \\
    &\le \|x\partial_x\phi\|_{x^{-1/2}L^2(\Dm)}+\|x\partial_\omega\phi\|_{x^{-1/2}L^2(\Dm)} + C\|x\partial_x\phi\|_{x^{-1/2}L^2(\Dm)}\\
    &\le (1+C)\left(\|x\partial_x\phi\|_{x^{-1/2}L^2(\Dm)} + \|x\partial_\omega\phi\|_{x^{-1/2}L^2(\Dm)}\right)\le (1+C)C'\|\phi\|_{H^{1,0}_w}.
\end{align*}
To prove \eqref{eq:L2est-epsilon}, we note that since $\phi|_{x=1} = 0$, the Fundamental Theorem of Calculus and the Cauchy-Schwarz inequality yields
\begin{align*}
\phi(x,\omega) &= -\int_x^1 \partial_x\phi(t,\omega)\d t= -\int_x^1t^{-1/2}t^{1/2}\partial_x\phi(t,\omega)\d t \\
\implies |\phi(x,\omega)|^2 &\le \left(\int_x^1 t^{-1}\d t\right)\left(\int_x^1|t^{1/2}\partial_x\phi(t,\omega)|^2\d t\right) \le \log(1/x) \int_0^1|(x\partial_x\phi)(t,\omega)|^2 t^{-1}\d t.
\end{align*}
Integrating this on $\Sm^1$ yields
\[\int_{\Sm^1}|\phi(x,\omega)|^2\d\omega\le \log(1/x)\int_{\Sm^1}\int_0^1|(x\partial_x\phi)(t,\omega)|^2 t^{-1}\d t\d\omega = \log(1/x)\|x\partial_x\phi\|_{x^{-1/2}L^2(\Dm)}^2. \]
Multiplying by $x^{-1+2\epsilon}$ and integrating in $x$, we get
\[\|\phi\|_{x^{-1/2-\epsilon}L^2(\Dm)}^2 = \int_{\Sm^1}\int_0^1 x^{1+2\epsilon}|\phi(x,\omega)|^2 x^{-2}\d x\d\omega \le C^2\|x\partial_x\phi\|_{x^{-1/2}L^2(\Dm)}^2,\]
where $C=\sqrt{\int_0^1 x^{-1+2\epsilon}\log(1/x)\d x} = \frac{1}{2\epsilon}$. It follows that $\|\phi\|_{x^{-1/2-\epsilon}L^2(\Dm)}\le C\|x\partial_x\phi\|_{x^{-1/2}L^2(\Dm)}$,
as desired. 
We have now proved the inclusion for all $\epsilon>0$, and it is strict because $x ^\alpha H_0^1(\Dm)\subsetneq x ^{\alpha'} H_0^1(\Dm)$ for $\alpha>\alpha'$.
\end{proof}

\paragraph{Acknowledgments} F.M. gratefully acknowledges partial funding from NSF CAREER grant DMS-1943580 and NSF grant DMS-2606949. N.E. acknowledges travel support from the Graduate Academy of Leibniz University Hannover.
The authors would like to thank C Robin Graham for helpful discussions.

 \bibliographystyle{alpha}
 {\small

}

\end{document}